\def\blx@maxline{77}
\numberwithin{equation}{section}
\newtheorem{proposition}{Proposition}[section]
\newtheorem{lemma}[proposition]{Lemma}
\newtheorem{corollary}[proposition]{Corollary}
\newtheorem{theorem}[proposition]{Theorem}
\theoremstyle{definition}
\newtheorem{definition}[proposition]{Definition}
\newtheorem{remark}[proposition]{Remark}
\newcommand{\Voo}[4]{
	\scalebox{#1}{\begin{tikzpicture}
			[scale=1.5, ultra thick, baseline=#4]
		\draw[dotted] (-.5,0)--++(.4,0);
		\draw[dotted] (.1,0)--++(.4,0);
		\node at (0,.25) {\LARGE$#3$};
		\node at (0,-.25) {\LARGE$#2$};
	\end{tikzpicture}}}
\newcommand{\Vio}[4]{
	\scalebox{#1}{\begin{tikzpicture}
			[scale=1.5, ultra thick, baseline=#4]
		\draw[->] (-.5,0)--++(.4,0);
		\draw[dotted] (.1,0)--++(.4,0);
		\node at (0,.25) {\LARGE$#3$};
		\node at (0,-.25) {\LARGE$#2$};
	\end{tikzpicture}}}
\newcommand{\Voi}[4]{
	\scalebox{#1}{\begin{tikzpicture}
			[scale=1.5, ultra thick, baseline=#4]
		\draw[dotted] (-.5,0)--++(.4,0);
		\draw[->] (.1,0)--++(.4,0);
		\node at (0,.25) {\LARGE$#3$};
		\node at (0,-.25) {\LARGE$#2$};
	\end{tikzpicture}}}
\newcommand{\Vii}[4]{
	\scalebox{#1}{\begin{tikzpicture}
			[scale=1.5, ultra thick, baseline=#4]
		\draw[->] (-.5,0)--++(.4,0);
		\draw[->] (.1,0)--++(.4,0);
		\node at (0,.25) {\LARGE$#3$};
		\node at (0,-.25) {\LARGE$#2$};
	\end{tikzpicture}}}
\newcommand{\BVoo}[4]{
	\scalebox{#1}{\begin{tikzpicture}
			[scale=1.5, ultra thick, baseline=#4]
		\draw[dotted] (-.5,0)--++(.4,0);
		\draw[dotted] (.1,0)--++(.4,0);
		\node at (0,.25) {\LARGE$#3$};
		\node at (0,-.25) {\LARGE$#2$};
	\end{tikzpicture}}}
\newcommand{\BVio}[4]{
	\scalebox{#1}{\begin{tikzpicture}
			[scale=1.5, ultra thick, baseline=#4]
		\draw[<-] (-.5,0)--++(.4,0);
		\draw[dotted] (.1,0)--++(.4,0);
		\node at (0,.25) {\LARGE$#3$};
		\node at (0,-.25) {\LARGE$#2$};
	\end{tikzpicture}}}
\newcommand{\BVoi}[4]{
	\scalebox{#1}{\begin{tikzpicture}
			[scale=1.5, ultra thick, baseline=#4]
		\draw[dotted] (-.5,0)--++(.4,0);
		\draw[<-] (.1,0)--++(.4,0);
		\node at (0,.25) {\LARGE$#3$};
		\node at (0,-.25) {\LARGE$#2$};
	\end{tikzpicture}}}
\newcommand{\BVii}[4]{
	\scalebox{#1}{\begin{tikzpicture}
			[scale=1.5, ultra thick, baseline=#4]
		\draw[<-] (-.5,0)--++(.4,0);
		\draw[<-] (.1,0)--++(.4,0);
		\node at (0,.25) {\LARGE$#3$};
		\node at (0,-.25) {\LARGE$#2$};
	\end{tikzpicture}}}
\newcommand{\ooYBoo}[3]{\scalebox{#1}{\begin{tikzpicture}
[scale=1.5, baseline=#3]
\draw[line width=#2,dashed] (0,0)--++(1,1);
\draw[line width=#2,dashed] (0,1)--++(1,-1);
\end{tikzpicture}}}
\newcommand{\iiYBii}[3]{\scalebox{#1}{\begin{tikzpicture}
[scale=1.5, baseline=#3]
\draw[line width=#2,->] (0,0)--++(.47,.47);
\draw[line width=#2,->] (.5,.5)--++(.5,.5);
\draw[line width=#2,->] (0,1)--++(.47,-.47);
\draw[line width=#2,->] (.5,.5)--++(.5,-.5);
\end{tikzpicture}}}
\newcommand{\oiYBoi}[3]{\scalebox{#1}{\begin{tikzpicture}
[scale=1.5, baseline=#3]
\draw[line width=#2,dashed] (0,0)--++(.9,.9);
\draw[line width=#2,dashed] (0,1)--++(1,-1);
\draw[line width=#2,->] (.5,.5)--++(.5,.5);
\draw[line width=#2,->] (0,1)--++(.47,-.47);
\end{tikzpicture}}}
\newcommand{\oiYBio}[3]{\scalebox{#1}{\begin{tikzpicture}
[scale=1.5, baseline=#3]
\draw[line width=#2,dashed] (0,0)--++(1,1);
\draw[line width=#2,dashed] (0,1)--++(.9,-.9);
\draw[line width=#2,->] (0,1)--++(.47,-.47);
\draw[line width=#2,->] (.5,.5)--++(.5,-.5);
\end{tikzpicture}}}
\newcommand{\ioYBio}[3]{\scalebox{#1}{\begin{tikzpicture}
[scale=1.5, baseline=#3]
\draw[line width=#2,dashed] (0,0)--++(1,1);
\draw[line width=#2,dashed] (0,1)--++(.9,-.9);
\draw[line width=#2,->] (0,0)--++(.47,.47);
\draw[line width=#2,->] (.5,.5)--++(.5,-.5);
\end{tikzpicture}}}
\newcommand{\ioYBoi}[3]{\scalebox{#1}{\begin{tikzpicture}
[scale=1.5, baseline=#3]
\draw[line width=#2,dashed] (0,0)--++(.9,.9);
\draw[line width=#2,dashed] (0,1)--++(1,-1);
\draw[line width=#2,->] (0,0)--++(.47,.47);
\draw[line width=#2,->] (.5,.5)--++(.5,.5);
\end{tikzpicture}}}
\def\epsx{.12}
\newcommand{\SVoooo}[2]{\scalebox{#1}{\begin{tikzpicture}
[scale=1, very thick, baseline=#2]
\draw[dotted] (-.5,0) -- (.5,0);
\draw[dotted] (0,-.5) -- (0,.5);
\end{tikzpicture}}}
\newcommand{\SViiii}[2]{\scalebox{#1}{\begin{tikzpicture}
[scale=1, very thick, baseline=#2]
\draw[->] (-.5,0) -- (-\epsx/4,0);
\draw[->] (0,0) -- (.5,0);
\draw[->] (0,-.5) -- (0,-\epsx/4);
\draw[->] (0,0) -- (0,.5);
\end{tikzpicture}}}
\newcommand{\SVoioi}[2]{\scalebox{#1}{\begin{tikzpicture}
[scale=1, very thick, baseline=#2]
\draw[->] (-.5,0) -- (-\epsx/4,0);
\draw[->] (0,0) -- (.5,0);
\draw[dotted] (0,-.5) -- (0,.5);
\end{tikzpicture}}}
\newcommand{\SVoiio}[2]{\scalebox{#1}{\begin{tikzpicture}
[scale=1, very thick, baseline=#2]
\draw[->] (-.5,0) -- (-\epsx/4,0);
\draw[->] (0,0) -- (0,.5);
\draw[dotted] (0,-.5) -- (0,0);
\draw[dotted] (0,0) -- (.5,0);
\end{tikzpicture}}}
\newcommand{\SVioio}[2]{\scalebox{#1}{\begin{tikzpicture}
[scale=1, very thick, baseline=#2]
\draw[->] (0,-.5) -- (0,-\epsx/4);
\draw[->] (0,0) -- (0,0.5);
\draw[dotted] (-.5,0) -- (.5,0);
\end{tikzpicture}}}
\newcommand{\SViooi}[2]{\scalebox{#1}{\begin{tikzpicture}
[scale=1, very thick, baseline=#2]
\draw[->] (0,-.5) -- (0,-\epsx/4);
\draw[->] (0,0) -- (0.5,0);
\draw[dotted] (-.5,0) -- (0,0);
\draw[dotted] (0,0) -- (0,.5);
\end{tikzpicture}}}
\newcommand{\ooWoo}[5]{
\scalebox{#1}{\begin{tikzpicture} [scale=1.5, ultra thick, baseline=#5]
		\draw[dotted] (-.5,0)--++(.4,0);
		\draw[dotted] (-.5,.5)--++(.4,0);
		\draw[dotted] (.1,0)--++(.4,0);
		\draw[dotted] (.1,.5)--++(.4,0);
		\node at (0,-.25) {\LARGE$#2$};
		\node at (0,.25) {\LARGE$#3$};
		\node at (0,.75) {\LARGE$#4$};
	\end{tikzpicture}}}
\newcommand{\ooWoi}[5]{
\scalebox{#1}{\begin{tikzpicture} [scale=1.5, ultra thick, baseline=#5]
		\draw[dotted] (-.5,0)--++(.4,0);
		\draw[dotted] (-.5,.5)--++(.4,0);
		\draw[dotted] (.1,0)--++(.4,0);
		\draw[->] (.1,.5)--++(.4,0);
		\node at (0,-.25) {\LARGE$#2$};
		\node at (0,.25) {\LARGE$#3$};
		\node at (0,.75) {\LARGE$#4$};
	\end{tikzpicture}}}
\newcommand{\ooWio}[5]{
\scalebox{#1}{\begin{tikzpicture} [scale=1.5, ultra thick, baseline=#5]
		\draw[dotted] (-.5,0)--++(.4,0);
		\draw[dotted] (-.5,.5)--++(.4,0);
		\draw[->] (.1,0)--++(.4,0);
		\draw[dotted] (.1,.5)--++(.4,0);
		\node at (0,-.25) {\LARGE$#2$};
		\node at (0,.25) {\LARGE$#3$};
		\node at (0,.75) {\LARGE$#4$};
	\end{tikzpicture}}}
\newcommand{\ooWii}[5]{
\scalebox{#1}{\begin{tikzpicture} [scale=1.5, ultra thick, baseline=#5]
		\draw[dotted] (-.5,0)--++(.4,0);
		\draw[dotted] (-.5,.5)--++(.4,0);
		\draw[->] (.1,0)--++(.4,0);
		\draw[->] (.1,.5)--++(.4,0);
		\node at (0,-.25) {\LARGE$#2$};
		\node at (0,.25) {\LARGE$#3$};
		\node at (0,.75) {\LARGE$#4$};
	\end{tikzpicture}}}
\newcommand{\oiWoo}[5]{
\scalebox{#1}{\begin{tikzpicture} [scale=1.5, ultra thick, baseline=#5]
		\draw[->] (-.5,0)--++(.4,0);
		\draw[dotted] (-.5,.5)--++(.4,0);
		\draw[dotted] (.1,0)--++(.4,0);
		\draw[dotted] (.1,.5)--++(.4,0);
		\node at (0,-.25) {\LARGE$#2$};
		\node at (0,.25) {\LARGE$#3$};
		\node at (0,.75) {\LARGE$#4$};
	\end{tikzpicture}}}
\newcommand{\oiWoi}[5]{
\scalebox{#1}{\begin{tikzpicture} [scale=1.5, ultra thick, baseline=#5]
		\draw[->] (-.5,0)--++(.4,0);
		\draw[dotted] (-.5,.5)--++(.4,0);
		\draw[dotted] (.1,0)--++(.4,0);
		\draw[->] (.1,.5)--++(.4,0);
		\node at (0,-.25) {\LARGE$#2$};
		\node at (0,.25) {\LARGE$#3$};
		\node at (0,.75) {\LARGE$#4$};
	\end{tikzpicture}}}
\newcommand{\oiWio}[5]{
\scalebox{#1}{\begin{tikzpicture} [scale=1.5, ultra thick, baseline=#5]
		\draw[->] (-.5,0)--++(.4,0);
		\draw[dotted] (-.5,.5)--++(.4,0);
		\draw[->] (.1,0)--++(.4,0);
		\draw[dotted] (.1,.5)--++(.4,0);
		\node at (0,-.25) {\LARGE$#2$};
		\node at (0,.25) {\LARGE$#3$};
		\node at (0,.75) {\LARGE$#4$};
	\end{tikzpicture}}}
\newcommand{\oiWii}[5]{
\scalebox{#1}{\begin{tikzpicture} [scale=1.5, ultra thick, baseline=#5]
		\draw[->] (-.5,0)--++(.4,0);
		\draw[dotted] (-.5,.5)--++(.4,0);
		\draw[->] (.1,0)--++(.4,0);
		\draw[->] (.1,.5)--++(.4,0);
		\node at (0,-.25) {\LARGE$#2$};
		\node at (0,.25) {\LARGE$#3$};
		\node at (0,.75) {\LARGE$#4$};
	\end{tikzpicture}}}
\newcommand{\ioWoo}[5]{
\scalebox{#1}{\begin{tikzpicture} [scale=1.5, ultra thick, baseline=#5]
		\draw[dotted] (-.5,0)--++(.4,0);
		\draw[->] (-.5,.5)--++(.4,0);
		\draw[dotted] (.1,0)--++(.4,0);
		\draw[dotted] (.1,.5)--++(.4,0);
		\node at (0,-.25) {\LARGE$#2$};
		\node at (0,.25) {\LARGE$#3$};
		\node at (0,.75) {\LARGE$#4$};
	\end{tikzpicture}}}
\newcommand{\ioWoi}[5]{
\scalebox{#1}{\begin{tikzpicture} [scale=1.5, ultra thick, baseline=#5]
		\draw[dotted] (-.5,0)--++(.4,0);
		\draw[->] (-.5,.5)--++(.4,0);
		\draw[dotted] (.1,0)--++(.4,0);
		\draw[->] (.1,.5)--++(.4,0);
		\node at (0,-.25) {\LARGE$#2$};
		\node at (0,.25) {\LARGE$#3$};
		\node at (0,.75) {\LARGE$#4$};
	\end{tikzpicture}}}
\newcommand{\ioWio}[5]{
\scalebox{#1}{\begin{tikzpicture} [scale=1.5, ultra thick, baseline=#5]
		\draw[dotted] (-.5,0)--++(.4,0);
		\draw[->] (-.5,.5)--++(.4,0);
		\draw[->] (.1,0)--++(.4,0);
		\draw[dotted] (.1,.5)--++(.4,0);
		\node at (0,-.25) {\LARGE$#2$};
		\node at (0,.25) {\LARGE$#3$};
		\node at (0,.75) {\LARGE$#4$};
	\end{tikzpicture}}}
\newcommand{\ioWii}[5]{
\scalebox{#1}{\begin{tikzpicture} [scale=1.5, ultra thick, baseline=#5]
		\draw[dotted] (-.5,0)--++(.4,0);
		\draw[->] (-.5,.5)--++(.4,0);
		\draw[->] (.1,0)--++(.4,0);
		\draw[->] (.1,.5)--++(.4,0);
		\node at (0,-.25) {\LARGE$#2$};
		\node at (0,.25) {\LARGE$#3$};
		\node at (0,.75) {\LARGE$#4$};
	\end{tikzpicture}}}
\newcommand{\iiWoo}[5]{
\scalebox{#1}{\begin{tikzpicture} [scale=1.5, ultra thick, baseline=#5]
		\draw[->] (-.5,0)--++(.4,0);
		\draw[->] (-.5,.5)--++(.4,0);
		\draw[dotted] (.1,0)--++(.4,0);
		\draw[dotted] (.1,.5)--++(.4,0);
		\node at (0,-.25) {\LARGE$#2$};
		\node at (0,.25) {\LARGE$#3$};
		\node at (0,.75) {\LARGE$#4$};
	\end{tikzpicture}}}
\newcommand{\iiWoi}[5]{
\scalebox{#1}{\begin{tikzpicture} [scale=1.5, ultra thick, baseline=#5]
		\draw[->] (-.5,0)--++(.4,0);
		\draw[->] (-.5,.5)--++(.4,0);
		\draw[dotted] (.1,0)--++(.4,0);
		\draw[->] (.1,.5)--++(.4,0);
		\node at (0,-.25) {\LARGE$#2$};
		\node at (0,.25) {\LARGE$#3$};
		\node at (0,.75) {\LARGE$#4$};
	\end{tikzpicture}}}
\newcommand{\iiWio}[5]{
\scalebox{#1}{\begin{tikzpicture} [scale=1.5, ultra thick, baseline=#5]
		\draw[->] (-.5,0)--++(.4,0);
		\draw[->] (-.5,.5)--++(.4,0);
		\draw[->] (.1,0)--++(.4,0);
		\draw[dotted] (.1,.5)--++(.4,0);
		\node at (0,-.25) {\LARGE$#2$};
		\node at (0,.25) {\LARGE$#3$};
		\node at (0,.75) {\LARGE$#4$};
	\end{tikzpicture}}}
\newcommand{\iiWii}[5]{
\scalebox{#1}{\begin{tikzpicture} [scale=1.5, ultra thick, baseline=#5]
		\draw[->] (-.5,0)--++(.4,0);
		\draw[->] (-.5,.5)--++(.4,0);
		\draw[->] (.1,0)--++(.4,0);
		\draw[->] (.1,.5)--++(.4,0);
		\node at (0,-.25) {\LARGE$#2$};
		\node at (0,.25) {\LARGE$#3$};
		\node at (0,.75) {\LARGE$#4$};
	\end{tikzpicture}}}
\newcommand{\iiWXX}[7]{
\scalebox{#1}{\begin{tikzpicture} [scale=1.5, ultra thick, baseline=#7]
		\draw[->] (-.5,0)--++(.4,0);
		\draw[->] (-.5,.5)--++(.4,0);
		\node at (0,-.25) {\LARGE$#2$};
		\node at (0,.25) {\LARGE$#3$};
		\node at (0,.75) {\LARGE$#4$};
		\node at (.5,0) {\LARGE$#5$};
		\node at (.5,.5) {\LARGE$#6$};
	\end{tikzpicture}}}
\newcommand{\ooSVoo}[2]{
\scalebox{#1}{\begin{tikzpicture} [scale=1.5, ultra thick, baseline=#2]
		\draw[dotted] (-.5,0)--++(.4,0);
		\draw[dotted] (-.5,.5)--++(.4,0);
		\draw[dotted] (.1,0)--++(.4,0);
		\draw[dotted] (.1,.5)--++(.4,0);
		\draw[dotted] (0,-.4)--++(0,1.3);
	\end{tikzpicture}}}
\newcommand{\ooSVpoo}[2]{
\scalebox{#1}{\begin{tikzpicture} [scale=1.5, ultra thick, baseline=#2]
		\draw[dotted] (-.5,0)--++(.4,0);
		\draw[dotted] (-.5,.5)--++(.4,0);
		\draw[dotted] (.1,0)--++(.4,0);
		\draw[dotted] (.1,.5)--++(.4,0);
		\draw[->] (0,-.4)--(0,0);
		\draw[->] (0,0)--++(0,.5);
		\draw[->] (0,.5)--++(0,.4);
	\end{tikzpicture}}}
\newcommand{\ooSVio}[2]{
\scalebox{#1}{\begin{tikzpicture} [scale=1.5, ultra thick, baseline=#2]
		\draw[dotted] (-.5,0)--++(.4,0);
		\draw[dotted] (-.5,.5)--++(.4,0);
		\draw[->] (.1,0)--++(.4,0);
		\draw[dotted] (.1,.5)--++(.4,0);
		\draw[->] (0,-.4)--(0,0);
		\draw[dotted] (0,0)--++(0,.5);
		\draw[dotted] (0,.5)--++(0,.4);
	\end{tikzpicture}}}
\newcommand{\ooSVoi}[2]{
\scalebox{#1}{\begin{tikzpicture} [scale=1.5, ultra thick, baseline=#2]
		\draw[dotted] (-.5,0)--++(.4,0);
		\draw[dotted] (-.5,.5)--++(.4,0);
		\draw[dotted] (.1,0)--++(.4,0);
		\draw[->] (.1,.5)--++(.4,0);
		\draw[->] (0,-.4)--(0,0);
		\draw[->] (0,0)--++(0,.5);
		\draw[dotted] (0,.5)--++(0,.4);
	\end{tikzpicture}}}
\newcommand{\ioSVoo}[2]{
\scalebox{#1}{\begin{tikzpicture} [scale=1.5, ultra thick, baseline=#2]
		\draw[->] (-.5,0)--++(.4,0);
		\draw[dotted] (-.5,.5)--++(.4,0);
		\draw[dotted] (.1,0)--++(.4,0);
		\draw[dotted] (.1,.5)--++(.4,0);
		\draw[dotted] (0,-.4)--(0,0);
		\draw[->] (0,0)--++(0,.5);
		\draw[->] (0,.5)--++(0,.4);
	\end{tikzpicture}}}
\newcommand{\ioSVio}[2]{
\scalebox{#1}{\begin{tikzpicture} [scale=1.5, ultra thick, baseline=#2]
		\draw[->] (-.5,0)--++(.4,0);
		\draw[dotted] (-.5,.5)--++(.4,0);
		\draw[->] (.1,0)--++(.4,0);
		\draw[dotted] (.1,.5)--++(.4,0);
		\draw[dotted] (0,-.4)--(0,0);
		\draw[dotted] (0,0)--++(0,.5);
		\draw[dotted] (0,.5)--++(0,.4);
	\end{tikzpicture}}}
\newcommand{\ioSVpio}[2]{
\scalebox{#1}{\begin{tikzpicture} [scale=1.5, ultra thick, baseline=#2]
		\draw[->] (-.5,0)--++(.4,0);
		\draw[dotted] (-.5,.5)--++(.4,0);
		\draw[->] (.1,0)--++(.4,0);
		\draw[dotted] (.1,.5)--++(.4,0);
		\draw[->] (0,-.4)--(0,0);
		\draw[->] (0,0)--++(0,.5);
		\draw[->] (0,.5)--++(0,.4);
	\end{tikzpicture}}}
\newcommand{\ioSVoi}[2]{
\scalebox{#1}{\begin{tikzpicture} [scale=1.5, ultra thick, baseline=#2]
		\draw[->] (-.5,0)--++(.4,0);
		\draw[dotted] (-.5,.5)--++(.4,0);
		\draw[dotted] (.1,0)--++(.4,0);
		\draw[->] (.1,.5)--++(.4,0);
		\draw[dotted] (0,-.4)--(0,0);
		\draw[->] (0,0)--++(0,.5);
		\draw[dotted] (0,.5)--++(0,.4);
	\end{tikzpicture}}}
\newcommand{\ioSVii}[2]{
\scalebox{#1}{\begin{tikzpicture} [scale=1.5, ultra thick, baseline=#2]
		\draw[->] (-.5,0)--++(.4,0);
		\draw[dotted] (-.5,.5)--++(.4,0);
		\draw[->] (.1,0)--++(.4,0);
		\draw[->] (.1,.5)--++(.4,0);
		\draw[->] (0,-.4)--(0,0);
		\draw[->] (0,0)--++(0,.5);
		\draw[dotted] (0,.5)--++(0,.4);
	\end{tikzpicture}}}
\newcommand{\oiSVoo}[2]{
\scalebox{#1}{\begin{tikzpicture} [scale=1.5, ultra thick, baseline=#2]
		\draw[dotted] (-.5,0)--++(.4,0);
		\draw[->] (-.5,.5)--++(.4,0);
		\draw[dotted] (.1,0)--++(.4,0);
		\draw[dotted] (.1,.5)--++(.4,0);
		\draw[dotted] (0,-.4)--(0,0);
		\draw[dotted] (0,0)--++(0,.5);
		\draw[->] (0,.5)--++(0,.4);
	\end{tikzpicture}}}
\newcommand{\oiSVio}[2]{
\scalebox{#1}{\begin{tikzpicture} [scale=1.5, ultra thick, baseline=#2]
		\draw[dotted] (-.5,0)--++(.4,0);
		\draw[->] (-.5,.5)--++(.4,0);
		\draw[->] (.1,0)--++(.4,0);
		\draw[dotted] (.1,.5)--++(.4,0);
		\draw[->] (0,-.4)--(0,0);
		\draw[dotted] (0,0)--++(0,.5);
		\draw[->] (0,.5)--++(0,.4);
	\end{tikzpicture}}}
\newcommand{\oiSVoi}[2]{
\scalebox{#1}{\begin{tikzpicture} [scale=1.5, ultra thick, baseline=#2]
		\draw[dotted] (-.5,0)--++(.4,0);
		\draw[->] (-.5,.5)--++(.4,0);
		\draw[dotted] (.1,0)--++(.4,0);
		\draw[->] (.1,.5)--++(.4,0);
		\draw[dotted] (0,-.4)--(0,0);
		\draw[dotted] (0,0)--++(0,.5);
		\draw[dotted] (0,.5)--++(0,.4);
	\end{tikzpicture}}}
\newcommand{\oiSVpoi}[2]{
\scalebox{#1}{\begin{tikzpicture} [scale=1.5, ultra thick, baseline=#2]
		\draw[dotted] (-.5,0)--++(.4,0);
		\draw[->] (-.5,.5)--++(.4,0);
		\draw[dotted] (.1,0)--++(.4,0);
		\draw[->] (.1,.5)--++(.4,0);
		\draw[->] (0,-.4)--(0,0);
		\draw[->] (0,0)--++(0,.5);
		\draw[->] (0,.5)--++(0,.4);
	\end{tikzpicture}}}
\newcommand{\oiSVii}[2]{
\scalebox{#1}{\begin{tikzpicture} [scale=1.5, ultra thick, baseline=#2]
		\draw[dotted] (-.5,0)--++(.4,0);
		\draw[->] (-.5,.5)--++(.4,0);
		\draw[->] (.1,0)--++(.4,0);
		\draw[->] (.1,.5)--++(.4,0);
		\draw[->] (0,-.4)--(0,0);
		\draw[dotted] (0,0)--++(0,.5);
		\draw[dotted] (0,.5)--++(0,.4);
	\end{tikzpicture}}}
\newcommand{\iiSVio}[2]{
\scalebox{#1}{\begin{tikzpicture} [scale=1.5, ultra thick, baseline=#2]
		\draw[->] (-.5,0)--++(.4,0);
		\draw[->] (-.5,.5)--++(.4,0);
		\draw[->] (.1,0)--++(.4,0);
		\draw[dotted] (.1,.5)--++(.4,0);
		\draw[dotted] (0,-.4)--(0,0);
		\draw[dotted] (0,0)--++(0,.5);
		\draw[->] (0,.5)--++(0,.4);
	\end{tikzpicture}}}
\newcommand{\iiSVoi}[2]{
\scalebox{#1}{\begin{tikzpicture} [scale=1.5, ultra thick, baseline=#2]
		\draw[->] (-.5,0)--++(.4,0);
		\draw[->] (-.5,.5)--++(.4,0);
		\draw[dotted] (.1,0)--++(.4,0);
		\draw[->] (.1,.5)--++(.4,0);
		\draw[dotted] (0,-.4)--(0,0);
		\draw[->] (0,0)--++(0,.5);
		\draw[->] (0,.5)--++(0,.4);
	\end{tikzpicture}}}
\newcommand{\iiSVii}[2]{
\scalebox{#1}{\begin{tikzpicture} [scale=1.5, ultra thick, baseline=#2]
		\draw[->] (-.5,0)--++(.4,0);
		\draw[->] (-.5,.5)--++(.4,0);
		\draw[->] (.1,0)--++(.4,0);
		\draw[->] (.1,.5)--++(.4,0);
		\draw[dotted] (0,-.4)--(0,0);
		\draw[dotted] (0,0)--++(0,.5);
		\draw[dotted] (0,.5)--++(0,.4);
	\end{tikzpicture}}}
\newcommand{\iiSVpii}[2]{
\scalebox{#1}{\begin{tikzpicture} [scale=1.5, ultra thick, baseline=#2]
		\draw[->] (-.5,0)--++(.4,0);
		\draw[->] (-.5,.5)--++(.4,0);
		\draw[->] (.1,0)--++(.4,0);
		\draw[->] (.1,.5)--++(.4,0);
		\draw[->] (0,-.4)--(0,0);
		\draw[->] (0,0)--++(0,.5);
		\draw[->] (0,.5)--++(0,.4);
	\end{tikzpicture}}}
\def\epsx{.12}
\newcommand{\vertexoo}[1]{\scalebox{#1}{\begin{tikzpicture}
[scale=1, very thick]
\node (i1) at (0,-1) {$g$};
\node (j1) at (-1,0) {$0$};
\node (i2) at (0,1) {$g$};
\node (j2) at (1,0) {$0$};
\draw[densely dotted] (j1) -- (j2);
\foreach \shi in {(0,0), (\epsx,0), (-\epsx,0)}
{\begin{scope}[shift=\shi]
\node (shi1) at (0,-1) {\phantom{$g$}};
\node (shi2) at (0,1) {\phantom{$g$}};
\draw[->] (shi1) -- (shi2);
\draw[->] (shi1) --++ (0,.5);
\end{scope}}
\end{tikzpicture}}}
\newcommand{\vertexol}[1]{\scalebox{#1}{\begin{tikzpicture}
[scale=1, very thick]
\node (i1) at (0,-1) {$g$};
\node (j1) at (-1,0) {$0$};
\node (i2) at (0,1) {$g-1$};
\node (j2) at (1,0) {$1$};
\foreach \shi in {(0,0), (-\epsx,0)}
{\begin{scope}[shift=\shi]
\node (shi1) at (0,-1) {\phantom{$g$}};
\node (shi2) at (0,1) {\phantom{$g$}};
\draw[->] (shi1) -- (shi2);
\draw[->] (shi1) --++ (0,.5);
\end{scope}}
\foreach \shi in {(\epsx,0)}
{\begin{scope}[shift=\shi]
\node (shi1) at (0,-1) {\phantom{$g$}};
\node (shi2) at (0,1) {\phantom{$g$}};
\draw[->] (shi1) -- (0,0) -- (j2);
\draw[->] (shi1) --++ (0,.5);
\end{scope}}
\draw[densely dotted] (j1) -- (j2);
\end{tikzpicture}}}
\newcommand{\vertexoll}[1]{\scalebox{#1}{\begin{tikzpicture}
[scale=1, very thick]
\node (i1) at (0,-1) {$g+1$};
\node (j1) at (-1,0) {$0$};
\node (i2) at (0,1) {$g$};
\node (j2) at (1,0) {$1$};
\foreach \shi in {(-1.5*\epsx,0), (.5*\epsx,0),(-.5*\epsx,0)}
{\begin{scope}[shift=\shi]
\node (shi1) at (0,-1) {\phantom{$g$}};
\node (shi2) at (0,1) {\phantom{$g$}};
\draw[->] (shi1) -- (shi2);
\draw[->] (shi1) --++ (0,.5);
\end{scope}}
\foreach \shi in {(1.5*\epsx,0)}
{\begin{scope}[shift=\shi]
\node (shi1) at (0,-1) {\phantom{$g$}};
\node (shi2) at (0,1) {\phantom{$g$}};
\draw[->] (shi1) -- (0,0) -- (j2);
\draw[->] (shi1) --++ (0,.5);
\end{scope}}
\draw[densely dotted] (j1) -- (j2);
\end{tikzpicture}}}
\newcommand{\vertexll}[1]{\scalebox{#1}{\begin{tikzpicture}
[scale=1, very thick]
\node (i1) at (0,-1) {$g$};
\node (i1shm1) at (-\epsx,-1) {\phantom{$g$}};
\node (i1sh1) at (\epsx,-1) {\phantom{$g$}};
\node (j1) at (-1,0) {$1$};
\node (i2) at (0,1) {$g$};
\node (i2shm1) at (-\epsx,1) {\phantom{$g$}};
\node (i2sh1) at (\epsx,1) {\phantom{$g$}};
\node (j2) at (1,0) {$1$};
\draw[densely dotted] (j1) -- (j2);
\draw[densely dotted] (i1) -- (i2);
\draw[->] (j1) -- (-\epsx*1.5,0)--++(.5*\epsx,.5*\epsx) -- (i2shm1);
\draw[->] (i1shm1) -- (-\epsx,-\epsx) -- (0,\epsx) -- (i2);
\draw[->] (i1) -- (0,-\epsx) -- (\epsx,\epsx) -- (i2sh1);
\draw[->] (i1sh1) -- (\epsx,-.5*\epsx)--++(.5*\epsx,.5*\epsx) -- (j2);
\draw[->] (j1) --++ (.5,0);
\draw[->] (i1) --++ (0,.5);
\draw[->] (i1sh1) --++ (0,.5);
\draw[->] (i1shm1) --++ (0,.5);
\end{tikzpicture}}}
\newcommand{\vertexlo}[1]{\scalebox{#1}{\begin{tikzpicture}
[scale=1, very thick]
\node (i1) at (0,-1) {$g$};
\node (j1) at (-1,0) {$1$};
\node (i2) at (0,1) {$g+1$};
\node (i2shm2) at (-3/2*\epsx,1) {\phantom{$g$}};
\node (j2) at (1,0) {$0$};
\draw[densely dotted] (j1) -- (j2);
\draw[->] (j1) -- (-3/2*\epsx,0) -- (i2shm2);
\foreach \shi in {(1/2*\epsx,0), (3/2*\epsx,0), (-1/2*\epsx,0)}
{\begin{scope}[shift=\shi]
\node (shi1) at (0,-1) {\phantom{$g$}};
\node (shi2) at (0,1) {\phantom{$g$}};
\draw[->] (shi1) -- (shi2);
\draw[->] (shi1) --++ (0,.5);
\end{scope}}
\draw[->] (j1) --++ (.5,0);
\end{tikzpicture}}}
\newcommand{\Bvertexoo}[1]{\scalebox{#1}{\begin{tikzpicture}
[scale=1, very thick]
\node (i1) at (0,-1) {$g$};
\node (j1) at (-1,0) {$0$};
\node (i2) at (0,1) {$g$};
\node (j2) at (1,0) {$0$};
\draw[densely dotted] (j1) -- (j2);
\foreach \shi in {(0,0), (\epsx,0), (-\epsx,0)}
{\begin{scope}[shift=\shi]
\node (shi1) at (0,1) {\phantom{$g$}};
\node (shi2) at (0,-1) {\phantom{$g$}};
\draw[->] (shi1) -- (shi2);
\draw[->] (shi1) --++ (0,-.5);
\end{scope}}
\end{tikzpicture}}}
\newcommand{\Bvertexll}[1]{\scalebox{#1}{\begin{tikzpicture}
[scale=1, very thick]
\node (i1) at (0,1) {$g$};
\node (i1shm1) at (\epsx,1) {\phantom{$g$}};
\node (i1sh1) at (-\epsx,1) {\phantom{$g$}};
\node (j1) at (1,0) {$1$};
\node (i2) at (0,-1) {$g$};
\node (i2shm1) at (\epsx,-1) {\phantom{$g$}};
\node (i2sh1) at (-\epsx,-1) {\phantom{$g$}};
\node (j2) at (-1,0) {$1$};
\draw[densely dotted] (j1) -- (j2);
\draw[densely dotted] (i1) -- (i2);
\draw[->] (j1) -- (\epsx*1.5,0)--++(-.5*\epsx,-.5*\epsx) -- (i2shm1);
\draw[->] (i1shm1) -- (\epsx,\epsx) -- (0,-\epsx) -- (i2);
\draw[->] (i1) -- (0,\epsx) -- (-\epsx,-\epsx) -- (i2sh1);
\draw[->] (i1sh1) -- (-\epsx,.5*\epsx)--++(-.5*\epsx,-.5*\epsx) -- (j2);
\draw[->] (j1) --++ (-.5,0);
\draw[->] (i1) --++ (0,-.5);
\draw[->] (i1sh1) --++ (0,-.5);
\draw[->] (i1shm1) --++ (0,-.5);
\end{tikzpicture}}}
\newcommand{\Bvertexol}[1]{\scalebox{#1}{\begin{tikzpicture}
[scale=1, very thick]
\node (i1) at (0,1) {$g$};
\node (j1) at (1,0) {$0$};
\node (i2) at (0,-1) {$g-1$};
\node (j2) at (-1,0) {$1$};
\foreach \shi in {(0,0), (\epsx,0)}
{\begin{scope}[shift=\shi]
\node (shi1) at (0,1) {\phantom{$g$}};
\node (shi2) at (0,-1) {\phantom{$g$}};
\draw[->] (shi1) -- (shi2);
\draw[->] (shi1) --++ (0,-.5);
\end{scope}}
\foreach \shi in {(-\epsx,0)}
{\begin{scope}[shift=\shi]
\node (shi1) at (0,1) {\phantom{$g$}};
\node (shi2) at (0,-1) {\phantom{$g$}};
\draw[->] (shi1) -- (0,0) -- (j2);
\draw[->] (shi1) --++ (0,-.5);
\end{scope}}
\draw[densely dotted] (j1) -- (j2);
\end{tikzpicture}}}
\newcommand{\Bvertexlo}[1]{\scalebox{#1}{\begin{tikzpicture}
[scale=1, very thick]
\node (i1) at (0,1) {$g$};
\node (j1) at (1,0) {$1$};
\node (i2) at (0,-1) {$g+1$};
\node (i2shm2) at (3/2*\epsx,-1) {\phantom{$g$}};
\node (j2) at (-1,0) {$0$};
\draw[densely dotted] (j1) -- (j2);
\draw[->] (j1) -- (3/2*\epsx,0) -- (i2shm2);
\foreach \shi in {(-1/2*\epsx,0), (-3/2*\epsx,0), (1/2*\epsx,0)}
{\begin{scope}[shift=\shi]
\node (shi1) at (0,1) {\phantom{$g$}};
\node (shi2) at (0,-1) {\phantom{$g$}};
\draw[->] (shi1) -- (shi2);
\draw[->] (shi1) --++ (0,-.5);
\end{scope}}
\draw[->] (j1) --++ (-.5,0);
\end{tikzpicture}}}
\newcommand{\coli}{\cellcolor{red!15}}
\newcommand{\colii}{\cellcolor{red!45}}
\newcommand{\colx}{\cellcolor{gray!15}}
\newcommand{\colxx}{\cellcolor{gray!75}}
\begin{document}
\title[Yang-Baxter field for spin Hall-Littlewood symmetric
	functions]{Yang-Baxter field for spin Hall-Littlewood\\symmetric functions}

\author[A. Bufetov]{Alexey Bufetov}\address{A. Bufetov, Massachusetts
	Institute of Technology, Department of Mathematics, 77 Massachusetts Avenue,
	Cambridge, MA 02139, USA}\email{alexey.bufetov@gmail.com}

\author[L. Petrov]{Leonid Petrov}\address{L. Petrov, University of Virginia,
	Department of Mathematics, 141 Cabell Drive, Kerchof Hall, P.O. Box 400137,
	Charlottesville, VA 22904, USA, and Institute for Information Transmission
	Problems, Bolshoy Karetny per. 19, Moscow, 127994,
	Russia}\email{lenia.petrov@gmail.com}

\date{}

\begin{abstract}
	Employing bijectivisation of summation identities, we introduce local
	stochastic moves based on the Yang-Baxter equation for
	$U_q(\widehat{\mathfrak{sl}_2})$. Combining these moves leads to a new object
	which we call the spin Hall-Littlewood Yang-Baxter field --- a probability
	distribution on two-dimensional arrays of particle configurations on the
	discrete line. We identify joint distributions along down-right paths in the
	Yang-Baxter field with spin Hall-Littlewood processes, a generalization of
	Schur processes. We consider various degenerations of the Yang-Baxter field
	leading to new dynamic versions of the stochastic six vertex model and of the
	Asymmetric Simple Exclusion Process.
\end{abstract}

\maketitle

\tableofcontents

\section{Introduction}

\subsection{Overview}

The past two decades have seen a wave of progress in understanding large
scale, long time asymptotics of driven nonequilibrium stochastic particle
systems in the one space and one time dimension belonging to the
Kardar-Parisi-Zhang (KPZ) universality class (about the KPZ class see, e.g.,
\cite{CorwinKPZ}, \cite{Corwin2016Notices}, \cite{halpin2015kpzCocktail}).
Much of this progress has been achieved by discovering exact distributional
formulas in these particle systems, and leveraging these formulas towards
asymptotic analysis. Stochastic particle systems possessing such exact
formulas are known under the name \emph{integrable}. Since the early days
(e.g., \cite{johansson2000shape}), success in discovering integrability (at
least for special initial data) has often been triggered by applications of
techniques coming from the algebra of symmetric functions \cite[Ch.
	I]{Macdonald1995}. Among the most notable frameworks for these applications
are Schur processes \cite{okounkov2001infinite},
\cite{okounkov2003correlation}, \cite{Borodin2010Schur}, \cite{Betea_etal2014}
and Macdonald processes \cite{BorodinCorwin2011Macdonald}, \cite{BCGS2013}.
The success of this approach naturally leads to a more extensive study of
structural properties of various families of symmetric functions and their
relations to probabilistic systems.

In this work we investigate stochastic systems related to \emph{spin
	Hall-Littlewood symmetric rational functions}
introduced in 
\cite{Borodin2014vertex}.
These functions are naturally at the interplay of the theory of symmetric
functions and the Yang-Baxter equation (see, e.g.,
\cite{tsilevich2006quantum}, \cite{betea2016refined}, \cite{betea2015refined},
\cite{wheeler2015refined} for other related examples). The main results of the
present paper are:
\begin{itemize}
	\item
	      We consider the general idea of bijectivisation
	      (\Cref{sec:bijectivisation}) and apply it to the Yang-Baxter equation
	      obtaining local stochastic moves acting on vertex model configurations
	      (\Cref{sec:main_construction}). We hope that the usefulness of this general
	      idea will not be limited by the results of this paper.
	\item
	      We introduce the spin Hall-Littlewood Yang-Baxter field
	      (\Cref{sec:YB_field}), a two-dimensional array of random particle
	      configurations on the discrete line. Its main properties are explicit formulas
	      for distributions along any down-right path
	      (\Cref{thm:YB_field_spin_HL_process}), and Markov projections turning the
	      Yang-Baxter field into a two-dimensional scalar field or its multilayer
	      versions (\Cref{prop:YB_Markov_projections,prop:dyn6V_is_YB_field}).
	\item
	      We consider a number of degenerations of the Yang-Baxter field,
	      including new dynamic versions of the stochastic six vertex model
	      (\Cref{sec:dynamicS6V}) and the Asymmetric Simple Exclusion Process
	      (\Cref{sub:degen_ASEP}). Our results about these dynamic models generalize
	      those of the recent works \cite{BorodinBufetovWheeler2016} and
	      \cite{BufetovMatveev2017}.
\end{itemize}

Let us describe our results in more detail.

\subsection{Random fields of Young diagrams}
\label{sub:YB_field_intro_section}

One of the key properties behind probabilistic applications of Macdonald (in
particular, Schur) symmetric functions is that they satisfy \emph{Cauchy
	summation identities} \cite[Ch. I.4 and Ch. VI.2]{Macdonald1995}
(see also \Cref{sub:Cauchy_identity} below for Cauchy identities
for the spin Hall-Littlewood symmetric functions). Regarding these identities
as expressing probability normalizing constants (=~partition functions) allows
to define and analyze \emph{Macdonald processes}. These are certain
probability distributions on collections of Young diagrams\footnote{In
	probabilistic applications, Young diagrams are often interpreted as particle
	configurations on the discrete line.} whose probability weights are
proportional to products of the (skew) Macdonald symmetric polynomials. A lot
of recent research is devoted to the study of these processes and their
degenerations, with applications to KPZ type and other asymptotics, e.g., see
\cite{Oconnell2009_Toda}, \cite{COSZ2011}, \cite{OSZ2012},
\cite{BorodinCorwin2011Macdonald}, \cite{BorodinCorwinFerrariVeto2013},
\cite{BorodinGorin2013beta}, \cite{BorodinPetrov2013Lect}.

It is much less articulated in the existing literature that one can consider
Macdonald (Schur, etc.) \emph{fields} --- certain ways to couple many
processes together leading to two-dimensional arrays of random Young diagrams.
Such fields are highly non-unique, and coming up with a ``good'' way to couple
processes together involves additional considerations like the presence of
Markov projections (see below). Various elements of Young diagram random
fields have appeared in the literature mainly as ways to match observables of
$(1+1)$-dimensional stochastic interacting particle systems with observables
of Macdonald or Schur processes. The latter observables then can be analyzed
to the point of asymptotics thanks to the algebraic structure coming from
symmetric functions. Two ways to construct such random fields were mainly
employed which we briefly discuss in \Cref{sub:intro_RSK,sub:intro_BF} below.

\subsection{RSK type fields}
\label{sub:intro_RSK}

RSK type fields were applied in probabilistic context in connection with Schur
measures as early as in \cite{baik1999distribution},
\cite{johansson2000shape}, \cite{PhahoferSpohn2002} to study asymptotics of
longest increasing subsequences, last passage percolation, TASEP (Totally
Asymmetric Simple Exclusion Process), and PNG (polynuclear growth). These
fields arise (in the Schur case) as results of applying the
Robinson-Schensted-Knuth (RSK) insertion algorithm to a random input, hence
the name. More precisely, a Schur RSK type field can be realized using Fomin
growth diagrams (an equivalent way to interpret the RSK insertion
\cite{Fomin1986}, \cite{fomin1995schensted}) with random integer inputs.
The idea to apply RSK insertion to random input seems to have first appeared in
\cite{Vershik1986}, and was substantially developed in
\cite{Baryshnikov_GUE2001},
\cite{OConnell2003}, \cite{OConnell2003Trans}.

Recently RSK type fields associated with deformations of Schur processes (see
\Cref{fig:polys}) were constructed for 
Whittaker processes
\cite{Oconnell2009_Toda},
\cite{COSZ2011},
\cite{OSZ2012},
$q$-Whittaker processes
\cite{OConnellPei2012}, \cite{BorodinPetrov2013NN}, \cite{Pei2013Symmetry},
\cite{MatveevPetrov2014}, \cite{pei2016qRSK}, and Hall-Littlewood
processes \cite{BufetovPetrov2014}, \cite{BorodinBufetovWheeler2016},
\cite{BufetovMatveev2017}. 
Constructions at the Whittaker level relied 
on the geometric (also sometimes called ``tropical'') 
lifting of the RSK correspondence
\cite{Kirillov2000_Tropical}, \cite{NoumiYamada2004},
while the $q$-Whittaker and Hall-Littlewood
developments required nontrivial
randomizations of the original RSK insertion algorithm.

Via Markov projections, 
this work uncovered connections 
of Whittaker, $q$-Whittaker, and Hall-Littlewood processes
with known and
new $(1+1)$-dimensional stochastic particle systems. In the
Whittaker case, these are various integrable models of directed random polymers
\cite{OConnellYor2002}, \cite{Seppalainen2012}.
For the $q$-Whittaker processes, these are the $q$-TASEP and related systems
\cite{BorodinCorwin2013discrete}, \cite{CorwinPetrov2013},
\cite{MatveevPetrov2014}. In the Hall-Littlewood case these are the ASEP
\cite{macdonald1968bioASEP}, \cite{Spitzer1970} and the stochastic six vertex
model \cite{GwaSpohn1992}, \cite{BCG6V}.

\begin{figure}[htpb]
	\centering
	\scalebox{1}{
		\begin{tikzpicture}
		[scale=1, very thick]
		\node[fill=white,draw,rectangle] (schur) at (0,0) {Schur};
		\node[fill=white,draw,rectangle] (whit) at (4,0) {Whittaker};
		\node[fill=white,draw,rectangle] (HL) at (-4,2) {Hall-Littlewood ($t$)};
		\node[fill=white,draw,rectangle] (sHL) at (-5.6,4.2) {spin Hall-Littlewood ($t,s$)};
		\node[fill=white,draw,rectangle] (jack) at (-.6,2) {Jack};
		\node[fill=white,draw,rectangle] (M) at (0,4.2) {Macdonald ($q,t$)};
		\node[fill=white,draw,rectangle] (qW) at (4,2) {$q$-Whittaker ($q$)};
		\draw[->] (M)--(jack) node [midway, xshift=23, yshift=-10] 
		{\parbox{.085\textwidth}{$t=q^{\beta/2}$\\\phantom{.}\hfill$\to1$}};
		\draw[->] (jack)--(schur) node [midway, xshift=-18, yshift=2] {$\beta=2$};
		\draw[->] (sHL)--(HL)  node [midway, xshift=-20] {$s=0$};
		\draw[->] (HL)--(schur) node [midway, xshift=-25] {$t=0$};
		\draw[->] (M)--(HL) node [midway, yshift=10, xshift=-5] {$q=0$};
		\draw[->] (M)--(qW) node [midway, xshift=25] {$t=0$};
		\draw[->] (qW)--(schur) node [midway, xshift=25] {$q=0$};
		\draw[->] (qW)--(whit) node [midway, xshift=20] {$q\nearrow1$};
\end{tikzpicture}}
	\caption{A part of the hierarchy of symmetric functions satisfying summation 
	identities of Cauchy type. Arrows mean degenerations.}
	\label{fig:polys}
\end{figure}

\subsection{BF type fields}
\label{sub:intro_BF}

Another method of constructing random fields of Young diagrams is based on
interpreting the skew Cauchy identity as an intertwining relation between
certain Markov transition matrices, and stitching these matrices together into
a multivariable Markov chain using an idea of Diaconis and Fill
\cite{DiaconisFill1990}. In symmetric functions context this method was first
applied (in the Schur case) in a work by Borodin and Ferrari
\cite{BorFerr2008DF}, hence the name.

In principle, this approach is applicable to a wider variety of models than
the RSK one, and does not require intricate combinatorial constructions. This
generality comes at a cost of having fewer Markovian projections than the RSK
constructions, especially away from the Schur case. An exception in the
literature is that the half-continuous BF type field in the setting of
$q$-Whittaker processes has led to the discovery of the continuous time
$q$-TASEP, a notable deformation of the TASEP with a richer algebraic
structure \cite{BorodinCorwin2011Macdonald}.

A unified approach to both the RSK type and the BF type fields in the
half-continuous setting (details on half-continuous degenerations of random
fields may be found in
\Cref{sub:degen_half_continuous_DS6V,sub:hc_degen_Schur}) was suggested in
\cite{BorodinPetrov2013NN}. In fully discrete setting, elements of BF type
fields for Schur polynomials appeared in \cite{warrenwindridge2009some},
\cite{BorFerr2008DF}.

\subsection{Yang-Baxter field}
\label{sub:intro_intro_YB_fields}

We present a third way of constructing random fields associated with symmetric
functions and the corresponding processes. Our approach is based on the
Yang-Baxter equation which is behind many families of symmetric functions
including Schur, Hall-Littlewood, and spin Hall-Littlewood ones. We focus on
the latter family for which Cauchy summation identities were recently
established in \cite{Borodin2014vertex} with the help of the Yang-Baxter
equation for the quantum $\mathfrak{sl}_2$ \cite{baxter2007exactly}.

In the setting of spin Hall-Littlewood processes, random fields have not been
considered in the literature yet. The Yang-Baxter field we construct in the
present paper yields a new object even in the most basic Schur case
(\Cref{sub:hc_degen_Schur}). The main advantages of our approach are its
simplicity and clear structure of Markov projections yielding new
$(1+1)$-dimensional stochastic systems (see \Cref{sub:degen_intro_section}
below). In comparison, an RSK type approach would likely require very
nontrivial combinatorial considerations (cf. \cite{BufetovMatveev2017} for the
Hall-Littlewood case), further complicated by the fact that the spin
Hall-Littlewood functions are not homogeneous polynomials while the usual
Hall-Littlewood ones are (see \Cref{rmk:spinHL_not_RSK} for more details). A
BF type approach, while clearly being applicable in the spin Hall-Littlewood
case, might not readily produce Markov projections.

Our construction of the Yang-Baxter field uses a very basic idea of
bijectivisation of the Yang-Baxter equation. We briefly describe this idea
next.

\subsection{Bijectivisation of the Yang-Baxter equation}
\label{sub:bijectivisation_intro_section}

In probability theory it is well known that considering couplings of
probability measures is a powerful idea. For our construction of Yang-Baxter
field we apply a similar idea to summation identities which form the
Yang-Baxter equation for quantum $\mathfrak{sl}_2$. We refer to it as a
\emph{bijectivisation} of these combinatorial summation identities. As a
byproduct of couplings thus constructed we obtain conditional distributions,
and we regard them as local stochastic (Markov) moves acting on vertex model
configurations. The bijectivisation of the Yang-Baxter equation we consider is
also not unique, but the space of possible parameters is quite small. We use
this freedom to choose a bijectivisation with the least ``noise'', in the
spirit of RSK type approach, cf. \cite[Section 7.4]{BorodinPetrov2013NN}. See
\Cref{sub:discussion} for details.

We believe that one of important novelties of this paper is the application of
this idea of coupling to combinatorial summation identities. Here we use it in
only one situation, in the setting of the Yang-Baxter equation powering the
spin Hall-Littlewood symmetric functions. However, it seems likely that this
idea might lead to new interesting constructions and results for other forms
of Yang-Baxter equation as well.

\subsection{Dynamic stochastic six vertex model and dynamic ASEP}
\label{sub:degen_intro_section}

A certain Markov projection of our Yang-Baxter random field yields a
scalar-valued random field indexed by the nonnegative integer quadrant. This
scalar field can be interpreted as a random field of values of the height
function in a certain generalization of the stochastic six vertex model in
which the vertex probabilities additionally depend on the value of the height
function. For this reason one can call this model a \emph{dynamic stochastic
	six vertex model} (DS6V). Its detailed description is given in
\Cref{sub:dynamicS6V_subsection}.

The joint distribution of the values of the height function in DS6V along
\emph{down-right paths}\footnote{Also referred to as \emph{space-like paths}
	in the language of stochastic particle systems, cf.
	\cite{derrida1991dynamics}, \cite{Ferrari_Airy_Survey}, \cite{BorFerr08push}.}
can be identified with that of certain observables of a spin Hall-Littlewood
process (\Cref{cor:dyn6V_spin_HL_process}). In the degeneration turning the
spin Hall-Littlewood symmetric functions into the Hall-Littlewood ones, the
DS6V model becomes the usual stochastic six vertex model of
\cite{GwaSpohn1992}, \cite{BCG6V}, and \Cref{cor:dyn6V_spin_HL_process} turns
into the statement established in \cite{BufetovMatveev2017}.

\medskip

Along with single-layer projections leading to DS6V, one can consider
multilayer projections of the full Yang-Baxter field, as was done in
\cite[Sections 4.4 and 4.5]{BufetovMatveev2017} for the Hall-Littlewood RSK
field. In particular, one can check that the two-layer projection of our
Yang-Baxter field, in the Hall-Littlewood degeneration, coincides with the
two-layer stochastic six vertex model of \cite[Section
	4.4]{BufetovMatveev2017}. However, the corresponding degeneration of the full
Yang-Baxter field is different from the full Hall-Littlewood RSK field.
Details may be found in \Cref{sec:degenerations}.

\medskip

In a continuous time limit around the diagonal, the DS6V model turns into the
following dynamic version of the ASEP depending on parameters $t\ge 0$,
$-1<s\le 0$, and $u>0$. Consider a continuous time particle system
$\{y_\ell(\tau)\}_{\ell\in \mathbb{Z}_{\ge1},\;\tau\in
	\mathbb{R}_{\ge0}}$ on $\mathbb{Z}$ (no more than one particle at a
site), started from the step initial configuration $y_\ell(0)=-\ell$. In
continuous time, each particle $y_{\ell}$, $\ell\ge1$, tries to jump to the
right by one at rate\footnote{That is, the waiting time till the jump is an
	independent exponential random variable with mean equal to
	$(\textnormal{rate})^{-1}$.} $\dfrac{u-st^{\ell}}{u-st^{\ell-1}}$, and to the
left by one at rate $t\,\dfrac{u-st^{\ell-1}}{u-st^{\ell}}$. If the
destination is occupied, the corresponding jump is blocked and $y_{\ell}$ does
not move. See \Cref{fig:dyn_ASEP}. The height function in this dynamic ASEP
can be identified in distribution with a certain limit of observables of spin
Hall-Littlewood processes. When $s=0$, the dynamic dependence of jump rates on
the height function disappears, and the system turns into the usual ASEP. See
\cite{BufetovMatveev2017} for connections of ASEP to Hall-Littlewood
processes.

\begin{figure}[htpb]
	\centering
	\begin{tikzpicture}
		[scale=1,very thick]
			\def\pt{.17}
			\def\ee{.1}
			\def\h{.45}
			\draw[->] (-.5,0) -- (8.5,0);
			\foreach \ii in {(0,0),(\h,0),(3*\h,0),(4*\h,0),(5*\h,0),(6*\h,0),
			(8*\h,0),(10*\h,0),(9*\h,0),(12*\h,0),(13*\h,0),(14*\h,0),(15*\h,0),(16*\h,0),(17*\h,0),(18*\h,0)}
			{
				\draw \ii circle(\pt);
			}
			\foreach \ii in {(2*\h,0),(7*\h,0),(11*\h,0),(15*\h,0),(8*\h,0),(16*\h,0)}
			{
				\draw[fill] \ii circle(\pt);
			}
			\node at (16*\h,-.5) {$y_1$};
			\node at (15*\h,-.5) {$y_2$};
			\node at (11*\h,-.5) {$y_3$};
			\node at (8*\h,-.5) {$y_4$};
			\node at (7*\h,-.5) {$y_5$};
			\node at (2*\h,-.5) {$y_6$};
			\draw[->, very thick] (2*\h,.3) to [in=180,out=90] (2.5*\h,.65) to [in=90, out=0] (3*\h,.3) 
					node [xshift=5,yshift=20] {$\frac{u-st^6}{u-st^5}$};
			\draw[->, very thick] (2*\h,.3) to [in=0,out=90] (1.5*\h,.65) to [in=90, out=180] (1*\h,.3) 
					node [xshift=-5,yshift=20] {$t\frac{u-st^5}{u-st^6}$};
			\draw[->, very thick] (8*\h,.3) to [in=0,out=90] (7.5*\h,.65) to [in=90, out=180] (7*\h,.3);
			\draw[ultra thick] (7.5*\h,.65)--++(.1,.2)--++(-.2,-.4)--++(.1,.2)--++(-.1,.2)--++(.2,-.4);
			\draw[->, very thick] (8*\h,.3) to [in=180,out=90] (8.5*\h,.65) to [in=90, out=0] (9*\h,.3) 
					node [xshift=5,yshift=20] {$\frac{u-st^4}{u-st^3}$};
	\end{tikzpicture}
	\caption{A new dynamic version of the ASEP.}
	\label{fig:dyn_ASEP}
\end{figure}

The connection between spin Hall-Littlewood process and DS6V and dynamic ASEP
hint at the possible integrability of the latter models, which might lead to
asymptotic results for them. We do not address this question in the present
paper. Note also that other dynamic generalizations of the stochastic six
vertex model and the ASEP were recently considered in
\cite{borodin2017elliptic}, \cite{aggarwal2017dynamical},
\cite{BorodinCorwin2017dynamic} in connection with vertex models related to
the Yang-Baxter equation for the elliptic quantum group
$E_{\tau,\eta}(\mathfrak{sl}_2)$. These dynamic models are different from the
ones introduced in the present work.

\subsection{Outline}

In \Cref{sec:bijectivisation} we outline the general idea of bijectivisation
of summation identities. In \Cref{sec:main_construction} we describe the
higher spin six vertex weights, the Yang-Baxter equation they satisfy, and its
bijectivisation with minimal ``noise''. In \Cref{sec:spin_HL_functions} we
recall the spin Hall-Littlewood symmetric functions and Cauchy summation
identities they satisfy. This section closely follows
\cite{Borodin2014vertex}. In \Cref{sec:local_transition_probabilities} we use
our bijectivisation of the Yang-Baxter equation sequentially to produce a
bijective proof of the skew Cauchy identity for the spin Hall-Littlewood
symmetric functions. In \Cref{sec:YB_field} we define our main object, the
Yang-Baxter field, and discuss its connection with spin Hall-Littlewood
measure and processes. In \Cref{sec:dynamicS6V} we consider a projection of
the Yang-Baxter field onto the column number zero leading to a new dynamic
version of the stochastic six vertex model. We also discuss a dynamic
Yang-Baxter equation for these dynamic six vertex weights. In
\Cref{sec:degenerations} we consider various degenerations of the dynamic
stochastic six vertex model. One of these degenerations produces a new dynamic
version of the ASEP. In \Cref{app:YB_equation,app:YB_probabilities} we
explicitly list all identities comprising the Yang-Baxter equation, and all
the forward and backward local transition probabilities coming out of our
bijectivisation of the Yang-Baxter equation. In \Cref{sub:another_Cauchy} we
discuss another versions of the skew Cauchy identity satisfied by the spin
Hall-Littlewood symmetric functions. In \Cref{app:inhomogeneous_construction}
we briefly outline extensions of our main constructions to the case of
inhomogeneous parameters spin Hall-Littlewood symmetric functions.

\subsection{Acknowledgments}

We appreciate helpful discussions with Alexei Borodin, Ivan Corwin, Grigori
Olshanski, and Nicolai Reshetikhin. The work was started when the authors
attended the 2017 IAS PCMI Summer Session on Random Matrices, and we are
grateful to the organizers for their hospitality and support. LP is partially
supported by the NSF grant DMS-1664617.

\section{Bijectivisation of summation identities}
\label{sec:bijectivisation}

\subsection{General formalism}

Here we explain the formal concept of bijectivisation of summation identities
which will be applied to the Yang-Baxter equation in
\Cref{sec:main_construction} below. Let $A$ and $B$ be two fixed finite
nonempty sets, and each element $a\in A$ and $b\in B$ is assigned certain
weight $w(a)$ or $w(b)$, respectively. Assume that the following summation
identity holds:
\begin{equation}
	\label{summation_identity}
	\sum_{a\in A}w(a)=\sum_{b\in B}w(b).
\end{equation}

\begin{definition}
	\label{def:bijectivisation}
	We say that the following data provides a \emph{bijectivisation} of
	identity \eqref{summation_identity}:
	\begin{itemize}
		\item
		      There are \emph{forward transition weights}
		      $p^{\mathrm{fwd}}(a,b)$ which satisfy
		      \begin{equation*}
			      \sum_{b\in B}p^{\mathrm{fwd}}(a,b)=1\quad
			      \textnormal{for each $a\in A$};
		      \end{equation*}
		\item
		      There are \emph{backward transition weights}
		      $p^{\mathrm{bwd}}(b,a)$ which satisfy
		      \begin{equation*}
			      \sum_{a\in A}p^{\mathrm{bwd}}(b,a)=1\quad
			      \textnormal{for each $b\in B$};
		      \end{equation*}
		\item
		      The transition weights satisfy the \emph{reversibility
			      condition}
		      \begin{equation}
			      \label{reversibility_condition}
			      w(a)p^{\mathrm{fwd}}(a,b)=
			      w(b)p^{\mathrm{bwd}}(b,a)\quad \textnormal{for each $a\in A$ and $b\in B$.}
		      \end{equation}
	\end{itemize}
\end{definition}

The term ``bijectivisation'' is justified by the following two observations.
First, if $A$ and $B$ have the same numbers of elements, $w(a)=w(b)=1$ for all
$a\in A$, $b\in B$, and each $p^{\mathrm{fwd}}(a,b)$ and
$p^{\mathrm{bwd}}(b,a)$ is either $0$ or $1$, then such a bijectivisation is
simply a bijection between $A$ and $B$.

Second, let us get back to the general situation of \Cref{def:bijectivisation}
and assume that a bijectivisation $\left\{ p^{\mathrm{fwd}}(a,b),
	p^{\mathrm{bwd}}(b,a)\right\}$ is given. Start from the left-hand side
of \eqref{summation_identity} and write
\begin{equation*}
	\sum_{a\in A}w(a)=
	\sum_{a\in A}w(a)
	\Biggl(
	\sum_{b\in B}p^{\mathrm{fwd}}(a,b)
	\Biggr)
	=
	\sum_{b\in B}w(b)
	\Biggl(
	\sum_{a\in A}p^{\mathrm{bwd}}(b,a)
	\Biggr)
	=\sum_{b\in B}w(b).
\end{equation*}
Then, due to the reversibility condition \eqref{reversibility_condition}, in
the middle two double sums the terms are in \emph{one-to-one correspondence}.
Thus, one can say that the transition weights $\left\{ p^{\mathrm{fwd}}(a,b),
	p^{\mathrm{bwd}}(b,a)\right\}$ produce a \emph{refinement} (or a
\emph{bijective proof}) of the initial identity~\eqref{summation_identity}.

\begin{remark}
	\label{rmk:bij_not_unique}
	Clearly, if both $A$ and $B$ have more than one element, then a
	bijectivisation is highly non-unique. However, in a concrete situation (such
	as for the Yang-Baxter equation in \Cref{sec:main_construction}) a particular
	bijectivisation might be more natural than the others. This choice would
	depend on additional structure of individual terms
	in~\eqref{summation_identity}.
\end{remark}

\subsection{Stochastic bijectivisation}

Now assume that the weights $w(a)$ and $w(b)$ in \eqref{summation_identity}
are \emph{stochastic}, i.e., they are positive\footnote{If some weights are
	equal to zero then let us remove the corresponding elements from $A$ and $B$.}
and sum to one: $\sum_{a\in A}w(a)=\sum_{b\in B}w(b)=1$. The latter condition
can always be achieved for positive weights $w(a),w(b)$ by dividing
\eqref{summation_identity} by their sum. If the transition weights in a
bijectivisation $\left\{ p^{\mathrm{fwd}}(a,b),
	p^{\mathrm{bwd}}(b,a)\right\}$ are all nonnegative, we call such
bijectivisation \emph{stochastic}. Another standard term used in Probability
Theory for such an object is \emph{coupling}.

A stochastic bijectivisation may be interpreted as a joint probability
distribution on $A\times B$ having prescribed marginal distributions $\left\{
	w(a) \right\}_{a\in A}$ and $\left\{ w(b) \right\}_{b\in B}$. The
forward and backward transition weights become families of conditional
distributions coming from this joint distribution on $A\times B$. The
reversibility condition \eqref{reversibility_condition} simply states the
compatibility between the two conditional distributions $p^{\mathrm{fwd}}$ and
$p^{\mathrm{bwd}}$.

One can also interpret $\{ p^{\mathrm{fwd}}(a,b)\}_{a\in A, b\in	B}$ as
a Markov transition matrix from $A$ to $B$, and similarly for
$p^{\mathrm{bwd}}$. This explains the terms ``transition weights'' and
``reversibility condition''.

If a stochastic bijectivisation has all transition weights $p^{\mathrm{fwd}},
	p^{\mathrm{bwd}}$ equal to 0 or 1, we call such bijectivisation
\emph{deterministic}.

\subsection{Examples}
\label{sub:examples}

Let us discuss two examples of bijectivisation relevant to the Yang-Baxter
equation considered in \Cref{sec:main_construction} below.

\subsubsection{One of the sets is a singleton}
\label{ssub:singleton}

For the first example, assume that $B=\left\{ b \right\}$ is a singleton while
$A=\{a_1,\ldots,a_n\}$ is an arbitrary finite set. The bijectivisation is
unique in this case and is given by
\begin{equation*}
	p^{\mathrm{fwd}}(a_i,b)=1,\qquad
	p^{\mathrm{bwd}}(b,a_i)=\frac{w(a_i)}{w(b)},\qquad i=1,\ldots,n .
\end{equation*}

\subsubsection{Both sets have two elements}
\label{ssub:two_and_two}

For the second example, consider the situation when both sets $A=\{a_1,a_2\}$,
$B=\{b_1,b_2\}$ have two elements, and all the four weights $w(a_i),w(b_j)$
are nonzero. In this case there are 8 forward and backward transition weights
which must solve 4 equations of the form
$p^{\mathrm{fwd}}(a_1,b_1)+p^{\mathrm{fwd}}(a_1,b_2)=1$, plus 4 more
reversibility equations involving the weights $w(a_i),w(b_j)$. However, since
the weights satisfy \eqref{summation_identity}, the reversibility equations
are not independent, and hence the rank of the system of linear equations on
the transition weights is equal to 7. (Another way to see this is to use
quantities from \eqref{reversibility_condition} as variables: there are 4
variables and 3 linearly independent conditions on them.)

Therefore, there is a one-parameter family of bijectivisations. One readily
checks that these solutions can be expressed in the following form:
\begin{equation}
	\label{general_2_2_solution}
	\begin{array}{ll}
		p^{\mathrm{fwd}}(a_1,b_1)=\gamma,
		& \quad
		p^{\mathrm{fwd}}(a_1,b_2)=1-\gamma,
		\\[12pt]
		p^{\mathrm{fwd}}(a_2,b_1)=
		1-\dfrac{w(b_2)}{w(a_2)}+(1-\gamma)
		\dfrac{w(a_1)}{w(a_2)}
		,
		& \quad
		p^{\mathrm{fwd}}(a_2,b_2)=
		\dfrac{w(b_2)}{w(a_2)}-(1-\gamma)\dfrac{w(a_1)}{w(a_2)}
		,
		\\[12pt]
		p^{\mathrm{bwd}}(b_1,a_1)=
		\gamma\dfrac{w(a_1)}{w(b_1)},
		&\quad
		p^{\mathrm{bwd}}(b_1,a_2)=
		1-\gamma\dfrac{w(a_1)}{w(b_1)},
		\\[12pt]
		p^{\mathrm{bwd}}(b_2,a_1)=
		(1-\gamma)\dfrac{w(a_1)}{w(b_2)},
		&\quad
		p^{\mathrm{bwd}}(b_2,a_2)
		=1-(1-\gamma)
		\dfrac{w(a_1)}{w(b_2)}.
	\end{array}
\end{equation}

Let us also consider a particular case of the above example when
$w(a_1)=w(b_1)$ (thus automatically $w(a_2)=w(b_2)$). In this case the
$\gamma$-dependent general solution \eqref{general_2_2_solution} simplifies.
Namely, it depends on the weights $w(\cdot)$ only through the combination
$(1-\gamma)w(a_1)/w(a_2)$. Thus, the most natural bijectivisation of the
summation identity
\begin{equation}
	\label{2_2_equation_easy_case}
	w(a_1)+w(a_2)=w(b_1)+w(b_2),\qquad w(a_1)=w(b_1),\quad w(a_2)=w(b_2)
\end{equation}
corresponds to choosing $\gamma=1$, does not depend on the weights $w(\cdot)$,
and is \emph{deterministic}. Namely, the term $w(a_1)$ is simply mapped to the
term $w(b_1)$ equal to it, and similarly for $w(a_2)$ and $w(b_2)$.

\section{Yang-Baxter equation and its bijectivisation}
\label{sec:main_construction}

The goal of this section is to apply bijectivisation of
\Cref{sec:bijectivisation} to Yang-Baxter equation for the (horizontal
spin-$\frac12$) higher spin six vertex model. This model corresponds to the
quantum group $U_q(\widehat{\mathfrak{sl}_2})$. The main outcome of this
section is the definition of forward and backward transition weights in
\Cref{sub:YB_bijectivisation_new_label}.

\subsection{Vertex weights}
\label{sub:vertex_weights}

Here we recall vertex weights of the higher spin six vertex model
introduced in 
\cite{KulishReshetikhin_YB_1981}.
In our formulas we adopt the parametrization used in
\cite{Borodin2014vertex}. 

The vertex weights depend on
the main ``\emph{quantization}'' parameter $t\in(0,1)$, the \emph{vertical
	spin} parameter $s$, and the \emph{spectral} parameter $u$, with only the
latter explicitly indicated in the notation. These weights are associated to a
vertex $(i_1,j_1;i_2,j_2)$ on the lattice $\mathbb{Z}^2$ which has $i_1$ and
$i_2$ incoming and outgoing vertical arrows, and $j_1$ and $j_2$ incoming and
outgoing horizontal arrows, respectively. We assume that our vertex model has
horizontal spin-$\frac{1}{2}$ and generic higher vertical spin, which is
equivalent to saying that the vertex weights are nonzero only if
$j_1,j_2\in\left\{0,1\right\}$ and $i_1,i_2\in\mathbb{Z}_{\ge0}$. (See also
\Cref{sub:degen_finite_spin} for a discussion of models with finite vertical
spin $I$ obtained by specializing the vertical spin parameter $s$ to
$t^{-I/2}$, $I\in\mathbb{Z}_{\ge1}$.) The arrows at any vertex should satisfy
the \emph{preservation property} $i_1+j_1=i_2+j_2$. Depending on $j_1,j_2$, we
will denote vertices by
\begin{equation}
	\label{single_vertex_notation}
	(g,0;g,0)=\Voo{.6}gg{-4.5pt}
	\,,
	\quad
	(g,0;g-1,1)=\Voi{.6}g{g-1}{-4.5pt}
	\,,
	\quad
	(g,1;g,1)=\Vii{.6}gg{-4.5pt}
	\,,
	\quad
	(g,1;g+1,0)=\Vio{.6}g{g+1}{-4.5pt}
	\,
\end{equation}
(see also \Cref{fig:vertex_weights} for a more detailed graphical
representation). Here $g\in\mathbb{Z}_{\ge0}$ is arbitrary, with the agreement
that $g\ge1$ in the second vertex. The weights of these vertices are defined
as
\begin{equation}
	\label{vertex_weights}
	\begin{array}{cll}
													 & \Big[ \Voo{.6}gg{-4.5pt}
		\Big]_{u}:=
		\dfrac{1-st^gu}{1-su}, & \qquad \Big[
		\Voi{.6}g{g-1}{-4.5pt} \Big]_{u}:=
		\dfrac{(1-s^2t^{g-1})u}{1-su}, \\[8pt] &\Big[
		\Vii{.6}gg{-4.5pt} \Big]_{u} :=
		\dfrac{u-st^g}{1-su},  & \qquad \Big[
			\Vio{.6}{g}{g+1}{-4.5pt} \Big]_{u} :=
		\dfrac{1-t^{g+1}}{1-su}.
	\end{array}
\end{equation}
Weights \eqref{vertex_weights} are very special in that they satisfy a
Yang-Baxter equation which we recall in the next subsection.

\begin{figure}[htbp]
	\begin{tabular}{c|c|c|c}
		\vertexoo{1}
		 &
		\vertexol{1}
		 &
		\vertexll{1}
		 &
		\vertexlo{1}
		\\\hline\rule{0pt}{20pt}
		$\dfrac{1-s t^{g}u}{1-s u}$
		 &
		$\dfrac{(1-s ^{2}t^{g-1})u}{1-s u}$
		 &
		$\dfrac{u-s t^{g}}{1-s u}$
		 &
		$\dfrac{1-t^{g+1}}{1-s u}$
		\phantom{\Bigg|} \\
	\end{tabular}
	\caption{Possible vertices in the (horizontal spin-$\frac{1}{2}$)
		higher spin six vertex model, with their weights \eqref{vertex_weights}.}
	\label{fig:vertex_weights}
\end{figure}

\begin{remark}
	The higher spin weights \eqref{vertex_weights}
	of \cite{KulishReshetikhin_YB_1981}
	generalize the original six vertex weights
	\cite{pauling1935structure},
	\cite{Lieb1967SixVertex},
	\cite{baxter2007exactly}
	to the case when the vertical representation
	is arbitrary highest weight (corresponding to the spin parameter $s$),
	and the horizontal representation is still one-dimensional. 
	Using a procedure called fusion 
	\cite{KR1987Fusion},
	one can define vertex weights corresponding to both representations
	being arbitrary. Explicit formulas for fused vertex weights may be found in, e.g.,
	\cite{Mangazeev2014}, see also \cite{CorwinPetrov2015} for a probabilistic interpretation.
	In the present paper we only use the simpler weights \eqref{vertex_weights}
	and do not employ the fused ones.
\end{remark}

\begin{remark}
	We denote the quantization parameter of the higher spin six vertex
	model by $t$ instead of $q$ used in \cite{Borodin2014vertex},
	\cite{CorwinPetrov2015}, \cite{BorodinPetrov2016inhom}. This is done to
	highlight properties (in particular, Cauchy summation identities) of the spin
	Hall-Littlewood symmetric functions which degenerate at $s=0$ to the
	corresponding properties of the usual Hall-Littlewood symmetric polynomials.
	Vertex models in the context of Hall-Littlewood polynomials and their
	properties were recently studied in, e.g., \cite{BorodinBufetovWheeler2016},
	\cite{BufetovMatveev2017}, and we follow these papers when using the parameter
	$t$. Note that setting $s=t=0$ reduces the picture to the one associated with
	the classical Schur polynomials, see \Cref{sec:degenerations}.
\end{remark}

\subsection{Yang-Baxter equation}
\label{sub:YB}

The Yang-Baxter equation \cite{YangSystem1967},
\cite{baxter2007exactly},
\cite{KulishReshSkl1981yang}
can be regarded as the origin of integrability of the
stochastic
higher spin six vertex model, cf. \cite{BorodinPetrov2016inhom}. It can be
written in a rather compact form involving $4\times 4$ matrices containing
certain combinations of vertex weights. For example, see 
\cite[Proposition 2.5]{Borodin2014vertex} for the statement 
for our particular parametrization.
However, as we aim to construct a bijectivisation of
the Yang-Baxter equation in the sense of \Cref{sec:bijectivisation}, we need
to write the Yang-Baxter equation out in full detail, considering each of its
matrix elements separately.

Let us first define weights of auxiliary \emph{cross vertices}. The cross
vertices' incoming and outgoing arrow directions are rotated by $45^\circ$,
and along each direction there can be at most one arrow. Therefore, due to the
arrow preservation there are 6 possible cross vertices. Their weights depend
on two arbitrary spectral parameters $u,v$ and are defined as follows:
\begin{equation}
	\begin{array}{clll}
		                                                 &
		\left[ \ooYBoo{.35}{3}{14.5pt} \right]_{u,v}:=1, &
		\qquad
		\left[ \oiYBio{.35}{3}{14.5pt}
		\right]_{u,v}:=\uprho=\dfrac{u-v}{u-tv},         &
		\qquad
		\left[ \oiYBoi{.35}{3}{14.5pt}
			\right]_{u,v}:=1-\uprho=\dfrac{(1-t)v}{u-tv},
		\\[8pt]&
		\left[ \iiYBii{.35}{3}{14.5pt} \right]_{u,v}:=1, &
		\qquad
		\left[
			\ioYBoi{.35}{3}{14.5pt}
		\right]_{u,v}:=t\uprho=\dfrac{t(u-v)}{u-tv},     &
		\qquad
		\left[ \ioYBio{.35}{3}{14.5pt}
			\right]_{u,v}:=1-t\uprho=\dfrac{(1-t)u}{u-tv}.
	\end{array}
	\label{cross_vertex_weights}
\end{equation}
Here we employed the shorthand notation $\uprho:=(u-v)/(u-tv)$.

Let us now introduce notation for weights of \emph{pairs of vertices} where
one vertex as in \Cref{fig:vertex_weights} is put on top of another. Because
each of the two vertices in a pair can have at most one incoming and at most
one outgoing horizontal arrow, there are $2^4=16$ types of such pairs. Indeed,
choosing the numbers of horizontal arrows and saying that there are, say, $g$
incoming vertical arrows at the bottom determines the other numbers of
vertical arrows by the arrows preservation. The weight a pair of
vertices\footnote{The total weight of each particular arrow configuration
	containing several vertices is, by definition, equal to the product of weights
	of arrow configurations over all individual vertices.} depends on two spectral
parameters $u,v$, where $u$ corresponds to the bottom vertex. We will denote
pairs of vertices and their weights similarly to
\eqref{single_vertex_notation}--\eqref{vertex_weights}, as in the following
example:
\begin{align*}
	\bigg[ \oiWoi{.6}g{g+1}g{7pt} \bigg]_{u,v}
	=
	\Big[ \Vio{.6}g{g+1}{-4.5pt} \Big]_{u}
	\Big[ \Voi{.6}{g+1}g{-4.5pt} \Big]_{v}=
	\frac{(1-t^{g+1})(1-s^2t^g)v}{(1-su)(1-sv)}
	.
\end{align*}

We are now in a position to discuss the Yang-Baxter equation. In words, this
equation states that the partition function (i.e., the sum of weights of all
arrow configurations) in a configuration of a cross vertex followed by a pair
of vertices with spectral parameters $u,v$ is the same as the partition
function of a pair of vertices with parameters $v,u$ followed by a cross
vertex, provided that the boundary conditions on all 6 external edges are the
same. (In fact, thus defined partition functions are always sums of at most
two terms.) This leads to 16 types of identities \eqref{YB1.1}--\eqref{YB4.4}
(each depending on~$g$) which are listed in \Cref{app:YB_equation}.

\begin{remark}
	\label{rmk:YB_equation_numbers}
	The numbering of identities \eqref{YB1.1}--\eqref{YB4.4} reflects the
	boundary conditions on the left and right (the first and the second number,
	respectively). More precisely, equation numbers $\left\{ 1,2,3,4 \right\}$
	correspond to the boundary conditions 
	$\left\{
		\scalebox{.6}{\begin{tikzpicture} [scale=1.5, ultra thick, baseline=6pt]
				\draw[dotted] (-.5,0.1)--++(.4,0);
				\draw[dotted] (-.5,.4)--++(.4,0);
		\end{tikzpicture}},
		\scalebox{.6}{\begin{tikzpicture} [scale=1.5, ultra thick, baseline=6pt]
				\draw[->] (-.5,0.1)--++(.4,0);
				\draw[dotted] (-.5,.4)--++(.4,0);
		\end{tikzpicture}},
		\scalebox{.6}{\begin{tikzpicture} [scale=1.5, ultra thick, baseline=6pt]
				\draw[dotted] (-.5,0.1)--++(.4,0);
				\draw[->] (-.5,.4)--++(.4,0);
		\end{tikzpicture}},
		\scalebox{.6}{\begin{tikzpicture} [scale=1.5, ultra thick, baseline=6pt]
				\draw[->] (-.5,0.1)--++(.4,0);
				\draw[->] (-.5,.4)--++(.4,0);
		\end{tikzpicture}}
	\right\}$.
\end{remark}

For example, identity \eqref{YB3.3} among these reads
\begin{equation}
	\label{text_YB3.3}
	\biggl[
	\oiYBoi{.3}{3}{13.5pt}\ioWoi{.6}ggg{6.75pt}
	\biggr]_{u,v}
	+
	\biggl[
	\oiYBio{.3}{3}{13.5pt}\oiWoi{.6}g{g+1}g{6.75pt}
	\biggr]_{u,v}
	=
	\biggl[
	\ioWoi{.6}g{g}g{6.75pt}\oiYBoi{.3}{3}{13.5pt}
	\biggr]_{v,u}
	+
	\biggl[
	\ioWio{.6}g{g-1}g{6.75pt}\ioYBoi{.3}{3}{13.5pt}
	\biggr]_{v,u}.
\end{equation}
Here in the left-hand side $u$ is the spectral parameter of the bottom vertex,
while in the right-hand side the spectral parameter $u$ is at the top vertex.
The weights of the cross vertices in both sides are given by
\eqref{cross_vertex_weights} and are not affected by the flipping of the
spectral parameters. Writing out \eqref{text_YB3.3} as an identity between
rational functions, we obtain:
\begin{multline*}
	\frac{(1-t)v}{u-tv}
	\frac{(1-st^gu)(v-st^g)}{(1-su)(1-sv)}
	+
	\frac{u-v}{u-tv}
	\frac{(1-t^{g+1})(1-s^2t^g)v}{(1-su)(1-sv)}
	\\=
	\frac{(1-st^gv)(u-st^g)}{(1-sv)(1-su)}
	\frac{(1-t)v}{u-tv}
	+
	\frac{(1-s^2t^{g-1})v(1-t^g)}{(1-sv)(1-su)}
	\frac{t(u-v)}{u-tv},
\end{multline*}
which can be readily checked by hand. All other explicit Yang-Baxter
identities are listed in \Cref{app:YB_equation}.

\subsection{Bijectivisation of the Yang-Baxter equation}
\label{sub:YB_bijectivisation_new_label}

Our aim is now to bijectivise (in the sense of \Cref{sec:bijectivisation})
each of the 16 types of identities \eqref{YB1.1}--\eqref{YB4.4} given in
\Cref{app:YB_equation}. The forward weights corresponding to the Yang-Baxter
equation with spectral parameters $u,v$\footnote{That is, in the left-hand
	side of the Yang-Baxter equation the parameter $u$ is at the bottom vertex,
	$v$ is at the top vertex, and the weights of the cross vertices in both sides
	are given by \eqref{cross_vertex_weights}.} will be denoted by
$P^{\mathrm{fwd}}_{u,v}$, and the backward ones --- by
$P^{\mathrm{bwd}}_{u,v}$.

Now, note that both sides of each of the Yang-Baxter identities
\eqref{YB1.1}--\eqref{YB4.4} have at most two terms, and so the discussion
from \Cref{sub:examples} applies. First, we see that \Cref{ssub:singleton}
provides unique bijectivisation of 12 out of 16 types of the Yang-Baxter
identities, except \eqref{YB2.2}, \eqref{YB2.3}, \eqref{YB3.2}, and
\eqref{YB3.3}.

Second, among these four remaining identities, \eqref{YB2.3} and \eqref{YB3.2}
are of the form \eqref{2_2_equation_easy_case}, that is, we can identify equal
terms on both sides. Thus, let us choose the corresponding natural
deterministic bijectivisations of these identities as explained in the end of
\Cref{ssub:two_and_two}.

Finally, it remains to choose bijectivisations of identities \eqref{YB2.2} and
\eqref{YB3.3} for which one cannot deterministically identify terms in both
sides. Let us consider \eqref{YB2.2}, identity \eqref{YB3.3} can be treated
very similarly. Moreover, for any bijectivisation of the former identity there
is a unique bijectivisation of the latter satisfying the symmetries discussed
in \Cref{sub:YB_transition_symmetries} below. Thus, having a bijectivisation
of \eqref{YB2.2} we will then simply write down the bijectivisation of
\eqref{YB3.3} obtained using these symmetries.

Identity \eqref{YB2.2} has the form $w(a_1)+w(a_2)=w(b_1)+w(b_2)$, where
\begin{equation*}
	a_1=\;
	\ioYBio{.3}{3}{13.5pt}\oiWio{.6}ggg{6.75pt}\;,\qquad
	a_2=\;
	\ioYBoi{.3}{3}{13.5pt}\ioWio{.6}g{g-1}g{6.75pt}\;,\qquad
	b_1=\;
	\oiWio{.6}g{g}{g}{6.75pt}\ioYBio{.3}{3}{13.5pt}\;,\qquad
	b_2=\;
	\oiWoi{.6}g{g+1}{g}{6.75pt}\oiYBio{.3}{3}{13.5pt}\;,
\end{equation*}
and the weights are given by (here $g\ge1$ because one of the arrow
configurations contains $g-1$ vertical arrows):
\begin{align*}
	w(a_1) & =
	\frac{(1-t)u}{u-tv}
	\frac{(u-st^g)(1-st^g v)}{(1-su)(1-sv)}
	,\qquad
	w(a_2)=
	\frac{u-v}{u-tv}
	\frac{t(1-t^g)(1-s^2t^{g-1})u}{(1-su)(1-sv)},
	\\
	w(b_1) & =
	\frac{(1-t)u}{u-tv}
	\frac{(v-st^g)(1-st^gu)}{(1-sv)(1-su)}
	,\qquad
	w(b_2)=
	\frac{u-v}{u-tv}
	\frac{(1-t^{g+1})(1-s^2t^g)u}{(1-sv)(1-su)}.
\end{align*}
All bijectivisations of \eqref{YB2.2} form a one-parameter family
\eqref{general_2_2_solution} employing the above weights. To select a
particular solution out of this one-parameter family, let us argue as follows.
Note that $w(a_2)$ vanishes when $u=v$, $t=0$, or $s^2=t^{1-g}$. When
$w(a_2)=0$, identity \eqref{YB2.2} simplifies and due to the discussion in
\Cref{ssub:singleton} has a unique bijectivisation. In particular, in this
case it should be $P^{\mathrm{bwd}}_{u,v}(b_1,a_2)=0$ (i.e., no mass can be
transferred into the term $w(a_2)=0$), which means that
\begin{equation*}
	\gamma(u,v,s,t,g)=\frac{w(b_1)}{w(a_1)}=
	\frac{(v-st^g)(1-st^gu)}{(u-st^g)(1-st^gv)}
	\qquad
	\textnormal{when $u=v$, $t=0$, or $s^2=t^{1-g}$.}
\end{equation*}
We will not address the question of whether the above conditions determine
$\gamma(u,v,s,t,g)$ uniquely (in a suitable class of functions), but instead
will take $\gamma(u,v,s,t,g)$ equal to the expression in the right-hand side
\emph{for all possible values} of $u,v,s,t,g$ (more discussion about the
choice of our particular bijectivisation may be found in \Cref{sub:discussion}
below). This choice of $\gamma$ leads via \eqref{general_2_2_solution} to the
following relatively simple forward and backward transition weights:
\begin{equation*}
	\begin{array}{ll}
		P^{\mathrm{fwd}}_{u,v}(a_1,b_1)=\dfrac{(v-st^g)(1-st^gu)}{(u-st^g)(1-st^gv)},
		& \quad
		P^{\mathrm{fwd}}_{u,v}(a_1,b_2)=\dfrac{(u-v)(1-s^2t^{2g})}{(u-st^g)(1-st^gv)},
		\\[12pt]
		P^{\mathrm{fwd}}_{u,v}(a_2,b_1)=0
		,
		& \quad
		P^{\mathrm{fwd}}_{u,v}(a_2,b_2)=1
		,
		\\[12pt]
		P^{\mathrm{bwd}}_{u,v}(b_1,a_1)=1
		,
		&\quad
		P^{\mathrm{bwd}}_{u,v}(b_1,a_2)=0
		,
		\\[12pt]
		P^{\mathrm{bwd}}_{u,v}(b_2,a_1)=
		\dfrac{(1-t)(1-s^2t^{2g})}{(1-t^{g+1})(1-s^2t^{g})}
		,
		&\quad
		P^{\mathrm{bwd}}_{u,v}(b_2,a_2)
		=
		\dfrac{(t-t^{g+1})(1-s^2t^{g-1})}
		{(1-t^{g+1})(1-s^2t^g)}
		.
	\end{array}
\end{equation*}
This is the bijectivisation of identity \eqref{YB2.2} that we will use in the
present work.

A similar argument leads to the following forward and backward transition
weights corresponding to the Yang-Baxter identity \eqref{YB3.3}:
\begin{align*}
		P^{\mathrm{fwd}}_{u,v}
		\biggl( \oiYBio{.3}{3}{13.5pt}\oiWoi{.6}{g-1}g{g-1}{6.75pt}\ ,\
		\ioWoi{.6}{g-1}{g-1}{g-1}{6.75pt}\oiYBoi{.3}{3}{13.5pt} \biggr)
		&=
		1-
		P^{\mathrm{fwd}}_{u,v}
		\biggl( \oiYBio{.3}{3}{13.5pt}\oiWoi{.6}{g-1}g{g-1}{6.75pt}\ ,\
		\ioWio{.6}{g-1}{g-2}{g-1}{6.75pt}\ioYBoi{.3}{3}{13.5pt} \biggr)
		=
		\dfrac{(1-t)(1-s^2t^{2g-2})}{(1-t^{g})(1-s^2t^{g-1})}
		;
		\\
		P^{\mathrm{fwd}}_{u,v}
		\biggl( \oiYBoi{.3}{3}{13.5pt}\ioWoi{.6}{g}g{g}{6.75pt}\ ,\
		\ioWoi{.6}{g}{g}{g}{6.75pt}\oiYBoi{.3}{3}{13.5pt} \biggr)
		&=
		P^{\mathrm{bwd}}_{u,v}
		\biggl(
			\ioWio{.6}{g+1}{g}{g+1}{6.75pt}\ioYBoi{.3}{3}{13.5pt}
			\ ,\
			\oiYBio{.3}{3}{13.5pt}\oiWoi{.6}{g+1}{g+2}{g+1}{6.75pt}
		\biggr)
		=1
		;
	\\
		P^{\mathrm{bwd}}_{u,v}
		\biggl(
			\ioWoi{.6}{g}{g}{g}{6.75pt}\oiYBoi{.3}{3}{13.5pt}
			\ ,\
			\oiYBoi{.3}{3}{13.5pt}\ioWoi{.6}{g}g{g}{6.75pt}
		\biggr)
		&=
		1-
		P^{\mathrm{bwd}}_{u,v}
		\biggl(
			\ioWoi{.6}{g}{g}{g}{6.75pt}\oiYBoi{.3}{3}{13.5pt}
			\ ,\
			\oiYBio{.3}{3}{13.5pt}\oiWoi{.6}{g}{g+1}{g}{6.75pt}
		\biggr)
		=
		\frac{(v-st^g)(1-st^gu)}{(u-st^g)(1-st^gv)}
		.
\end{align*}

All the forward and backward transition weights obtained above are organized
into tables in \Cref{fig:fwd_YB,fig:bwd_YB}, respectively. In
\Cref{app:YB_probabilities} these weights are listed in full detail.

\begin{figure}[htpb]
	\centering
	\scalebox{.9}{
		$
			\begin{array}{c||c||c|c||c|c||c}
				P^{\mathrm{fwd}}_{u,v}& \ooYBoo{.3}{3}{13.5pt}
					& \ioYBio{.3}{3}{13.5pt}
					& \oiYBio{.3}{3}{13.5pt}
					& \oiYBoi{.3}{3}{13.5pt}
					& \ioYBoi{.3}{3}{13.5pt}
					& \iiYBii{.3}{3}{13.5pt}
				\phantom{\bigg.}
				\\\hline
				\ooYBoo{.3}{3}{13.5pt}
				\phantom{\Bigg.}
					& 1
					& \dfrac{(1-t)v}{u-t
					v}\dfrac{1-st^{g}u}{1-st^{g}v}
					&
				\coli
				\dfrac{u-v}{u-tv}
				\dfrac{1-st^{g+1}v}{1-st^{g}v}
					&
					\dfrac{(1-t)u}{u-tv}
					\dfrac{1-st^g
					v}{1-st^gu}
					& \colx
					\dfrac{t(u-v)}{u-tv}
					\dfrac{1-s t^{g-1}u}{1-s t^gu}
					& 1
				\\\hline
				\ioYBio{.3}{3}{13.5pt}
				\phantom{\Bigg.}
					& 1
					&
				\dfrac{v-st^g}{u-st^g}
				\dfrac{1-st^gu}{1-st^gv}
					&
				\coli
				\dfrac{u-v}{u-st^g}
				\dfrac{1-s^2t^{2g}}{1-st^gv}
					& 1
					& \colx0
					& 1
				\\\hline
				\oiYBio{.3}{3}{13.5pt}
				\phantom{\Bigg.}
					& \colx1
					& \colx0
					& 1
					&
				\colx
				\dfrac{1-t}{1-t^{g}}
				\dfrac{1-s^2t^{2g-2}}{1-s^2t^{g-1}}
					&
				\colxx
				\dfrac{t-t^{g}}{1-t^{g}}
				\dfrac{1-s^2t^{g-2}}{1-s^2t^{g-1}}
					& \colx1
				\\\hline
				\oiYBoi{.3}{3}{13.5pt}
				\phantom{\Bigg.}
					& 1
					& 1
					& \coli0
					& 1
					& \colx0
					& 1
				\\\hline
				\ioYBoi{.3}{3}{13.5pt}
				\phantom{\Bigg.}
					& \coli1
					& \coli0
					& \colii1
					& \coli0
					& 1
					& \coli1
				\\\hline
				\iiYBii{.3}{3}{13.5pt}
				\phantom{\Bigg.}
					& 1
					& \dfrac{(1-t)u}{u-t
					v}\dfrac{v-st^g}{u-st^g}
					&
				\coli\dfrac{u-v}{u-tv}
				\dfrac{u-st^{g+1}}{u-st^g}
					&
				\dfrac{(1-t)v}{u-tv}
				\dfrac{u-st^{g}}{v-st^{g}}
					&
				\colx\dfrac{t(u-v)}{u-tv}
				\dfrac{v-st^{g-1}}{v-st^{g}}
					& 1
			\end{array}
		$
	}
	\caption{Forward transition weights corresponding to the Yang-Baxter equation.
	Here $g$ is the number of vertical arrows in the middle before the move of the cross vertex.
	The coloring reflects the change of the number of vertical arrows in the middle after the move:
	pink and red correspond to transitions $g\to g+1$ and $g\to g+2$, while lighter and darker gray
	mean $g\to g-1$ and $g\to g-2$, respectively.}
	\label{fig:fwd_YB}
\end{figure}

\begin{figure}[htpb]
	\centering
	\scalebox{.9}{
		$
			\begin{array}{c||c||c|c||c|c||c}
				P^{\mathrm{bwd}}_{u,v}& \ooYBoo{.3}{3}{13.5pt}
					& \ioYBio{.3}{3}{13.5pt}
					& \oiYBio{.3}{3}{13.5pt}
					& \oiYBoi{.3}{3}{13.5pt}
					& \ioYBoi{.3}{3}{13.5pt}
					& \iiYBii{.3}{3}{13.5pt}
				\phantom{\bigg.}
				\\\hline
				\ooYBoo{.3}{3}{13.5pt}
				\phantom{\Bigg.}
					& 1
					& \dfrac{(1-t)u}{u-tv}\dfrac{1-st^gv}{1-st^gu}
					& \coli \dfrac{u-v}{u-tv}\dfrac{1-st^{g+1}v}{1-st^gv}
					& \dfrac{(1-t)v}{u-tv}\dfrac{1-st^gu}{1-st^gv}
					& \colx \dfrac{t(u-v)}{u-tv}\dfrac{1-st^{g-1}u}{1-st^gu}
					& 1
				\\\hline
				\ioYBio{.3}{3}{13.5pt}
				\phantom{\Bigg.}
					& 1
					& 1
					& \coli 0
					& 1
					& \colx 0
					& 1
				\\\hline
				\oiYBio{.3}{3}{13.5pt}
				\phantom{\Bigg.}
					& \colx 1
					& \colx \dfrac{1-t}{1-t^g}\dfrac{1-s^{2}t^{2g-2}}{1-s^2t^{g-1}}
					& 1
					& \colx 0
					& \colxx \dfrac{t-t^g}{1-t^g}\dfrac{1-s^2t^{g-2}}{1-s^2t^{g-1}}
					& \colx 1
				\\\hline
				\oiYBoi{.3}{3}{13.5pt}
				\phantom{\Bigg.}
					& 1
					& 1
					& \coli
					\dfrac{u-v}{u-st^g}\dfrac{1-s^2t^{2g}}{1-st^gv}
					& \dfrac{v-st^g}{u-st^g}\dfrac{1-st^gu}{1-st^gv}
					& \colx 0
					& 1
				\\\hline
				\ioYBoi{.3}{3}{13.5pt}
				\phantom{\Bigg.}
					& \coli 1
					& \coli 0
					& \colii 1
					& \coli 0
					& 1
					& \coli 1
				\\\hline
				\iiYBii{.3}{3}{13.5pt}
				\phantom{\Bigg.}
					& 1
					& \dfrac{(1-t)v}{u-tv}\dfrac{u-st^g}{v-st^g}
					& \coli \dfrac{u-v}{u-tv}\dfrac{u-st^{g+1}}{u-st^g}
					& \dfrac{(1-t)u}{u-tv}\dfrac{v-st^g}{u-st^g}
					& \colx \dfrac{t(u-v)}{u-tv}\dfrac{v-st^{g-1}}{v-st^g}
					& 1
			\end{array}
		$
	}
	\caption{Backward transition weights corresponding to the Yang-Baxter equation.
	This table uses the same conventions as in \Cref{fig:fwd_YB}.}
	\label{fig:bwd_YB}
\end{figure}

\subsection{Symmetries}
\label{sub:YB_transition_symmetries}

The forward and backward transition weights just defined in
\Cref{sub:YB_bijectivisation_new_label} satisfy the following symmetries:
\begin{proposition}
	\label{prop:symm1}
	Fix any boundary conditions $k_1,k_2,k_1',k_2'\in\left\{ 0,1 \right\}$
	and $i_1,i_2\in \mathbb{Z}_{\ge0}$. Then for any $g_1,g_2\in
		\mathbb{Z}_{\ge0}$ we have the following identity between forward and backward
	transition weights:
	\begin{equation*}
		P^{\mathrm{fwd}}_{u,v}
		\biggl(
			\scalebox{.6}{
			\begin{tikzpicture} [scale=1.5, very thick,
					baseline=6.75pt]
				\draw[densely dashed] (-.75,.25)--(-.5,0)--++(.4,0);
				\draw[densely dashed] (-.75,.25)--(-.5,.5)--++(.4,0);
				\draw (-1,0)--(-.75,.25);
				\draw (-1,.5)--(-.75,.25);
				\node[anchor=east] at (-1.05,0) {\LARGE$k_1$};
				\node[anchor=east] at (-1.05,.5) {\LARGE$k_2$};
				\node[anchor=west] at (.52,0) {\LARGE$k_1'$};
				\node[anchor=west] at (.52,.5) {\LARGE$k_2'$};
				\draw (.1,0)--++(.4,0);
				\draw (.1,.5)--++(.4,0);
				\node at (0,-.25) {\LARGE$i_1$};
				\node at (0,.25) {\LARGE$g_1$};
				\node at (0,.75) {\LARGE$i_2$};
			\end{tikzpicture}}
			\ ,\
			\scalebox{.6}{
			\begin{tikzpicture} [scale=1.5, very thick,
					baseline=6.75pt]
				\draw (-.5,0)--++(.4,0);
				\draw (-.5,.5)--++(.4,0);
				\node[anchor=east] at (-.52,0) {\LARGE$k_1$};
				\node[anchor=east] at (-.52,.5) {\LARGE$k_2$};
				\node[anchor=west] at (1.02,0) {\LARGE$k_1'$};
				\node[anchor=west] at (1.02,.5) {\LARGE$k_2'$};
				\draw[densely dashed] (.1,0)--++(.4,0)--++(.25,.25);
				\draw[densely dashed] (.1,.5)--++(.4,0)--++(.25,-.25);
				\draw (.75,.25)--++(.25,.25);
				\draw (.75,.25)--++(.25,-.25);
				\node at (0,-.25) {\LARGE$i_1$};
				\node at (0,.25) {\LARGE$g_2$};
				\node at (0,.75) {\LARGE$i_2$};
			\end{tikzpicture}}
		\biggr)
		=
		P^{\mathrm{bwd}}_{u,v}
		\biggl(
			\scalebox{.6}{
			\begin{tikzpicture} [scale=1.5, very thick,
					baseline=6.75pt]
				\draw (-.5,0)--++(.4,0);
				\draw (-.5,.5)--++(.4,0);
				\node[anchor=east] at (-.52,0) {\LARGE$k_2'$};
				\node[anchor=east] at (-.52,.5) {\LARGE$k_1'$};
				\node[anchor=west] at (1.02,0) {\LARGE$k_2$};
				\node[anchor=west] at (1.02,.5) {\LARGE$k_1$};
				\draw[densely dashed] (.1,0)--++(.4,0)--++(.25,.25);
				\draw[densely dashed] (.1,.5)--++(.4,0)--++(.25,-.25);
				\draw (.75,.25)--++(.25,.25);
				\draw (.75,.25)--++(.25,-.25);
				\node at (0,-.25) {\LARGE$i_2$};
				\node at (0,.25) {\LARGE$g_1$};
				\node at (0,.75) {\LARGE$i_1$};
			\end{tikzpicture}}
			\ ,\
			\scalebox{.6}{
			\begin{tikzpicture} [scale=1.5, very thick,
					baseline=6.75pt]
				\draw[densely dashed] (-.75,.25)--(-.5,0)--++(.4,0);
				\draw[densely dashed] (-.75,.25)--(-.5,.5)--++(.4,0);
				\draw (-1,0)--(-.75,.25);
				\draw (-1,.5)--(-.75,.25);
				\node[anchor=east] at (-1.05,0) {\LARGE$k_2'$};
				\node[anchor=east] at (-1.05,.5) {\LARGE$k_1'$};
				\node[anchor=west] at (.52,0) {\LARGE$k_2$};
				\node[anchor=west] at (.52,.5) {\LARGE$k_1$};
				\draw (.1,0)--++(.4,0);
				\draw (.1,.5)--++(.4,0);
				\node at (0,-.25) {\LARGE$i_2$};
				\node at (0,.25) {\LARGE$g_2$};
				\node at (0,.75) {\LARGE$i_1$};
			\end{tikzpicture}}
		\biggr),
	\end{equation*}
	with the agreement that weights on both sides are well-defined (i.e.,
	$g_1$ and/or $g_2$ is $\ge1$ if needed). In both weights the numbers of arrows
	at the boundary are given, and the number of vertical arrows in the middle
	($g_1$ or $g_2$) determines the numbers of arrows along the dashed edges
	connecting the cross vertices with the two-vertex configurations.
\end{proposition}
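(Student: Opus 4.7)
The proposition asserts a simultaneous $180^\circ$ rotation symmetry of the bijectivisation: rotating every vertex and cross vertex in the configuration, swapping the boundary labels $(k_1,k_2,k_1',k_2',i_1,i_2)\mapsto(k_2',k_1',k_2,k_1,i_2,i_1)$, and interchanging forward with backward weights should leave the transition weights invariant. My plan is to reduce to the $16$ Yang-Baxter identities listed in \Cref{app:YB_equation} and analyze each according to which branch of \Cref{sub:examples} was used to bijectivise it. First I would work out explicitly how the $180^\circ$ rotation acts on the diagrams: the four cross vertices $\ooYBoo{.3}{3}{13.5pt}$, $\iiYBii{.3}{3}{13.5pt}$, $\oiYBio{.3}{3}{13.5pt}$, $\ioYBoi{.3}{3}{13.5pt}$ are fixed, while $\oiYBoi{.3}{3}{13.5pt}$ and $\ioYBio{.3}{3}{13.5pt}$ are interchanged; and each of the $16$ Yang-Baxter identities \eqref{YB1.1}--\eqref{YB4.4} is sent to another one among the $16$ (with spectral parameters still in the order $u$ below and $v$ above on the cross-vertex-moves side).

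Next I would split into three cases tracking the nature of the bijectivisation. In the $12$ identities where one of the two sides contains a single term, \Cref{ssub:singleton} forces the bijectivisation uniquely: the transition weights are simply ratios of products of the vertex weights \eqref{vertex_weights} and the cross weights \eqref{cross_vertex_weights}. Here the asserted symmetry reduces to an elementary algebraic identity between these ratios under the rotation, which I would verify directly using the explicit forms in Figures~\ref{fig:fwd_YB}--\ref{fig:bwd_YB}. For the two identities \eqref{YB2.3} and \eqref{YB3.2}, the bijectivisation was chosen to be deterministic by matching equal terms on the two sides of a $2$-term $=$ $2$-term identity (the last paragraph of \Cref{ssub:two_and_two}); since $180^\circ$ rotation preserves the pairing of equal terms, the claim in this case is immediate.

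The genuinely nontrivial cases are the two identities \eqref{YB2.2} and \eqref{YB3.3}, where the bijectivisation lies in a one-parameter family \eqref{general_2_2_solution}. As the text before the statement explicitly notes, the particular $\gamma=\gamma(u,v,s,t,g)$ was chosen in \Cref{sub:YB_bijectivisation_new_label} for \eqref{YB2.2} and then the bijectivisation of \eqref{YB3.3} was designed to be the unique one compatible with the rotational symmetry. Thus for these two identities the verification is a direct substitution check: one compares the row of \Cref{fig:fwd_YB} corresponding to an instance of \eqref{YB2.2} or \eqref{YB3.3} with the rotated row of \Cref{fig:bwd_YB}, and algebraically confirms that the rational functions in $u,v,s,t,g$ coincide. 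The $u\leftrightarrow v$ symmetry of $\uprho$ plays no role; what matters is that the factor $\tfrac{(v-st^g)(1-st^gu)}{(u-st^g)(1-st^gv)}$ (and its analogue for \eqref{YB3.3}) that enters our chosen $\gamma$ is invariant in the correct sense under the coordinated rotation-and-swap.

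The principal obstacle is bookkeeping rather than any conceptual difficulty: one must keep track of which Yang-Baxter identity maps to which under the rotation (taking care that pair-vertex types, not just cross-vertex types, rearrange correctly), and match this up with the correct pair of table cells in Figures~\ref{fig:fwd_YB}--\ref{fig:bwd_YB}. Once this dictionary is written out, the $16$ verifications collapse into a handful of trivial checks plus the two substantive algebraic identities described in the previous paragraph.
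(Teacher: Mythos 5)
Your proposal is correct and takes essentially the same route as the paper: the paper's own proof is simply a straightforward case-by-case verification against the explicit transition weights, which is exactly what you organize via the $16$ Yang-Baxter identities (singleton cases, the deterministic cases \eqref{YB2.3}, \eqref{YB3.2}, and the two substantive checks \eqref{YB2.2}, \eqref{YB3.3}, the latter pair being nearly automatic since the bijectivisation of \eqref{YB3.3} was chosen to enforce this very symmetry). The only caveat is that even in the "deterministic" and "fixed-pairing" cases the rotation does not preserve individual vertex weights, so the matching of table entries still has to be checked, but this is the same straightforward verification the paper invokes.
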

\begin{proof}
	Straightforward verification.
\end{proof}
\begin{proposition}
	\label{prop:symm2}
	For any $k_1,k_2,k_1',k_2'\in \left\{ 0,1 \right\}$ and
	$i_1,i_2,g_1,g_2\in \mathbb{Z}_{\ge0}$ we have the following symmetry of the
	forward transition weights with respect to the change
	$(u,v)\to(v^{-1},u^{-1})$ in the spectral parameters:
	\begin{multline*}
	P^{\mathrm{fwd}}_{u,v}
		\biggl(
			\scalebox{.6}{
			\begin{tikzpicture} [scale=1.5, very thick,
					baseline=6.75pt]
				\draw[densely dashed] (-.75,.25)--(-.5,0)--++(.4,0);
				\draw[densely dashed] (-.75,.25)--(-.5,.5)--++(.4,0);
				\draw (-1,0)--(-.75,.25);
				\draw (-1,.5)--(-.75,.25);
				\node[anchor=east] at (-1.05,0) {\LARGE$k_1$};
				\node[anchor=east] at (-1.05,.5) {\LARGE$k_2$};
				\node[anchor=west] at (.52,0) {\LARGE$k_1'$};
				\node[anchor=west] at (.52,.5) {\LARGE$k_2'$};
				\draw (.1,0)--++(.4,0);
				\draw (.1,.5)--++(.4,0);
				\node at (0,-.25) {\LARGE$i_1$};
				\node at (0,.25) {\LARGE$g_1$};
				\node at (0,.75) {\LARGE$i_2$};
			\end{tikzpicture}}
			\ ,\
			\scalebox{.6}{
			\begin{tikzpicture} [scale=1.5, very thick,
					baseline=6.75pt]
				\draw (-.5,0)--++(.4,0);
				\draw (-.5,.5)--++(.4,0);
				\node[anchor=east] at (-.52,0) {\LARGE$k_1$};
				\node[anchor=east] at (-.52,.5) {\LARGE$k_2$};
				\node[anchor=west] at (1.02,0) {\LARGE$k_1'$};
				\node[anchor=west] at (1.02,.5) {\LARGE$k_2'$};
				\draw[densely dashed] (.1,0)--++(.4,0)--++(.25,.25);
				\draw[densely dashed] (.1,.5)--++(.4,0)--++(.25,-.25);
				\draw (.75,.25)--++(.25,.25);
				\draw (.75,.25)--++(.25,-.25);
				\node at (0,-.25) {\LARGE$i_1$};
				\node at (0,.25) {\LARGE$g_2$};
				\node at (0,.75) {\LARGE$i_2$};
			\end{tikzpicture}}
		\biggr)
		\\=
		P^{\mathrm{fwd}}_{v^{-1},u^{-1}}
		\biggl(
			\scalebox{.6}{
			\begin{tikzpicture} [scale=1.5, very thick,
					baseline=6.75pt]
				\draw[densely dashed] (-.75,.25)--(-.5,0)--++(.4,0);
				\draw[densely dashed] (-.75,.25)--(-.5,.5)--++(.4,0);
				\draw (-1,0)--(-.75,.25);
				\draw (-1,.5)--(-.75,.25);
				\node[anchor=east] at (-1.05,0) {\LARGE$1-k_2$};
				\node[anchor=east] at (-1.05,.5) {\LARGE$1-k_1$};
				\node[anchor=west] at (.52,0) {\LARGE$1-k_2'$};
				\node[anchor=west] at (.52,.5) {\LARGE$1-k_1'$};
				\draw (.1,0)--++(.4,0);
				\draw (.1,.5)--++(.4,0);
				\node at (0,-.25) {\LARGE$i_2$};
				\node at (0,.25) {\LARGE$g_1$};
				\node at (0,.75) {\LARGE$i_1$};
			\end{tikzpicture}}
			\ ,\
			\scalebox{.6}{
			\begin{tikzpicture} [scale=1.5, very thick,
					baseline=6.75pt]
				\draw (-.5,0)--++(.4,0);
				\draw (-.5,.5)--++(.4,0);
				\node[anchor=east] at (-.52,0) {\LARGE$1-k_2$};
				\node[anchor=east] at (-.52,.5) {\LARGE$1-k_1$};
				\node[anchor=west] at (1.02,0) {\LARGE$1-k_2'$};
				\node[anchor=west] at (1.02,.5) {\LARGE$1-k_1'$};
				\draw[densely dashed] (.1,0)--++(.4,0)--++(.25,.25);
				\draw[densely dashed] (.1,.5)--++(.4,0)--++(.25,-.25);
				\draw (.75,.25)--++(.25,.25);
				\draw (.75,.25)--++(.25,-.25);
				\node at (0,-.25) {\LARGE$i_2$};
				\node at (0,.25) {\LARGE$g_2$};
				\node at (0,.75) {\LARGE$i_1$};
			\end{tikzpicture}}
		\biggr).
	\end{multline*}
	An analogous identity holds for the backward transition weights.
\end{proposition}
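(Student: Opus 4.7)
The approach is direct case-by-case verification based on the explicit formulas in \Cref{fig:fwd_YB,fig:bwd_YB} and \Cref{app:YB_probabilities}. The starting point is that the cross vertex weights \eqref{cross_vertex_weights} are all invariant under $(u,v)\to(v^{-1},u^{-1})$, since they depend only on $t$ and on $\uprho=(u-v)/(u-tv)$, and a direct calculation yields $\uprho(v^{-1},u^{-1})=\uprho(u,v)$. Consequently, in each of the 16 Yang-Baxter identities \eqref{YB1.1}--\eqref{YB4.4} the cross vertex contribution is preserved under the transformation, so the proposition reduces to a statement concerning the single-vertex weights and the chosen bijectivisation.

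Next I would verify that the combined involution on boundary data --- flipping $k_j\mapsto 1-k_j$, swapping the top and bottom horizontal rows, and swapping $i_1\leftrightarrow i_2$ --- permutes the 16 Yang-Baxter identities among themselves. (A quick sanity check: for a single vertex in the pair this combined transformation preserves the arrow conservation condition $i_1+k_1=i_2+k_1'$, since the $k$-dependent terms cancel after flipping and reindexing.) For the 12 entries of \Cref{fig:fwd_YB} whose bijectivisation is uniquely determined --- either by the singleton case of \Cref{ssub:singleton} or by the deterministic case \eqref{2_2_equation_easy_case} --- the claim reduces to checking that the relevant ratios of single-vertex weights are invariant under the transformation. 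A routine computation shows that although individual single-vertex weights pick up factors of the form $(1-su)/(u-s)$ under $u\mapsto u^{-1}$ combined with arrow flipping, these common prefactors cancel in the ratios appearing in $P^{\mathrm{fwd}}_{u,v}$.

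For the two nontrivial cases associated to \eqref{YB2.2} and \eqref{YB3.3}, the bijectivisation additionally depends on the chosen parameter
\begin{equation*}
\gamma(u,v,s,t,g)=\frac{(v-st^g)(1-st^gu)}{(u-st^g)(1-st^gv)}.
\end{equation*}
Substituting $(u,v)\to(v^{-1},u^{-1})$ and clearing powers of $uv$ from numerator and denominator produces $\gamma(v^{-1},u^{-1},s,t,g)=\gamma(u,v,s,t,g)$, so the free parameter is invariant under the transformation. Combined with the invariance of the weight ratios this completes the verification for $P^{\mathrm{fwd}}$. The analogous symmetry for $P^{\mathrm{bwd}}_{u,v}$ then follows from the reversibility condition \eqref{reversibility_condition}: the invariance of $P^{\mathrm{fwd}}_{u,v}$ together with the invariance of the weight ratios $w(a)/w(b)$ (which itself follows from the cross-vertex invariance and the fact that $P^{\mathrm{fwd}}$ is built from these ratios) forces the backward transition weights to be invariant as well.

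The main obstacle is organizational rather than mathematical: sixteen cases must be paired up and inspected, and one must keep track of how the involution acts on both the cross vertex type and the pair configuration in each case. The explicit tabulation in \Cref{fig:fwd_YB,fig:bwd_YB} makes this bookkeeping manageable, and no individual case requires more than elementary simplification of rational functions in $u,v,s,t$.
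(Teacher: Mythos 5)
Your proposal is correct and follows essentially the same route as the paper, whose proof is simply the remark that the identity can be checked directly (optionally shortening the bookkeeping via \Cref{prop:symm1}); your organization — invariance of the cross weights through $\uprho$, the involution permuting the sixteen identities, invariance of the free parameter $\gamma$ for \eqref{YB2.2}/\eqref{YB3.3}, and the backward case via \eqref{reversibility_condition} — is a reasonable way to carry that verification out. One small correction to your intermediate claim: under arrow complementation plus $u\mapsto u^{-1}$ only the weights of the vertices $(g,0;g,0)$ and $(g,1;g,1)$ pick up exactly the factor $\tfrac{1-su}{u-s}$, while $(g,0;g-1,1)$ and $(g,1;g+1,0)$ acquire additional $g$-dependent factors $\tfrac{1-t^{g}}{1-s^2t^{g-1}}$ and $\tfrac{1-s^2t^{g}}{1-t^{g+1}}$ respectively; these extra factors cancel only when the bottom and top vertices of a pair are combined (the product depends only on the boundary multiplicities $i_1,i_2$), after which your cancellation in the ratios, and hence the whole argument, goes through as claimed.
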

\begin{proof}
	This can also be checked in a straightforward way, but the
	verification can be made shorter with the help of the previous
	\Cref{prop:symm1}.
\end{proof}

\subsection{Nonnegativity and probabilistic interpretation}
\label{sub:YB_transition_nonnegativity}

Let us now address the question of nonnegativity of the forward and backward
transition weights obtained in \Cref{sub:YB_bijectivisation_new_label}.
\begin{proposition}
	\label{prop:nonnegative_transition_weights}
	Assume that our parameters satisfy
	\begin{equation}
		\label{weights_nonnegativity_region}
		0\le t<1,\qquad  -1<s\le 0, \qquad  0\le v\le u.
	\end{equation}
	Then all the forward and backward transition weights
	$P^{\mathrm{fwd}}_{u,v}$, $P^{\mathrm{bwd}}_{u,v}$ are nonnegative.
\end{proposition}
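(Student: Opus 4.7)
The plan is a direct, entry-by-entry sign check. Every transition weight displayed in Figures~\ref{fig:fwd_YB} and~\ref{fig:bwd_YB} (and in the full list of \Cref{app:YB_probabilities}) is either $0$, $1$, or a quotient of a short list of elementary expressions in $s,t,u,v,g$. Accordingly, I would first isolate this list of building blocks and check that each has a consistent sign under the hypotheses \eqref{weights_nonnegativity_region}.

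The positivity facts I would collect at the outset follow immediately from $0\le t<1$, $-1<s\le 0$, and $0\le v\le u$ (with $u>0$ so that denominators are well-defined). The quantities $1-t$ and $u-v$ are nonnegative, and $u-tv\ge u-v\ge 0$ is strictly positive for $u>0$. Since $-st^g\ge 0$, the expressions $1-st^gu$, $1-st^gv$, $v-st^g$ and $u-st^g$ are all nonnegative (bounded below by $1$, $v$, or $u$ as appropriate). Because $s^2<1$ and $t^k\le 1$ for $k\ge 0$, every factor of the form $1-s^2t^k$ with $k\ge 0$ is strictly positive. Finally, for $g\ge 1$ one has $t^g\le t\le 1$, so $1-t^g\ge 0$ and $t-t^g\ge 0$.

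Once this table of signs is in hand, each entry in Figures~\ref{fig:fwd_YB} and~\ref{fig:bwd_YB} becomes a quotient of nonnegative numbers over a strictly positive denominator, and nonnegativity is immediate. This covers even the more involved entries such as $\frac{u-v}{u-st^g}\cdot\frac{1-s^2t^{2g}}{1-st^gv}$ and $\frac{t-t^g}{1-t^g}\cdot\frac{1-s^2t^{g-2}}{1-s^2t^{g-1}}$: each numerator and denominator splits into the elementary pieces above.

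The one place requiring care, and the only real (if mild) obstacle, is that the exponents $t^{g-1}$ and $t^{g-2}$ appearing in some entries are meaningful only when $g$ is large enough. These weights arise from the Yang-Baxter identities of \Cref{app:YB_equation}, and arrow conservation in the corresponding two-vertex configurations forces $g\ge 1$ (and, wherever a $t^{g-2}$ appears, $g\ge 2$). The color code of Figures~\ref{fig:fwd_YB} and~\ref{fig:bwd_YB} records this: lighter/darker gray entries correspond to transitions $g\to g-1$ or $g\to g-2$, so the associated weights only occur in the corresponding admissible range of $g$. Within that range one has $s^2t^{g-1},\,s^2t^{g-2}\le s^2<1$, so $1-s^2t^{g-1}$ and $1-s^2t^{g-2}$ remain strictly positive. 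Cross-referencing each weight with its parent identity to confirm this range is the bookkeeping heart of the argument; with that in place, the nonnegativity claim reduces to the elementary positivity facts listed above.
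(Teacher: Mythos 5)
Your proof is correct and takes essentially the same route as the paper's: a direct elementary sign check of the explicit weights under \eqref{weights_nonnegativity_region}, with the $g\ge1$ (resp.\ $g\ge2$) restriction for entries containing $t^{g-1}$ (resp.\ $t^{g-2}$) being the only point requiring care, which you handle correctly. The only organizational difference is that the paper reduces the claim to showing that a short list of quantities lies in $[0,1]$ (so that entries arising as $1-\cdots$ are covered automatically), whereas you verify each tabulated entry directly; this is equally valid since \Cref{fig:fwd_YB,fig:bwd_YB} display both members of every complementary pair explicitly.
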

\begin{proof}
	Observe that the nonnegativity of the forward and backward transition
	weights would hold if all the following quantities
	\begin{align*}
		 & \frac{u-v}{u-tv},\qquad
		\frac{(1-s  t^gu) (v-s t^g)}{(1-s  t^gv)(u-s t^g)},
		\qquad
		\frac{(1-t) (1-s^2 t^{2 g})}{(1-t^{g+1}) (1-s^2 t^g)},
		\\
		 & \frac{ 1-s t^{g+1}v}
		{ 1-s t^gv},
		\qquad
		\frac{t  -s  t^{g+1}u}{ 1-s t^{g+1}u},
		\qquad
		\frac{t  v-s t^{g+1}}{ v-s t^{g+1}},
		\qquad
		\frac{ u-s t^{g+1}}{
			u-s t^g}
	\end{align*}
	(with arbitrary $g\in \mathbb{Z}_{\ge0}$) are between $0$ and $1$.
	The latter directly follows from \eqref{weights_nonnegativity_region}.
\end{proof}

\Cref{prop:nonnegative_transition_weights} implies that under
conditions \eqref{weights_nonnegativity_region} the forward and backward
weights from \Cref{sub:YB_bijectivisation_new_label} define \emph{Markov}
transition steps. We call them the (\emph{local}, \emph{randomized})
\emph{Yang-Baxter moves}:
\begin{definition}
	\label{def:local_moves}
	The \emph{forward Yang-Baxter move} transforms a fixed three-vertex
	configuration with the cross vertex on the left, given boundary conditions
	$k_1,k_1',k_2,k_2'\in\left\{ 0,1 \right\}$, $i_1,i_2\in\mathbb{Z}_{\ge0}$, and
	fixed number $g_1\in\mathbb{Z}_{\ge0}$ of vertical arrows in the middle, into
	a three-vertex configuration with the cross vertex on the right, having the
	same boundary conditions and a \emph{random} number $g_2$ of vertical arrows
	in the middle. Depending on the boundary conditions, $g_2$ can take at most
	two possible values which are two consecutive numbers chosen from $\left\{
		g_1-2,g_1-1,g_1,g_1+1,g_1+2	\right\}$.

	Similarly, the \emph{backward Yang-Baxter move} transforms a fixed
	three-vertex configuration with the cross vertex on the right, given boundary
	conditions, and fixed $g_2$, into a three-vertex configuration with the cross
	vertex on the left, same boundary conditions, and \emph{random} $g_1$.

	The probabilities of forward and backward Yang-Baxter moves are given
	in \Cref{fig:fwd_YB,fig:bwd_YB} (and also in \Cref{app:YB_probabilities} in
	full detail). See \Cref{fig:probabilistic_transition} for an illustration.
\end{definition}

\begin{figure}[htpb]
	\centering
	\scalebox{.6}{
		\begin{tikzpicture}[scale=1.5, very thick]
			\draw[line width=1.8]
			(-.1,0)--(.9,1)--++(1.2,0);
			\draw[line width=1.8]
			(-.1,1)--(.9,0)--++(1.2,0);
			\draw[line width=5] (1.5,-.5)--++(0,2);
			\node[circle, draw, fill=white] at (1.5,0)
			{\large$u$};
			\node[circle, draw, fill=white] at (1.5,1)
			{\large$v$};
			\node[anchor=north] at (1.5,-.55)
			{\LARGE$i_1$};
			\node[anchor=south] at (1.5,1.55)
			{\LARGE$i_2$};
			\node[anchor=west] at (1.55,.5) {\LARGE$g_1$};
			\node[anchor=east] at (-.12,0) {\LARGE$k_1$};
			\node[anchor=east] at (-.12,1) {\LARGE$k_2$};
			\node at (.75,-.25) {\LARGE$j_1$};
			\node at (.75,1.28) {\LARGE$j_2$};
			\node[anchor=west] at (2.12,0) {\LARGE$k_1'$};
			\node[anchor=west] at (2.12,1) {\LARGE$k_2'$};
			\draw[->, densely dashed, line width=2]
			(2.9,1.5) to
			[out=45,in=135] (5.7,1.5);
			\draw[<-, densely dashed, line width=2]
			(2.9,-.5) to
			[out=-45,in=-135] (5.7,-.5);
			\node at (4.3, 1.6) {\LARGE{}forward};
			\node at (4.3, -.5) {\LARGE{}backward};
			\begin{scope}[shift={(5.6,0)}]
				\draw[line width=1.8]
				(.9,1)--++(1.2,0);
				\draw[line width=1.8]
				(.9,0)--++(1.2,0);
				\draw[line width=5]
				(1.5,-.5)--++(0,2);
				\draw[line width=1.8]
				(1.5,0)--++(.6,0)--++(1,1);
				\draw[line width=1.8]
				(1.5,1)--++(.6,0)--++(1,-1);
				\node[circle, draw, fill=white] at
				(1.5,0)
				{\large$v$};
				\node[circle, draw, fill=white] at
				(1.5,1)
				{\large$u$};
				\node[anchor=north] at (1.5,-.55)
				{\LARGE$i_1$};
				\node[anchor=south] at (1.5,1.55)
				{\LARGE$i_2$};
				\node[anchor=west] at (1.55,.5)
				{\LARGE$g_2$};
				\node[anchor=east] at (.88,0)
				{\LARGE$k_1$};
				\node[anchor=east] at (.88,1)
				{\LARGE$k_2$};
				\node at (2.2,-.25) {\LARGE$j_1'$};
				\node at (2.2,1.28) {\LARGE$j_2'$};
				\node[anchor=west] at (3.12,0)
				{\LARGE$k_1'$};
				\node[anchor=west] at (3.12,1)
				{\LARGE$k_2'$};
			\end{scope}
		\end{tikzpicture}
	}
	\caption{Randomized Yang-Baxter moves turning fixed $g_1$ to random
		$g_2$ or vice versa. Note that the numbers of arrows $j_1,j_2$ or $j_1',j_2'$
		in the middle are uniquely determined by $k_1,k_2,k_1',k_2',i_1,i_2$ and $g_1$
		or $g_2$, respectively, and thus do not need to be specified explicitly.}
	\label{fig:probabilistic_transition}
\end{figure}

\subsection{On the choice of bijectivisation}
\label{sub:discussion}

In Section \Cref{sub:YB_bijectivisation_new_label} we presented a particular
choice of bijectivisation of the Yang-Baxter equation, and the rest of the
paper will be devoted to the study of the objects associated with the choice.
However, there are other reasonable choices, for which a very similar
discussion would be possible. To simplify the exposition, we will not focus on
them and just briefly mention possible variations in this section.

The Yang-Baxter equation consists of 16 identities between rational functions
listed in \Cref{app:YB_equation}. Twelve of them contain only one term in at
least one side of an equation and thus have a unique bijectivisation.
Identities \eqref{YB2.2}, \eqref{YB2.3}, \eqref{YB3.2}, and \eqref{YB3.3}
contain two terms on each side, so according to \Cref{ssub:two_and_two} each
of these identities admits a one-parameter family of bijectivisations. It is
easy to check that the choice of bijectivisations of these identities
presented in \Cref{sub:YB_bijectivisation_new_label} uniquely determined by
the following properties:
\begin{enumerate}
	\item
	      (\emph{Nonnegativity})
	      Transition probabilities are non-negative.
	\item
	      (\emph{Minimal ``noise'' property})
	      As many transition probabilities as possible are equal to $0$.
\end{enumerate}
Indeed, in \eqref{YB2.3} and \eqref{YB3.2} two of the forward probabilities
can be made zero, and in \eqref{YB2.2} and \eqref{YB3.3} one forward
probability can be made zero. Which of these probabilities are zero is
uniquely determined by the non-negativeness.

Let us discuss the above conditions. The first one is a must have since we
want to obtain a stochastic object. Thus, it forces our four parameters to lie
within certain segments of the real line. However, the second condition has a
combinatorial flavor which is not crucial for obtaining reasonable
probabilistic models. For example, one can introduce another bijectivisation
by replacing it with a condition
\begin{enumerate}
	\item
	      [(2')]
	      (\emph{Independence from input}) Forward transition
	      probabilities do not depend on the state of the cross vertex
	      before the move.
\end{enumerate}

Condition (2') uses the idea of \cite{DiaconisFill1990}
(applied in a symmetric function
setting in \cite{BorFerr2008DF}). Also, as far as we know, the dynamics coming
from condition (2') was used by Andrea Sportiello \cite{Sportiello-private}
for simulations in our setting. However, this idea was not applied to
bijectivise the Cauchy identity (which requires both forward and backward
probabilities) or to construct a random field of signatures
(\Cref{sec:YB_field}).

We focus on condition (2) rather than (2') (or any other choice of four
parameters satisfying condition (1)) because due to less interaction it leads
to slightly simpler models. However, since 12 out of 16 identities coming from
the Yang-Baxter equation work in the same way for any bijectivisation, all
these models are fairly similar. In particular, the dynamic version of the six
vertex model (\Cref{sec:dynamicS6V}) and all its degenerations
(\Cref{sec:degenerations}) will appear for all bijectivizations.

Finally, let us notice that yet another motivation for a certain specific
choice of bijectivisation might come from the algebraic side related to the
matrix interpretation of the Yang-Baxter equation. We were not able to find a
natural condition along these lines.

\section{Spin Hall-Littlewood symmetric functions}
\label{sec:spin_HL_functions}

In this section we recall the symmetric rational functions defined in
\cite{Borodin2014vertex} and their basic properties including the Cauchy
summation identities. In this section we do not assume that the transition
weights are nonnegative.

\subsection{Signatures}
\label{sub:signatures}

We need to introduce some notation. For each $N\in \mathbb{Z}_{\ge1}$ let
\begin{equation*}
	\mathsf{Sign}_{N}:=\left\{ \lambda\in \mathbb{Z}^{N}\colon
	\lambda_1\ge \ldots\ge\lambda_N  \right\}
\end{equation*}
denote the set of \emph{signatures} with $N$ components.\footnote{Signatures
	are also sometimes called \emph{highest weights} as the set
	$\mathsf{Sign}_{N}$ indexes irreducible representations of the unitary group
	$U(N)$, e.g., see \cite{Weyl1946}.} For $\lambda\in\mathsf{Sign}_{N}$ denote
$\ell(\lambda):=N$ and call this the \emph{length} of $\lambda$. By agreement,
$\mathsf{Sign}_0$ consists of the single empty signature $\varnothing$. We
will also use the notation $|\lambda|:=\lambda_1+\ldots+\lambda_N$.

A signature $\lambda\in \mathsf{Sign}_{N}$ is called \emph{nonnegative} if
$\lambda_N\ge0$. The set of nonnegative signatures is denoted by
$\mathsf{Sign}_{N}^+\subset \mathsf{Sign}_{N}$. Let us set
$\mathsf{Sign}:=\bigcup_{N=0}^{\infty}\mathsf{Sign}_{N}$ and
$\mathsf{Sign}^+:=\bigcup_{N=0}^{\infty}\mathsf{Sign}_{N}^+$.

Nonnegative signatures are often referred to as (integer) \emph{partitions},
which are represented pictorially as \emph{Young diagrams}, e.g., see
\cite[Ch. I.1]{Macdonald1995}. While this way of representing signatures is
extremely useful in many contexts, we will employ another graphical
representation of signatures which works equally well for signatures having
negative parts.

Namely, associate to each $\mu\in \mathsf{Sign}_{N}$ a configuration of $N$
vertical arrows on $\mathbb{Z}$, with multiple arrows per site allowed, by
putting an arrow at each of the locations $\mu_1,\ldots,\mu_N\in \mathbb{Z} $.
In other words, write $\mu$ in multiplicative notation as
$\mu=\ldots(-1)^{m_{-1}}0^{m_0}1^{m_1}2^{m_2}\ldots  $, where $m_i:=\#\left\{
	j\colon \mu_j=i \right\}$, $i\in \mathbb{Z}$. Then put $m_i$ vertical arrows
at each site $i\in \mathbb{Z}$. Note that all but finitely many sites $i\in
	\mathbb{Z}$ will be empty. See \Cref{fig:arrow_signature}, left, for an
illustration.

\begin{figure}[htpb]
	\centering
	\begin{tikzpicture}[scale=.7, thick]
		\draw[->] (-3.5,0)--++(9.5,0);
		\foreach \ii in {-3,...,5}
			{
				\draw (\ii,.1)--++(0,-.2) node[below,
					yshift=-10] {$\ii$};
			}
		\draw [line width=2,->] (4,-.4)--++(0,.8);
		\draw [line width=2,->] (3,-.4)--++(0,.8);
		\draw [line width=2,->] (1.2,-.4)--++(0,.8);
		\draw [line width=2,->] (1,-.4)--++(0,.8);
		\draw [line width=2,->] (.8,-.4)--++(0,.8);
		\draw [line width=2,->] (-2,-.4)--++(0,.8);
	\end{tikzpicture}
	\caption{Representing a signature $\mu=(4,3,1,1,1,-2)\in
			\mathsf{Sign}_6$ as a configuration of $6$ vertical arrows on $\mathbb{Z}$.}
	\label{fig:arrow_signature}
\end{figure}

\subsection{Definition of spin Hall-Littlewood functions}
\label{sub:spin_HL_definition}

Let us now recall the definitions of the symmetric rational functions
$F_{\lambda/\mu}$ and $G_{\lambda/\mu}^c$ introduced in
\cite{Borodin2014vertex}. 
Similar objects were also considered earlier as Bethe ansatz 
eigenfunctions, e.g., see \cite[Ch. VII]{QISM_book}, and also
\cite{Povolotsky2013}, \cite{BCPS2014} for more stochastic particle systems connections.

We begin by defining versions of the spin Hall-Littlewood functions
depending on one variable, the spectral parameter $u\in\mathbb{C}$.

\subsubsection{Functions $F_{\lambda/\mu}(u)$}
\label{ssub:F_definition}

Let a signature $\mu\in \mathsf{Sign}_{N-1}$ \emph{interlace} with a signature
$\lambda\in \mathsf{Sign}_{N}$ (notation: $\mu\prec\lambda$) which by
definition means that
\begin{equation}
	\label{interlacing_definition}
	\lambda_{N}\le \mu_{N-1}\le \lambda_{N-1}\le \ldots\le \lambda_2\le
	\mu_1\le \lambda_1 .
\end{equation}
There exists a unique configuration of arrows on the grid $\mathbb{Z}\times
	\left\{ -1,0,1	 \right\}$ connecting $\mu$ to $\lambda$ (see
\Cref{fig:connecting_interlacing}, left):
\begin{itemize}
	\item
	      vertical arrows $(\mu_i,-1)\to(\mu_i,0)$ entering from the
	      bottom;
	\item
	      vertical arrows $(\lambda_j,0)\to(\lambda_j,1)$ exiting at the
	      top;
	\item
	      horizontal arrows along $\mathbb{Z}\times \left\{0\right\}$ such
	      that the local configuration of arrows around each vertex of
	      $\mathbb{Z}\times\left\{0\right\}$ looks like one of the vertices in
	      \Cref{fig:vertex_weights}, and configurations of arrows at neighboring
	      vertices are compatible. There configuration of horizontal arrows is packed at
	      $-\infty$, and is empty at $+\infty$.
\end{itemize}

\begin{figure}[htpb]
	\centering
	\begin{tikzpicture}[scale=.7, thick]
		\draw (7.5,0)--++(9.5,0);
		\draw[densely dotted, line width=.4]
		(7.5,1)--++(9.2,0);
		\draw[densely dotted, line width=.4]
		(7.5,-1)--++(9.2,0);
		\foreach \ii in {-4,...,4}
			{
				\draw (\ii+12,.1)--++(0,-.2)
				node[below, yshift=-25] {$\ii$};
				\draw[densely dotted, line width=.4]
				(\ii+12,-1.3)--++(0,2.6);
			}
		\draw [line width=2,->] (15,-1)--++(0,1);
		\draw [line width=2,->] (15,0)--++(0,1);
		\draw [line width=2,->] (13.2,-1)--++(0,.9);
		\draw [line width=2,->]
		(13.2,-.1)--++(.1,.1)--++(.7,0)--++(0,1);
		\draw [line width=2,->] (13,-.1)--++(.1,.1)--++(0,1);
		\draw [line width=2,->]
		(12.8,-.1)--++(.1,.1)--++(0,1);
		\draw [line width=2,->] (13,-1)--++(0,.9);
		\draw [line width=2,->] (12.8,-1)--++(0,.9);
		\draw [line width=2,->] (10,-1)--++(0,.9);
		\draw [line width=2,->] (10,-.1)--++(.1,.1)--++(0,1);
		\draw [line width=2,->] (7,0)--++(1,0);
		\draw [line width=2,->] (8,0)--++(1,0);
		\draw [line width=2,->] (9,0)--++(.9,0);
		\draw [line width=2,->] (9.9,0)--++(0,1);
		\node at (11.5,1.1) {$\lambda$};
		\node at (11.5,-1.1) {$\mu$};
	\end{tikzpicture}
	\qquad
	\begin{tikzpicture}[scale=.7, thick]
		\draw (7.5,0)--++(9.5,0);
		\draw[densely dotted, line width=.4]
		(7.5,1)--++(9.2,0);
		\draw[densely dotted, line width=.4]
		(7.5,-1)--++(9.2,0);
		\foreach \ii in {-4,...,4}
			{
				\draw (\ii+12,.1)--++(0,-.2)
				node[below, yshift=-25] {$\ii$};
				\draw[densely dotted, line width=.4]
				(\ii+12,-1.3)--++(0,2.6);
			}
		\draw [line width=2,->] (15,-1)--++(0,1);
		\draw [line width=2,->] (15,0)--++(1,0);
		\draw [line width=2,->] (16,0)--++(1,0);
		\draw [line width=2,->] (13.2,-1)--++(0,.9);
		\draw [line width=2,->]
		(13.2,-.1)--++(.1,.1)--++(.7,0)--++(0,1);
		\draw [line width=2,->] (13,-.1)--++(.1,.1)--++(0,1);
		\draw [line width=2,->]
		(12.8,-.1)--++(.1,.1)--++(0,1);
		\draw [line width=2,->] (13,-1)--++(0,.9);
		\draw [line width=2,->] (12.8,-1)--++(0,.9);
		\draw [line width=2,->] (10,-1)--++(0,.9);
		\draw [line width=2,->] (10,-.1)--++(.1,.1)--++(0,1);
		\draw [line width=2,->] (7,0)--++(1,0);
		\draw [line width=2,->] (8,0)--++(1,0);
		\draw [line width=2,->] (9,0)--++(.9,0);
		\draw [line width=2,->] (9.9,0)--++(0,1);
		\node at (11.5,1.1) {$\nu$};
		\node at (11.5,-1.1) {$\mu$};
	\end{tikzpicture}
	\caption{
		Left: a configuration of horizontal arrows connecting
		$\mu=(3,1,1,1,-2)$ to $\lambda=(3,2,1,1,-2,-2)$, with $\mu\prec\lambda$.
		Right: a configuration of horizontal arrows connecting the same $\mu$ to
		$\nu=(2,1,1,-2,-2)$, with $\nu\mathop{\dot\prec}\mu$.
	}
	\label{fig:connecting_interlacing}
\end{figure}
For each $m\in \mathbb{Z}$, denote the numbers of incoming and outgoing
vertical and horizontal arrows at vertex $m\times\left\{ 0 \right\}$ by
$i_{1,2}(m)\in\mathbb{Z}_{\ge0}$ and $j_{1,2}(m)\in\left\{ 0,1 \right\}$,
respectively (this notation follows the beginning of
\Cref{sub:vertex_weights}).

Using this configuration of horizontal arrows connecting $\mu$ to $\lambda$,
define
\begin{equation}
		\label{F_skew_one_variable_definition}
		F_{\lambda/\mu}(u):= \prod_{m=-\infty}^{-1}
		\frac{\Bigl[
			\scalebox{.6}{
				\begin{tikzpicture}[scale=1.5, ultra thick,
						baseline=-4.5pt]
					\node at (-1,0) {\LARGE$j_1(m)$};
					\node at (1,0) {\LARGE$j_2(m)$};
					\node at (0,.25) {\LARGE$i_1(m)$};
					\node at (0,-.25) {\LARGE$i_2(m)$};
				\end{tikzpicture}
			}
	\Bigr]_{u}}
	{\Bigl[
			\SVoioi{.6}{-4.5pt}
	\Bigr]_{u}}\;
	\prod_{m=0}^{\infty}
	\Bigl[
			\scalebox{.6}{
				\begin{tikzpicture}[scale=1.5, ultra thick,
						baseline=-4.5pt]
					\node at (-1,0) {\LARGE$j_1(m)$};
					\node at (1,0) {\LARGE$j_2(m)$};
					\node at (0,.25) {\LARGE$i_1(m)$};
					\node at (0,-.25) {\LARGE$i_2(m)$};
				\end{tikzpicture}
			}
	\Bigr]_{u}
	,
\end{equation}
where we use notation \eqref{vertex_weights} for the vertex weights depending
on the spectral parameter $u$. Observe that both products above are finite
since $i_{1,2}(-m)=i_{1,2}(m)=0$, $j_{1,2}(-m)=1$, $j_{1,2}(m)=0$ for all
sufficiently large $m$. If $\mu\not\prec\lambda$, set
$F_{\lambda/\mu}(u)\equiv 0$.

When $\mu,\lambda\in \mathsf{Sign}^+$, $F_{\lambda/\mu}$ defined by
\eqref{F_skew_one_variable_definition} coincides with the one given in
\cite{Borodin2014vertex}. Moreover, \eqref{F_skew_one_variable_definition}
extends the definition so that $F_{\lambda/\mu}$ for arbitrary signatures
$\mu\prec\lambda$ satisfies the following translation property:
\begin{equation}
	\label{F_shifting_property}
	F_{\lambda+(r^{N})/\mu+(r^{N-1})}(u)=\left( \frac{u-s}{1-su}
	\right)^{r} F_{\lambda/\mu}(u), \qquad \mu\in \mathsf{Sign}_{N-1}, \quad
	\lambda\in \mathsf{Sign}_N,
\end{equation}
where in the left-hand side we add arbitrary $r\in \mathbb{Z}$ to all parts of
both $\mu$ and $\lambda$.

\subsubsection{Functions $G_{\mu/\nu}^c(u)$}
\label{ssub:G_definition}

Let $\mu,\nu\in \mathsf{Sign}_N$. If these signatures satisfy
\begin{equation}
	\label{interlace2}
	\nu_N\le \mu_N\le \nu_{N-1}\ldots \mu_2\le \nu_2\le \mu_1,
\end{equation}
then we also say that $\nu$ and $\mu$ \emph{interlace}, but use a slightly
different notation $\nu\mathop{\dot\prec}\mu$ for this.

Let us connect $\mu$ to $\nu$ by a configuration of horizontal arrows in the
same sense as in \Cref{ssub:F_definition} above. Note that now the ``larger''
signature $\mu$ is placed at the \emph{bottom}. This implies that the
configuration of horizontal arrows connecting $\mu$ to $\nu$ contains
infinitely many horizontal arrows, both at $-\infty$ and at $+\infty$ (see
\Cref{fig:connecting_interlacing}, right).

Using this configuration of arrows connecting $\mu$ to $\nu$, define
\begin{equation}
	\label{G_skew_one_variable_definition}
	G_{\mu/\nu}^{c}(u):=
	\prod_{m=-\infty}^{+\infty}
		\frac{\Bigl[
			\scalebox{.6}{
				\begin{tikzpicture}[scale=1.5, ultra thick,
						baseline=-4.5pt]
					\node at (-1,0) {\LARGE$j_1(m)$};
					\node at (1,0) {\LARGE$j_2(m)$};
					\node at (0,.25) {\LARGE$i_1(m)$};
					\node at (0,-.25) {\LARGE$i_2(m)$};
				\end{tikzpicture}
			}
	\Bigr]_{u^{-1}}}
	{\Bigl[
			\SVoioi{.6}{-4.5pt}
	\Bigr]_{u^{-1}}},
\end{equation}
where we used the same notation $i_{1,2}(m), j_{1,2}(m)$ for the numbers of
arrows at individual vertices of $\mathbb{Z}\times \left\{ 0 \right\}$ as in
\Cref{ssub:F_definition}. Again, observe that the product in
\eqref{G_skew_one_variable_definition} is actually finite. If
$\nu\mathop{\dot{\not\prec}}\mu$, set $G_{\mu/\nu}^{c}(u)\equiv 0$.

\begin{remark}
	\label{rmk:G_coincides_with_Bor17}
	Let us connect \eqref{G_skew_one_variable_definition} to the
	definition of $G_{\mu/\nu}^{c}$ given in \cite{Borodin2014vertex}. Denote
	\begin{equation*}
		\Bigl[
			\scalebox{.6}{
				\begin{tikzpicture}[scale=1.5, ultra thick,
						baseline=-4.5pt]
					\node at (-.4,0) {\LARGE$j_1$};
					\node at (.4,0) {\LARGE$j_2$};
					\node at (0,.25) {\LARGE$i_1$};
					\node at (0,-.25) {\LARGE$i_2$};
				\end{tikzpicture}
			}
			\Bigr]^{\bullet}_{u}
		:=
		\frac{\Bigl[
			\scalebox{.6}{
				\begin{tikzpicture}[scale=1.5, ultra thick,
						baseline=-4.5pt]
					\node at (-.4,0) {\LARGE$j_1$};
					\node at (.4,0) {\LARGE$j_2$};
					\node at (0,.25) {\LARGE$i_1$};
					\node at (0,-.25) {\LARGE$i_2$};
				\end{tikzpicture}
			}
			\Bigr]_{u^{-1}}}
		{\Bigl[
			\SVoioi{.6}{-4.5pt}
			\Bigr]_{u^{-1}}},
	\end{equation*}
	then from \eqref{vertex_weights} we have
	\begin{equation*}
		\begin{array}{cll}
			&
			\Big[ \Voo{.6}gg{-4.5pt}
			\Big]_{u}^{\bullet}=
			\dfrac{u-st^g}{1-su},
			&
			\qquad
			\Big[
			\Voi{.6}g{g-1}{-4.5pt} \Big]_{u}^{\bullet}=
			\dfrac{1-s^2t^{g-1}}{1-su},
			\\[8pt]
			&
			\Big[
			\Vii{.6}gg{-4.5pt} \Big]_{u}^{\bullet} =
			\dfrac{1-st^gu}{1-su},
			&
			\qquad
			\Big[
				\Vio{.6}{g}{g+1}{-4.5pt} \Big]_{u}^{\bullet}
				=
			\dfrac{(1-t^{g+1})u}{1-su}.
		\end{array}
	\end{equation*}
	Observe that in the above graphical definition of $G_{\mu/\nu}^{c}$
	the ``larger'' signature $\mu$ is placed at the bottom. Replacing the
	right-pointing horizontal arrows by empty edges, and vice versa replacing
	empty edges by \emph{left-pointing} horizontal arrows leads to the conjugated
	vertex weights $w^{c}_u$ defined in \cite{Borodin2014vertex}:
	\begin{equation*}
		\begin{array}{cll}
			&
			\Big[ \BVii{.6}gg{-4.5pt}
			\Big]_{u}^{c}=
			\dfrac{u-st^g}{1-su},
			&
			\qquad
			\Big[
			\BVio{.6}{g+1}{g}{-4.5pt} \Big]_{u}^{c}=
			\dfrac{1-s^2t^{g}}{1-su},
			\\[8pt]
			&
			\Big[
			\BVoo{.6}gg{-4.5pt} \Big]_{u}^{c} =
			\dfrac{1-st^gu}{1-su},
			&
			\qquad
			\Big[
				\BVoi{.6}{g-1}{g}{-4.5pt} \Big]_{u}^{c}
				=
			\dfrac{(1-t^{g})u}{1-su}.
		\end{array}
	\end{equation*}
	Then $G_{\mu/\nu}^c(u)$ is equal to the product of the conjugated
	weights $[\cdots]^{c}_u$ similar to \eqref{G_skew_one_variable_definition} but
	without the denominators (also with $\nu$ at the top and $\mu$ at the bottom).
	Note that \cite{Borodin2014vertex} also defines functions $G_{\mu/\nu}(u)$
	without the conjugation, but we do not use them in the present paper.
\end{remark}

\subsubsection{Multivariable functions $F$ and $G^c$}
\label{ssub:F_G_multivar_definition}

Using the single-variable functions \eqref{F_skew_one_variable_definition} and
\eqref{G_skew_one_variable_definition}, one can define the corresponding
multivariable functions $F$ and $G^c$.

Let $K\in \mathbb{Z}_{\ge1}$, $\lambda,\mu\in \mathsf{Sign}$, such that
$\ell(\lambda)=\ell(\mu)+K$, $\ell(\mu)=N\in \mathbb{Z}_{\ge0}$. Set
\begin{equation}
	\label{F_skew_multivariable}
	F_{\lambda/\mu}(u_1,\ldots,u_K ):= \sum_{ \{ \kappa^{(j)} \}}
	F_{\lambda/\kappa^{(K-1)}}(u_1) F_{\kappa^{(K-1)}/\kappa^{(K-2)}}(u_2)\ldots
	F_{\kappa^{(1)}/\mu}(u_K),
\end{equation}
where the sum runs over all $(K-1)$-tuples of signatures $\kappa^{(j)}\in
	\mathsf{Sign}_{N+j}$, $j=1,\ldots,K-1 $, such that $\mu\prec
	\kappa^{(1)}\prec\ldots\prec \kappa^{(K-1)}\prec\lambda$.
Equivalently, $F_{\lambda/\mu}(u_1,\ldots,u_K )$ can be thought of as the
partition function of a path configuration similar to the one in
\Cref{fig:connecting_interlacing}, left, but consisting of $K$ horizontal
layers. The signatures $\mu$ and $\lambda$ encode, respectively, the bottom
and the top boundary conditions, and there are additional $K$ paths entering
on the left.

The multivariable version of $G^{c}$ is defined in a similar way. Fix $K\in
	\mathbb{Z}_{\ge1}$, $N\in \mathbb{Z}_{\ge0}$, and let $\mu,\nu\in
	\mathsf{Sign}_{N}$. Set
\begin{equation}
	\label{G_skew_multivariable}
	G_{\lambda/\mu}^c(u_1,\ldots,u_K ):= \sum_{ \{\kappa^{(j)}\} }
	G_{\mu/\kappa^{(K-1)}}^{c}(u_1) G_{\kappa^{(K-1)}/\kappa^{(K-2)}}^{c}(u_2)
	\ldots G_{\kappa^{(1)}/\nu}(u_K),
\end{equation}
where the sum is taken over all $(K-1)$-tuples of signatures $\kappa^{(j)}\in
	\mathsf{Sign}_N$, $j=1,\ldots,K-1 $, satisfying
$\nu\mathop{\dot\prec}\kappa^{(1)}\mathop{\dot\prec}\ldots
	\mathop{\dot\prec}\kappa^{(K-1)}\mathop{\dot\prec}\mu$. Equivalently,
$G_{\lambda/\mu}^c(u_1,\ldots,u_K )$ is the partition function of path
configurations similar to the one in \Cref{fig:connecting_interlacing}, right,
but consisting of $K$ horizontal layers. The signatures $\mu$ and $\nu$
encode, respectively, the bottom and the top boundary conditions.

The Yang-Baxter equation for the vertex weights used to define the functions
$F_{\lambda/\mu}(u_1,\ldots, u_K)$ and $G_{\mu/\nu}^{c}(u_1,\ldots, u_K)$
readily implies that these functions are symmetric with respect to
permutations of the $u_j$'s. See \cite[Theorem 3.5]{Borodin2014vertex} for
details.

In special cases when the lower diagram is simple, the skew functions $F$ and
$G^c$ admit explicit formulas expressing them as sums over permutations. Let
us recall such a formula for $F_{\lambda/\varnothing}$. A formula for
$G^c_{\mu/(0,\ldots,0 )}$ (where the number of zeros is the same as the number
of components in $\mu$) is of similar nature but is more complicated, so we
omit it here and refer to \cite[Theorem 5.1]{Borodin2014vertex}, \cite[Theorem
	4.14]{BorodinPetrov2016inhom} for details on the statements and their proofs.
For the function $F_{\lambda/\varnothing}$ with $\lambda\in \mathsf{Sign}_N^+$
we have
\begin{equation}
	\label{F_symmetrization_formula}
	F_{\lambda/\varnothing}(u_1,\ldots,u_N ) =
	\frac{(1-t)^N}{\prod_{i=1}^{N}(1-su_i)} \sum_{\sigma\in S(N)} \prod_{1\le
		i<j\le N}\frac{u_{\sigma(i)}-t u_{\sigma(j)}}{u_{\sigma(i)}-u_{\sigma(j)}}
	\prod_{i=1}^{N} \biggl( \frac{u_{\sigma(i)}-s}{1-su_{\sigma(i)}}
	\biggr)^{\lambda_i}.
\end{equation}

\subsection{Cauchy summation identities}
\label{sub:Cauchy_identity}

One of the central properties of the functions $F$ and $G^c$ described in
\Cref{sub:spin_HL_definition} is that they satisfy summation identities of
Cauchy type \cite{Borodin2014vertex}. The most basic of these identities is
the one for the single-variable functions:

\begin{theorem}[{Single-variable skew Cauchy identity \cite[Theorem
					4.2]{Borodin2014vertex}}]
	\label{thm:skew_Cauchy_one}
	Let $u,v\in \mathbb{C}$ satisfy
	\begin{equation}
		\label{condition_on_convergence}
		\left|\frac{(u-s)(1-sv)}{(v-s)(1-su)}\right|<1
	\end{equation}
	Then for any $N\in \mathbb{Z}_{\ge0}$, $\lambda\in \mathsf{Sign}_{N}$,
	$\mu\in  \mathsf{Sign}_{N+1}$ we have (see \Cref{fig:skew_Cauchy} for a
	graphical illustration of both sides of the sum)
	\begin{equation}
		\label{skew_Cauchy_identity}
		\sum_{\kappa\in \mathsf{Sign}_{N}}
		G_{\lambda/\kappa}^{c}(v^{-1})F_{\mu/\kappa}(u) =
		\frac{v-u}{v-tu} \sum_{\nu\in \mathsf{Sign}_{N+1}}
		F_{\nu/\lambda}(u)\,G_{\nu/\mu}^{c}(v^{-1}).
	\end{equation}
\end{theorem}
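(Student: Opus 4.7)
The plan is to realize both sides of \eqref{skew_Cauchy_identity} as partition functions of a single two-row horizontal vertex strip, and to use the Yang-Baxter equation of \Cref{sub:YB} to transform one into the other. The left-hand side is the partition function of such a strip with the $G^c$ row at the bottom (carrying vertex weights at spectral parameter $v$ after the inversion built into \eqref{G_skew_one_variable_definition}) and the $F$ row on top (with parameter $u$); the bottom vertical-arrow boundary is $\lambda$, the top is $\mu$, and the middle signature is summed over $\kappa$. The right-hand side is the same strip with the two rows exchanged, multiplied by the prefactor $(v-u)/(v-tu)$.

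First I would insert a cross vertex (the R-matrix \eqref{cross_vertex_weights}) at one end of the strip, with its two spectral parameters matching the row rapidities $u,v$. At the $-\infty$ end both rows carry horizontal arrows, and the unique cross-vertex configuration consistent with this boundary has weight $1$; at the $+\infty$ end the $F$ and $G^c$ asymptotic horizontal states differ, and the cross-vertex configuration that bridges the LHS and RHS asymptotic boundaries has weight equal to $(v-u)/(v-tu)$. Inserting the nontrivial cross vertex at $+\infty$ supplies exactly this prefactor.

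Next I would apply the sixteen Yang-Baxter identities \eqref{YB1.1}--\eqref{YB4.4} listed in \Cref{app:YB_equation} column by column to drag the cross vertex through the entire strip. Each local move preserves the partition function, while the cumulative effect of a complete sweep is to swap the rapidities of the two horizontal rows throughout the strip, thereby converting the LHS configuration ($G^c$ on bottom, $F$ on top) into the RHS configuration ($F$ on bottom, $G^c$ on top). Once the cross vertex reaches the opposite end it meets a boundary configuration forcing the trivial weight $1$, and can be removed. What remains is exactly the right-hand side of \eqref{skew_Cauchy_identity}.

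The principal technical difficulty is that the horizontal strip is infinite: the $G^c$ row involves an infinite product of column weights normalized as in \eqref{G_skew_one_variable_definition}, and the sums over intermediate signatures $\kappa,\nu$ have infinitely many nonzero terms. I would handle this by first truncating to a finite window $[-M,M']$, performing the Yang-Baxter sweep within this window, and then passing to the limit $M,M' \to \infty$. The convergence of the resulting series and infinite products is controlled precisely by the hypothesis \eqref{condition_on_convergence}, which encodes the translation behaviour captured in \eqref{F_shifting_property} and its $G^c$-counterpart.
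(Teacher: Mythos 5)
Your argument is correct, but it is not the route the paper takes: what you describe is the classical Yang--Baxter ``train'' argument, essentially the original proof of this statement in the cited source \cite{Borodin2014vertex}. You insert a cross vertex at $-\infty$, where the packed boundary forces it into the weight-$1$ state, sweep it through the two-row strip column by column using the identities \eqref{YB1.1}--\eqref{YB4.4} (each step preserving the partition function and the full sweep exchanging the two row rapidities), and remove it at $+\infty$ in the state of weight $\tfrac{v-u}{v-tu}$, which accounts for the prefactor; the only analytic input is that the boundary term in which the cross has not yet released the last horizontal arrow at the right edge of your truncation window decays geometrically, with ratio exactly the quantity in \eqref{condition_on_convergence}. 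The paper instead gives a \emph{bijectivised} version of this sweep: it first refines each local Yang--Baxter identity into forward and backward transition weights, assembles them into the transition probabilities on signatures of \Cref{def:Ufwd,def:Ubwd}, proves the reversibility identity of \Cref{prop:reversibility_on_signatures} as a product of local reversibility conditions, and then deduces \eqref{skew_Cauchy_identity} by summing over $\kappa$ and $\nu$ and using the normalization \eqref{transition_probabilities_U_sum_to_one}; the convergence issue you treat by truncation is packaged there into \Cref{lemma:cross_far_to_the_right} and the fact that $\mathsf{U}^{\mathrm{fwd}}_{v,u}$ is a probability distribution. Your route is more direct and applies to complex $u,v$ satisfying \eqref{condition_on_convergence} with no positivity assumptions, while the paper's proof as written works in the range \eqref{weights_nonnegativity_region_v_u}; what the paper's proof buys in exchange is a term-by-term refinement (a coupling) of the identity, which is exactly the structure later used to build the Yang--Baxter field. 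Two small points to tighten in your write-up: in the truncation step, identify explicitly that the discarded boundary contribution is the one with the cross still in the ``arrow-not-yet-transferred'' state and bound it by the geometric ratio from \eqref{condition_on_convergence}; and note that the per-column normalizing denominators built into \eqref{F_skew_one_variable_definition} and \eqref{G_skew_one_variable_definition} are the same on both sides of the sweep (they do not depend on which row is on top), so the Yang--Baxter equation for raw weights indeed yields the identity for $F$ and $G^{c}$.
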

\begin{remark}
	\label{rmk:skew_Cauchy}
	In \Cref{thm:skew_Cauchy_one} the sum over $\kappa$ in the left-hand
	side is finite, while the sum over $\nu$ in the right-hand side is infinite.
	Condition \eqref{condition_on_convergence} is needed to ensure the convergence
	of this infinite sum.
\end{remark}

In \Cref{sub:bijective_proof_skew_Cauchy} below we will present a new
bijective proof of the skew Cauchy identity of \Cref{thm:skew_Cauchy_one}
employing the forward and backward transition weights developed of
\Cref{sec:main_construction}. This bijective proof motivates a new version of
the skew Cauchy identity which we present in \Cref{sub:another_Cauchy}.

\begin{figure}[htpb]
\centering
\begin{tikzpicture}[scale=.6, thick]
		\draw (-4.5,0)--++(10.5,0) node[below right] {$v$};
		\draw (-4.5,1)--++(10.5,0) node[above right] {$u$};
		\draw[densely dotted, line width=.4]
		(-4.5,2)--++(10.2,0);
		\draw[densely dotted, line width=.4]
		(-4.5,-1)--++(10.2,0);
		\foreach \ii in {-4,...,5}
			{
				\draw (\ii,1.1)--++(0,-.2);
				\draw (\ii,.1)--++(0,-.2)
				node[below, yshift=-25] {$\ii$};
				\draw[densely dotted, line width=.4]
				(\ii,-1.3)--++(0,3.6);
			}
		\node at (0.5,-1.1) {$\lambda$};
		\draw [line width=2,->] (-2,-1)--++(0,1);
		\draw [line width=2,->] (.9,-1)--++(0,1);
		\draw [line width=2,->] (1.1,-1)--++(0,1);
		\draw [line width=2,->] (3,-1)--++(0,1);
		\node at (0.5,2.1) {$\mu$};
		\draw [line width=2,->] (-3.1,1)--++(.1,.1)--++(0,.9);
		\draw [line width=2,->] (-1,1)--++(0,1);
		\draw [line width=2,->] (0,1)--++(0,1);
		\draw [line width=2,->] (1,1)--++(0,1);
		\draw [line width=2,->] (4,1)--++(0,1);
		\node at (.5,0.5) {$\kappa$};
		\draw [line width=2,->] (-3,0)--++(0,.9);
		\draw [line width=2,->] (0,0)--++(0,1);
		\draw [line width=2,->] (2,0)--++(0,1);
		\draw [line width=2,->] (-5,1)--++(1,0);
		\draw [line width=2,->] (-4,1)--++(.9,0);
		\draw [line width=2,->] (-5,0)--++(1,0);
		\draw [line width=2,->] (-4,0)--++(1,0);
		\draw [line width=2,->] (-3,.9)--++(.1,.1)--++(.9,0);
		\draw [line width=2,->] (-2,1)--++(1,0);
		\draw [line width=2,->] (-2,0)--++(1,0);
		\draw [line width=2,->] (-1,0)--++(1,0);
		\draw [line width=2,->] (.9,0)--++(.1,.1)--++(0,.9);
		\draw [line width=2,->] (1.1,0)--++(.9,0);
		\draw [line width=2,->] (2,1)--++(1,0);
		\draw [line width=2,->] (3,1)--++(1,0);
		\draw [line width=2,->] (3,0)--++(1,0);
		\draw [line width=2,->] (4,0)--++(1,0);
		\draw [line width=2,->] (5,0)--++(1,0);
		\draw [line width=2,->] (6,0)--++(1,0);
	\end{tikzpicture}\qquad \quad
	\begin{tikzpicture}[scale=.6, thick]
		\draw (-4.5,0)--++(10.5,0) node[below right] {$u$};
		\draw (-4.5,1)--++(10.5,0) node[above right] {$v$};
		\draw[densely dotted, line width=.4]
		(-4.5,2)--++(10.2,0);
		\draw[densely dotted, line width=.4]
		(-4.5,-1)--++(10.2,0);
		\foreach \ii in {-4,...,5}
			{
				\draw (\ii,1.1)--++(0,-.2);
				\draw (\ii,.1)--++(0,-.2)
				node[below, yshift=-25] {$\ii$};
				\draw[densely dotted, line width=.4]
				(\ii,-1.3)--++(0,3.6);
			}
		\node at (0.5,-1.1) {$\lambda$};
		\draw [line width=2,->] (-2,-1)--++(0,.9);
		\draw [line width=2,->] (.9,-1)--++(0,1);
		\draw [line width=2,->] (1.1,-1)--++(0,1);
		\draw [line width=2,->] (3,-1)--++(0,1);
		\node at (0.5,2.1) {$\mu$};
		\draw [line width=2,->] (-3,1)--++(0,1);
		\draw [line width=2,->] (-1.1,1)--++(.1,.1)--++(0,.9);
		\draw [line width=2,->] (0,1)--++(0,1);
		\draw [line width=2,->] (1,1)--++(0,1);
		\draw [line width=2,->] (4,1)--++(0,1);
		\node at (.5,0.5) {$\nu$};
		\draw [line width=2,->] (-5,1)--++(1,0);
		\draw [line width=2,->] (-4,1)--++(1,0);
		\draw [line width=2,->] (-5,0)--++(1,0);
		\draw [line width=2,->] (-4,0)--++(1,0);
		\draw [line width=2,->] (-3,0)--++(.9,0);
		\draw [line width=2,->] (-2.1,0)--++(.1,.1)--++(0,.9);
		\draw [line width=2,->] (-2,-.1)--++(.1,.1)--++(.9,0);
		\draw [line width=2,->] (-2,1)--++(.9,0);
		\draw [line width=2,->] (-1,0)--++(0,.9);
		\draw [line width=2,->] (-1,.9)--++(.1,.1)--++(.9,0);
		\draw [line width=2,->] (.9,0)--++(.1,.1)--++(0,.9);
		\draw [line width=2,->] (1.1,0)--++(.9,0);
		\draw [line width=2,->] (2,0)--++(0,1);
		\draw [line width=2,->] (2,1)--++(1,0);
		\draw [line width=2,->] (3,1)--++(1,0);
		\draw [line width=2,->] (3,0)--++(1,0);
		\draw [line width=2,->] (4,0)--++(1,0);
		\draw [line width=2,->] (5,0)--++(0,1);
		\draw [line width=2,->] (5,1)--++(1,0);
		\draw [line width=2,->] (6,1)--++(1,0);
	\end{tikzpicture}
	\caption{
		Illustration of the sums in the skew Cauchy identity \eqref{skew_Cauchy_identity}
		with $\lambda=(3,1,1,-2)$, $\mu=(4,1,0,-1,-3)$, and $N=4$.
		Left: The (finite) sum runs over $\kappa\in\mathsf{Sign}_N$
		with $\lambda\mathop{\dot\succ}\kappa\prec \mu$.
		Right: The (infinite) sum runs over $\nu\in \mathsf{Sign}_{N+1}$ with
		$\lambda\prec \nu\mathop{\dot\succ}\mu$. Spectral parameters
		corresponding to the two horizontal layers are also indicated.
	}
	\label{fig:skew_Cauchy}
\end{figure}

Via iteration (cf. \eqref{F_skew_multivariable},
\eqref{G_skew_multivariable}), the skew Cauchy identity of
\Cref{thm:skew_Cauchy_one} implies the following multivariable identity:
\begin{corollary}[Multivariable skew Cauchy identity]
	\label{cor:skew_multi_Cauchy}
	Let $u_1,\ldots,u_K,v_1,\ldots,v_L\in \mathbb{C}$ be such that each
	pair $(u_i,v_j)$ satisfies \eqref{condition_on_convergence}. For any $N\in
		\mathbb{Z}_{\ge0}$, $\lambda\in \mathsf{Sign}_{N}$, and $\mu\in
		\mathsf{Sign}_{N+K}$, we have
	\begin{multline}
		\label{skew_multi_Cauchy}
		\sum_{\kappa\in\mathsf{Sign}_N}
		G_{\lambda/\kappa}^{c}(v_1^{-1},\ldots,v_L^{-1} )
		F_{\mu/\kappa}(u_1,\ldots,u_K ) \\=
		\prod_{i=1}^{K}\prod_{j=1}^{L}\frac{v_j-u_i}{v_j-tu_i} \sum_{\nu\in
			\mathsf{Sign}_{N+K}} F_{\nu/\lambda}(u_1,\ldots,u_K )\,
		G_{\nu/\mu}^{c}(v_1^{-1},\ldots,v_L^{-1} )
	\end{multline}
\end{corollary}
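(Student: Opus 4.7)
The plan is to deduce the multivariable skew Cauchy identity from the single-variable version, Theorem~\ref{thm:skew_Cauchy_one}, by iterating it $KL$ times. The proof will proceed in two stages, peeling off one spectral parameter at a time.

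\emph{Stage 1.} First I would establish the intermediate ``one $v$, many $u$'s'' identity: for a single parameter $v$ and the family $u_1,\ldots,u_K$,
\begin{equation*}
\sum_{\kappa\in \mathsf{Sign}_{N}}
G^{c}_{\lambda/\kappa}(v^{-1})\,F_{\mu/\kappa}(u_1,\ldots,u_K)
=\prod_{i=1}^{K}\frac{v-u_i}{v-tu_i}
\sum_{\nu\in \mathsf{Sign}_{N+K}}
F_{\nu/\lambda}(u_1,\ldots,u_K)\,G^{c}_{\nu/\mu}(v^{-1}).
\end{equation*}
To prove this, I expand $F_{\mu/\kappa}(u_1,\ldots,u_K)$ via the definition \eqref{F_skew_multivariable} as a sum over an interlacing chain $\kappa=\kappa^{(0)}\prec\kappa^{(1)}\prec\cdots\prec\kappa^{(K)}=\mu$, with $F_{\kappa^{(K-i+1)}/\kappa^{(K-i)}}(u_i)$ at the $i$-th step. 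Then I apply Theorem~\ref{thm:skew_Cauchy_one} to the innermost sum on $\kappa^{(0)}$ pairing $G^{c}(v^{-1})$ with $F(u_K)$: this introduces a new interlacing variable $\nu^{(1)}$ on top of $\lambda$ and transforms the pair into $F_{\nu^{(1)}/\lambda}(u_K)\,G^{c}_{\nu^{(1)}/\kappa^{(1)}}(v^{-1})$, contributing the factor $(v-u_K)/(v-tu_K)$. Iterating this swap on $\kappa^{(1)},\ldots,\kappa^{(K-1)}$ with $u_{K-1},\ldots,u_1$ respectively, and recognizing the resulting chain $\lambda\prec\nu^{(1)}\prec\cdots\prec\nu^{(K)}$ as expressing $F_{\nu^{(K)}/\lambda}(u_1,\ldots,u_K)$ via \eqref{F_skew_multivariable}, yields the intermediate identity.

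\emph{Stage 2.} Next I would apply the intermediate identity $L$ times. Expanding $G^{c}_{\lambda/\kappa}(v_1^{-1},\ldots,v_L^{-1})$ via \eqref{G_skew_multivariable} as a sum over a chain $\lambda=\eta^{(0)}\mathop{\dot\succ}\eta^{(1)}\mathop{\dot\succ}\cdots\mathop{\dot\succ}\eta^{(L)}=\kappa$ of interlacing signatures in $\mathsf{Sign}_N$, I pull the rightmost factor $G^{c}_{\eta^{(L-1)}/\kappa}(v_L^{-1})$ past $F_{\mu/\kappa}(u_1,\ldots,u_K)$ using the intermediate identity (with $v=v_L$), producing $\prod_i(v_L-u_i)/(v_L-tu_i)$ and replacing $\mu$ by a new upper signature. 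I then repeat this for $v_{L-1},\ldots,v_1$. After all $L$ swaps, the chains recombine via \eqref{F_skew_multivariable} and \eqref{G_skew_multivariable} into $F_{\nu/\lambda}(u_1,\ldots,u_K)$ and $G^{c}_{\nu/\mu}(v_1^{-1},\ldots,v_L^{-1})$, and the accumulated prefactor is exactly $\prod_{i,j}(v_j-u_i)/(v_j-tu_i)$.

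The main technical point is justifying interchange of the (finitely many nested) infinite sums over the top signatures introduced by each application of Theorem~\ref{thm:skew_Cauchy_one}. Each such application is licensed precisely by the corresponding instance of the convergence condition \eqref{condition_on_convergence} for the pair $(u_i,v_j)$ currently being swapped; the hypothesis of the corollary gives all $KL$ such conditions simultaneously, so absolute convergence at every stage allows Fubini-type rearrangement and the iteration goes through. Aside from bookkeeping this convergence, no new algebraic input beyond Theorem~\ref{thm:skew_Cauchy_one} and the definitions \eqref{F_skew_multivariable}, \eqref{G_skew_multivariable} is needed.
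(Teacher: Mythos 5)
Your proposal is correct and matches the paper's intended argument: the paper derives \Cref{cor:skew_multi_Cauchy} precisely by iterating \Cref{thm:skew_Cauchy_one}, using the chain expansions \eqref{F_skew_multivariable} and \eqref{G_skew_multivariable} to peel off one pair $(u_i,v_j)$ at a time, with each swap contributing a factor $\frac{v_j-u_i}{v_j-tu_i}$. Your additional attention to the Fubini-type interchange of the infinite sums, justified by \eqref{condition_on_convergence} for every pair, only fills in a detail the paper leaves implicit.
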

Next, setting $\lambda=\varnothing$ and $\mu=(0^K)$ in
\Cref{cor:skew_multi_Cauchy}, we get:
\begin{corollary}[Ordinary Cauchy identity]
	Let $u_1,\ldots,u_K,v_1,\ldots,v_L\in \mathbb{C}$ be such that each
	pair $(u_i,v_j)$ satisfies \eqref{condition_on_convergence}. Then we have
	\label{cor:nonskew_multi_Cauchy}
	\begin{equation}
		\label{nonskew_multi_Cauchy}
		\prod_{i=1}^{K}\frac{1-t^i}{1-su_i}
		=
		\prod_{i=1}^{K}\prod_{j=1}^{L}\frac{v_j-u_i}{v_j-tu_i}
		\sum_{\nu\in\mathsf{Sign}^+_{K}}
		F_{\nu/\varnothing}(u_1,\ldots,u_K )\,
		G_{\nu/(0^K)}^{c}(v_1^{-1},\ldots,v_L^{-1} ) .
	\end{equation}
	Note that here the sum runs over nonnegative signatures because all
	parts of $\mu=(0^K)$ are nonnegative.
\end{corollary}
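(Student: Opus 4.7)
The strategy is to specialize the multivariable skew Cauchy identity of \Cref{cor:skew_multi_Cauchy} by taking $\lambda=\varnothing$ (so $N=0$) and $\mu=(0^K)\in\mathsf{Sign}_K^+$, and then to simplify both sides using the one-variable definitions of $F$ and $G^c$ together with the explicit symmetrization formula \eqref{F_symmetrization_formula}.

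First, I would analyze the left-hand side of \eqref{skew_multi_Cauchy} under this specialization. Since $\mathsf{Sign}_0=\{\varnothing\}$, the sum over $\kappa$ collapses to a single term indexed by $\kappa=\varnothing$. The factor $G^{c}_{\varnothing/\varnothing}(v_1^{-1},\ldots,v_L^{-1})$ is an empty product equal to $1$, as is visible directly from \eqref{G_skew_one_variable_definition} and \eqref{G_skew_multivariable} (no arrow-containing vertices occur since both boundary signatures are empty, so the quotients are trivial). Therefore the left-hand side reduces to $F_{(0^K)/\varnothing}(u_1,\ldots,u_K)$.

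Next, I would evaluate $F_{(0^K)/\varnothing}(u_1,\ldots,u_K)$ using the symmetrization formula \eqref{F_symmetrization_formula}. Plugging in $\lambda=(0^K)$ kills the factor $\prod_i\bigl((u_{\sigma(i)}-s)/(1-su_{\sigma(i)})\bigr)^{\lambda_i}$, leaving
\begin{equation*}
F_{(0^K)/\varnothing}(u_1,\ldots,u_K)=\frac{(1-t)^K}{\prod_{i=1}^K(1-su_i)}\sum_{\sigma\in S(K)}\prod_{1\le i<j\le K}\frac{u_{\sigma(i)}-t\,u_{\sigma(j)}}{u_{\sigma(i)}-u_{\sigma(j)}}.
\end{equation*}
The key input at this step is the classical Hall--Littlewood symmetrization identity
\begin{equation*}
\sum_{\sigma\in S(K)}\prod_{1\le i<j\le K}\frac{u_{\sigma(i)}-t\,u_{\sigma(j)}}{u_{\sigma(i)}-u_{\sigma(j)}}=\prod_{i=1}^K\frac{1-t^i}{1-t},
\end{equation*}
which is essentially the evaluation $P_{\varnothing}(u_1,\ldots,u_K;t)=1$ in the Hall--Littlewood framework (or equivalently follows from the Macdonald constant-term identity at the trivial highest weight). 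Combining these two identities converts the left-hand side to $\prod_{i=1}^K(1-t^i)/(1-su_i)$, which is exactly the prefactor on the left of \eqref{nonskew_multi_Cauchy}.

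Finally, on the right-hand side of \eqref{skew_multi_Cauchy} the sum runs over $\nu\in\mathsf{Sign}_{0+K}$ with $\nu\mathop{\dot\succ}(0^K)$. Unwinding the interlacing condition \eqref{interlace2} with $\mu=(0^K)$ and $\nu_K\le 0\le\nu_{K-1}\le 0\le\cdots$ forces $\nu_K\ge 0$, i.e.\ $\nu\in\mathsf{Sign}^+_K$, matching the summation range in \eqref{nonskew_multi_Cauchy}. The factor $F_{\nu/\varnothing}$ is unchanged, as is $G^c_{\nu/(0^K)}$, and the product $\prod_{i,j}(v_j-u_i)/(v_j-tu_i)$ is already in the desired form. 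Putting the two sides together yields the claimed identity. The only non-routine input is the Hall--Littlewood symmetrization sum, but this is a well-established identity and not truly an obstacle; everything else is direct specialization of \Cref{cor:skew_multi_Cauchy}.
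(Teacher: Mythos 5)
Your proposal is correct and follows essentially the paper's own route: the paper obtains \eqref{nonskew_multi_Cauchy} exactly by setting $\lambda=\varnothing$ and $\mu=(0^K)$ in \Cref{cor:skew_multi_Cauchy}, with the evaluation $F_{(0^K)/\varnothing}(u_1,\ldots,u_K)=\prod_{i=1}^{K}(1-t^i)/(1-su_i)$ (which you make explicit via \eqref{F_symmetrization_formula} and the classical Hall--Littlewood symmetrization sum) left implicit, and with $G^c_{\varnothing/\varnothing}=1$ collapsing the left-hand side as you say. The only cosmetic slip is your unwinding of the interlacing on the right-hand side: since $G^c_{\nu/(0^K)}$ here has $L$ variables, the relevant condition is that $\nu/(0^K)$ is a concatenation of $L$ horizontal strips rather than the single-variable chain of inequalities you wrote (which, taken literally, would over-constrain $\nu$); but all that is needed, and all the paper asserts, is that $G^c_{\nu/(0^K)}$ vanishes unless $\nu$ is nonnegative, which indeed holds.
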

\section{Transition probabilities $\mathsf{U}^{\mathrm{fwd}}$ and
$\mathsf{U}^{\mathrm{bwd}}$ on signatures}
\label{sec:local_transition_probabilities}

In this section, employing the vertex level forward and backward transition
probabilities from \Cref{sec:main_construction}, we define the transition
probabilities on signatures $\mathsf{U}_{v,u}^{\mathrm{fwd}}
	(
	\kappa\to \nu\mid \lambda,\mu )$ and $\mathsf{U}_{v,u}^{\mathrm{bwd}}(
	\nu\to\kappa\mid \lambda,\mu
	)$. The latter probabilities are in particular used to give a new
bijective proof of the skew Cauchy identity (\Cref{thm:skew_Cauchy_one}).

\subsection{Definition of transition probabilities on signatures}
\label{sub:definition_of_three_signature_probabilities}

Throughout this section we assume that our parameters satisfy
\begin{equation}
	\label{weights_nonnegativity_region_v_u}
	0\le t<1,\qquad  -1<s\le 0, \qquad  0\le u< v<1,
\end{equation}
so that the probabilities $P_{v,u}^{\mathrm{fwd}}$ and
$P_{v,u}^{\mathrm{bwd}}$ of the local Yang-Baxter moves are nonnegative
(thanks to \Cref{prop:nonnegative_transition_weights}). In particular, this
implies the convergence condition \eqref{condition_on_convergence} in Cauchy
identities. Note that we need a strict inequality in
\eqref{condition_on_convergence}, and for that we require $u<v$.

The condition $v<1$ (hence $u<1$) included in
\eqref{weights_nonnegativity_region_v_u} ensures that the vertex weights
\eqref{vertex_weights} with spectral parameters $u$ and $v$ are nonnegative.
This property will be essential in \Cref{sec:YB_field} below.


\begin{remark}
	\label{rmk:swap_u_v}
	In this and the following sections (in comparison with
	\Cref{sec:main_construction}) we swap the parameters
	$(u,v)\leftrightarrow(v,u)$ it probabilities of the local Yang-Baxter moves.
	The swapped parameters (corresponding to $P_{v,u}^{\mathrm{fwd}}$ and
	$P_{v,u}^{\mathrm{bwd}}$) match the skew Cauchy identities of
	\Cref{sub:Cauchy_identity} (cf. \Cref{fig:skew_Cauchy}).
\end{remark}

Fix $N\in \mathbb{Z}_{\ge0}$, and let $\kappa,\lambda\in
	\mathsf{Sign}_N$ and $\mu\in \mathsf{Sign}_{N+1}$ such that
$\lambda\mathop{\dot\succ}\kappa\prec \mu$ be fixed. For each $\nu\in
	\mathsf{Sign}_{N+1}$ we define the forward transition probability
$\mathsf{U}_{v,u}^{\mathrm{fwd}}\left(\kappa\to \nu\mid \lambda,\mu
	\right)$ by constructing a random signature $\nu$ as follows.

Consider the two-layer arrow configuration as in \Cref{fig:skew_Cauchy}, left,
with signatures $\lambda,\kappa,\mu$ appearing from bottom to top. Observe
that this configuration has boundary conditions $\scalebox{.6}{
		\begin{tikzpicture} [scale=1.5, ultra thick, baseline=6pt]
			\draw[->] (-.5,0.1)--++(.4,0);
			\draw[->] (-.5,.4)--++(.4,0);
		\end{tikzpicture}
	}$ on the far left and $\scalebox{.6}{
		\begin{tikzpicture} [scale=1.5, ultra thick, baseline=6pt]
			\draw[->] (-.5,0.1)--++(.4,0);
			\draw[dotted] (-.5,.4)--++(.4,0);
		\end{tikzpicture}
	}$ on the far right, and, moreover, cannot contain vertical arrows to
the left of $\mu_{N+1}$ and to the right of $\lambda_1$. Add the cross vertex
$\iiYBii{.25}{3.5}{10.5pt}$ to the left of an arbitrary location $M\le
	\mu_{N+1}$. Then for each $r=M,M+1,\ldots $ perform the forward randomized
Yang-Baxter move which drags the cross to the right through the column number
$r$. Let these forward Yang-Baxter moves have probabilities
$P_{v,u}^{\mathrm{fwd}}$ given in \Cref{fig:fwd_YB}. This sequence of forward
Yang-Baxter moves will not affect the signatures $\lambda,\mu$, and will
randomly change $\kappa$, cf. \Cref{fig:U_transition_probabilities}.

\begin{figure}[htpb]
\centering
\begin{tikzpicture}[scale=.8, thick]
	\draw (-4.5,0)--++(4,0)--++(1,1)--++(6,0);
	\draw (-4.5,1)--++(4,0)--++(1,-1)-++(6,0);
	\node at (-5.5,0) {$u$};
	\node at (-5.5,1) {$v$};
	\node at (7.5,-0) {$v$};
	\node at (7.5,1) {$u$};
		\draw[densely dotted, line width=.4]
		(-4.5,2)--++(4,0);
		\draw[densely dotted, line width=.4]
		(-4.5,-1)--++(4,0);
		\foreach \ii in {-4,...,-1}
			{
				\draw (\ii,1.1)--++(0,-.2);
				\draw (\ii,.1)--++(0,-.2)
				node[below, yshift=-42] {$\ii$};
				\draw[densely dotted, line width=.4]
				(\ii,-1.3)--++(0,3.6);
			}
		\foreach \ii in {0,...,5}
			{
				\draw (\ii+1,1.1)--++(0,-.2);
				\draw (\ii+1,.1)--++(0,-.2)
				node[below, yshift=-42] {$\ii$};
				\draw[densely dotted, line width=.4]
				(\ii+1,-1.3)--++(0,3.6);
			}
		\node at (-2,-1.5) {$\lambda_4$};
		\node at (1,-1.5) {$\lambda_3$};
		\node at (3,-1.5) {$\lambda_2$};
		\node at (5,-1.5) {$\lambda_1$};
		\draw [line width=2,->] (-2,-1)--++(0,.95);
		\node at (-3,2.5) {$\mu_5$};
		\node at (-1,2.5) {$\mu_4$};
		\node at (1,2.5) {$\mu_3$};
		\node at (3,2.5) {$\mu_2$};
		\node at (6,2.5) {$\mu_1$};
		\node at (-2.3,0.5) {$\nu_5$};
		\node at (-1.3,0.5) {$\nu_4$};
		\node[anchor=west] at (1.1,0.5) {$\kappa_3=\kappa_2$};
		\node at (4.4,0.5) {$\kappa_1$};
		\draw [line width=2,->] (6,0)--++(1,0);
		\draw [line width=2,->] (-5,1)--++(1,0);
		\draw [line width=2,->] (-4,1)--++(1,0);
		\draw [line width=2,->] (-3,1)--++(0,1);
		\draw [line width=2,->] (-1,1)--++(0,1);
		\draw [line width=2,->] (-5,0)--++(1,0);
		\draw [line width=2,->] (-4,0)--++(1,0);
		\draw [line width=2,->] (-3,0)--++(.95,0);
		\draw [line width=2,->] (-2,0)--++(0,1);
		\draw [line width=2,->] (-2,1)--++(.95,0);
		\draw [line width=2,->] (-2,0)--++(1,0);
		\draw [line width=2,->] (-1,0)--++(0,.95);
		\draw [line width=2,->] (-1,1)--++(.5,0);
		\draw [line width=2,->] (-.5,1)--++(.5,-.5);
		\draw [line width=2,->] (0,.5)--++(.5,-.5);
		\draw [line width=2,->] (.5,0)--++(.4,0);
		\draw [line width=2,->] (.9,0)--++(0,.9);
		\draw [line width=2,->] (.9,.9)--++(.1,.1)--++(0,1);
		\draw [line width=2,->] (1.1,0)--++(0,.9);
		\draw [line width=2,->] (1.1,.9)--++(.1,.1)--++(.9,0);
		\draw [line width=2,->] (1,-1)--++(0,1);
		\draw [line width=2,->] (2,1)--++(1,0);
		\draw [line width=2,->] (3,1)--++(0,1);
		\draw [line width=2,->] (3,-1)--++(0,1);
		\draw [line width=2,->] (3,0)--++(1,0);
		\draw [line width=2,->] (4,0)--++(0,1);
		\draw [line width=2,->] (4,1)--++(1,0);
		\draw [line width=2,->] (5,1)--++(1,0);
		\draw [line width=2,->] (6,1)--++(0,1);
		\draw [line width=2,->] (5,-1)--++(0,1);
		\draw [line width=2,->] (5,0)--++(1,0);
	\end{tikzpicture}
	\caption{Performing randomized Yang-Baxter moves
		to sample $\nu$ given $\kappa$ under
		$\mathsf{U}_{v,u}^{\mathrm{fwd}}$ (dragging the cross to the right)
		or $\kappa$ given $\nu$ under
		$\mathsf{U}_{v,u}^{\mathrm{bwd}}$ (the cross is dragged the left).
	}
	\label{fig:U_transition_probabilities}
\end{figure}

\begin{lemma}
	\label{lemma:cross_far_to_the_right}
	As $r\to+\infty$, the state of the cross vertex stabilizes at
	$\oiYBio{.25}{3.5}{10.5pt}$\,.
\end{lemma}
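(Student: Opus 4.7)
The plan is to pin down the cross-vertex state by examining the tail behavior of the two-row configuration on either side of the cross, once $r$ is past the support of every relevant signature. For $r>\max(\lambda_1,\mu_1)$, combined with $\kappa_1\le\mu_1$ (which follows from $\kappa\prec\mu$), the unprocessed pair lying at the right of the cross has no incoming or outgoing vertical arrows. The tail boundary conditions of the skew functions then determine its horizontal edges: $G^c_{\lambda/\kappa}(v^{-1})$, which sits on the bottom row of the unprocessed side, has horizontal arrows packed at $+\infty$, whereas $F_{\mu/\kappa}(u)$, on the top row, is empty at $+\infty$. The unprocessed pair at column $r+1$ is therefore the bottom vertex $\Vii{.6}00{-4.5pt}$ stacked with the top vertex $\Voo{.6}00{-4.5pt}$. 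Matching these horizontal edges entering from the left with the right-hand corners of the cross forces $\mathrm{BR}=1$ and $\mathrm{TR}=0$.

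I would then run the analogous argument on the already-processed pair lying at the left of the cross. On that side the layer assignment is swapped: the bottom row is $F_{\nu/\lambda}(u)$ (empty at $+\infty$) and the top row is $G^c_{\nu/\mu}(v^{-1})$ (packed at $+\infty$). For $r$ past all vertical arrows on this side as well, the processed pair becomes bottom vertex $\Voo{.6}00{-4.5pt}$ and top vertex $\Vii{.6}00{-4.5pt}$, and matching its right-hand horizontal edges to the cross's left-hand corners yields $\mathrm{BL}=0$ and $\mathrm{TL}=1$. The combined constraint $(\mathrm{BL},\mathrm{TL},\mathrm{BR},\mathrm{TR})=(0,1,1,0)$ singles out exactly the cross-vertex state $\oiYBio{.3}{3}{13.5pt}$\,.

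The main technical obstacle is making sure that ``past all vertical arrows'' is actually attained on the processed side, where the middle signature $\nu$ is produced randomly by the preceding Yang-Baxter moves. I would address this by a short consistency check from \Cref{fig:fwd_YB}: in the row labelled $\oiYBio$ at $g=0$, the only non-vanishing entry is the diagonal $\oiYBio\to\oiYBio$ with probability one, because every other entry in that row either vanishes at $g=0$ or is only well-defined for $g\ge 1$. Hence $\oiYBio$ with $g=0$ is a deterministic fixed point of the dynamics, creating no further middle vertical arrows. Since $\nu$ has only finitely many components, the largest column at which the cross has ever created a middle arrow is almost surely finite; once $r$ exceeds both this column and $\max(\lambda_1,\mu_1)$, both sides of the cross lie in their respective tail patterns and the four-corner analysis above forces the cross to remain in state $\oiYBio$.
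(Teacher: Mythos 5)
Your deterministic boundary-matching is sound and mirrors the first step of the paper's proof: once $r>\max(\lambda_1,\mu_1)$ the packed/empty tails of the two rows fix the right edges of the cross, leaving only the two states $\oiYBio{.25}{3.5}{10.5pt}$ and $\ioYBio{.25}{3.5}{10.5pt}$, and your observation that $\oiYBio{.25}{3.5}{10.5pt}$ at $g=0$ is absorbing is also correct. The problem is your treatment of the ``main technical obstacle.'' The statement ``since $\nu$ has only finitely many components, the largest column at which the cross has ever created a middle arrow is almost surely finite'' is circular: the number of components of $\nu$ is fixed at $N+1$ by construction, but the top part $\nu_1$ is placed exactly at the (random) step when the cross switches from $\ioYBio{.25}{3.5}{10.5pt}$ to $\oiYBio{.25}{3.5}{10.5pt}$, and a priori the cross could remain in the state $\ioYBio{.25}{3.5}{10.5pt}$ forever, in which case $\nu_1$ is never placed and the state never stabilizes. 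Almost-sure finiteness of that switching time is precisely the content of the lemma, so it cannot be taken as an input to the argument that derives the left edges of the cross from ``having passed $\nu_1$.''

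The missing ingredient is quantitative. For a tail column ($g=0$, right boundary bottom arrow / top empty) the forward move from $\ioYBio{.25}{3.5}{10.5pt}$ gives $\oiYBio{.25}{3.5}{10.5pt}$ with probability $1-\frac{(u-s)(1-sv)}{(v-s)(1-su)}$ and keeps $\ioYBio{.25}{3.5}{10.5pt}$ otherwise (this is the $g=0$ entry of the table in \Cref{fig:fwd_YB}, with the swap of \Cref{rmk:swap_u_v}); under \eqref{weights_nonnegativity_region_v_u}, equivalently the strict inequality \eqref{condition_on_convergence}, this switching probability is strictly positive and does not depend on the column. Hence the time spent in $\ioYBio{.25}{3.5}{10.5pt}$ past $\lambda_1$ is geometric, so almost surely finite (this is where $\nu_1$ gets its a.s.\ finite, geometrically distributed location), and afterwards the absorbing property you already noted keeps the cross in $\oiYBio{.25}{3.5}{10.5pt}$. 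Note also that the positivity genuinely requires $u<v$: if the ratio in \eqref{condition_on_convergence} were equal to $1$ the cross would drift in state $\ioYBio{.25}{3.5}{10.5pt}$ indefinitely, so no purely combinatorial ``finitely many parts'' argument can replace this probabilistic estimate.
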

\begin{proof}
	Once the cross vertex passes to the right of $\lambda_1$ it can only
	be in one of two states, $\oiYBio{.25}{3.5}{10.5pt}$ or
	$\ioYBio{.25}{3.5}{10.5pt}$\,, since the boundary conditions far to the right
	are $\scalebox{.6}{
			\begin{tikzpicture} [scale=1.5, ultra thick,
					baseline=6pt]
				\draw[->] (-.5,0.1)--++(.4,0);
				\draw[dotted] (-.5,.4)--++(.4,0);
			\end{tikzpicture}
		}$. From the table in
	\Cref{fig:fwd_YB} we see that $P_{v,u}^{\mathrm{fwd}}\left(
		\oiYBio{.25}{3.5}{10.5pt},\oiYBio{.25}{3.5}{10.5pt} \right)=1$. Moreover,
	since there are no vertical arrows to the right of $\lambda_1$, we have
	$P_{v,u}^{\mathrm{fwd}}\left(
		\ioYBio{.25}{3.5}{10.5pt},\oiYBio{.25}{3.5}{10.5pt} \right)   =
		1-\frac{(u-s)(1-sv)}{(v-s)(1-su)}$, which is strictly positive by
	\eqref{condition_on_convergence}. Therefore, the state
	$\ioYBio{.25}{3.5}{10.5pt}$ of the cross vertex eventually turns into
	$\oiYBio{.25}{3.5}{10.5pt}$ with probability 1 (which in fact corresponds to
	choosing $\nu_1$ somewhere to the right of $\lambda_1$), and the latter state
	is preserved forever.
\end{proof}

We see that the process of (randomized) dragging of the cross vertex to the
right essentially terminates. Cutting cross vertex $\oiYBio{.25}{3.5}{10.5pt}$
which has stabilized far on the right, we obtain the final two-layer arrow
configuration which looks as in \Cref{fig:skew_Cauchy}, right. That is, the
boundary conditions are now $\scalebox{.6}{
		\begin{tikzpicture} [scale=1.5, ultra thick, baseline=6pt]
			\draw[->] (-.5,0.1)--++(.4,0);
			\draw[->] (-.5,.4)--++(.4,0);
		\end{tikzpicture}
	}$ on the far left and $\scalebox{.6}{
		\begin{tikzpicture} [scale=1.5, ultra thick, baseline=6pt]
			\draw[dotted] (-.5,0.1)--++(.4,0);
			\draw[->] (-.5,.4)--++(.4,0);
		\end{tikzpicture}
	}$ on the far right, while the fixed signature
$\kappa\in\mathsf{Sign}_N$ in the middle has been replaced by a \emph{random}
signature $\nu\in \mathsf{Sign}_{N+1}$. Moreover, this new signature satisfies
$\lambda\prec
	\nu\mathop{\dot\succ}\mu$ because in the final two-layer configuration
there can be at most one horizontal arrow per edge.

\begin{definition}
	\label{def:Ufwd}
	The law of the random signature $\nu\in \mathsf{Sign}_{N+1}$ described
	above will be denoted by $\mathsf{U}_{v,u}^{\mathrm{fwd}}(\kappa\to\nu\mid
		\lambda,\mu)$. We will call $\mathsf{U}_{v,u}^{\mathrm{fwd}}$ the
	\emph{forward transition probabilities} (\textit{on signatures}).
\end{definition}

The backward transition probabilities $\mathsf{U}_{v,u}^{\mathrm{bwd}}
	\left(
	\nu\to \kappa\mid \lambda,\mu \right)$ (where the signatures
$\lambda\in\mathsf{Sign}_N$, $\nu,\mu\in\mathsf{Sign}_{N+1}$ with
$\lambda\prec
	\nu\mathop{\dot\succ}\mu$ are given) are defined in a similar way, but
now the cross vertex $\oiYBio{.25}{3.5}{10.5pt}$ is added to the right of
$\nu_1$ and is dragged to the left using the backward Yang-Baxter moves having
probabilities $P_{v,u}^{\mathrm{bwd}}$ given in \Cref{fig:bwd_YB}. The process
of dragging the cross vertex to the left terminates at $\mu_{N+1}$ when the
cross vertex has the state $\iiYBii{.25}{3.5}{10.5pt}$\,. This process does
not affect the signatures $\lambda$ and $\mu$, and turns the fixed signature
$\nu\in \mathsf{Sign}_{N+1}$ in the middle into a \emph{random} signature
$\kappa\in \mathsf{Sign}_N$.

\begin{definition}
	\label{def:Ubwd}
	The law of the random signature $\kappa\in \mathsf{Sign}_{N}$ just
	described will be denoted by $\mathsf{U}_{v,u}^{\mathrm{bwd}}(\nu\to\kappa\mid
		\lambda,\mu)$. We will call $\mathsf{U}_{v,u}^{\mathrm{bwd}}$ the
	\emph{backward transition probabilities} (\emph{on signatures}).
\end{definition}

Clearly, by the very construction,
\begin{equation}
	\label{transition_probabilities_U_sum_to_one}
	\begin{split}
		\sum_{\nu\in \mathsf{Sign}_{N+1}}
		\mathsf{U}_{v,u}^{\mathrm{fwd}}
		\left(
		\kappa\to \nu\mid \lambda,\mu \right)&=1,\qquad
		\textnormal{for every $\lambda,\kappa,\mu$ with
			$\lambda\mathop{\dot\succ}\kappa\prec \mu$};\\
		\sum_{\kappa\in \mathsf{Sign}_{N}}
		\mathsf{U}_{v,u}^{\mathrm{bwd}}
		\left(
		\nu\to \kappa\mid \lambda,\mu \right)&=1,\qquad
		\textnormal{for every $\lambda,\nu,\mu$ with $\lambda\prec
				\nu\mathop{\dot\succ}\mu$}.
	\end{split}
\end{equation}
The first of these sums is infinite and converges due to
\eqref{weights_nonnegativity_region_v_u}. The second of the sums is finite.

\begin{proposition}
	\label{prop:U_are_products_of_P}
	Let $N\in \mathbb{Z}_{\ge0}$, $\lambda,\kappa\in\mathsf{Sign}_N$, and
	$\mu,\nu\in\mathsf{Sign}_{N+1}$ be fixed. The forward transition probability
	$\mathsf{U}_{v,u}^{\mathrm{fwd}}(\kappa\to\nu\mid \lambda,\mu)$ on signatures
	is equal to the product of finitely many local forward transition
	probabilities $P_{v,u}^{\mathrm{fwd}}$ over columns with numbers from
	$\mu_{N+1}$ to $\nu_1$. Similarly,
	$\mathsf{U}_{v,u}^{\mathrm{bwd}}(\nu\to\kappa\mid \lambda,\mu)$ is the product
	of finitely many local backward transition probabilities
	$P_{v,u}^{\mathrm{bwd}}$ over columns from $\mu_{N+1}$ to $\nu_1$.
\end{proposition}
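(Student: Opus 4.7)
The plan is to unwind the sequential construction in \Cref{def:Ufwd} and \Cref{def:Ubwd} directly. I focus on the forward case; the backward case is entirely analogous, with \Cref{fig:bwd_YB} replacing \Cref{fig:fwd_YB} and the cross dragged from right to left starting in state $\oiYBio{.3}{3}{13.5pt}$.

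By construction, $\mathsf{U}^{\mathrm{fwd}}_{v,u}(\kappa\to\nu\mid\lambda,\mu)$ is the probability that the Markov chain of randomized Yang-Baxter moves, started with cross vertex $\iiYBii{.3}{3}{13.5pt}$ to the left of some column $M\le\mu_{N+1}$, converts the three-layer arrow configuration with middle signature $\kappa$ into the one with middle signature $\nu$. Since successive moves are independent conditional on the current cross state and the W-pair column being processed, this probability a priori decomposes as a sum over cross-vertex trajectories of products of the local weights $P^{\mathrm{fwd}}_{v,u}$. My first claim is that once $\lambda,\kappa,\mu,\nu$ are fixed the trajectory is unique: when the cross sits just to the left of column $r+1$ (about to process it), its four surrounding edges are determined---two (NW, SW) by the already-updated $(\lambda,\nu,\mu)$-configuration to its left and two (NE, SE) by the original $(\lambda,\kappa,\mu)$-configuration to its right---and these four edges force the cross state. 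Hence $\mathsf{U}^{\mathrm{fwd}}_{v,u}(\kappa\to\nu\mid\lambda,\mu)$ equals a single product of $P^{\mathrm{fwd}}_{v,u}$-factors along this forced trajectory.

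The second step reduces this a priori infinite product to the finite range $[\mu_{N+1},\nu_1]$ (well-defined since the interlacings $\kappa\prec\mu$ and $\mu\mathop{\dot\prec}\nu$ give $\mu_{N+1}\le\nu_1$). For columns $r<\mu_{N+1}$ no vertical arrows are present in any of the three layers at or below column $r$, so the cross passes through in state $\iiYBii{.3}{3}{13.5pt}$ with $g=0$, and the table in \Cref{fig:fwd_YB} gives $P^{\mathrm{fwd}}_{v,u}(\iiYBii{.3}{3}{13.5pt},\iiYBii{.3}{3}{13.5pt})=1$. For $r>\nu_1$ the interlacings $\lambda\prec\nu\mathop{\dot\succ}\mu$ force $\nu_1\ge\max(\lambda_1,\mu_1)$, so no vertical arrows remain past column $\nu_1$; by \Cref{lemma:cross_far_to_the_right} the cross has stabilized at $\oiYBio{.3}{3}{13.5pt}$, and the same table gives $P^{\mathrm{fwd}}_{v,u}(\oiYBio{.3}{3}{13.5pt},\oiYBio{.3}{3}{13.5pt})=1$. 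Combining the two steps delivers the stated finite product. The main technical point, modest here, is the trajectory uniqueness above, which amounts to a column-by-column application of arrow preservation at the cross vertex; no algebraic computation beyond reading \Cref{fig:fwd_YB,fig:bwd_YB} is needed.
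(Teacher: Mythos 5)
Your proof is correct and takes essentially the same route as the paper's: both hinge on the fact that, given the four signatures, the state of the cross vertex at every stage of the drag is uniquely determined (you get this by arrow conservation applied to the already-updated configuration on the left and the untouched original configuration on the right of the cross, while the paper runs an induction from the far left on the local data around the current column), so the probability collapses to a single product of forced local factors. Your explicit check that the factors outside $[\mu_{N+1},\nu_1]$ equal $1$ is a useful addition the paper leaves implicit; note only that for columns $r>\nu_1$ this is a consequence of your own edge-determination argument conditioned on the outcome $\nu$ (all four signatures have no parts beyond $\nu_1$, which forces the stabilized cross state and a probability-$1$ move), rather than of \Cref{lemma:cross_far_to_the_right}, which is an unconditional statement about the limit $r\to+\infty$.
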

\begin{proof}
	We argue only about forward transition probabilities, the case of the
	backward ones is analogous. Let the multiplicative notations of the signatures
	$\lambda,\kappa,\nu,\mu$ be
	$\lambda=\ldots(-1)^{\ell_{-1}}0^{\ell_0}1^{\ell_1}2^{\ell_2}\ldots  $,
	$\kappa=\ldots(-1)^{k_{-1}}0^{k_0}\ldots  $,
	$\nu=\ldots(-1)^{n_{-1}}0^{n_0}\ldots  $, and
	$\mu=\ldots(-1)^{m_{-1}}0^{m_0}\ldots  $. Consider the situation in the
	definition of $\mathsf{U}_{v,u}^{\mathrm{fwd}}$ when the cross vertex is moved
	through the column number $r$ (for example, $r=0$ in
	\Cref{fig:U_transition_probabilities}). Assume that the following data is
	known before the move of the cross vertex:
	\begin{itemize}
		\item
		      The state of the cross vertex (i.e., one of six states
		      as in \eqref{cross_vertex_weights});
		\item
		      The numbers $\ell_r,k_r,m_r$ of vertical arrows at the
		      $r$-th column before the move of the cross vertex;
		\item
		      The numbers $\ell_r,n_r,m_r$ of vertical arrows at the
		      $r$-th column after the move of the cross vertex;
		\item
		      The numbers of horizontal arrows in both layers of the
		      arrow configuration as in \Cref{fig:U_transition_probabilities} between the
		      $(r-1)$-st column and the cross vertex, as well as between the $r$-th and the
		      $(r+1)$-st columns.
	\end{itemize}
	One readily sees that the state of the cross vertex after the forward
	randomized Yang-Baxter move (placing the cross vertex one step to the right)
	is completely determined by the above data.

	The state of the cross vertex and all the above data at the far left
	is known. Therefore, by induction all the intermediate states of the cross
	vertex in the definition of $\mathsf{U}_{v,u}^{\mathrm{fwd}}(\kappa\to\nu\mid
		\lambda,\mu)$ are completely determined by the four signatures
	$\lambda,\kappa,\nu,\mu$. This implies that the transition probability
	$\mathsf{U}_{v,u}^{\mathrm{fwd}}(\kappa\to\nu\mid \lambda,\mu)$ on signatures
	is indeed equal to the product of the local transition probabilities
	$P_{v,u}^{\mathrm{fwd}}$ depending on these intermediate cross vertex states.
	This completes the proof.
\end{proof}

\subsection{Bijective proof of the skew Cauchy identity}
\label{sub:bijective_proof_skew_Cauchy}

The key observation leading to our bijective proof of
\Cref{thm:skew_Cauchy_one} is the following
\begin{proposition}[Reversibility on signatures]
	\label{prop:reversibility_on_signatures}
	Fix arbitrary $N\in \mathbb{Z}_{\ge0}$,
	$\lambda,\kappa\in\mathsf{Sign}_N$, and $\mu,\nu\in\mathsf{Sign}_{N+1}$. We
	have for any $(u,v)$ satisfying \eqref{weights_nonnegativity_region_v_u}:
	\begin{equation}
		\label{reversibility_on_signatures}
		\left[ \iiYBii{.35}{3}{14.5pt} \right]_{v,u}
		G_{\lambda/\kappa}^{c}(v^{-1})F_{\mu/\kappa}(u)
		\mathsf{U}_{v,u}^{\mathrm{fwd}}(\kappa\to\nu\mid \lambda,\mu) = \left[
			\oiYBio{.35}{3}{14.5pt} \right]_{v,u}
		F_{\nu/\lambda}(u)\,G_{\nu/\mu}^{c}(v^{-1})
		\mathsf{U}_{v,u}^{\mathrm{bwd}}(\nu\to\kappa\mid \lambda,\mu),
	\end{equation}
	where the weights of the cross vertices are given in
	\eqref{cross_vertex_weights} (modulo the swap, cf. \Cref{rmk:swap_u_v}).
\end{proposition}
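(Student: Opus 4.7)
The plan is to iterate the local reversibility of the Yang-Baxter bijectivisation column by column.

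First, by \Cref{prop:U_are_products_of_P}, both $\mathsf{U}^{\mathrm{fwd}}_{v,u}(\kappa\to\nu\mid\lambda,\mu)$ and $\mathsf{U}^{\mathrm{bwd}}_{v,u}(\nu\to\kappa\mid\lambda,\mu)$ factor as products of the local transition weights $P^{\mathrm{fwd}}_{v,u}$ and $P^{\mathrm{bwd}}_{v,u}$ over columns. At each column $r$, the reversibility condition~\eqref{reversibility_condition} built into the bijectivisation of \Cref{sub:YB_bijectivisation_new_label} reads
\[
w_{\mathrm{YB}}(c_r)\,w_{\mathrm{col}}(\alpha_r)\,P^{\mathrm{fwd}}_{v,u}\bigl((c_r,\alpha_r)\to(\beta_r,c_{r+1})\bigr)=w_{\mathrm{col}}(\beta_r)\,w_{\mathrm{YB}}(c_{r+1})\,P^{\mathrm{bwd}}_{v,u}\bigl((\beta_r,c_{r+1})\to(c_r,\alpha_r)\bigr),
\]
where $c_r,c_{r+1}$ are the cross-vertex states immediately before and after dragging the cross through column $r$, and $\alpha_r,\beta_r$ are the two-vertex column configurations at column $r$ before and after the forward move; the weights $w_{\mathrm{YB}}$ and $w_{\mathrm{col}}$ are taken from~\eqref{cross_vertex_weights} and~\eqref{vertex_weights}.

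Next, I would multiply these identities over all $r$. The products of local transition weights then yield exactly $\mathsf{U}^{\mathrm{fwd}}_{v,u}$ and $\mathsf{U}^{\mathrm{bwd}}_{v,u}$ by \Cref{prop:U_are_products_of_P}. The cross-vertex weights telescope: on the LHS only $w_{\mathrm{YB}}(c_{\mathrm{first}})=\left[\iiYBii{.35}{3}{14.5pt}\right]_{v,u}$ survives (the initial cross state added far to the left, with both horizontal boundary arrows passing through, cf.\ \Cref{fig:U_transition_probabilities}), and on the RHS only $w_{\mathrm{YB}}(c_{\mathrm{last}})=\left[\oiYBio{.35}{3}{14.5pt}\right]_{v,u}$ survives (the stabilized cross state furnished by \Cref{lemma:cross_far_to_the_right}). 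The column-weight products $\prod_r w_{\mathrm{col}}(\alpha_r)$ and $\prod_r w_{\mathrm{col}}(\beta_r)$ are the weights of the two two-layer configurations pictured in \Cref{fig:skew_Cauchy}, which, after dividing by the normalizing empty-column factors appearing in~\eqref{F_skew_one_variable_definition}--\eqref{G_skew_one_variable_definition}, equal $G^c_{\lambda/\kappa}(v^{-1})F_{\mu/\kappa}(u)$ and $F_{\nu/\lambda}(u)G^c_{\nu/\mu}(v^{-1})$, respectively.

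Collecting the three groups of factors gives precisely~\eqref{reversibility_on_signatures}. The main technical step is to check that the empty-column normalizations match on both sides: far from the support of the signatures, the $F$ side contributes $\left[\SVoioi{.6}{-4.5pt}\right]_u$ and the $G^c$ side contributes $\left[\SVoioi{.6}{-4.5pt}\right]_v$ per column, and this product is independent of which of the two rows is assigned parameter $u$ versus $v$ (the cross merely swaps the two rows). Consequently the normalizations coincide between the $\alpha$ and $\beta$ sides and cancel, leaving exactly~\eqref{reversibility_on_signatures}. This bookkeeping of the infinite-product normalizations is the hardest part; beyond it, the proof reduces to a direct column-by-column application of the local reversibility built into our bijectivisation.
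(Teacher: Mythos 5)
Your proposal is correct and follows essentially the same route as the paper's proof: factor $\mathsf{U}^{\mathrm{fwd}}_{v,u}$ and $\mathsf{U}^{\mathrm{bwd}}_{v,u}$ into products of local probabilities via \Cref{prop:U_are_products_of_P}, apply the local reversibility condition \eqref{reversibility_condition} column by column, and collect the cross-vertex and column weights, with the telescoping of the cross weights and the cancellation of the identical normalizing factors in \eqref{F_skew_one_variable_definition}--\eqref{G_skew_one_variable_definition} on both sides. Your write-up merely spells out this bookkeeping in more detail than the paper does.
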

\begin{remark}
	\label{rmk:reversibility_on_signatures}
	Both sides of \eqref{reversibility_on_signatures} are nonzero only if
	$\lambda\mathop{\dot\succ}\kappa\prec \mu$ and $\lambda\prec
		\nu\mathop{\dot\succ}\mu$. Indeed, if, say, the condition $\kappa\prec \mu$ is
	violated, then $F_{\mu/\kappa}(u)$ is zero by the very definition. At the same
	time $\mathsf{U}_{v,u}^{\mathrm{bwd}}(\nu\to\kappa\mid \lambda,\mu)$ also
	vanishes because $\kappa\not\prec\mu$ implies that $\kappa$ cannot arise as
	the middle signature in the two-layer arrow configuration after dragging the
	cross vertex from far right to the left.
\end{remark}
\begin{proof}[Proof of \Cref{prop:reversibility_on_signatures}]
	By \eqref{F_skew_one_variable_definition},
	\eqref{G_skew_one_variable_definition} and \Cref{prop:U_are_products_of_P},
	the skew functions $F, G^c$ as well as the transition probabilities
	$\mathsf{U}_{v,u}$ in both sides of \eqref{reversibility_on_signatures} can be
	expressed as products over the columns in the two-layer arrow configurations
	as in \Cref{fig:skew_Cauchy}. The desired identity
	\eqref{reversibility_on_signatures} then follows by repeatedly applying the
	local reversibility condition at each column for the probabilities of the
	Yang-Baxter moves $P_{v,u}^{\mathrm{fwd}}$ and $P_{v,u}^{\mathrm{bwd}}$. The
	local reversibility condition is satisfied by the very construction of the
	latter probabilities, see \Cref{def:bijectivisation} and
	\Cref{sub:YB_bijectivisation_new_label}. The quantities $\left[
			\iiYBii{.25}{3.5}{10.5pt} \right]_{v,u}
		G_{\lambda/\kappa}^{c}(v^{-1})F_{\mu/\kappa}(u) $ and $\left[
			\oiYBio{.25}{3.5}{10.5pt} \right]_{v,u}
		F_{\nu/\lambda}(u)\,G_{\nu/\mu}^{c}(v^{-1})$ collect the weights entering the
	local reversibility conditions, while the probabilities
	$\mathsf{U}_{v,u}^{\mathrm{fwd}},\mathsf{U}_{v,u}^{\mathrm{bwd}}$ collect the
	local probabilities $P_{v,u}^{\mathrm{fwd}},P_{v,u}^{\mathrm{bwd}}$. This
	implies \eqref{reversibility_on_signatures}.
\end{proof}

\begin{proof}[Proof of \Cref{thm:skew_Cauchy_one}]
	Summing \eqref{reversibility_on_signatures} over both
	$\kappa\in\mathsf{Sign}_{N}$ and $\nu\in \mathsf{Sign}_{N+1}$ and recalling
	that $\left[\iiYBii{.25}{3.5}{10.5pt} \right]_{v,u}=1$ and $\left[
			\oiYBio{.25}{3.5}{10.5pt} \right]_{v,u}=\frac{v-u}{v-tu}$, we have
	\begin{multline}
		\label{skew_Cauchy_identity_bijective_proof}
		\sum_{\kappa\in \mathsf{Sign}_N}
		G_{\lambda/\kappa}^{c}(v^{-1})F_{\mu/\kappa}(u)
		\Bigg(\sum_{\nu\in \mathsf{Sign}_{N+1}}
		\mathsf{U}_{v,u}^{\mathrm{fwd}}(\kappa\to\nu\mid \lambda,\mu) \Bigg) \\ =
		\frac{v-u}{v-tu} \sum_{\nu\in \mathsf{Sign}_{N+1}}
		F_{\nu/\lambda}(u)\,G_{\nu/\mu}^{c}(v^{-1}) \Bigg( \sum_{\kappa\in
			\mathsf{Sign}_N} \mathsf{U}_{v,u}^{\mathrm{bwd}}(\nu\to\kappa\mid \lambda,\mu)
		\Bigg).
	\end{multline}
	By \eqref{transition_probabilities_U_sum_to_one}, the sums in the
	parentheses in both sides are equal to $1$, which implies the desired identity
	\eqref{skew_Cauchy_identity}.
\end{proof}

We call the above proof of the skew Cauchy identity
\eqref{skew_Cauchy_identity} \emph{bijective} because
\eqref{skew_Cauchy_identity_bijective_proof} provides a \emph{refinement} of
\eqref{skew_Cauchy_identity}
(involving summation over $\kappa,\nu$ in both sides), in which the terms in
both sides are bijectively identified with each other with the help of the
reversibility condition \eqref{reversibility_on_signatures}. Thus, the
transition probabilities $\mathsf{U}_{v,u}^{\mathrm{fwd}}$ and
$\mathsf{U}_{v,u}^{\mathrm{bwd}}$ show how to split terms in both sides of the
original identity \eqref{skew_Cauchy_identity} into smaller ones, such that
these smaller terms are identified with each other.

\subsection{Markov projection of the forward transition onto first columns}
\label{sub:properties_of_global_transitions}

For notational convenience, in this subsection we assume that both $\lambda$
and $\mu$ are nonnegative signatures (i.e., whose parts are all nonnegative).
Then the signatures $\kappa,\nu$ entering
$\mathsf{U}_{v,u}^{\mathrm{fwd}}(\kappa\to \nu\mid
	\lambda,\mu)$ (as well as
$\mathsf{U}_{v,u}^{\mathrm{bwd}}(\nu\to\kappa\mid
	\lambda,\mu)$) should also be nonnegative, otherwise these transition
probabilities vanish for interlacing reasons. Fix any $h\in
	\mathbb{Z}_{\ge1}$. For any nonnegative signature $\rho$ having multiplicative
notation $\rho=0^{r_0}1^{r_1}2^{r_2}\ldots $, let
$\rho^{[<h]}:=(r_0,r_1,\ldots,r_{h-1} )\in \mathbb{Z}_{\ge0}^{h}$ and
$\rho^{[\ge h]}:=(r_h,r_{h+1},\ldots )$ be the corresponding configurations of
arrows in the first $h$ columns and in the rest of the nonnegative integer
lattice. Using the fact that the forward transition probabilities
$\mathsf{U}^{\mathrm{fwd}}_{v,u}$ were defined in
\Cref{sub:definition_of_three_signature_probabilities} in a sequential way
(from left to right columns), we can express them as follows (for every fixed
$h\ge1$):
\begin{multline}
	\label{U_fwd_Markov_representation}
	\mathsf{U}_{v,u}^{\mathrm{fwd}}\left( \kappa\to\nu\mid \lambda,\mu
	\right)
	=
	\mathsf{U}^{[<h],\mathrm{fwd}}_{v,u}(\kappa^{[<h]}\to\nu^{[<h]}\mid
	\lambda^{[<h]},\mu^{[<h]})
	\\\times
	\mathsf{U}^{[\ge h],\mathrm{fwd}}_{v,u}
	(\kappa^{[\ge h]}\to\nu^{[\ge h]}\mid
	\lambda,\mu,\kappa^{[<h]},\nu^{[<h]}).
\end{multline}
A crucial property in \eqref{U_fwd_Markov_representation} is that
$\mathsf{U}^{[<h],\mathrm{fwd}}_{v,u}$, the transition probability describing
the evolution of the first $h$ columns, does not depend on configurations of
arrows the in columns $h,h+1,\ldots $. In other words, in the transition
$\kappa\to\nu$ under $\mathsf{U}_{v,u}^{\mathrm{fwd}}$, the first $h$ columns
are (randomly) transformed in a \emph{marginally Markovian way}. We will say
that the forward transition probabilities $\mathsf{U}_{v,u}^{\mathrm{fwd}}$ on
signatures \emph{admit Markov projections onto the first $h$ columns} for
every $h\ge1$. In \eqref{U_fwd_Markov_representation} this Markov projection
is denoted by $\mathsf{U}^{[<h],\mathrm{fwd}}_{v,u}$.

Representation \eqref{U_fwd_Markov_representation} is possible because the
forward transition probabilities are defined via dragging the cross vertex
from left to right. A similar representation for the backward transition
probabilities based on their definition via dragging the cross vertex from
right to left would show that in the transition $\nu\to\kappa$ under
$\mathsf{U}_{v,u}^{\mathrm{bwd}}$ the columns $h,h+1,\ldots $ evolve in a
marginally Markovian way. Since this Markov projection of the backward
probabilities always involves infinitely many columns, we will not focus on
this right-to-left Markov property in the present paper.

Let us now consider the case $h=1$. For shorter notation in this case we will
write $[0]$ instead of $[<1]$ in the superscripts. Let us write down the
Markov projection $\mathsf{U}_{v,u}^{[0],\mathrm{fwd}}$ of
$\mathsf{U}_{v,u}^{\mathrm{fwd}}$ onto the column number $0$. In this case the
quantity $\rho^{[0]}$ for any nonnegative signature $\rho$ is simply the
number of zero parts in $\rho$. There are six possible types of transitions in
the first column which can be read off the last row of the table in
\Cref{fig:fwd_YB} (recall that we swap the parameters
$u$ and $v$, cf. \Cref{rmk:swap_u_v}):
\begin{equation}
	\label{U_0_dynamic_S6V_transitions}
	\begin{split}
		&
		\mathsf{U}_{v,u}^{[0]}(g\to g\mid g-1,g+1)=1
		,
		\\&
		\mathsf{U}_{v,u}^{[0]}(g\to g\mid g,g+1)=
				\dfrac{(1-t)v}{v-t
				u}\dfrac{u-st^g}{v-st^g}
		,\qquad
		\mathsf{U}_{v,u}^{[0]}(g\to g+1\mid g,g+1)=
			\dfrac{v-u}{v-tu}
			\dfrac{v-st^{g+1}}{v-st^g}
		,
		\\&
		\mathsf{U}_{v,u}^{[0]}(g\to g\mid g-1,g)=
			\dfrac{(1-t)u}{v-tu}
			\dfrac{v-st^{g}}{u-st^{g}}
		,\qquad
		\mathsf{U}_{v,u}^{[0]}(g\to g-1\mid g-1,g)=
			\dfrac{t(v-u)}{v-tu}
			\dfrac{u-st^{g-1}}{u-st^{g}}
		,
		\\&
		\mathsf{U}_{v,u}^{[0]}(g\to g\mid g,g)=1
		.
	\end{split}
\end{equation}
These transitions depend on arbitrary $g\in \mathbb{Z}_{\ge0}$ with the
understanding that $g\ge1$ in the first and the third lines in
\eqref{U_0_dynamic_S6V_transitions}.

\section{Yang-Baxter field}
\label{sec:YB_field}

In this section we introduce our main stochastic object, the \emph{spin
	Hall-Littlewood Yang-Baxter random field}
(called simply the \emph{Yang-Baxter field} throughout the paper), and discuss
its main properties.

\subsection{Spin Hall-Littlewood measures and processes}
\label{sub:spin_HL_measures_processes}

Fix $(x,y)\in \mathbb{Z}_{\ge0}^2$, and let $v_1,\ldots,v_x $ and $u_1,\ldots
	,u_y$ be spectral parameters such that $0\le u_i<v_j<1$ for all $i,j$. As in
\Cref{sec:local_transition_probabilities}, we continue to assume that $0\le
	t<1$ and $-1<s\le0$. Define the following probability measure on the set of
nonnegative signatures of length $y$:
\begin{equation}
	\label{spin_HL_measure}
	\mathscr{H}_{x,y}(\lambda):=
	\frac{1}{\Pi_{x,y}}\,
	G^c_{\lambda/(0^y)}(v_1^{-1},\ldots,v_x^{-1}
	)F_{\lambda/\varnothing}(u_1,\ldots,u_y ) ,\qquad \lambda\in
	\mathsf{Sign}^{+}_{y}.
\end{equation}
The weights under $\mathscr{H}_{x,y}$ are nonnegative and their sum over
$\lambda\in
	\mathsf{Sign}^{+}_y$ converges thanks to our conditions on parameters.
The normalization constant in \eqref{spin_HL_measure} has the following
product form due to the Cauchy identity of \Cref{cor:nonskew_multi_Cauchy}:
\begin{equation}
	\label{spin_HL_normalization}
	\Pi_{x,y}
	=
	\prod_{i=1}^{y}
	\biggl(\frac{1-t^i}{1-su_i}
	\prod_{j=1}^{x}\frac{v_j-tu_i}{v_j-u_i}
	\biggr).
\end{equation}
We call the measures $\mathscr{H}_{x,y}$ \eqref{spin_HL_measure} the
\emph{spin Hall-Littlewood measures} by analogy with the Macdonald measures
\cite{BorodinCorwin2011Macdonald}
(and their several degenerations, most notably, the
Schur measures \cite{okounkov2001infinite}). As in the Macdonald setting, skew
Cauchy identities allow to extend the measures \eqref{spin_HL_measure} to spin
Hall-Littlewood processes which are probability measures on certain sequences
of nonnegative signatures. For simplicity, we will only consider a particular
case of spin Hall-Littlewood processes suitable for our needs.

Fix $k\in \mathbb{Z}_{\ge1}$ and sequences
\begin{equation}
	\label{spin_HL_down_right_path_sequences}
	\vec{x}:=(0=x_1\le x_2\le \ldots\le x_k),\qquad
	\vec{y}:=(y_1\ge y_2\ge \ldots\ge y_{k-1}\ge y_k=0).
\end{equation}
Consider the following down-right path in $\mathbb{Z}_{\ge0}^2$ corresponding
to these sequences:
\begin{equation}
	\label{spin_HL_down_right_path}
	\mathcal{P}_{\vec{x},\vec{y}}:=
	\left\{
	(x_1,y_1),(x_2,y_1),(x_2,y_2),(x_3,y_2),\ldots,(x_k,y_{k-1}),(x_k,y_k)
	\right\}.
\end{equation}
Let $v_1,\ldots,v_{x_k}$ and $u_1,\ldots,u_{y_1} $ be spectral parameters
satisfying the same conditions as for the measures \eqref{spin_HL_measure}.
The \emph{spin Hall-Littlewood process} $\mathscr{HP}_{\vec{x},\vec{y}}$
indexed by the down-right path $\mathcal{P}_{\vec{x},\vec{y}}$ depending on
these spectral parameters is a probability measure on sequences of nonnegative
signatures $\lambda^p$, $p\in \mathcal{P}_{\vec{x},\vec{y}}$, with
$\lambda^{(0,y_1)}=(0^{y_1})$ and $\lambda^{(x_k,0)}=\varnothing$, defined as
\begin{multline}
	\label{spin_HL_process}
	\mathscr{HP}_{\vec{x},\vec{y}}
	(\lambda^p\colon p\in \mathcal{P}_{\vec{x},\vec{y}})
	\\:=\frac{1}{\Pi_{\vec{x},\vec{y}}}\,
	\prod_{i=1}^{k-1}
	G^c_{\lambda^{(x_{i+1},y_i)}/\lambda^{(x_i,y_i)}}
	(v_{x_i+1}^{-1},\ldots,v_{x_{i+1}}^{-1} ) \prod_{i=2}^{k}
	F_{\lambda^{(x_{i},y_{i-1})}/\lambda^{(x_{i},y_{i})}}(u_{y_{i}+1},\ldots,u_{y_
		{i-1}} ).
\end{multline}
Here $\lambda^{(x,y)}\in \mathsf{Sign}_y^+$, and the normalization constant in
\eqref{spin_HL_process} can be read off the skew Cauchy identities (see
\Cref{sub:Cauchy_identity}):
\begin{equation}
	\label{spin_HL_process_normalization}
	\Pi_{\vec{x},\vec{y}}=
	\Biggl(\prod_{i=1}^{y}
	\frac{1-t^i}{1-su_i}
	\Biggr)
	\prod_{\substack{(i,j)\in \mathbb{Z}_{\ge1}^{2}\colon
	\\\text{box $(i,j)$ is below $\mathcal{P}_{\vec{x},\vec{y}}$}}}
	\frac{v_j-tu_i}{v_j-u_i}.
\end{equation}
A graphical illustration of a spin Hall-Littlewood process is given in
\Cref{fig:spin_HL_process}.

\begin{figure}[htpb]
	\centering
	\begin{tikzpicture}
	[scale=1, very thick]
	\draw[->] (-.5,0)--++(7,0);
	\draw[->] (0,-.5)--++(0,4);
		\foreach \ii in {1,2,3,4,5,6}
		{
			\node at (\ii-.5,-.4) {$v_\ii$};
			\draw[dotted, thick] (\ii,-.5)--++(0,3.75);
		}
		\foreach \jj in {1,2,3}
		{
			\node at (-.4,\jj-.5) {$u_\jj$};
			\draw[dotted, thick] (-.5,\jj)--++(6.75,0);
		}
		\draw [red, line width=2] (0,3)--++(3,0)--++(0,-1)--++(1,0)--++(0,-1)--++(2,0)--++(0,-1);
		\foreach \p in {(0,3),(3,3),(3,2),(4,2),(4,1),(6,1),(6,0)}
		{
			\draw[red,fill] \p circle(3pt);
		}
		\node at (0,-.9) {$x_1$};
		\node at (3,-.9) {$x_2$};
		\node at (4,-.9) {$x_3$};
		\node at (6,-.9) {$x_4$};
		\node at (-1,0) {$y_4$};
		\node at (-1,1) {$y_3$};
		\node at (-1,2) {$y_2$};
		\node at (-1,3) {$y_1$};
		\node at (1.5,3.35) {$G^c$};
		\node at (3.7,2.35) {$G^c$};
		\node at (5.4,1.35) {$G^c$};
		\node at (2.7,2.5) {$F$};
		\node at (3.7,1.5) {$F$};
		\node at (5.7,0.5) {$F$};
		\node at (0,3) (p1) {};
		\node at (3,3) (p2) {};
		\node at (3,2) (p3) {};
		\node at (4,2) (p4) {};
		\node at (4,1) (p5) {};
		\node at (6,1) (p6) {};
		\node at (6,0) (p7) {};
		\node[anchor=east] (lp1) at (-.5,4) {$\lambda^{(0,3)}=(0^{3})$};
		\node (lp2) at (3.6,3.3) {$\lambda^{(3,3)}$};
		\node (lp3) at (2.6,1.7) {$\lambda^{(3,2)}$};
		\node (lp4) at (4.6,2.3) {$\lambda^{(4,2)}$};
		\node (lp5) at (4.6,1.3) {$\lambda^{(4,1)}$};
		\node[anchor=west] (lp6) at (6.5,2) {$\lambda^{(6,1)}$};
		\node[anchor=west] (lp7) at (6.5,1) {$\lambda^{(6,0)}=\varnothing$};
		\draw[line width=.7,dashed] (p1)--(lp1);
		\draw[line width=.7,dashed] (p6)--(lp6);
		\draw[line width=.7,dashed] (p7)--(lp7);
	\end{tikzpicture}
	\caption{An illustration of the spin Hall-Littlewood process
	indexed by sequences $\vec{x}=(0,3,4,6)$ and $\vec{y}=(3,2,1,0)$.
	The second product (over in $(i,j)$) in \eqref{spin_HL_process_normalization}
	runs over all boxes inside the region bounded by the
	down-right path $\mathcal{P}_{\vec{x},\vec{y}}$.
	For this particular path the product contains 13 terms.}
	\label{fig:spin_HL_process}
\end{figure}

One of the properties of spin Hall-Littlewood processes is that the marginal
distribution of each single signature $\lambda^{(x,y)}\in\mathsf{Sign}^+_y$,
$(x,y)\in \mathcal{P}_{\vec{x},\vec{y}}$, under
$\mathscr{HP}_{\vec{x},\vec{y}}$ \eqref{spin_HL_process} is given by the spin
Hall-Littlewood measure $\mathscr{H}_{x,y}$ \eqref{spin_HL_measure}. More
generally, take any subpath $\mathcal{Q}$ of $\mathcal{P}_{\vec{x},\vec{y}}$
such that $\mathcal{Q}$ is itself a down-right path. Then the marginal
distribution of the signatures $\{\lambda^{q}\colon q\in \mathcal{Q}\}$ under
the original spin Hall-Littlewood process $\mathscr{HP}_{\vec{x},\vec{y}}$
\eqref{spin_HL_process} is itself a spin Hall-Littlewood process corresponding
to the path $\mathcal{Q}$.

\subsection{Yang-Baxter field}
\label{sub:YB_random_field}

Let us now introduce the Yang-Baxter field with the help of the forward
transition probabilities on signatures discussed in
\Cref{sec:local_transition_probabilities}. The field depends on $t\in[0,1)$,
		$s\in(-1,0]$, and two sequences of spectral parameters $v_1,v_2,\ldots $,
$u_1,u_2,\ldots $ such that $0\le u_i<v_j<1$ for all $i,j$. The Yang-Baxter
field is a probability distribution on the space of nonnegative signatures
$\lambda^{(x,y)}$ indexed by points of the quadrant $(x,y)\in
	\mathbb{Z}_{\ge0}^2$ such that $\lambda^{(x,y)}\in
	\mathsf{Sign}_{y}^{+}$, the signatures interlace as (see
\Cref{sub:signatures} for notation)
\begin{equation*}
	\lambda^{(x,y)}\prec\lambda^{(x,y+1)},\qquad
	\lambda^{(x,y)}\mathop{\dot\prec}\lambda^{(x+1,y)},\qquad
	(x,y)\in \mathbb{Z}_{\ge0}^2,
\end{equation*}
and satisfy the boundary conditions $\lambda^{(x,0)}\equiv \varnothing$,
$\lambda^{(0,y)}=(0^y)$.

\begin{definition}
	\label{def:YB_field}
	We construct the \emph{Yang-Baxter field} $\boldsymbol	\Lambda:=\{
		\boldsymbol\lambda^{(x,y)} \}_{x,y\ge0}$ inductively. Initialize the boundary
	values in the following nonrandom way:
	$\boldsymbol\lambda^{(x,0)}=\varnothing$, $\boldsymbol\lambda^{(0,y)}=(0^y)$
	for all $x,y\ge0$. Now, for some $n\ge1$, let the field be already defined for
	all $(x',y')\in \mathbb{Z}_{\ge0}^{2}$ such that $x'+y'\le n$. Conditioned on
	$\{\boldsymbol\lambda^{(x',y')}\}_{x'+y'\le n}$, independently sample the
	random signatures $\boldsymbol\lambda^{(x,y)}$ with $x+y=n+1$, $x,y\ge1$,
	according to
	\begin{equation}
		\label{YB_field_definition_forward}
		\mathrm{Prob}\bigl( \boldsymbol\lambda^{(x,y)}=\nu
		\mid \{\boldsymbol\lambda^{(x',y')}\}_{x'+y'\le n} \bigr)=
		\mathsf{U}^{\mathrm{fwd}}_{v_x,u_y}
		\bigl( \boldsymbol\lambda^{(x-1,y-1)}\to \nu
		\mid \boldsymbol\lambda^{(x,y-1)},\boldsymbol\lambda^{(x-1,y)}
		\bigr).
	\end{equation}
	This defines the Yang-Baxter field for $(x,y)$ with $x+y\le n+1$, and
	the induction step completes the definition of the field for all $(x,y)\in
		\mathbb{Z}_{\ge0}^{2}$. See \Cref{fig:YB_field} for an illustration.
\end{definition}

\begin{figure}[htpb]
	\centering
	\begin{tikzpicture}
		[scale=1.5, very thick]
		\draw[->] (-.8,0)--++(7.6,0) node[above] {$\mathop{\dot\prec}$};
		\draw[->] (0,-.8)--++(0,4.6) node[above] {$\prec$};
		\foreach \ii in {1,2,3,4,5,6}
		{
			\node at (\ii-.5,-.4) {$v_\ii$};
			\draw[dotted, thick] (\ii,-.7)--++(0,4.3);
		}
		\foreach \jj in {1,2,3}
		{
			\node at (-.4,\jj-.5) {$u_\jj$};
			\draw[dotted, thick] (-.7,\jj)--++(7.3,0);
		}
		\foreach \ii in {0,...,5}
		{\node[rectangle,draw,fill=white] at (\ii,0) {$\varnothing$};}
		\node[rectangle,draw,fill=white] at (0,1) {$(0)$};
		\node[rectangle,draw,fill=white] at (0,2) {$(00)$};
		\node[rectangle,draw,fill=pink] at (0,3) {$(000)$};
		\node[rectangle,draw,fill=pink] at (6,0) {$\varnothing$};
		\node[rectangle,draw,fill=white] at (1,1) {$\boldsymbol\lambda^{(1,1)}$};
		\node[rectangle,draw,fill=white] at (2,1) {$\boldsymbol\lambda^{(2,1)}$};
		\node[rectangle,draw,fill=white] at (3,1) {$\boldsymbol\lambda^{(3,1)}$};
		\node[rectangle,draw,fill=pink] at (4,1) {$\boldsymbol\lambda^{(4,1)}$};
		\node[rectangle,draw,fill=pink] at (5,1) {$\boldsymbol\lambda^{(5,1)}$};
		\node[rectangle,draw,fill=pink] at (6,1) {$\boldsymbol\lambda^{(6,1)}$};
		\node[rectangle,draw,fill=white] at (1,2) {$\boldsymbol\lambda^{(1,2)}$};
		\node[rectangle,draw,fill=white] at (2,2) {$\boldsymbol\lambda^{(2,2)}$};
		\node[rectangle,draw,fill=pink] (kappa) at (3,2) {$\boldsymbol\lambda^{(3,2)}$};
		\node[rectangle,draw,fill=pink] at (4,2) {$\boldsymbol\lambda^{(4,2)}$};
		\node[rectangle,draw,fill=white] at (5,2) {$\boldsymbol\lambda^{(5,2)}$};
		\node[rectangle,draw,fill=white] at (6,2) {$\boldsymbol\lambda^{(6,2)}$};
		\node[rectangle,draw,fill=pink] at (1,3) {$\boldsymbol\lambda^{(1,3)}$};
		\node[rectangle,draw,fill=pink] at (2,3) {$\boldsymbol\lambda^{(2,3)}$};
		\node[rectangle,draw,fill=pink] at (3,3) {$\boldsymbol\lambda^{(3,3)}$};
		\node[rectangle,draw,dashed,fill=gray!40!white] (nu) at (4,3) {$\boldsymbol\lambda^{(4,3)}$};
		\node[rectangle,draw,fill=white] at (5,3) {$\boldsymbol\lambda^{(5,3)}$};
		\node[rectangle,draw,fill=white] at (6,3) {$\boldsymbol\lambda^{(6,3)}$};
		\draw[->,dotted,red,line width=2] (kappa)--(nu.south west);
	\end{tikzpicture}
	\caption{Yang-Baxter random field. Signatures along a down-right path
	(the extension of the path in \Cref{fig:spin_HL_process}) are highlighted in red.
	The signature $\boldsymbol\lambda^{(3,2)}$ is replaced by $\boldsymbol\lambda^{(4,3)}$
	in this path with the help of the forward transition probability, cf. the proof of
	\Cref{thm:YB_field_spin_HL_process}.}
	\label{fig:YB_field}
\end{figure}

The discussion in \Cref{sub:properties_of_global_transitions} readily implies
the following Markov projection property of the Yang-Baxter field:
\begin{proposition}
	\label{prop:YB_Markov_projections}
	Fix any $h\in \mathbb{Z}_{\ge1}$. Under the Yang-Baxter field, the
	first $h$ columns of the signatures $\boldsymbol\lambda^{(x,y)}$ evolve in a
	marginally Markovian way (i.e., independently of the columns $h+1,h+2,\ldots
	$).
\end{proposition}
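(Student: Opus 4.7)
The plan is to reduce the claim to the one-step marginal factorization established in equation \eqref{U_fwd_Markov_representation}. That identity says precisely that, for every choice of triple $(\lambda,\kappa,\mu)$ with $\lambda\mathop{\dot\succ}\kappa\prec\mu$, the forward kernel $\mathsf{U}^{\mathrm{fwd}}_{v,u}(\kappa\to\nu\mid\lambda,\mu)$ factorizes as
\begin{equation*}
\mathsf{U}^{[<h],\mathrm{fwd}}_{v,u}\bigl(\kappa^{[<h]}\to\nu^{[<h]}\mid \lambda^{[<h]},\mu^{[<h]}\bigr)\,\cdot\,\mathsf{U}^{[\ge h],\mathrm{fwd}}_{v,u}\bigl(\kappa^{[\ge h]}\to\nu^{[\ge h]}\mid \lambda,\mu,\kappa^{[<h]},\nu^{[<h]}\bigr),
\end{equation*}
where the first factor depends on the input signatures only through their restrictions to the first $h$ columns. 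This is a direct consequence of \Cref{prop:U_are_products_of_P}, since performing the cross-vertex dragging from left to right, the outputs of the local Yang-Baxter moves in columns $0,1,\ldots,h-1$ are determined once we know the initial state of the cross vertex and the column data $\ell_r,k_r,m_r$ for $r<h$.

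Given this factorization, I would induct on antidiagonals $n=x+y$. Write $\boldsymbol\Lambda^{[<h]}_n:=\{\boldsymbol\lambda^{(x',y'),[<h]}\colon x'+y'\le n\}$ and $\boldsymbol\Lambda_n:=\{\boldsymbol\lambda^{(x',y')}\colon x'+y'\le n\}$. By \Cref{def:YB_field}, conditional on $\boldsymbol\Lambda_n$, the signatures $\boldsymbol\lambda^{(x,y)}$ with $x+y=n+1$ and $x,y\ge 1$ are conditionally independent and each is sampled from $\mathsf{U}^{\mathrm{fwd}}_{v_x,u_y}(\boldsymbol\lambda^{(x-1,y-1)}\to\cdot\mid \boldsymbol\lambda^{(x,y-1)},\boldsymbol\lambda^{(x-1,y)})$. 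Applying the factorization above to each such sample and marginalizing over the column-indices $\ge h$, one obtains
\begin{equation*}
\mathrm{Prob}\bigl(\boldsymbol\lambda^{(x,y),[<h]}=\eta\mid\boldsymbol\Lambda_n\bigr)=\mathsf{U}^{[<h],\mathrm{fwd}}_{v_x,u_y}\bigl(\boldsymbol\lambda^{(x-1,y-1),[<h]}\to\eta\mid\boldsymbol\lambda^{(x,y-1),[<h]},\boldsymbol\lambda^{(x-1,y),[<h]}\bigr),
\end{equation*}
which depends on the past only through $\boldsymbol\Lambda^{[<h]}_n$. Combining these conditionally independent updates yields that $\boldsymbol\Lambda^{[<h]}_{n+1}$ is obtained from $\boldsymbol\Lambda^{[<h]}_n$ via a Markov kernel that makes no reference to columns $\ge h$. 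Iterating from the deterministic boundary data completes the induction.

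The only step that requires care is the verification of the factorization itself, i.e., that $\mathsf{U}^{[<h],\mathrm{fwd}}_{v,u}$ genuinely depends only on $\lambda^{[<h]}$ and $\mu^{[<h]}$ (and not, for instance, on the state of the cross vertex when it enters column $h$, which could secretly carry information from columns $\ge h$). This is not an issue here because the cross vertex is introduced at the far left with a fixed state $\iiYBii{.25}{3.5}{10.5pt}$ and then propagates strictly rightward, so by the time it has swept through columns $0,\ldots,h-1$ its state is a function of data in those columns alone. Once this is noted, the rest of the argument is bookkeeping about conditional independence of updates across the antidiagonal.
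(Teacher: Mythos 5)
Your argument is correct and coincides with the paper's: the proposition is stated as an immediate consequence of the factorization \eqref{U_fwd_Markov_representation}, which in turn rests on the left-to-right sequential construction of $\mathsf{U}^{\mathrm{fwd}}_{v,u}$ (via \Cref{prop:U_are_products_of_P} and the fact that the cross vertex enters from the far left in the deterministic state), exactly as you observe. Your induction over antidiagonals merely spells out the conditional-independence bookkeeping that the paper leaves implicit, so there is nothing to add.
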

This evolution of the first $h$ columns defines a random field indexed by
$\mathbb{Z}_{\ge0}^{2}$ with values in $\mathbb{Z}_{\ge0}^{h}$ which can be
regarded as an $h$-layer stochastic vertex model. In
\Cref{sec:dynamicS6V,sec:degenerations} we discuss the case $h=1$ in detail.
Details on the two-layer case for $s=0$ may be found in \cite[Section
	4.4]{BufetovMatveev2017}.

The next theorem states a key property of the Yang-Baxter field
$\boldsymbol\Lambda$:
\begin{theorem}
	\label{thm:YB_field_spin_HL_process}
	Under the Yang-Baxter field, for any down-right path
	$\mathcal{P}_{\vec{x},\vec{y}}$ as in
	\eqref{spin_HL_down_right_path_sequences}--\eqref{spin_HL_down_right_path},
	the joint distribution of the signatures $\{\boldsymbol\lambda^p\colon p\in
		\mathcal{P}_{\vec{x},\vec{y}}\}$ is given by the spin Hall-Littlewood
	process $\mathscr{HP}_{\vec{x},\vec{y}}$ \eqref{spin_HL_process}.
\end{theorem}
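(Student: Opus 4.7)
The plan is to derive the theorem from iterated applications of the reversibility relation on signatures \eqref{reversibility_on_signatures} established in \Cref{prop:reversibility_on_signatures}, which already powers the bijective proof of the skew Cauchy identity in \Cref{sub:bijective_proof_skew_Cauchy}. Since multivariable skew functions \eqref{F_skew_multivariable}--\eqref{G_skew_multivariable} are by definition sums of products of single-variable ones over interlacing chains, it suffices to prove the theorem for the \emph{refined} down-right path $\tilde{\mathcal{P}}$ that visits every integer lattice point of $\mathcal{P}_{\vec{x},\vec{y}}$; marginalizing out the extra signatures then yields the theorem as stated.

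First, I induct on the number of lattice boxes strictly below the current refined down-right path from $(0, y_1)$ to $(x_k, 0)$. The base case is the path $\mathcal{P}_0$ hugging the axes: $(0, y_1), (0, y_1-1), \ldots, (0,0), (1,0), \ldots, (x_k, 0)$. All signatures on $\mathcal{P}_0$ are forced by the boundary conditions of \Cref{def:YB_field}, and a direct computation using \eqref{vertex_weights} gives $F_{(0^{j+1})/(0^{j})}(u) = (1-t^{j+1})/(1-su)$ and $G^c_{\varnothing/\varnothing}(v^{-1}) = 1$, so the product of single-variable spin Hall-Littlewood factors over $\mathcal{P}_0$ equals $\prod_{j=1}^{y_1}(1-t^j)/(1-su_j)$, which matches $\Pi_{\vec{x},\vec{y}}$ from \eqref{spin_HL_process_normalization} (no boxes lie below $\mathcal{P}_0$).

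For the inductive step, I build a sequence of refined down-right paths $\mathcal{P}_0, \mathcal{P}_1, \ldots, \mathcal{P}_N = \tilde{\mathcal{P}}$ in which each $\mathcal{P}_{k+1}$ arises from $\mathcal{P}_k$ by flipping a convex corner $(a, b+1) \to (a,b) \to (a+1, b)$ into a concave corner $(a, b+1) \to (a+1, b+1) \to (a+1, b)$, thereby adding one lattice box below. Since the region below $\tilde{\mathcal{P}}$ is a Young-diagram shape, such flips can be ordered so that the newly added point $(a+1, b+1)$ always has the smallest coordinate sum among points of $\tilde{\mathcal{P}}$ not yet added, matching the order by $x+y$ in which \Cref{def:YB_field} samples $\boldsymbol\Lambda$. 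Setting $\kappa = \boldsymbol\lambda^{(a,b)}$, $\mu = \boldsymbol\lambda^{(a, b+1)}$, $\lambda = \boldsymbol\lambda^{(a+1, b)}$, $\nu = \boldsymbol\lambda^{(a+1, b+1)}$, $v = v_{a+1}$, $u = u_{b+1}$, the sampling of $\nu$ via $\mathsf{U}^{\mathrm{fwd}}_{v,u}(\kappa \to \cdot \mid \lambda, \mu)$ in \Cref{def:YB_field}, combined with the inductive hypothesis that the joint distribution on $\mathcal{P}_k$ carries the factors $G^c_{\lambda/\kappa}(v^{-1}) F_{\mu/\kappa}(u)$, followed by marginalizing out $\kappa$ and using $\sum_\kappa \mathsf{U}^{\mathrm{bwd}}_{v,u}(\nu \to \kappa \mid \lambda, \mu) = 1$ from \eqref{transition_probabilities_U_sum_to_one} in the reversibility relation \eqref{reversibility_on_signatures}, produces the factors $\tfrac{v-u}{v-tu} F_{\nu/\lambda}(u) G^c_{\nu/\mu}(v^{-1})$. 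These are exactly the factors for the new corner on $\mathcal{P}_{k+1}$, and the prefactor $\tfrac{v_{a+1}-u_{b+1}}{v_{a+1}-tu_{b+1}}$ absorbs into $\Pi_{\vec{x},\vec{y}}$ the new box below the path as per \eqref{spin_HL_process_normalization}. Iterating until $\mathcal{P}_N = \tilde{\mathcal{P}}$ proves the theorem.

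The main technical obstacle will be verifying that each local flip alters only the two skew-function factors incident to the flipped corner, leaving the rest of the spin Hall-Littlewood weight untouched. This rests on the conditional independence built into \Cref{def:YB_field}: by the chosen ordering, at the moment $\nu$ is sampled all non-axis points on $\mathcal{P}_k$ have coordinate sum at most $a+b+1$, so they lie at strictly earlier levels of the construction, and given the three neighbors $\kappa, \mu, \lambda$ the new signature $\nu$ is conditionally independent of them. The axis signatures on $\mathcal{P}_k$ are deterministic, hence trivially independent. Thus the rest of the joint distribution (inductively of spin Hall-Littlewood form) factors out of the marginalization over $\kappa$, and the induction closes.
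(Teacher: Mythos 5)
Your proof is correct and follows essentially the same route as the paper's: induction over corner flips of the refined down-right path, with the deterministic base case along the coordinate axes and the induction step carried out by applying the reversibility relation \eqref{reversibility_on_signatures}, summing over $\kappa$ via \eqref{transition_probabilities_U_sum_to_one}, and absorbing the prefactor $\frac{v-u}{v-tu}$ into the normalization \eqref{spin_HL_process_normalization}. The only (harmless) imprecision is your claim that all non-axis points of $\mathcal{P}_k$ have coordinate sum at most $a+b+1$ --- points added earlier on the same diagonal can have sum $a+b+2$ --- but since signatures on a common diagonal are sampled independently given the earlier diagonals in \Cref{def:YB_field}, the conditional law of $\nu$ given $\kappa,\lambda,\mu$ is unaffected and the induction closes as you state.
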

\begin{proof}
	Extend the path $\mathcal{P}_{\vec{x},\vec{y}}$ by adding to it all
	the intermediate vertices, so that the distance between each two consecutive
	vertices along the extended path is equal to $1$ (cf. \Cref{fig:YB_field}).
	Let us also add vertices $(0,y_1+1)$ and $(x_k+1,0)$ in the beginning and the
	end of the path, respectively. If we establish the claim for such extended
	paths, then the original claim will follow, cf. the remark in the end of
	\Cref{sub:spin_HL_measures_processes}.

	Using the inductive definition of $\boldsymbol\Lambda$, we establish
	the modified claim by induction on the down-right path. The base of the
	induction is the case when the path goes along the coordinate axes, i.e., has
	the form $\left\{    (0,y_1+1),(0,y_1),\ldots,
		(0,1),(0,0),(1,0),\ldots,(x_k+1,0)\right\}$. In this case the random
	signatures along this path are in fact deterministic, and coincide with the
	corresponding signatures under the spin Hall-Littlewood process corresponding
	to this path.

	In the induction step, we replace one down-right corner of the form
	$\{(x,y+1),(x,y),(x+1,y)\}$ by the right-down corner
	$\{(x,y+1),(x+1,y+1),(x+1,y)\}$ (see an example in \Cref{fig:YB_field} where
	$(x,y)=(3,2)$). Denote the old and the new paths by $\mathcal{P}$ and
	$\mathcal{P}'$, respectively. For shorter notation, set
	\begin{equation*}
		\kappa:=\boldsymbol\lambda^{(x,y)},\qquad
		\mu:=\boldsymbol\lambda^{(x,y+1)},
		\qquad
		\lambda:=\boldsymbol\lambda^{(x+1,y)},
		\qquad
		\nu:=\boldsymbol\lambda^{(x+1,y+1)}.
	\end{equation*}
	Assume that the joint distribution of the signatures along
	$\mathcal{P}$ is given by the corresponding spin Hall-Littlewood process. The
	joint distribution along $\mathcal{P}'$ can be obtained from the joint
	distribution along $\mathcal{P}$ with the help of the conditional distribution
	of $\nu$ given $\lambda,\kappa,\mu$. By \Cref{def:YB_field}, the latter
	conditional distribution is given by the forward transition probability. Thus,
	we see that the joint distribution of all four signatures
	$\lambda,\kappa,\mu,\nu$ is proportional to the left-hand side of
	\eqref{reversibility_on_signatures} (with $u=u_{y+1}$, $v=v_{x+1}$). Using
	this identity and summing over $\kappa$, we see from the right-hand side of
	\eqref{reversibility_on_signatures} that the joint distribution of
	$\lambda,\nu,\mu$ is proportional to
	$F_{\nu/\lambda}(u_{y+1})\,G_{\nu/\mu}^{c}(v^{-1}_{x+1})$, as it should be
	under the spin Hall-Littlewood process corresponding to the path
	$\mathcal{P}'$. This completes the induction step and the proof of the
	proposition.
\end{proof}

\Cref{thm:YB_field_spin_HL_process} and
\Cref{prop:reversibility_on_signatures} readily
imply a backward version of the conditional distribution
\eqref{YB_field_definition_forward} in the Yang-Baxter field:
\begin{corollary}
	\label{cor:YB_field_bwd_conditional_distr}
	Under the Yang-Baxter field, for any $(x,y)\in	\mathbb{Z}_{\ge0}$ the
	conditional distribution of $\boldsymbol\lambda^{(x,y)}$ given the signatures
	to the right and above it is equal to the backward transition probability:
	\begin{equation*}
		\mathrm{Prob}(\boldsymbol\lambda^{(x,y)}=\kappa\mid
		\boldsymbol\lambda^{(x+1,y)},\boldsymbol\lambda^{(x,y+1)},
		\boldsymbol\lambda^{(x+1,y+1)})
		=
		\mathsf{U}^{\mathrm{bwd}}_{v_{x+1},u_{y+1}}
		\bigl(
		\boldsymbol\lambda^{(x+1,y+1)}\to\kappa
		\mid
		\boldsymbol\lambda^{(x+1,y)},
		\boldsymbol\lambda^{(x,y+1)}
		\bigr).
	\end{equation*}
\end{corollary}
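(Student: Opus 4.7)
The plan is to derive the corollary by combining Theorem~\ref{thm:YB_field_spin_HL_process} (joint law along down-right paths), Definition~\ref{def:YB_field} (forward sampling rule), and Proposition~\ref{prop:reversibility_on_signatures} (signature-level reversibility). Write $\kappa=\boldsymbol\lambda^{(x,y)}$, $\mu=\boldsymbol\lambda^{(x,y+1)}$, $\lambda=\boldsymbol\lambda^{(x+1,y)}$, $\nu=\boldsymbol\lambda^{(x+1,y+1)}$, and set $u=u_{y+1}$, $v=v_{x+1}$. It suffices to pin down the joint law of $(\mu,\lambda,\kappa,\nu)$ up to a factor independent of $\kappa$ and $\nu$, and then read off the conditional of $\kappa$ given $(\mu,\lambda,\nu)$.

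First, choose a down-right path passing successively through $(0,y+1)$, $(x,y+1)$, $(x,y)$, $(x+1,y)$, $(x+1,0)$; Theorem~\ref{thm:YB_field_spin_HL_process} identifies the joint distribution of $(\mu,\kappa,\lambda)$ under the Yang-Baxter field with the corresponding spin Hall-Littlewood process \eqref{spin_HL_process}, in which all $\kappa$-dependence sits in the factor $F_{\mu/\kappa}(u)\,G^{c}_{\lambda/\kappa}(v^{-1})$. Second, by Definition~\ref{def:YB_field}, conditioned on everything already constructed, $\nu$ is sampled from $\mathsf{U}^{\mathrm{fwd}}_{v,u}(\kappa\to\nu\mid\lambda,\mu)$, which depends only on the three neighbors $(\mu,\kappa,\lambda)$. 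Combining these two observations yields, for some factor $C(\mu,\lambda)$ independent of $\kappa$ and $\nu$,
\begin{equation*}
\mathrm{Prob}(\mu,\kappa,\lambda,\nu)=C(\mu,\lambda)\,F_{\mu/\kappa}(u)\,G^{c}_{\lambda/\kappa}(v^{-1})\,\mathsf{U}^{\mathrm{fwd}}_{v,u}(\kappa\to\nu\mid\lambda,\mu).
\end{equation*}

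Now invoke \eqref{reversibility_on_signatures} from Proposition~\ref{prop:reversibility_on_signatures}: the two cross-vertex weights are constants in $(\kappa,\nu)$, so the last display is also proportional, with a fixed ratio $(v-u)/(v-tu)$, to
\begin{equation*}
F_{\nu/\lambda}(u)\,G^{c}_{\nu/\mu}(v^{-1})\,\mathsf{U}^{\mathrm{bwd}}_{v,u}(\nu\to\kappa\mid\lambda,\mu).
\end{equation*}
Summing over $\kappa\in\mathsf{Sign}_{y}^{+}$ and using \eqref{transition_probabilities_U_sum_to_one}, the marginal of $(\mu,\lambda,\nu)$ is proportional to $F_{\nu/\lambda}(u)\,G^{c}_{\nu/\mu}(v^{-1})$, and dividing gives the asserted conditional law of $\kappa$ given $(\mu,\lambda,\nu)$.

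No serious obstacle is expected. The one point requiring care is that in the second step the forward probability is really a conditional law given the entire earlier field, not just given $(\mu,\lambda,\kappa)$; but since that law factors through $(\mu,\lambda,\kappa)$, marginalizing the remaining earlier signatures (whose joint law is controlled by Theorem~\ref{thm:YB_field_spin_HL_process}) does not change the formula. The resulting argument is essentially the inductive step of the proof of Theorem~\ref{thm:YB_field_spin_HL_process}, read in reverse.
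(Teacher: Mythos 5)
Your argument is correct and is exactly the route the paper intends: it derives the corollary from Theorem~\ref{thm:YB_field_spin_HL_process} (applied to the down-right path through $(x,y+1),(x,y),(x+1,y)$) together with the forward sampling rule of Definition~\ref{def:YB_field} and the reversibility identity \eqref{reversibility_on_signatures}, which the paper cites as the two ingredients without writing out the details. Your handling of the conditioning subtlety (the forward law factoring through $(\mu,\kappa,\lambda)$, so marginalizing the rest of the earlier field is harmless) and the summation over $\kappa$ via \eqref{transition_probabilities_U_sum_to_one} correctly fill in the omitted steps.
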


\section{A dynamic stochastic six vertex model}
\label{sec:dynamicS6V}

Here we consider the Markov projection of the Yang-Baxter field onto the
column number zero. This produces a new dynamic version of the stochastic six
vertex model. The original stochastic six vertex model was introduced in
\cite{GwaSpohn1992}, and its asymptotic behavior was studied in various
regimes in, e.g., \cite{BCG6V}, \cite{AmolBorodin2016Phase},
\cite{Amol2016Stationary}. We recall this model in \Cref{sub:degen_HL} below.

\subsection{Dynamic vertex weights}
\label{sub:dynamicS6V_subsection}

Let $\boldsymbol\Lambda=\{\boldsymbol\lambda^{(x,y)}\}_{x,y\ge0}$ be the
Yang-Baxter field constructed in \Cref{sec:YB_field}. Recall that each
$\boldsymbol\lambda^{(x,y)}$ is a random nonnegative signature (of length
$y$). For each $(x,y)\in \mathbb{Z}_{\ge0}^2$, let $\boldsymbol\ell^{(x,y)}:=
	(\boldsymbol\lambda^{(x,y)})^{[0]}\in	\mathbb{Z}_{\ge0}$ denote the
number of arrows in the zeroth column of the arrow configuration encoded by
the signature $\boldsymbol\lambda^{(x,y)}$.\footnote{Equivalently,
	$(\boldsymbol\lambda^{(x,y)})^{[0]}$ is the number of zero parts in the
	signature $\boldsymbol\lambda^{(x,y)}$.} Since $\boldsymbol\lambda^{(x,y)}\in
	\mathsf{Sign}_y$, we have $\boldsymbol\ell^{(x,y)}\le y$.
\Cref{prop:YB_Markov_projections} implies that the scalar random field
$\mathbf{L}:=\{\boldsymbol\ell^{(x,y)}\}_{x,y\ge0}$ \emph{does not depend} on
the rest of the Yang-Baxter field (i.e., of the numbers of arrows in
$\boldsymbol\lambda^{(x,y)}$ in columns $\ge1$). In this way we say that
$\mathbf{L}$ is a marginally Markovian projection of the Yang-Baxter field
$\boldsymbol\Lambda$ onto the column number zero.

Let us now present an independent description of $\mathbf{L}$. From the
definition of the Yang-Baxter field via conditional probabilities
\eqref{YB_field_definition_forward} it follows that for each $(x,y)\in
	\mathbb{Z}_{\ge0}^{2}$ the value of $\boldsymbol\ell^{(x+1,y+1)}$ is
randomly determined using $\boldsymbol\ell^{(x+1,y)},\boldsymbol\ell^{(x,y)}$,
and $\boldsymbol\ell^{(x,y+1)}$, and the corresponding conditional
probabilities can be read from \eqref{U_0_dynamic_S6V_transitions}. In the
language of values of the field $\mathbf{L}$ these conditional probabilities
are given in \Cref{fig:L_dynamicS6V_probabilities}. The nature of the six
possible configurations of the values of $\mathbf{L}$ at $2\times 2$ squares
allow to \emph{identify} $\mathbf{L}$ with the height function in a dynamic
version of the stochastic six vertex model. Let us describe this model in more
detail.

\begin{figure}[htpb]
	\centering
	\begin{tabular}{c|c|c|c|c|c}
	\scalebox{.9}{\begin{tikzpicture}
		[scale=1.2, very thick]
		\draw[dashed] (0,-.6)--++(0,1.2);
		\draw[dashed] (-.6,0)--++(1.2,0);
		\node[anchor=north east] at (-.1,-.1) {$\ell$};
		\node[anchor=south east] at (-.1,.1) {$\ell+1$};
		\node[anchor=north west] at (.1,-.1) {$\ell-1$};
		\node[anchor=south west] at (.1,.1) {$\ell$};
		\draw[ultra thick,->] (-.6,0)--++(.55,0);
		\draw[ultra thick,->] (0,-.6)--++(0,.55);
		\draw[ultra thick,->] (0,0)--++(.55,0);
		\draw[ultra thick,->] (0,0)--++(0,.55);
	\end{tikzpicture}}&
	\scalebox{.9}{\begin{tikzpicture}
		[scale=1.2, very thick]
		\draw[dashed] (0,-.6)--++(0,1.2);
		\draw[dashed] (-.6,0)--++(1.2,0);
		\node[anchor=north east] at (-.1,-.1) {$\ell$};
		\node[anchor=south east] at (-.1,.1) {$\ell+1$};
		\node[anchor=north west] at (.1,-.1) {$\ell$};
		\node[anchor=south west] at (.1,.1) {$\ell$};
		\draw[ultra thick,->] (-.6,0)--++(.6,0);
		\draw[ultra thick,->] (0,0)--++(0,.6);
	\end{tikzpicture}}&
	\scalebox{.9}{\begin{tikzpicture}
		[scale=1.2, very thick]
		\draw[dashed] (0,-.6)--++(0,1.2);
		\draw[dashed] (-.6,0)--++(1.2,0);
		\node[anchor=north east] at (-.1,-.1) {$\ell$};
		\node[anchor=south east] at (-.1,.1) {$\ell+1$};
		\node[anchor=north west] at (.1,-.1) {$\ell$};
		\node[anchor=south west] at (.1,.1) {$\ell+1$};
		\draw[ultra thick,->] (-.6,0)--++(.6,0);
		\draw[ultra thick,->] (0,0)--++(.6,0);
	\end{tikzpicture}}&
	\scalebox{.9}{\begin{tikzpicture}
		[scale=1.2, very thick]
		\draw[dashed] (0,-.6)--++(0,1.2);
		\draw[dashed] (-.6,0)--++(1.2,0);
		\node[anchor=north east] at (-.1,-.1) {$\ell$};
		\node[anchor=south east] at (-.1,.1) {$\ell$};
		\node[anchor=north west] at (.1,-.1) {$\ell-1$};
		\node[anchor=south west] at (.1,.1) {$\ell$};
		\draw[ultra thick,->] (0,-.6)--++(0,.6);
		\draw[ultra thick,->] (0,0)--++(.6,0);
	\end{tikzpicture}}&
	\scalebox{.9}{\begin{tikzpicture}
		[scale=1.2, very thick]
		\draw[dashed] (0,-.6)--++(0,1.2);
		\draw[dashed] (-.6,0)--++(1.2,0);
		\node[anchor=north east] at (-.1,-.1) {$\ell$};
		\node[anchor=south east] at (-.1,.1) {$\ell$};
		\node[anchor=north west] at (.1,-.1) {$\ell-1$};
		\node[anchor=south west] at (.1,.1) {$\ell-1$};
		\draw[ultra thick,->] (0,-.6)--++(0,.6);
		\draw[ultra thick,->] (0,0)--++(0,.6);
	\end{tikzpicture}}&
	\scalebox{.9}{\begin{tikzpicture}
		[scale=1.2, very thick]
		\draw[dashed] (0,-.6)--++(0,1.2);
		\draw[dashed] (-.6,0)--++(1.2,0);
		\node[anchor=north east] at (-.1,-.1) {$\ell$};
		\node[anchor=south east] at (-.1,.1) {$\ell$};
		\node[anchor=north west] at (.1,-.1) {$\ell$};
		\node[anchor=south west] at (.1,.1) {$\ell$};
	\end{tikzpicture}}
	\\\hline
	\scalebox{.9}{1}
	&
		\scalebox{.9}{$\dfrac{(1-t)v}{v-t
		u}\dfrac{u-st^\ell}{v-st^\ell}$}
	&
			\scalebox{.9}{$\dfrac{v-u}{v-tu}
			\dfrac{v-st^{\ell+1}}{v-st^\ell}$}
	&
			\scalebox{.9}{$\dfrac{(1-t)u}{v-tu}
			\dfrac{v-st^{\ell}}{u-st^{\ell}}$}
	&
			\scalebox{.9}{$\dfrac{t(v-u)}{v-tu}
			\dfrac{u-st^{\ell-1}}{u-st^{\ell}}$}
	&
	\scalebox{.9}{1}
	\Bigg.
	\end{tabular}
	\caption{%
		Conditional probabilities in the random field
		$\mathbf{L}$ on $\mathbb{Z}_{\ge0}^2$.  In the top row all possible values
		of the field in the square $\left\{ x,x+1 \right\}\times\left\{ y,y+1
		\right\}$ are listed, where $\ell\in\mathbb{Z}_{\ge0}$ (and $\ell\ge1$ in
		the first, fourth, and fifth pictures).  The bottom row contains the
		corresponding conditional probabilities to sample the top right value
		$\boldsymbol\ell^{(x+1,y+1)}$ of the field given the three other values.
		The spectral parameters are $v=v_{x+1}$ and $u=u_{y+1}$.  The arrows
		represent identification with the six vertex configurations.
	}
	\label{fig:L_dynamicS6V_probabilities}
\end{figure}

First we define the space of configurations in our dynamic stochastic six
vertex model. Consider an ensemble of infinite up-right paths in the positive
integer quadrant with the following properties:
\begin{itemize}
	\item
	      Paths go along edges of the shifted lattice $\left(
		      \mathbb{Z}_{\ge0}+\frac{1}{2} \right)^2$;
	\item
	      Each edge of $\left( \mathbb{Z}_{\ge0}+\frac{1}{2} \right)^2$ is
	      occupied by at most one path;
	\item
	      Paths can touch each other at a vertex but cannot cross each
	      other;
	\item
	      On the boundary of the quadrant no paths enter from below, and
	      at each height $n+\frac{1}{2}$, $n\ge0$, a new path enters through the left
	      part of the boundary;
\end{itemize}
Fix such a configuration of up-right paths. At each $(x,y)$ in the original
non-shifted lattice $\mathbb{Z}_{\ge0}^2$ define the value of the \emph{height
	function}, $\mathfrak{h}(x,y)$, to be the number of paths passing below
$(x,y)$. See \Cref{fig:dyn_stoch6V} for an illustration.

\begin{definition}[DS6V]
	\label{def:dyn_S6V}
	The \emph{dynamic stochastic six vertex model} (\emph{DS6V} for short)
	is a probability distribution on ensembles of up-right paths (depending on the
	parameters $t\in[0,1)$, $s\in(-1,0]$, and two sequences $v_1,v_2,\ldots $ and
	$u_1,u_2,\ldots $ such that $0\le u_i<v_j<1$ for all $i,j$) defined
	inductively as follows. Suppose that the path configuration below the line
	$x+y\le n$ (for some $n\in \mathbb{Z}_{\ge1}$) is sampled. Thus, at each
	vertex
	$(n-\frac{1}{2},\frac{1}{2}),(n-\frac{3}{2},\frac{1}{2}),\ldots,(\frac{1}{2},n
		-\frac{1}{2})$ we know the incoming configuration of paths. We also know
	the values of the height function at each point $(x,y)\in
		\mathbb{Z}_{\ge0}^{2}$ with $x+y\le n$. Using the probabilities in
	\Cref{fig:L_dynamicS6V_probabilities}, sample the outgoing configuration of
	paths at each vertex
	$(n-\frac{1}{2},\frac{1}{2}),\ldots,(\frac{1}{2},n-\frac{1}{2})$
	independently, and then proceed by induction.
\end{definition}

The weights in \Cref{fig:L_dynamicS6V_probabilities} together with our
conditions on the parameters of the model imply that under the dynamic
stochastic six vertex model for each $y$ there almost surely exists $x$ such
that $\mathfrak{h}(x',y)=0$ for all $x'\ge x$. In other words, each path
almost surely reaches arbitrarily large vertical coordinates.

\begin{figure}[htpb]
	\centering
	\scalebox{.9}{
		\begin{tikzpicture}
		[scale=1.3, very thick]
		\draw[->] (-.35,0)--++(6.85,0) node [right] {$x$};
		\draw[->] (0,-.35)--++(0,4.85) node [left] {$y$};
			\foreach \ii in {1,2,3,4,5,6}
			{
				\node at (\ii-.5,-1) {$v_\ii$};
				\draw[dotted, thick] (\ii,-.3)--++(0,4.65);
				\node at (\ii,-.6) {$\ii$};
			}
			\foreach \jj in {1,2,3,4}
			{
				\node at (-1,\jj-.5) {$u_\jj$};
				\draw[dotted, thick] (-.3,\jj)--++(6.65,0);
				\node at (-.6,\jj) {$\jj$};
			}
			\node at (-.6,0) {$0$};
			\node at (0,-.6) {$0$};
			\foreach \zz in {(0,0),(1,0),(2,0),(3,0),(4,0),(5,0),(6,0),(5,1),(6,1),(5,2),(6,2),(6,3)}
			{\node[rectangle,draw,fill=white] at \zz {0};}
			\foreach \zz in {(0,1),(1,1),(2,1),(3,1),(4,1),(3,2),(4,2),(5,3),(5,4),(6,4)}
			{\node[rectangle,draw,fill=white] at \zz {1};}
			\foreach \zz in {(0,2),(1,2),(2,2),(1,3),(2,3),(3,3),(4,3),(4,4)}
			{\node[rectangle,draw,fill=white] at \zz {2};}
			\foreach \zz in {(0,3),(1,4),(2,4),(3,4)}
			{\node[rectangle,draw,fill=white] at \zz {3};}
			\foreach \zz in {(0,4)}
			{\node[rectangle,draw,fill=white] at \zz {4};}
			\draw[line width=2.7,->] (-.65, .5)--(-.5,.5)--++(1,0)--++(1,0)--++(1,0)--++(1,0)--++(1,0)--++(0,1)--++(0,.92)--++(.08,.08)--++(.92,0)--++(0,1)--++(1,0);
			\draw[line width=2.7,->] (-.65,1.5)--(-.5,1.5)--++(1,0)--++(2,0)--++(0,1)--++(1,0)--++(0.92,0)--++(.08,.08)--++(0,.92)--++(0,1);
			\draw[line width=2.7,->] (-.65,2.5)--(-.5,2.5)--++(1,0)--++(0,.92)--++(.08,.08)--++(.92,0)--++(1,0)--++(1,0)--++(0,1);
			\draw[line width=2.7,->] (-.65,3.5)--(-.5,3.5)--++(0.92,0)--++(.08,.08)--++(0,.92);
		\end{tikzpicture}
	}
	\caption{Path configuration of six vertex type in a quadrant together with its height function.}
	\label{fig:dyn_stoch6V}
\end{figure}

\begin{remark}
	\label{rmk:dynS6V_not_the_same}
	The vertex model introduced in \Cref{def:dyn_S6V} differs from the
	dynamic stochastic six vertex model presented recently in
	\cite{borodin2017elliptic} as a degeneration of the stochastic
	Interaction-Round-a-Face model (introduced in the same work). A higher spin
	model following the approach of the latter paper was then developed in
	\cite{aggarwal2017dynamical}. All these dynamic stochastic vertex models are
	closely related to versions of the Yang-Baxter equation with dynamic
	parameters (see \Cref{sub:dynamic_YB} below for our dynamic Yang-Baxter
	exuation which seems to be simpler than the one in
	\cite{borodin2017elliptic}). Therefore, we regard the model from
	\Cref{def:dyn_S6V} as another dynamic version of the stochastic six vertex
	model, different from the ones in \cite{borodin2017elliptic},
	\cite{aggarwal2017dynamical}.
\end{remark}

\begin{proposition}
	\label{prop:dyn6V_is_YB_field}
	Let $\mathfrak{H}:=\{\mathfrak{h}(x,y)\}_{x,y\ge0}$ be the random
	field of values of the height function of DS6V (\Cref{def:dyn_S6V}). Let
	$\mathbf{L}=\{\boldsymbol\ell^{(x,y)}\}_{x,y\ge0}$ be the random field
	obtained as the projection of the Yang-Baxter random field of
	\Cref{def:YB_field} onto the column number zero. Then these random fields
	$\mathfrak{H}$ and $\mathbf{L}$ have the same distribution.
\end{proposition}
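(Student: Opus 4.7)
The plan is to show that $\mathfrak{H}$ and $\mathbf{L}$ are generated by the same Markov rule on $\mathbb{Z}_{\ge 0}^2$ from the same deterministic boundary data, so that equality in distribution follows by induction on the anti-diagonals $n = x+y$.

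First I match the boundary data. For $\mathbf{L}$, the initialization $\boldsymbol\lambda^{(x,0)} = \varnothing$ and $\boldsymbol\lambda^{(0,y)} = (0^y)$ in \Cref{def:YB_field} forces $\boldsymbol\ell^{(x,0)} = 0$ and $\boldsymbol\ell^{(0,y)} = y$. For $\mathfrak{H}$, the DS6V boundary convention in \Cref{def:dyn_S6V}---no paths enter from below, and exactly one path enters through the left boundary at each height $k + \tfrac{1}{2}$, $k \ge 0$---gives $\mathfrak{h}(x,0) = 0$ and $\mathfrak{h}(0,y) = y$ as well.

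Next I match the one-step transitions along anti-diagonals. By \Cref{prop:YB_Markov_projections} applied with $h = 1$, column zero of the Yang-Baxter field evolves marginally as a Markov random field; more concretely, conditional on $\{\boldsymbol\ell^{(x',y')}\}_{x'+y' \le n}$, the new values $\boldsymbol\ell^{(x,y)}$ with $x + y = n+1$ are sampled independently from the Markov projection $\mathsf{U}^{[0]}_{v_x, u_y}$, whose six non-zero transition probabilities are listed in \eqref{U_0_dynamic_S6V_transitions} and were derived from the bottom row of \Cref{fig:fwd_YB}. On the DS6V side, \Cref{def:dyn_S6V} builds $\mathfrak{H}$ along the same anti-diagonals using the six conditional probabilities in \Cref{fig:L_dynamicS6V_probabilities}. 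Under the identification $g = \boldsymbol\ell^{(x-1,y-1)}$ (SW), $g_\lambda = \boldsymbol\ell^{(x,y-1)}$ (SE), $g_\mu = \boldsymbol\ell^{(x-1,y)}$ (NW), $g' = \boldsymbol\ell^{(x,y)}$ (NE), with $v = v_x$ and $u = u_y$, the six entries of \eqref{U_0_dynamic_S6V_transitions} coincide term by term with those of \Cref{fig:L_dynamicS6V_probabilities}. Because these six are the \emph{only} non-zero transitions out of a $2 \times 2$ block in $\mathbf{L}$, the field $\mathbf{L}$ is automatically supported on value configurations realizable as height functions of up-right path ensembles of six-vertex type, so the identification of $\mathbf{L}$ with $\mathfrak{H}$ makes sense globally (not just locally on each face).

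The only step requiring care is the bookkeeping: aligning the SW/SE/NW/NE corners of a face with the roles of $(\kappa, \lambda, \mu, \nu)$ in the definition of $\mathsf{U}^{\mathrm{fwd}}$, and tracking the parameter swap $(u, v) \leftrightarrow (v, u)$ noted in \Cref{rmk:swap_u_v}. Once the initial values and the one-step conditionals are matched, a straightforward induction on $n = x+y$---identical in structure to the inductive definitions of both $\boldsymbol\Lambda$ and the DS6V---completes the proof.
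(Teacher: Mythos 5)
Your proof is correct and follows exactly the route the paper takes: its one-line proof ("straightforward from the identification of weights in Figure 11 together with the identification of the boundary conditions") is precisely your argument, which you merely spell out in more detail — matching the deterministic boundary data, matching the six conditional probabilities of the column-zero Markov projection \eqref{U_0_dynamic_S6V_transitions} with those of the DS6V weights, and inducting along anti-diagonals. No gaps; the extra care you take with the corner/parameter bookkeeping and with the realizability of $\mathbf{L}$ as a six-vertex height function is exactly what the paper leaves implicit.
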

\begin{proof}
	Straightforward from the identification of weights in $\mathfrak{H}$
	and $\mathbf{L}$ in \Cref{fig:L_dynamicS6V_probabilities} together with the
	identification of the boundary conditions.
\end{proof}

From \Cref{thm:YB_field_spin_HL_process} and \Cref{prop:dyn6V_is_YB_field} we
immediately get the following interpretation of the distribution of the height
function in DS6V:
\begin{corollary}
	\label{cor:dyn6V_spin_HL_process}
	Fix a down-right path $\mathcal{P}_{\vec{x},\vec{y}}$ as in
	\eqref{spin_HL_down_right_path_sequences}--\eqref{spin_HL_down_right_path}.
	The joint distribution of the random variables $\{\mathfrak{h}(p)\colon
		p\in\mathcal{P}_{\vec{x},\vec{y}}\}$ (i.e., the values of the height function
	of the dynamic stochastic six vertex model along this down-right path),
	coincides with the joint distribution of $\bigl\{(\lambda^{(p)})^{[0]}\colon
		p\in   \mathcal{P}_{\vec{x},\vec{y}}\bigr\}$, the numbers of zero parts in
	the signatures $\lambda^{(p)}$ governed by the spin Hall-Littlewood process
	$\mathscr{HP}_{\vec{x},\vec{y}}$ corresponding to the down-right path
	$\mathcal{P}_{\vec{x},\vec{y}}$.
\end{corollary}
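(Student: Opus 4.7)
The plan is to combine the two results stated immediately before the corollary, namely Theorem~\ref{thm:YB_field_spin_HL_process} and Proposition~\ref{prop:dyn6V_is_YB_field}. No new construction or calculation is needed; the argument is a two-step identification of distributions, since the corollary is essentially a formal consequence of these prior results.

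First, I would invoke Proposition~\ref{prop:dyn6V_is_YB_field} to replace the height function field $\mathfrak{H}=\{\mathfrak{h}(x,y)\}_{x,y\ge 0}$ by the column-zero projection $\mathbf{L}=\{\boldsymbol\ell^{(x,y)}\}_{x,y\ge 0}$ of the Yang-Baxter field, where by definition $\boldsymbol\ell^{(x,y)}=(\boldsymbol\lambda^{(x,y)})^{[0]}$ is the number of zero parts of $\boldsymbol\lambda^{(x,y)}$. This identifies the joint law of $\{\mathfrak{h}(p)\colon p\in\mathcal{P}_{\vec{x},\vec{y}}\}$ with the joint law of $\{(\boldsymbol\lambda^{p})^{[0]}\colon p\in \mathcal{P}_{\vec{x},\vec{y}}\}$ under the Yang-Baxter field.

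Next, I would observe that the map $\lambda\mapsto\lambda^{[0]}$ is a deterministic coordinate-wise functional on signatures. Theorem~\ref{thm:YB_field_spin_HL_process} asserts that the joint distribution of the tuple $\{\boldsymbol\lambda^{p}\colon p\in\mathcal{P}_{\vec{x},\vec{y}}\}$ under the Yang-Baxter field coincides with its joint distribution under $\mathscr{HP}_{\vec{x},\vec{y}}$. Pushing both distributions forward by the deterministic map $(\lambda^{p})_{p\in\mathcal{P}_{\vec{x},\vec{y}}}\mapsto \bigl((\lambda^{p})^{[0]}\bigr)_{p\in\mathcal{P}_{\vec{x},\vec{y}}}$ preserves equality in distribution, yielding the desired identification.

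There is no genuine obstacle to overcome: all the substantive work has already been carried out in the proofs of Theorem~\ref{thm:YB_field_spin_HL_process} (which ultimately rests on the reversibility identity of Proposition~\ref{prop:reversibility_on_signatures} and induction along the down-right path) and Proposition~\ref{prop:dyn6V_is_YB_field} (which is a direct weight-by-weight matching via Figure~\ref{fig:L_dynamicS6V_probabilities}). The only point to spell out cleanly is the measurability/functoriality remark of the second step, which is immediate since the number of zero parts is determined by a single coordinate of the signature's multiplicative encoding.
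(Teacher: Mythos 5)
Your proposal is correct and matches the paper exactly: the paper derives this corollary by declaring it immediate from Theorem~\ref{thm:YB_field_spin_HL_process} and Proposition~\ref{prop:dyn6V_is_YB_field}, which is precisely the two-step identification (height function field equals the column-zero projection $\mathbf{L}$, then push the down-right-path distributional identity of the Yang-Baxter field forward under $\lambda\mapsto\lambda^{[0]}$) that you spell out.
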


\subsection{A dynamic Yang-Baxter equation}
\label{sub:dynamic_YB}

The probabilities of vertex configurations in DS6V (given in
\Cref{fig:L_dynamicS6V_probabilities}) satisfy a dynamic version of the
Yang-Baxter equation. It is convenient to formulate it in terms of the values
of the height function since the corresponding arrow configurations can be
readily recovered as in \Cref{fig:L_dynamicS6V_probabilities}. Consider two
three-line configurations as in \Cref{fig:dyn_YBE}. Fix the six boundary
values $\ell_0,\ell_1,\ell_2,\ell_1',\ell_2',\ell_3\in \mathbb{Z}_{\ge0}$ of
the height function. Clearly, these values can be arbitrary provided that they
satisfy
\begin{equation}
	\label{conditions_on_ell}
	\ell_1-\ell_0,\ell_2-\ell_1,\ell_3-\ell_2 \in \left\{ 0,1 \right\},
	\qquad
	\ell_1'-\ell_0,\ell_2'-\ell_1',\ell_3-\ell_2'\in \left\{ 0,1 \right\}.
\end{equation}
Also fix spectral parameters $\mathsf{u}_1,\mathsf{u}_2,\mathsf{v}$. For the
dynamic Yang-Baxter equation in \Cref{thm:dynamic_YB} below these parameters
do not have to satisfy any conditions as in \Cref{def:dyn_S6V}. However, if
$0\le \mathsf{u}_2<\mathsf{u}_1<\mathsf{v}<1$ and $0\le t<1$, $-1<s\le 0$,
then all the individual vertex weights entering the dynamic Yang-Baxter
equation belong to $[0,1]$.

\begin{figure}[htpb]
	\centering
	\begin{tikzpicture}
		[scale=1,very thick]
		\draw[densely dashed] (0,0) -- (3,1.7);
		\draw[densely dashed] (0,1) -- (3,-.7);
		\draw[densely dashed] (2.2,-1.2)--++(0,3.4);
		\node at (2.6,-1) {$\ell_0$};
		\node at (1.3,-.2) {$\ell_1$};
		\node at (0,.5) {$\ell_2$};
		\node at (2.6,0.5) {$\ell_1'$};
		\node at (2.6,2) {$\ell_2'$};
		\node at (1.3,1.3) {$\ell_3$};
		\node at (1.75,.5) {$?$};
		\node at (-.3,-.2) {$\mathsf{u}_1$};
		\node at (-.3,1.2) {$\mathsf{u}_2$};
		\node at (2.2,-1.5) {$\mathsf{v}$};
		\begin{scope}[shift={(6,0)}]
			\draw[densely dashed] (0,1.7) -- (3,0);
			\draw[densely dashed] (0,-.7) -- (3,1);
			\draw[densely dashed] (.8,-1.2)--++(0,3.4);
			\node at (1.8,-.2) {$\ell_0$};
			\node at (.4,-1) {$\ell_1$};
			\node at (.4,.5) {$\ell_2$};
			\node at (3,0.5) {$\ell_1'$};
			\node at (1.8,1.3) {$\ell_2'$};
			\node at (.4,2) {$\ell_3$};
			\node at (1.25,.5) {$?$};
			\node at (-.3,-.9) {$\mathsf{u}_1$};
			\node at (-.3,1.9) {$\mathsf{u}_2$};
			\node at (.8,-1.5) {$\mathsf{v}$};
		\end{scope}
	\end{tikzpicture}
	\caption{%
		The dynamic Yang-Baxter equation for the dynamic stochastic six vertex weights in
		\Cref{fig:L_dynamicS6V_probabilities}.%
	}
	\label{fig:dyn_YBE}
\end{figure}

\begin{theorem}[Dynamic Yang-Baxter equation]
	\label{thm:dynamic_YB}
	Form two partition functions corresponding to the left and the right
	three-line configurations in \Cref{fig:dyn_YBE}. In both partition functions,
	the same boundary conditions satisfying \eqref{conditions_on_ell} are fixed,
	and the summation is over all possible values (in fact, no more than two) of
	the height function ``$?$'' inside the triangle. The spectral parameters
	$\mathsf{u}_1,\mathsf{u}_2,\mathsf{v}$ are attached to the three lines, and at
	each intersection the corresponding ``horizontal'' and ``vertical'' parameters
	replace $u$ and $v$, respectively, in the weights in
	\Cref{fig:L_dynamicS6V_probabilities}.

	Then these two partition functions are equal to each other.
\end{theorem}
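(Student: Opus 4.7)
The plan is to deduce this dynamic Yang-Baxter equation from the ordinary Yang-Baxter equation for the underlying higher spin six vertex model reviewed in \Cref{sub:YB}. By \Cref{prop:dyn6V_is_YB_field} and formulas \eqref{U_0_dynamic_S6V_transitions}, each DS6V vertex weight in \Cref{fig:L_dynamicS6V_probabilities} is precisely a single local forward transition probability $P^{\mathrm{fwd}}_{v,u}$ from \Cref{fig:fwd_YB} for a cross vertex entering column zero in the state $\iiYBii{.25}{3.5}{10.5pt}$, with the four corner heights encoding the vertical-arrow counts before and after the cross passes through column zero.

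First I would set up an extended three-line analogue of the Yang-Baxter field of \Cref{sec:YB_field} carrying two horizontal rapidities $\mathsf{u}_1,\mathsf{u}_2$ and one vertical rapidity $\mathsf{v}$, with the column-zero counts pinned by the six boundary heights $\ell_0,\ldots,\ell_3$. In this field the two configurations in \Cref{fig:dyn_YBE} correspond to two orderings in which three rapidity lines intersect at a single point, equivalently to two orderings of the three associated cross-vertex passages.

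Next I would lift the identity to the full (non-projected) vertex model. By \Cref{prop:U_are_products_of_P}, each DS6V weight is a single local $P^{\mathrm{fwd}}$, so each side of the dynamic equation becomes a partition function of a three-line higher spin six vertex configuration, summed over the inner arrow counts at column zero, with column-zero boundary conditions determined by the $\ell_i$'s and with the configuration in columns $\ge 1$ common to both sides. The equality then reduces to an iterated application of the sixteen ordinary Yang-Baxter identities listed in \Cref{app:YB_equation} that permute the three rapidity lines through column zero, together with the local reversibility \eqref{reversibility_condition} underlying the bijectivisation.

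The main obstacle will be to track, on both sides of the hexagon, the horizontal arrow counts entering and leaving column zero along each rapidity line and to verify that they are uniquely determined by $\ell_0,\ldots,\ell_3$ (so that the division by the column-zero normalization implicit in the conditional probabilities $\mathsf{U}^{[0],\mathrm{fwd}}_{v,u}$ yields matching denominators on both sides). As a safeguard, or as a fully elementary alternative, the theorem can be verified by direct case analysis: the interlacing conditions \eqref{conditions_on_ell} restrict the data $(\ell_0,\ell_1,\ell_2,\ell_1',\ell_2',\ell_3)$ to at most $2^6$ combinations (most collapsed further by the sum constraints), and each case reduces to a short identity between at most two terms per side, each a simple rational function of $s,t,\mathsf{u}_1,\mathsf{u}_2,\mathsf{v}$.
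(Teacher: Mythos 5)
Your ``safeguard'' is in fact the paper's proof. The constraints \eqref{conditions_on_ell} say that the left and right boundary data are two monotone height profiles from $\ell_0$ to $\ell_3$ with three $\{0,1\}$ increments each, so the number of cases is $\sum_{d=0}^{3}\binom{3}{d}^2=20$ (your bound $2^6$ is correct but loose), each side of each identity has at most two terms, and each identity is a short rational-function check in $s,t,\mathsf{u}_1,\mathsf{u}_2,\mathsf{v}$; the paper exhibits one such identity explicitly and verifies the remaining nineteen the same way. If you carry out that finite verification, you have a complete proof identical in substance to the paper's.

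Your primary route, by contrast, has a genuine gap that goes beyond the ``main obstacle'' you flag. The reduction hinted at in \Cref{rmk:dyn_YB_from_static} is to the Yang--Baxter equation of the \emph{stochastic six vertex model} (via a height-dependent gauge, i.e.\ a vertex--IRF transformation), not through the bijectivisation of the higher spin equation. Your identification of each weight in \Cref{fig:L_dynamicS6V_probabilities} with a single local probability $P^{\mathrm{fwd}}$ (equivalently, by reversibility, with a ratio of static three-vertex weights, since the relevant backward probabilities equal $1$) is correct, but it realizes the dynamic weight only when one of the two rapidities at the crossing is attached to the column carrying the spin parameter $s$ — that is what makes the $\ell$-dependence appear. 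In \Cref{thm:dynamic_YB} the crossing of the two ``horizontal'' lines $\mathsf{u}_1,\mathsf{u}_2$ also carries an $\ell$-dependent weight, whereas in the static three-line configurations behind \eqref{YB1.1}--\eqref{YB4.4} that crossing is a cross vertex with the height-independent weights \eqref{cross_vertex_weights}, and no column with rapidity $\mathsf{u}_1$ or $\mathsf{u}_2$ is present in the geometry. Consequently the three conditional probabilities around the triangle do not telescope into a ratio of two genuine static three-line partition functions with common normalization, and the step ``the equality then reduces to an iterated application of the sixteen ordinary Yang--Baxter identities together with local reversibility'' is precisely the unproved part, not a bookkeeping matter. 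To salvage this route you would need an explicit gauge transformation turning the dynamic weights into static stochastic six vertex weights and a check that the gauge factors cancel around the hexagon for fixed boundary heights; otherwise, fall back on the elementary case analysis, which is what the paper does.
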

\begin{proof}
	There are totally 20 types of identities corresponding to various
	choices of the boundary conditions satisfying \eqref{conditions_on_ell}, and
	depending on one particular value of the height function, say, $\ell_0=\ell$.
	Each of these identities is readily verified by hand. For illustration, let us
	present one such identity:
	\begin{equation*}
		\left[\scalebox{.7}{
				\begin{tikzpicture}[scale=1.1, thick,
						baseline=12pt]
					\draw[densely dashed] (0,0) --
					(3,1.7);
					\draw[densely dashed] (0,1) --
					(3,-.7);
					\draw[densely dashed]
					(2.2,-1.2)--++(0,3.4);
					\draw[->, line width=2]
					(0,1)--(0.882353,.5);
					\draw[->, line width=2]
					(0.882353,.5)--(2.2,1.24667);
					\draw[->, line width=2]
					(2.2,1.24667)--(3,1.7);
					\node at (2.6,-1) {$\ell$};
					\node at (1.3,-.2) {$\ell$};
					\node at (0,.5) {$\ell$};
					\node at (2.6,0.5) {$\ell$};
					\node at (2.65,2) {$\ell+1$};
					\node at (1.3,1.3) {$\ell+1$};
					\node at (1.75,.5) {$\ell$};
				\end{tikzpicture}
			}\right]
		+
		\left[\scalebox{.7}{
				\begin{tikzpicture}[scale=1.1, thick,
						baseline=12pt]
					\draw[densely dashed] (0,0) --
					(3,1.7);
					\draw[densely dashed] (0,1) --
					(3,-.7);
					\draw[densely dashed]
					(2.2,-1.2)--++(0,3.4);
					\draw[->, line width=2]
					(0,1)--(0.882353,.5);
					\draw[->, line width=2]
					(0.882353,.5)--(2.2,-0.246667);
					\draw[->, line width=2]
					(2.2,-0.246667)--(2.2,1.24667);
					\draw[->, line width=2]
					(2.2,1.24667)--(3,1.7);
					\node at (2.6,-1) {$\ell$};
					\node at (1.3,-.2) {$\ell$};
					\node at (0,.5) {$\ell$};
					\node at (2.6,0.5) {$\ell$};
					\node at (2.65,2) {$\ell+1$};
					\node at (1.3,1.3) {$\ell+1$};
					\node at (1.75,.5) {$\ell+1$};
				\end{tikzpicture}
			}\right]
		=
		\left[\scalebox{.7}{
				\begin{tikzpicture}[scale=1.1,
						thick,baseline=12pt]
					\draw[densely dashed] (0,1.7) --
					(3,0);
					\draw[densely dashed] (0,-.7) --
					(3,1);
					\draw[densely dashed]
					(.8,-1.2)--++(0,3.4);
					\draw[->, line width=2]
					(0,1.7)--(0.8,1.246667);
					\draw[->, line width=2]
					(.8,1.24667)--(2.11765,.5);
					\draw[->, line width=2]
					(2.11765,.5)--(3,1);
					\node at (1.8,-.2) {$\ell$};
					\node at (.35,-1) {$\ell$};
					\node at (.35,.5) {$\ell$};
					\node at (3,0.5) {$\ell$};
					\node at (1.8,1.3) {$\ell+1$};
					\node at (.4,2) {$\ell+1$};
					\node at (1.25,.5) {$\ell$};
				\end{tikzpicture}
			}\right].
	\end{equation*}
	This translates into the following identity between rational functions
	\begin{multline*}
		\tfrac{(1-t) \mathsf{u}_1 (\mathsf{u}_2-s
			t^\ell)}{(\mathsf{u}_1-t \mathsf{u}_2) (\mathsf{u}_1-s
			t^\ell)}
		\tfrac{(\mathsf{v}-\mathsf{u}_1) (s
			t^{\ell+1}-\mathsf{v})}{(\mathsf{v}-t \mathsf{u}_1) (s
			t^\ell-\mathsf{v})}
		+
		\tfrac{(\mathsf{u}_1-\mathsf{u}_2) (\mathsf{u}_1-s
			t^{\ell+1})}{(\mathsf{u}_1-t \mathsf{u}_2)
			(\mathsf{u}_1-s t^\ell)}
		\tfrac{(1-t) \mathsf{v} (\mathsf{u}_2-s t^\ell)}{(\mathsf{v}-t
			\mathsf{u}_2) (\mathsf{v}-s t^\ell)}
		\tfrac{(1-t) \mathsf{u}_1 (\mathsf{v}-s
			t^{\ell+1})}{(\mathsf{v}-t \mathsf{u}_1)
			(\mathsf{u}_1-s t^{\ell+1})}
		\\=
		\tfrac{(\mathsf{v}-\mathsf{u}_2) (s
			t^{\ell+1}-\mathsf{v})}{(\mathsf{v}-t \mathsf{u}_2) (s
			t^\ell-\mathsf{v})}
		\tfrac{(1-t) \mathsf{u}_1 (\mathsf{u}_2-s
			t^\ell)}{(\mathsf{u}_1-t \mathsf{u}_2) (\mathsf{u}_1-s
			t^\ell)},
	\end{multline*}
	which is readily verified by hand. The remaining 19 identities
	comprising the dynamic Yang-Baxter equation are checked in a similar way, and
	the theorem follows.
\end{proof}

\begin{remark}
	\label{rmk:dyn_YB_from_static}
	The dynamic Yang-Baxter equation of \Cref{thm:dynamic_YB} can in fact
	be reduced to the usual Yang-Baxter equation for the stochastic six vertex
	model, but we do not use this fact here.
\end{remark}

The dynamic Yang-Baxter equation of \Cref{thm:dynamic_YB} satisfied by the
probabilities in the dynamic stochastic six vertex model hints at the model's
integrability (i.e., that certain observables of this model are computable in
explicit form). We do not discuss these problems in the present work, though
in \Cref{sec:degenerations} below we consider degenerations of DS6V for which
certain observables indeed can be computed in explicit form.

\appendix

\section{Degenerations and limits}
\label{sec:degenerations}

Here we discuss a number of degenerations of 
the dynamic stochastic six vertex model (DS6V) 
and its properties
stated in \Cref{thm:YB_field_spin_HL_process} and
\Cref{cor:dyn6V_spin_HL_process}. Some of these degenerations
correspond to degenerations of the spin Hall-Littlewood symmetric functions
outlined in \cite[Section 8]{Borodin2014vertex}. The tables in
\Cref{fig:dynamic_vertex_weights_degen,fig:dynamic_vertex_weights_degen_half_cont} 
list various degenerations of the DS6V weights considered in 
Appendices \ref{sub:degen_HL} to \ref{sub:hc_degen_t0s0}.
Additional (less direct) degenerations are discussed in 
\Cref{sub:degen_rational,sub:degen_ASEP,sub:degen_finite_spin}.
We also discuss two degenerations of the full Yang-Baxter field in \Cref{sub:degen_HL,sub:hc_degen_Schur},
and compare them to known systems.
\begin{remark}
	\label{rmk:first_k_columns}
	Every degeneration of the DS6V model we consider 
	can be lifted to a $k$-layer model, where $k\ge2$ is arbitrary.
	Indeed, such a model would arise by taking the corresponding degeneration
	of the full Yang-Baxter field, and looking at its Markov projection
	onto the first $k$ columns 
	as in \Cref{sub:properties_of_global_transitions}.
	Such multilayer models for $s=0$ were explicitly written down in 
	\cite{BufetovMatveev2017}.
	For shortness, we will not address multilayer
	extensions in the present work.
\end{remark}

For simplicity we assume that the spectral parameters are
constant, $u_i\equiv u$ and $v_j\equiv v$, but most constructions (except the ASEP type limit in \Cref{sub:degen_ASEP}) 
work for the inhomogeneous parameters $u_i,v_j$, too.

\begin{figure}[htpb]
	\centering
	\begin{tabular}{c|c|c|c|c|c}
		&&
	\scalebox{.9}{\begin{tikzpicture}
		[scale=1.2, very thick]
		\draw[dashed] (0,-.6)--++(0,1.2);
		\draw[dashed] (-.6,0)--++(1.2,0);
		\node[anchor=north east] at (-.1,-.1) {$\ell$};
		\node[anchor=south east] at (-.1,.1) {$\ell+1$};
		\node[anchor=north west] at (.1,-.1) {$\ell$};
		\node[anchor=south west] at (.1,.1) {$\ell$};
		\draw[ultra thick,->] (-.6,0)--++(.6,0);
		\draw[ultra thick,->] (0,0)--++(0,.6);
	\end{tikzpicture}}&
	\scalebox{.9}{\begin{tikzpicture}
		[scale=1.2, very thick]
		\draw[dashed] (0,-.6)--++(0,1.2);
		\draw[dashed] (-.6,0)--++(1.2,0);
		\node[anchor=north east] at (-.1,-.1) {$\ell$};
		\node[anchor=south east] at (-.1,.1) {$\ell+1$};
		\node[anchor=north west] at (.1,-.1) {$\ell$};
		\node[anchor=south west] at (.1,.1) {$\ell+1$};
		\draw[ultra thick,->] (-.6,0)--++(.6,0);
		\draw[ultra thick,->] (0,0)--++(.6,0);
	\end{tikzpicture}}&
	\scalebox{.9}{\begin{tikzpicture}
		[scale=1.2, very thick]
		\draw[dashed] (0,-.6)--++(0,1.2);
		\draw[dashed] (-.6,0)--++(1.2,0);
		\node[anchor=north east] at (-.1,-.1) {$\ell$};
		\node[anchor=south east] at (-.1,.1) {$\ell$};
		\node[anchor=north west] at (.1,-.1) {$\ell-1$};
		\node[anchor=south west] at (.1,.1) {$\ell$};
		\draw[ultra thick,->] (0,-.6)--++(0,.6);
		\draw[ultra thick,->] (0,0)--++(.6,0);
	\end{tikzpicture}}&
	\scalebox{.9}{\begin{tikzpicture}
		[scale=1.2, very thick]
		\draw[dashed] (0,-.6)--++(0,1.2);
		\draw[dashed] (-.6,0)--++(1.2,0);
		\node[anchor=north east] at (-.1,-.1) {$\ell$};
		\node[anchor=south east] at (-.1,.1) {$\ell$};
		\node[anchor=north west] at (.1,-.1) {$\ell-1$};
		\node[anchor=south west] at (.1,.1) {$\ell-1$};
		\draw[ultra thick,->] (0,-.6)--++(0,.6);
		\draw[ultra thick,->] (0,0)--++(0,.6);
	\end{tikzpicture}}
	\\\hline
	&\scalebox{.9}{\parbox{.1\textwidth}{Original weights}}
	&
		\scalebox{.9}{$\dfrac{(1-t)v}{v-t
		u}\dfrac{u-st^\ell}{v-st^\ell}$}
	&
			\scalebox{.9}{$\dfrac{v-u}{v-tu}
			\dfrac{v-st^{\ell+1}}{v-st^\ell}$}
	&
			\scalebox{.9}{$\dfrac{(1-t)u}{v-tu}
			\dfrac{v-st^{\ell}}{u-st^{\ell}}$}
	&
			\scalebox{.9}{$\dfrac{t(v-u)}{v-tu}
			\dfrac{u-st^{\ell-1}}{u-st^{\ell}}$}
	\bigg.
	\\\hline
	(a)&\scalebox{.9}{\Cref{sub:degen_HL}}
	&
	\scalebox{.9}{$\dfrac{(1-t)u/v}{1-tu/v}$}
	&
	\scalebox{.9}{$\dfrac{1-u/v}{1-tu/v}$}
	&
	\scalebox{.9}{$\dfrac{1-t}{1-tu/v}$}
	&
	\scalebox{.9}{$\dfrac{t(1-u/v)}{1-tu/v}$}
	\bigg.
	\\\hline
	(b)&\scalebox{.9}{\Cref{sub:degen_t0}}
	&
		\scalebox{.9}{$\dfrac{u-s\mathbf{1}_{\ell=0}}{v-s\mathbf{1}_{\ell=0}}$}
	&
			\scalebox{.9}{$
			\dfrac{v-u}{v-s\mathbf{1}_{\ell=0}}$}
	&
	\scalebox{.9}{$1$}
	&
	\scalebox{.9}{$0$}
	\bigg.
	\\\hline
	(c)&\scalebox{.9}{\Cref{sub:degen_Schur}}
	&
	\scalebox{.9}{$u/v$}
	&
	\scalebox{.9}{$1-u/v$}
	&
	\scalebox{.9}{$1$}
	&
	\scalebox{.9}{$0$}
	\big.
	\\\hline
	(d)&\scalebox{.9}{\Cref{sub:degen_IHL}}
	&
		\scalebox{.9}{$\dfrac{(1-t)v}{v-t
		u}\dfrac{u+t^\ell}{v+t^\ell}$}
	&
			\scalebox{.9}{$\dfrac{v-u}{v-tu}
			\dfrac{v+t^{\ell+1}}{v+t^\ell}$}
	&
			\scalebox{.9}{$\dfrac{(1-t)u}{v-tu}
			\dfrac{v+t^{\ell}}{u+t^{\ell}}$}
	&
			\scalebox{.9}{$\dfrac{t(v-u)}{v-tu}
			\dfrac{u+t^{\ell-1}}{u+t^{\ell}}$}
	\bigg.
	\\\hline
	(e)&\scalebox{.9}{\Cref{sub:degen_t0s0}}
	&
	\scalebox{.9}{$\dfrac{u+\mathbf{1}_{\ell=0}}{v+\mathbf{1}_{\ell=0}}$}
	&
	\scalebox{.9}{$\dfrac{v-u}{v+\mathbf{1}_{\ell=0}}$}
	&
	\scalebox{.9}{$1$}
	&
	\scalebox{.9}{$0$}
	\bigg.
	\end{tabular}
	\caption{%
		Direct degenerations of the dynamic stochastic six vertex weights 
		from \Cref{sec:dynamicS6V} considered in the first part of \Cref{sec:degenerations}.
		Here $\ell\in \mathbb{Z}_{\ge0}$ (and $\ell\ge1$ in the last two cases)
		is the parameter corresponding to the height function, and $\mathbf{1}_{\cdots}$
		denotes the indicator of an event.
		The vertices $(1,1;1,1)$ and $(0,0;0,0)$ always having 
		weight $1$ are not shown.%
	}
	\label{fig:dynamic_vertex_weights_degen}
\end{figure}

\subsection{Hall-Littlewood degeneration and stochastic six vertex model}
\label{sub:degen_HL}

Setting $s=0$ and keeping other parameters makes the DS6V weights independent
of the height function. Moreover, in this degeneration the weights depend only
on the ratio $u/v$ and not on the individual parameters $u,v$. See
\Cref{fig:dynamic_vertex_weights_degen}(a). Thus, in this limit the DS6V turns
into the usual stochastic six vertex model introduced in \cite{GwaSpohn1992}
and studied in Integrable Probability since \cite{BCG6V}. The spin
Hall-Littlewood symmetric functions $F$ and $G^c$ turn (up to simple factors)
into the Hall-Littlewood symmetric polynomials \cite[Ch. III]{Macdonald1995}.
The correspondence between the stochastic six vertex model and Hall-Littlewood
processes following from \Cref{cor:dyn6V_spin_HL_process} was obtained earlier
in \cite{borodin2016stochastic_MM} (at the level of formulas),
\cite{BorodinBufetovWheeler2016} (for a half-continuous degeneration, cf.
\Cref{sub:hc_degen_HL} below), and in full form in \cite{BufetovMatveev2017}.

The Yang-Baxter field $\boldsymbol \Lambda:=\{
	\boldsymbol\lambda^{(x,y)} \}_{x,y\ge0}$ for $s=0$ becomes a certain
field of random Young diagrams indexed by $\mathbb{Z}_{\ge0}^{2}$ related to
Hall-Littlewood measures and processes. This random field \emph{differs} from
the Hall-Littlewood RSK field introduced in \cite{BufetovMatveev2017}, despite
that:
\begin{itemize}
	\item
	      In both fields, joint distributions along down-right paths are
	      the same and are given by the Hall-Littlewood processes as in
	      \Cref{cor:dyn6V_spin_HL_process}.
	\item
	      The projection onto the first column in both fields produces the
	      stochastic six vertex model.
\end{itemize}
The existence of two different random fields with these properties might seem
surprising, but such non-uniqueness of 2-dimensional stochastic dynamics was
observed before, e.g., in \cite{BorodinPetrov2013NN} or \cite[Section
	4]{BorodinPetrov2013Lect}. The fact that the $s=0$ Yang-Baxter field and the
Hall-Littlewood RSK field are indeed different will be evident in
\Cref{sub:hc_degen_Schur} when we take further degenerations and obtain
different objects.

\begin{remark}
	\label{rmk:spinHL_not_RSK}
	The Hall-Littlewood RSK field of \cite{BufetovMatveev2017} has an
	additional structure coming from the fact that the skew Hall-Littlewood
	symmetric functions in one variable are proportional to a power of the
	variable. Using this fact, analogues of the probabilities
	$\mathsf{U}^{\mathrm{fwd}}_{v,u}$ and $\mathsf{U}^{\mathrm{bwd}}_{v,u}$ for
	the Hall-Littlewood RSK field lead to \emph{randomized RSK correspondences}:
	having Young diagrams $\mu,\kappa,\lambda$, and an integer $r\in
		\mathbb{Z}_{\ge0}$ (corresponding to the power of $u/v$), the randomized RSK
	produces a random output Young diagram $\nu$. See \cite[Section
		3.6]{BufetovMatveev2017} for details on this reduction of a random field of
	Young diagrams to randomized RSK correspondences with input.

	However, for $s\ne 0$ the skew spin Hall-Littlewood symmetric
	functions in one variable are not simply proportional to powers of the
	variables. This presents a clear obstacle to a possible reduction of the
	Yang-Baxter field or another such random field of signatures to a randomized
	correspondence with integer input. Therefore, we do not address this issue in
	the present work.
\end{remark}

Observables of Hall-Littlewood processes pertaining to the projection onto
first columns can be extracted using the action of Hall-Littlewood versions of
Macdonald difference operators (e.g., see \cite{dimitrov2016kpz}). Thus, the
connection between the stochastic six vertex model and Hall-Littlewood
processes produces tools for the analysis of the former model alternative to
the original approach of \cite{BCG6V}. See, e.g.,
\cite{borodin2016stochastic_MM} for an analysis via Hall-Littlewood measures.

\subsection{Schur degeneration and modified discrete time PushTASEP}
\label{sub:degen_t0}

Setting $t=0$ and keeping all other parameters makes the DS6V weights look as
in \Cref{fig:dynamic_vertex_weights_degen}(b). These weights are still dynamic
in the sense that they retain dependence on the height function. However, this
dependence only singles out the bottommost path: the behavior of all other
paths follows the same weights.

As noted in \cite[Section 8.3]{Borodin2014vertex}, the spin Hall-Littlewood
functions $F$ and $G^c$ for $t=0$ turn into certain determinants generalizing
Schur polynomials, thus making the spin Hall-Littlewood measures and processes 
in this degeneration potentially more tractable.

Let us reinterpret the $t=0$ degeneration of DS6V as a discrete time particle
system by regarding the horizontal direction as time (a similar interpretation
is valid for the general DS6V model, too, only the corresponding particle
system becomes more complicated.)

\begin{definition}
	\label{def:discrete_modified_push}
	Consider a discrete time particle system living on half infinite
	particle configurations $x_1(\mathsf{t})<x_2(\mathsf{t})<\ldots $,
	$\mathsf{t}\in \mathbb{Z}_{\ge0}$, on $\mathbb{Z}$. Identify this system with
	the $t=0$ degeneration of the DS6V model as follows:
	\begin{equation}
		\label{PushTASEP_interpretation_of_particle_sys}
		x_i(\mathsf{t})=k \qquad \Longleftrightarrow \qquad
		\mathfrak{h}(\mathsf{t},k-1)=i-1 \quad\textnormal{and}\quad
		\mathfrak{h}(\mathsf{t},k)=i,
	\end{equation}
	cf. \Cref{fig:dyn_stoch6V} and the definition of the height function
	in \Cref{sub:dynamicS6V_subsection}. The boundary condition with arrows on the
	right in DS6V translates into the \emph{step initial condition} $x_i(0)=i$,
	$i\in \mathbb{Z}_{\ge1}$.

	The particle system on $\mathbb{Z}$ thus defined evolves as follows.
	In discrete time, particles jump to the right by one or stay put (indeed, this
	is because the weight of the vertex $(1,0;1,0)$ is zero). At each time step
	$\mathsf{t}\to \mathsf{t}+1$, the first particle flips a coin with the
	probability of success $(u-s)/(v-s)$, and each of the other particles flip
	independent coins with probability of success $u/v$. Then in the order from
	left to right, each particle $x_i$, $i=1,2,\ldots $ jumps to the right by one
	if either
	\begin{itemize}
		\item
		      the coin of $x_i$ is a success,
		\item
		      or if
		      $x_i(\mathsf{t})=x_{i-1}(\mathsf{t})+1=x_{i-1}(\mathsf{t}+1)$. In other words,
		      if the particle $x_{i-1}$ is moving to the right by one and its destination is
		      occupied by $x_i$, then $x_i$ is pushed to the right by one (and then the coin
		      of $x_i$ does not matter). If the destination of $x_i$ is also occupied, the
		      pushing propagates further to the right to $x_{i+1}$, and so on.
	\end{itemize}
	At each time step almost surely the update eventually terminates after
	a final push to the right by one of the infinite densely packed configuration.
\end{definition}

The particle system of \Cref{def:discrete_modified_push} is a \emph{modified
	discrete time PushTASEP} with a special behavior of the first particle (the
original discrete time PushTASEP is discussed in in \Cref{sub:degen_Schur}
next). To the best of the authors' knowledge, this modified PushTASEP was not
studied before by methods of integrable probability.

\subsection{Discrete time PushTASEP and Schur measures}
\label{sub:degen_Schur}

Setting $s=t=0$ in DS6V turns it into the discrete time PushTASEP (pushing
Totally Asymmetric Simple Exclusion Process). That is, interpreting the vertex
model as a particle system as in
\eqref{PushTASEP_interpretation_of_particle_sys}, we get the following
evolution. Initially $x_i(0)=i$, $i\in \mathbb{Z}_{\ge1}$. At each discrete
time step $\mathsf{t}\to\mathsf{t}+1$, each particle $x_1,x_2,\ldots $ (in
this order) independently jumps to the right by one with probability $u/v$,
following the pushing mechanism described in
\Cref{def:discrete_modified_push}.

When $s=t=0$, the spin Hall-Littlewood symmetric functions turn into the Schur
symmetric polynomials \cite[Ch. II.3]{Macdonald1995}, and the measures and
processes from \Cref{sub:spin_HL_measures_processes} turn into the Schur
measures and processes, which are determinantal with explicit double contour
integral kernels \cite{okounkov2001infinite}, \cite{okounkov2003correlation}.

The discrete time PushTASEP just described is a known particle system
associated with Schur measures and processes.\footnote{This discrete time
	PushTASEP is known as the Bernoulli one. There is also geometric PushTASEP in
	which particles jump to the right by arbitrary distance according to some
	distribution. These processes can be read off from, e.g.,
	\cite{BorFerr2008DF}; concise discrete time definitions are also obtained by
	setting $q=0$ in \cite[Sections 5.2 and 6.3]{MatveevPetrov2014}. The
	continuous time version of the PushTASEP (which is a suitable limit of both
	the Bernoulli and the geometric PushTASEPs) is discussed in
	\Cref{sub:hc_degen_Schur}.} However, its relation to the Schur measures
following from our \Cref{cor:dyn6V_spin_HL_process} \emph{differs} from the
one in \cite{BorFerr2008DF}. A connection similar to the latter one was
employed in \cite{BorFerr08push} for asymptotic analysis. Let us compare these
two connections in the single-point case (though both of them can be lifted to
suitable multipoint statements).

\begin{proposition}[\cite{BorFerr2008DF}]
	\label{prop:PushTASEP_known}
	For the discrete time PushTASEP with step initial condition and
	probability of jump $u/v$, we have the following equality in distribution for
	all $N\ge1$, $\mathsf{t}\ge0$:
	\begin{equation*}
		x_N(\mathsf{t})\stackrel{d}{=}\lambda_N+N,
	\end{equation*}
	where $\lambda=(\lambda_1,\ldots,\lambda_N)\in \mathsf{Sign}_N^+$ is a
	random signature distributed according to the Schur measure
	\begin{equation*}
		\mathrm{Prob}(\lambda)
		=
		\frac{1}{Z}\,s_\lambda(\underbrace{1,\ldots,1}_N)
		s_{\lambda'}(\underbrace{\tfrac uv,\ldots,\tfrac uv
		}_{\mathsf{t}}).
	\end{equation*}
	Here $Z$ is the normalizing constant and $\lambda'$ denotes the
	transposition (in the language of Young diagrams) of $\lambda$.
\end{proposition}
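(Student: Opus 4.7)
The strategy I would follow is the original one of \cite{BorFerr2008DF}, based on column Robinson--Schensted--Knuth (RSK) insertion applied to a matrix of independent Bernoulli variables. This route is distinct from the spin vertex-model route of Corollary \ref{cor:dyn6V_spin_HL_process}, and the juxtaposition of the two is precisely the point being made in the surrounding discussion.

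First, I would construct a coupling. Let $\{b_{i,\tau}\}_{1 \le i \le N,\; 1 \le \tau \le \mathsf{t}}$ be independent Bernoulli variables with $\mathbb{P}(b_{i,\tau}=1)=u/v$, where $b_{i,\tau}$ is interpreted as the coin tossed by particle $x_i$ during the time step $\tau-1 \to \tau$. I would read the $\tau$-th column $(b_{1,\tau},\dots,b_{N,\tau})$ as the input of one step of column-insertion RSK, updating a Young diagram $\lambda(\tau-1) \in \mathsf{Sign}_N^+$ to $\lambda(\tau)$ (so that $\lambda(\tau)/\lambda(\tau-1)$ is a vertical strip), starting from $\lambda(0)=\varnothing$.

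Second, I would verify by induction on $\tau$ that this column RSK step reproduces the PushTASEP update under the standard identification between particle positions and parts of the Young diagram, in such a way that $x_N(\tau) = N + \lambda_N(\tau)$ for all $\tau$. The pushing mechanism of Definition \ref{def:discrete_modified_push} corresponds precisely to the bumping cascade in column RSK: inserting a $1$ into a row that is already saturated propagates a box to the next row, mirroring how a coin-triggered jump pushes a chain of successive particles to the right. This step is the combinatorial heart of \cite{BorFerr2008DF}, and it is the main obstacle: it requires a careful case analysis of the column bumping routes, matched against the sequential-sweep update of Definition \ref{def:discrete_modified_push}.

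Third, with the coupling in place, the claim reduces to computing the marginal distribution of $\lambda(\mathsf{t})$. By the Fomin growth-diagram description, the joint probability of any chain $\varnothing = \lambda(0) \nearrow \lambda(1) \nearrow \cdots \nearrow \lambda(\mathsf{t}) = \lambda$ factorises as a product over $\tau$ of one-variable skew Schur evaluations $s_{\lambda(\tau)'/\lambda(\tau-1)'}(u/v)$, weighted by the Bernoulli probabilities of the inserted columns. Summing over such chains and invoking the branching rule for Schur functions yields the marginal $\mathbb{P}(\lambda(\mathsf{t})=\lambda) \propto s_\lambda(1^N)\, s_{\lambda'}((u/v)^{\mathsf{t}})$, with normalization computed from the dual Cauchy identity $\sum_\lambda s_\lambda(x) s_{\lambda'}(y) = \prod_{i,j}(1+x_i y_j)$ evaluated at $x=(1^N)$ and $y=((u/v)^{\mathsf{t}})$. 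Once the step-two coupling is in hand, this third step is routine.
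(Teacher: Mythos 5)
You should first be aware that the paper does not prove this proposition at all: it is quoted from \cite{BorFerr2008DF}, where it is obtained from the Diaconis--Fill intertwining construction \cite{DiaconisFill1990} on interlacing arrays --- exactly the ``BF-type'' approach that \Cref{sub:intro_BF} and \Cref{sub:hc_degen_Schur} contrast with RSK-type constructions. So describing column RSK insertion as ``the original [route] of \cite{BorFerr2008DF}'' is a misattribution; an RSK-style argument (closer in spirit to \cite{BufetovMatveev2017} at $t=0$) would be a genuinely different route, which in principle is legitimate, but only if the combinatorial identification is correct --- and here it is not.

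The decisive gap is your second step: the asserted coupling $x_N(\tau)=N+\lambda_N(\tau)$ fails already at the first time step. Within your own setup each insertion step adds a vertical strip, so inserting the first Bernoulli column $(b_{1,1},\ldots,b_{N,1})$ into $\varnothing$ yields the single-column shape $(1^k)$ with $k=\sum_i b_{i,1}$; hence $\lambda_N(1)=1$ only when \emph{all} $N$ coins succeed, probability $(u/v)^N$. In the PushTASEP of \Cref{def:discrete_modified_push}, by contrast, the packed initial configuration means $x_N$ advances as soon as \emph{any} of the coins of $x_1,\ldots,x_N$ succeeds, probability $1-(1-u/v)^N$. The quantity that such an insertion scheme can track pathwise is the \emph{largest} part (the right edge of the shape), not the smallest: e.g.\ $x_N(1)-N=\mathbf{1}\{\exists\, i\le N:\ b_{i,1}=1\}=\lambda_1(1)$, and the pushing cascade through a packed cluster corresponds to growth of the first part, so the case analysis you defer in step two would have to be redone with the opposite edge of the diagram. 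There is also a parameter bookkeeping error in your third step: the skew factor $s_{\lambda(\tau)'/\lambda(\tau-1)'}(\cdot)$ already encodes the number of inserted $1$'s, so the Bernoulli weights must not be counted again, and with $\mathrm{Prob}(b_{i,\tau}=1)=u/v$ the one-step weight is $(1-u/v)^{N}\,s_{\lambda(\tau)'/\lambda(\tau-1)'}\bigl(\tfrac{u}{v-u}\bigr)$, i.e.\ the dual specialization value is $\tfrac{u/v}{1-u/v}$ rather than $u/v$. A sanity check at $N=1$ shows why this matters: $x_1(\mathsf{t})-1$ is Binomial$(\mathsf{t},u/v)$, while the measure you claim to derive in step three makes the corresponding part Binomial$\bigl(\mathsf{t},\tfrac{u}{u+v}\bigr)$, so your step three cannot follow from your step one as written. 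Until the edge identification and the $p$ versus $p/(1-p)$ matching are repaired, neither the coupling nor the marginal computation goes through.
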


Recall that via the identification
\eqref{PushTASEP_interpretation_of_particle_sys}, the vertex model height
function can be interpreted as $\mathfrak{h}(\mathsf{t},x):=\#\left\{
	\textnormal{particles at time $\mathsf{t}$ which are $\le x$}
	\right\}$, which is natural to view as the height function of the
PushTASEP.

\begin{proposition}[$t=s=0$ in \Cref{cor:dyn6V_spin_HL_process}]
	\label{prop:PushTASEP_new}
	For the discrete time PushTASEP as above we have for all $N\ge1$ and
	$\mathsf{t}\ge0$:
	\begin{equation*}
		\mathfrak{h}(\mathsf{t},N)\stackrel{d}{=}\mu^{[0]},
	\end{equation*}
	where $\mu=(\mu_1,\ldots,\mu_N )\in \mathsf{Sign}_N^+$ is distributed
	according to the Schur measure
	\begin{equation}
		\label{Schur_for_discrete_PushTASEP}
		\mathrm{Prob}(\mu)=\frac{1}{Z}\,
		s_{\mu}(\underbrace{v^{-1},\ldots,v^{-1}
		}_{\mathsf{t}})s_{\mu}(\underbrace{u,\ldots,u }_{N}).
	\end{equation}
	Here $Z$ is the normalizing constant, and $\mu^{[0]}$ denotes the
	number of zero parts in the signature $\mu$ (in other words,
	$\mu^{[0]}=N-\mu_1'$).
\end{proposition}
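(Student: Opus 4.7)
My plan is to deduce the proposition as a specialization of Corollary \ref{cor:dyn6V_spin_HL_process} at $s=t=0$, combined with the identification of the spin Hall-Littlewood symmetric functions with Schur polynomials in this regime.

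First, I would apply Corollary \ref{cor:dyn6V_spin_HL_process} to the trivial down-right path passing through the single point $(\mathsf{t},N)$ (obtained, e.g., as a subpath of $\mathcal{P}_{\vec{x},\vec{y}}$ with $\vec{x}=(0,\mathsf{t})$ and $\vec{y}=(N,0)$). This yields $\mathfrak{h}(\mathsf{t},N)\stackrel{d}{=}(\lambda^{(\mathsf{t},N)})^{[0]}$, where $\lambda^{(\mathsf{t},N)}$ is distributed according to the single-point marginal, i.e., the spin Hall-Littlewood measure $\mathscr{H}_{\mathsf{t},N}$ of \eqref{spin_HL_measure} with constant spectral parameters $u_i\equiv u$, $v_j\equiv v$.

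Next, I would evaluate the spin Hall-Littlewood measure at $s=t=0$. For $F_{\lambda/\varnothing}$ one can plug $s=t=0$ directly into the symmetrization formula \eqref{F_symmetrization_formula}: the factors $(1-t)^N/\prod(1-su_i)=1$ disappear, and $\prod_{i<j}(u_{\sigma(i)}-tu_{\sigma(j)})/(u_{\sigma(i)}-u_{\sigma(j)})$ becomes $\prod_{i<j}u_{\sigma(i)}/(u_{\sigma(i)}-u_{\sigma(j)})$, turning the sum into the bialternant $\det[u_j^{\lambda_i+N-i}]/\det[u_j^{N-i}]=s_\lambda(u_1,\ldots,u_N)$. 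For $G^c_{\lambda/(0^N)}$ at $s=t=0$ I would appeal to the analogous symmetrization formula in \cite[Theorem 5.1]{Borodin2014vertex} (or \cite[Theorem 4.14]{BorodinPetrov2016inhom}) and perform the same manipulation to identify the result with a Schur polynomial in $v_1^{-1},\ldots,v_L^{-1}$; alternatively, one can cross-check via the ordinary Cauchy identity of Corollary \ref{cor:nonskew_multi_Cauchy}, which at $s=t=0$ becomes $\prod_{i,j}(v_j/(v_j-u_i))$ and must match the classical Schur Cauchy sum, forcing $G^c_{\lambda/(0^K)}(v_1^{-1},\ldots,v_L^{-1})=s_\lambda(v_1^{-1},\ldots,v_L^{-1})$.

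Combining these two identifications, the measure $\mathscr{H}_{\mathsf{t},N}(\lambda)$ at $s=t=0$ with $u_i\equiv u$, $v_j\equiv v$ becomes
\[
\mathrm{Prob}(\lambda)=\frac{1}{Z}\,s_\lambda(\underbrace{v^{-1},\ldots,v^{-1}}_{\mathsf{t}})\,s_\lambda(\underbrace{u,\ldots,u}_{N}),
\]
which is exactly \eqref{Schur_for_discrete_PushTASEP} (relabeling $\lambda$ to $\mu$). Since $\lambda^{[0]}$ by definition counts zero parts of $\lambda$, which equals $N-\lambda_1'$, this produces the claimed identity in distribution.

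The only genuinely nontrivial step is the identification $G^c_{\lambda/(0^N)}\to s_\lambda$ at $s=t=0$, since the excerpt does not reproduce the explicit symmetrization formula for $G^c$; I expect this to be the main obstacle, but it follows either from a direct computation on the symmetrization formula of \cite[Theorem 5.1]{Borodin2014vertex} or, more economically, by matching normalizations via the Cauchy identity and invoking linear independence of Schur polynomials. Everything else is a direct substitution into previously established results.
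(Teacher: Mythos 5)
Your proposal is correct and follows essentially the same route as the paper: the paper obtains this proposition as the immediate $t=s=0$ specialization of \Cref{cor:dyn6V_spin_HL_process} (applied at the single point $(\mathsf{t},N)$, whose marginal is the spin Hall--Littlewood measure \eqref{spin_HL_measure}), together with the fact that at $s=t=0$ the functions $F$ and $G^c$ degenerate to Schur polynomials, so the measure becomes the Schur measure \eqref{Schur_for_discrete_PushTASEP}. Your extra verification of $G^c_{\lambda/(0^N)}\to s_\lambda$ (via the symmetrization formula or via the $s=t=0$ Cauchy identity and linear independence of Schur polynomials) is sound detail that the paper simply asserts by citing the standard degeneration.
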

Note that when $\mathsf{t}<N$, $\mathrm{Prob}(\mu)$ given by
\eqref{Schur_for_discrete_PushTASEP} automatically vanishes if
$\mu_{\mathsf{t}+1}>0$, as it should be. Indeed, after time $\mathsf{t}<N$
there are at least $N-\mathsf{t}$ particles in the PushTASEP at locations $\le
	N$, so the value of the height function $N-\mu_1'$ must be at least
$N-\mathsf{t}$.

These two connections between PushTASEP and Schur measures admit different
deformations along the hierarchy of symmetric functions. Namely,
\Cref{prop:PushTASEP_known} can be generalized by inserting the $q$-Whittaker
parameter $q\in(0,1)$, which gives rise to $q$-PushTASEP connected with
$q$-Whittaker measures and processes \cite{BorodinPetrov2013NN},
\cite{CorwinPetrov2013}, \cite{MatveevPetrov2014}. On the other hand,
\Cref{prop:PushTASEP_new} is generalized to our
\Cref{cor:dyn6V_spin_HL_process}, and thus the PushTASEP is lifted to the
dynamic stochastic six vertex model depending on two additional parameters
$t\in(0,1)$ and $s\in (-1,0)$ and related to the spin Hall-Littlewood measures
and processes.

Moreover, \Cref{prop:PushTASEP_known} can be generalized to PushTASEP with
particle-dependent jumping probabilities while \Cref{prop:PushTASEP_new} can
be extended to PushTASEP in inhomogeneous space. In the latter version of
PushTASEP, the jumping probability of a particle depends on the current
location of the particle, and not on the particle itself. The asymptotics of
PushTASEP in inhomogeneous space are studied in the forthcoming work
\cite{Petrov2017push}.

We postpone the discussion of the $t=s=0$ degeneration of the Yang-Baxter
field to \Cref{sub:hc_degen_Schur} where a half-continuous rescaling further
simplifies the object.

\subsection{Hall-Littlewood degeneration with rescaling}
\label{sub:degen_IHL}

Renaming $(u,v)$ to $(-su,-sv)$ makes the DS6V weights independent of $s$, cf.
\Cref{fig:dynamic_vertex_weights_degen}(d). The new degenerate weights still
contain the dynamic dependence on the value of the height function. They are
nonnegative for $0\le t<1$ and $0\le u<v<1$.

Taking variables $-su_i$ and $-sv_j$ in the spin Hall-Littlewood functions $F$
and $G^c$, respectively, we can then send $s\to0$. This limit requires a
rescaling of the functions themselves, but the spin Hall-Littlewood measures
and processes have $s\to0$ limits without any rescaling. The symmetric
functions $F$ and $G^c$ in this $s\to0$ limit become polynomials in $u_i$ or
$v_j$, respectively, whose top degree homogeneous components are the classical
Hall-Littlewood symmetric polynomials \cite[Section 8.2]{Borodin2014vertex}.
The functions $F_{\lambda/\varnothing}$ under this degeneration can also be
viewed as eigenfunctions of the stochastic $q$-Boson particle system
\cite{BorodinCorwinPetrovSasamoto2013}.

This $s\to0$ limit with rescaling of the spin Hall-Littlewood measures could
be easier to analyze (to the point of asymptotics) than the measures
\eqref{spin_HL_measure} before the limit. Via \Cref{cor:dyn6V_spin_HL_process}
this would give tools for asymptotic analysis of a dynamic model with the
weights given in \Cref{fig:dynamic_vertex_weights_degen}(d).

\subsection{Schur degeneration with rescaling}
\label{sub:degen_t0s0}

Further setting $t=0$ in the model of \Cref{sub:degen_IHL} produces a model
with vertex weights in \Cref{fig:dynamic_vertex_weights_degen}(e) which are
very similar to the ones considered in \Cref{sub:degen_t0}. Interpreting the
vertex model as a discrete time particle system as in
\Cref{def:discrete_modified_push} produces another version of the discrete
time PushTASEP with a special behavior of the first particle.

\subsection{Half-continuous dynamic stochastic six vertex model}
\label{sub:degen_half_continuous_DS6V}

In
\Cref{sub:degen_half_continuous_DS6V,sub:hc_degen_HL,sub:hc_degen_t0,sub:hc_degen_Schur,sub:hc_degen_IHL,sub:hc_degen_t0s0}
we discuss the rescaling of DS6V to the continuous horizontal direction,
beginning with the half-continuous DS6V model itself. The degenerations of the
half-continuous DS6V model considered in
\Cref{sub:hc_degen_HL,sub:hc_degen_t0,sub:hc_degen_Schur,sub:hc_degen_IHL,sub:hc_degen_t0s0}
are summarized in \Cref{fig:dynamic_vertex_weights_degen_half_cont}.

\begin{figure}[htpb]
	\centering
	\begin{tabular}{c|c|c|c|c|c}
		    &                                          &
		\scalebox{.9}{
			\begin{tikzpicture}[scale=1.2, very thick]
				\draw[dashed] (0,-.6)--++(0,1.2);
				\draw[dashed] (-.6,0)--++(1.2,0);
				\node[anchor=north east] at (-.1,-.1)
				{$\ell$};
				\node[anchor=south east] at (-.1,.1)
				{$\ell+1$};
				\node[anchor=north west] at (.1,-.1) {$\ell$};
				\node[anchor=south west] at (.1,.1) {$\ell$};
				\draw[ultra thick,->] (-.6,0)--++(.6,0);
				\draw[ultra thick,->] (0,0)--++(0,.6);
				\node at (0,-.9) {Rate};
			\end{tikzpicture}
		}   &
		\scalebox{.9}{
			\begin{tikzpicture}[scale=1.2, very thick]
				\draw[dashed] (0,-.6)--++(0,1.2);
				\draw[dashed] (-.6,0)--++(1.2,0);
				\node[anchor=north east] at (-.1,-.1)
				{$\ell$};
				\node[anchor=south east] at (-.1,.1)
				{$\ell+1$};
				\node[anchor=north west] at (.1,-.1) {$\ell$};
				\node[anchor=south west] at (.1,.1)
				{$\ell+1$};
				\draw[ultra thick,->] (-.6,0)--++(.6,0);
				\draw[ultra thick,->] (0,0)--++(.6,0);
				\node at (0,-.9) {Probability};
			\end{tikzpicture}
		}   &
		\scalebox{.9}{
			\begin{tikzpicture}[scale=1.2, very thick]
				\draw[dashed] (0,-.6)--++(0,1.2);
				\draw[dashed] (-.6,0)--++(1.2,0);
				\node[anchor=north east] at (-.1,-.1)
				{$\ell$};
				\node[anchor=south east] at (-.1,.1) {$\ell$};
				\node[anchor=north west] at (.1,-.1)
				{$\ell-1$};
				\node[anchor=south west] at (.1,.1) {$\ell$};
				\draw[ultra thick,->] (0,-.6)--++(0,.6);
				\draw[ultra thick,->] (0,0)--++(.6,0);
				\node at (0,-.9) {Probability};
			\end{tikzpicture}
		}   &
		\scalebox{.9}{
			\begin{tikzpicture}[scale=1.2, very thick]
				\draw[dashed] (0,-.6)--++(0,1.2);
				\draw[dashed] (-.6,0)--++(1.2,0);
				\node[anchor=north east] at (-.1,-.1)
				{$\ell$};
				\node[anchor=south east] at (-.1,.1) {$\ell$};
				\node[anchor=north west] at (.1,-.1)
				{$\ell-1$};
				\node[anchor=south west] at (.1,.1)
				{$\ell-1$};
				\draw[ultra thick,->] (0,-.6)--++(0,.6);
				\draw[ultra thick,->] (0,0)--++(0,.6);
				\node at (0,-.9) {Probability};
			\end{tikzpicture}
		}
		\\\hline
		(a)
		    &
		\scalebox{.9}{\Cref{sub:degen_half_continuous_DS6V}}
		    &
		\scalebox{.9}{$(1-t)(u-st^\ell)$}
		    &
		\scalebox{.9}{$1-O(v^{-1})$}
		    &
		\scalebox{.9}{$\dfrac{(1-t)u}{u-st^\ell}$}
		    &
		\scalebox{.9}{$\dfrac{tu-st^\ell}{u-st^\ell}$}
		\bigg.
		\\\hline
		(b) & \scalebox{.9}{\Cref{sub:hc_degen_HL}}
		    &
		\scalebox{.9}{$(1-t)u$}
		    &
		\scalebox{.9}{$1-O(v^{-1})$}
		    &
		\scalebox{.9}{$1-t$}
		    &
		\scalebox{.9}{$t$}
		\bigg.
		\\\hline
		(c) & \scalebox{.9}{\Cref{sub:hc_degen_t0}}
		    &
		\scalebox{.9}{$u-s\mathbf{1}_{\ell=0}$}
		    &
		\scalebox{.9}{$1-O(v^{-1})$}
		    &
		\scalebox{.9}{$1$}
		    &
		\scalebox{.9}{$0$}
		\bigg.
		\\\hline
		(d) & \scalebox{.9}{\Cref{sub:hc_degen_Schur}}
		    &
		\scalebox{.9}{$u$}
		    &
		\scalebox{.9}{$1-O(v^{-1})$}
		    &
		\scalebox{.9}{$1$}
		    &
		\scalebox{.9}{$0$}
		\bigg.
		\\\hline
		(e) & \scalebox{.9}{\Cref{sub:hc_degen_IHL}}
		    &
		\scalebox{.9}{$(1-t)(u+t^\ell)$}
		    &
		\scalebox{.9}{$1-O(v^{-1})$}
		    &
		\scalebox{.9}{$\dfrac{(1-t)u}{u+t^\ell}$}
		    &
		\scalebox{.9}{$\dfrac{tu-st^\ell}{u+t^\ell}$}
		\bigg.
		\\\hline
		(f) & \scalebox{.9}{\Cref{sub:hc_degen_t0s0}}
		    &
		\scalebox{.9}{$u+\mathbf{1}_{\ell=0}$}
		    &
		\scalebox{.9}{$1-O(v^{-1})$}
		    &
		\scalebox{.9}{$1$}
		    &
		\scalebox{.9}{$0$}
		\bigg.
	\end{tabular}
	\caption{The half-continuous DS6V model and its various degenerations.
		The vertices $(1,1;1,1)$ and $(0,0;0,0)$ always having weight $1$ are not
		shown.}
	\label{fig:dynamic_vertex_weights_degen_half_cont}
\end{figure}

Taking the expansion as $v\to+\infty$ of the DS6V vertex weights in
\Cref{fig:dyn_stoch6V}, we see that
\begin{align*}
	\Biggl[
		\scalebox{.8}{
			\begin{tikzpicture}[scale=1.2, very thick,
					baseline=-3pt]
				\draw[dashed] (0,-.6)--++(0,1.2);
				\draw[dashed] (-.6,0)--++(1.2,0);
				\node[anchor=north east] at (-.1,-.1)
				{$\ell$};
				\node[anchor=south east] at (-.1,.1)
				{$\ell+1$};
				\node[anchor=north west] at (.1,-.1) {$\ell$};
				\node[anchor=south west] at (.1,.1) {$\ell$};
				\draw[ultra thick,->] (-.6,0)--++(.6,0);
				\draw[ultra thick,->] (0,0)--++(0,.6);
			\end{tikzpicture}
		}
	\Biggr] & =
	v^{-1}(1-t)(u-st^{\ell})+O(v^{-2})
	,
	\quad
	\Biggl[
		\scalebox{.8}{
			\begin{tikzpicture}[scale=1.2, very thick,
					baseline=-3pt]
				\draw[dashed] (0,-.6)--++(0,1.2);
				\draw[dashed] (-.6,0)--++(1.2,0);
				\node[anchor=north east] at (-.1,-.1)
				{$\ell$};
				\node[anchor=south east] at (-.1,.1)
				{$\ell+1$};
				\node[anchor=north west] at (.1,-.1) {$\ell$};
				\node[anchor=south west] at (.1,.1)
				{$\ell+1$};
				\draw[ultra thick,->] (-.6,0)--++(.6,0);
				\draw[ultra thick,->] (0,0)--++(.6,0);
			\end{tikzpicture}
		}
		\Biggr]
	=
	1-O(v^{-1})
	,           \\
	\Biggl[\scalebox{.8}{
			\begin{tikzpicture}[scale=1.2, very thick,
					baseline=-3pt]
				\draw[dashed] (0,-.6)--++(0,1.2);
				\draw[dashed] (-.6,0)--++(1.2,0);
				\node[anchor=north east] at (-.1,-.1)
				{$\ell$};
				\node[anchor=south east] at (-.1,.1) {$\ell$};
				\node[anchor=north west] at (.1,-.1)
				{$\ell-1$};
				\node[anchor=south west] at (.1,.1) {$\ell$};
				\draw[ultra thick,->] (0,-.6)--++(0,.6);
				\draw[ultra thick,->] (0,0)--++(.6,0);
			\end{tikzpicture}
		}
		\Biggr]
	        & =
	\frac{(1-t)u}{u-st^{\ell}}+O(v^{-2})
	,\qquad \qquad \qquad
	\Biggl[\scalebox{.8}{
			\begin{tikzpicture}[scale=1.2, very thick,
					baseline=-3pt]
				\draw[dashed] (0,-.6)--++(0,1.2);
				\draw[dashed] (-.6,0)--++(1.2,0);
				\node[anchor=north east] at (-.1,-.1)
				{$\ell$};
				\node[anchor=south east] at (-.1,.1) {$\ell$};
				\node[anchor=north west] at (.1,-.1)
				{$\ell-1$};
				\node[anchor=south west] at (.1,.1)
				{$\ell-1$};
				\draw[ultra thick,->] (0,-.6)--++(0,.6);
				\draw[ultra thick,->] (0,0)--++(0,.6);
			\end{tikzpicture}
		}
		\Biggr]
	=
	\frac{tu-st^{\ell}}{u-st^{\ell}}+O(v^{-2})
	,
	\\&\hspace{50pt}
	\Biggl[
		\scalebox{.8}{
			\begin{tikzpicture}[scale=1.2, very thick,
					baseline=-3pt]
				\draw[dashed] (0,-.6)--++(0,1.2);
				\draw[dashed] (-.6,0)--++(1.2,0);
				\node[anchor=north east] at (-.1,-.1)
				{$\ell$};
				\node[anchor=south east] at (-.1,.1)
				{$\ell+1$};
				\node[anchor=north west] at (.1,-.1)
				{$\ell-1$};
				\node[anchor=south west] at (.1,.1) {$\ell$};
				\draw[ultra thick,->] (-.6,0)--++(.55,0);
				\draw[ultra thick,->] (0,-.6)--++(0,.55);
				\draw[ultra thick,->] (0,0)--++(.55,0);
				\draw[ultra thick,->] (0,0)--++(0,.55);
			\end{tikzpicture}
		}
		\Biggr]
	=
	\Biggl[
		\scalebox{.8}{
			\begin{tikzpicture}[scale=1.2, very thick,
					baseline=-3pt]
				\draw[dashed] (0,-.6)--++(0,1.2);
				\draw[dashed] (-.6,0)--++(1.2,0);
				\node[anchor=north east] at (-.1,-.1)
				{$\ell$};
				\node[anchor=south east] at (-.1,.1) {$\ell$};
				\node[anchor=north west] at (.1,-.1) {$\ell$};
				\node[anchor=south west] at (.1,.1) {$\ell$};
			\end{tikzpicture}
		}
		\Biggr]
	=1.
\end{align*}
Thus, for $v\gg1$, taking into account the DS6V boundary conditions, we see
that all up-right paths will go to the right most of the steps. Occasionally
with probability proportional to $v^{-1}$, a path might turn up using the
vertex $(0,1;1,0)$, move some random distance up using several vertices
$(1,0;1,0)$, and either turn right using $(1,0;0,1)$, or hit a neighboring
path above it using $(1,1;1,1)$ (recall that paths can touch each other at a
vertex but cannot cross each other). In the latter case, this neighboring path
now faces up, and in turn should make a number of upward steps and either
eventually turn right, or hit the next path, and so on. The update in this
vertical slice eventually terminates after some path decides to turn right, or
after the infinite densely packed cluster of paths is pushed up by one.

In the limit as $v\to+\infty$ we thus obtain a probability distribution on
up-right paths in the half-continuous quadrant $\mathbb{R}_{\ge0}\times
	(\mathbb{Z}_{\ge0}+\tfrac12)$. All paths enter through the left
boundary, and nothing enters from below. Each $\ell$-th path from below,
$\ell\in
	\mathbb{Z}_{\ge1}$, carries an independent Poisson process of rate
$(1-t)(u-st^{\ell-1})$. Outside arrivals of these Poisson
processes\footnote{To rigorously define the system note that the behavior of
	the paths up to vertical coordinate $M$ does not depend on the behavior of the
	system above $M$, for any $M\ge1$. Thus, the evolution of any finite part of
	the system with vertical coordinate $\le M$ is well-defined, and for different
	$M$ these processes are compatible, thus defining the measure on the full
	half-continuous quadrant.} all paths go to the right. When there is an arrival
in the $\ell$-th Poisson process, the corresponding path turns up, and then
behaves as explained in the previous paragraph using probabilities of the
vertices $(1,0;1,0)$, $(1,0;0,1)$, and $(1,1;1,1)$.

Similarly to \Cref{def:discrete_modified_push}, one can interpret this
half-continuous DS6V model as a continuous time particle system
$x_1(\tau)<x_2(\tau)<\ldots $, $\tau\in \mathbb{R}_{\ge0}$, started from the
step initial configuration $x_i(0)=i$, $i\ge1$. Namely, in continuous time
each particle $x_i(\tau)$ wakes up at rate $(1-t)(u-st^{i-1})$ and
instantaneously moves to the right by a random number of steps according to
the probabilities in \Cref{fig:dynamic_vertex_weights_degen_half_cont}(a). If
the particle $x_{i+1}$ is in the way of $x_i$, then $x_i$ stops at where
$x_{i+1}$ was before. At the same time moment, $x_{i+1}$ is pushed to the
right by one, wakes up, and can instantaneously move further to the right, and
so on.

The height function of the half-continuous DS6V is identified (via a limit of
\Cref{cor:dyn6V_spin_HL_process}) with an observable of a limit of the spin
Hall-Littlewood measure \eqref{spin_HL_measure} as $v\to+\infty$ and the
number of the variables $v^{-1}$ in $G^{c}$ grows as $\tau v$. (This
identification can also be extended to multipoint observables.) Such limits of
the spin Hall-Littlewood measures and processes exist and can be constructed
via the corresponding half-continuous rescaling of the Yang-Baxter field. We
will not discuss the half-continuous Yang-Baxter field in the full generality
of parameters, and instead in \Cref{sub:hc_degen_Schur} below focus on the
simpler $s=t=0$ case which can be readily compared to existing
$(2+1)$-dimensional dynamics associated with Schur processes.

\subsection{Half-continuous stochastic six vertex model}
\label{sub:hc_degen_HL}

Setting $s=0$ in the half-continuous DS6V model turns the rates and
probabilities in this model into the ones in
\Cref{fig:dynamic_vertex_weights_degen_half_cont}(b). The vertex weights stop
being dynamic (i.e., they no longer depend on the value $\ell$ of the height
function), and the model becomes a half-continuous version of the stochastic
six vertex model. This model and its connection to Hall-Littlewood measures
and processes was considered in \cite{BorodinBufetovWheeler2016}.

\subsection{Continuous time modified PushTASEP}
\label{sub:hc_degen_t0}

Setting $t=0$ in the half-continuous DS6V model but keeping the parameter
$s\in(-1,0]$, and identifying the vertex model with a continuous time particle
system $x_1(\tau)<x_2(\tau)<\ldots $ yields the following system. Initially
$x_i(0)=i$, $i\ge 1$. Each particle has an independent exponential clock,
$x_1$ with a higher rate $u-s$, and each of the other ones with rate $u$. When
the clock of $x_i$ rings, it jumps to the right by one. If the destination is
occupied, and, more generally, if there is a packed cluster of particles
immediately to the right of $x_i$ (i.e., $x_i=x_{i+1}-1=\ldots
	=x_{i+k-1}-k+1=x_{i+k}-k$ before the jump), then each of the particles
$x_{i+1}, \ldots,x_{i+k} $ in this cluster is pushed to the right by one.

\subsection{Continuous time PushTASEP and $(2+1)$-dimensional Yang-Baxter
	dynamics}
\label{sub:hc_degen_Schur}

Setting $t=0$ in the half-continuous stochastic six vertex model of
\Cref{sub:hc_degen_HL}, or, which is the same, setting $s=0$ in the model of
\Cref{sub:hc_degen_t0}, leads to the usual continuous time \emph{PushTASEP}.
In this continuous time particle system on $\mathbb{Z}$, each particle
independently jumps to the right by one at rate $u$, and pushes to the right
the particles which are in the way.

The spin Hall-Littlewood measures turn into the Schur measures, and the limit
$v\to+\infty$ in the specialization $(v^{-1},\ldots,v^{-1} )$ ($v^{-1}$ is
repeated $\tau v$ times) corresponds to the so-called Plancherel
specialization of symmetric functions. In this way both
\Cref{prop:PushTASEP_known,prop:PushTASEP_new} readily lead to corresponding
statements for the continuous time PushTASEP.

Let us address what happens to the Yang-Baxter field under this
half-continuous $s=t=0$ degeneration, and compare it with other known
$(2+1)$-dimensional continuous time dynamics associated with Schur measures
and processes. Let us first introduce a suitable framework. Fix any $M\in
	\mathbb{Z}_{\ge1}$. A collection of signatures $\lambda^{(1)}\prec
	\lambda^{(2)}\prec \ldots\prec \lambda^{(M)}$, $\lambda^{(i)}\in
	\mathsf{Sign}_i^+$ (see \eqref{interlacing_definition} for notation),
is called an \emph{interlacing array} of depth $M$.\footnote{Also (often in
	connection with representation theory) referred to as a Gelfand--Tsetlin
	pattern.} We interpret the integers $\lambda^{(k)}_i$, $1\le i\le k\le M$, as
coordinates of particles in the space $\mathbb{Z}_{\ge0}\times \left\{
	1,\ldots,M  \right\}$. There are exactly $k$ particles on each level
$k=1,\ldots, M$.

We will consider a class of continuous time stochastic dynamics on interlacing
arrays called \emph{sequential update dynamics} introduced in
\cite{BorodinPetrov2013NN}. They evolve as follows:
\begin{itemize}
	\item
	      (\emph{independent jumps and blocking by particles below})
	      Each particle $\lambda^{(k)}_i$ has an independent exponential
	      clock of rate $w^{(k)}_i\ge0$ which may depend on the whole array. When the
	      clock rings, the particle $\lambda^{(k)}_i$ tries to jump to the right by one
	      (i.e., the coordinate $\lambda^{(j)}_i$ wishes to increase by one). If this
	      particle is blocked by the lower left neighbor, i.e.,
	      $\lambda^{(k)}_i=\lambda^{(k-1)}_{i-1}$ before the jump, then the jump of
	      $\lambda^{(k)}_{i}$ is forbidden.
	\item
	      (\emph{move propagation})
	      Denote the signature after the jump at level $k$ by $\nu^{(k)}$.
	      After a jump at level $k$, the update $\lambda^{(k)}\to\nu^{(k)}$ may initiate
	      a sequential cascade of instantaneous updates on all the upper levels,
	      $\lambda^{(k+1)}\to \nu^{(k+1)},\ldots,\lambda^{(M)}\to\nu^{(M)} $, according
	      to the transition probabilities $U_j(\lambda^{(j)}\to\nu^{(j)}\mid
		      \lambda^{(j-1)}\to\nu^{(j-1)})$. Here each $\nu^{(j)}$ differs from
	      $\lambda^{(j)}$ by a move of at most one particle to the right by one.
	\item
	      (\emph{mandatory pushing to preserve interlacing})
	      In order to preserve interlacing, the probabilities $U_j$ must
	      be equal to one in the case when
	      $\nu^{(j-1)}_{i}=\lambda^{(j-1)}_i+1=\lambda^{(j)}_i+1$ and
	      $\nu^{(j)}_i=\lambda^{(j)}_i+1$. In words, if a particle $\lambda^{(j-1)}_i$
	      moves and this breaks the interlacing with level $j$, then an instantaneous
	      move of $\lambda^{(j)}_i$ must be made to restore the interlacing.
\end{itemize}
Under certain conditions on $w^{(k)}_{i}$ and the transition probabilities
$U_k$ the sequential update dynamics acts nicely on Schur
processes,\footnote{That is, joint distributions in the dynamics started from
	the packed initial configuration $\lambda^{(k)}_j(0)\equiv 0$ are given by
	Schur processes along down-right paths as in our
	\Cref{thm:YB_field_spin_HL_process}.} see \cite{BorodinPetrov2013NN}. These
conditions might be interpreted as providing a bijectivisation of the skew
Cauchy identity for Schur polynomials when one of the specializations is
Plancherel.

The connection between the framework of interlacing arrays and the
half-continuous rescaling of the Yang-Baxter field is the following. Under the
rescaling of the horizontal coordinate $x$ to continuum, the Yang-Baxter field
(or any of its degenerations considered in
\Cref{sub:degen_HL,sub:degen_t0,sub:degen_Schur,sub:degen_IHL,sub:degen_t0s0})
$\{\boldsymbol\lambda^{(x,y)}\}$ indexed by $(x,y)\in
	\mathbb{Z}_{\ge0}$ turns into a field
$\{\boldsymbol\lambda^{(\tau,y)}\}$ indexed by $\tau\in
	\mathbb{R}_{\ge0}$, $y\in \mathbb{Z}_{\ge0}$. The boundary conditions
are $\boldsymbol\lambda^{(0,y)}=(0^y)$, $y\in \mathbb{Z}_{\ge0}$, and
$\boldsymbol\lambda^{(\tau,0)}=\varnothing$, $\tau\in \mathbb{R}_{\ge0}$. We
interpret the first $M$ rows $\{\boldsymbol\lambda^{(\tau,k)}\}_{k=1,\ldots,M
	}$ of the half-continuous random field of signatures as a continuous
time Markov dynamics (where $\tau$ is time) on interlacing arrays of depth $M$
via $\lambda^{(k)}(\tau)=\boldsymbol\lambda^{(\tau,k)}$, with initial
condition $\lambda^{(k)}(0)=(0^k)$.

In sequential update dynamics we describe below the quantities $w^{(k)}_i$ and
$U_k$ are essentially independent of $k$, i.e., the jumping and move
propagation mechanisms are the same at all levels of the interlacing array.
Thus, to describe such a dynamics let us fix $k$ and denote
\begin{equation}
	\label{lambda_kappa_seq_update_dynamics}
	\kappa=\lambda^{(k-1)}(\tau-),\qquad  \mu=\lambda^{(k)}(\tau-),\qquad
	\lambda=\lambda^{(k-1)}(\tau),\qquad  \nu=\lambda^{(k)}(\tau)
\end{equation}
(i.e., these are
signatures at levels $k-1$ and $k$ before and after the jump at time $\tau$,
respectively). The jump rates $w_i\equiv w^{(k)}_i$, $i=1,\ldots,k $,
correspond to independent jumps of the particles of $\mu$ when
$\lambda=\kappa$, and the transition probabilities $U(\mu\to\nu\mid
	\kappa\to\lambda)$ describe how the move at level $k-1$ propagates to level
$k$.

We are now in a position to describe the half-continuous $t=s=0$ degeneration
of the Yang-Baxter field:
\begin{definition}
	\label{def:Schur_dyn_YB}
	The Yang-Baxter continuous time dynamics on interlacing arrays looks
	as follows at each pair of consecutive levels $(k-1,k)$ (using notation
	\eqref{lambda_kappa_seq_update_dynamics}). When $\lambda=\kappa$, the rate of
	independent jump of each particle $\mu_i$ is in general equal to $u$, except:
	\begin{itemize}
		\item
		      (blocking from below) The rate of independent jump of
		      $\mu_i$
		      is zero if $\mu_i=\lambda_{i-1}$
		\item
		      (special blocking)
		      The rate of independent jump of $\mu_i$ is also zero if
		      \begin{equation}
			      \label{YB_Schur_cont_field_exotic_jump}
			      \mu_i=\lambda_i=\mu_{i+1}=\lambda_{i+1}=\ldots=
			      \mu_{i+m}=\lambda_{i+m}>\mu_{i+m+1}\quad
			      \text{for some $m\ge0$}.
		      \end{equation}
	\end{itemize}
	When $\lambda\ne \kappa$ and the difference is only in
	$\lambda_i=\kappa_i+1$, the transition probability $U(\mu\to\nu\mid
		\kappa\to\lambda)$ is in general equal to $\mathbf{1}_{\nu=\mu}$ (no move
	propagation), except:
	\begin{itemize}
		\item
		      (mandatory pushing to restore interlacing)
		      If $\lambda_i=\mu_i+1=\kappa_i+1$, this leads to
		      $\nu_i=\mu_i+1$ with probability $1$;
		\item
		      (special pushing)
		      If
		      \begin{equation}
			      \label{YB_Schur_cont_field_exotic_push}
			      \mu_i=\lambda_i=\mu_{i-1}=\lambda_{i-1}
			      =\ldots=\mu_{i-m}=\lambda_{i-m}<\lambda
			      _{i-m-1}\quad\text{for some $m\ge0$},
		      \end{equation}
		      then together with $\lambda_i=\kappa_i+1$ this leads to
		      $\nu_{i-m}=\mu_{i-m}+1$ with probability $1$.
	\end{itemize}
	In particular, in this dynamics the difference between $\lambda$ and
	$\kappa$, as well as between $\nu$ and $\mu$, is in the move of at most one
	particle to the right by one (in the language of Young diagrams, in adding one
	box).
\end{definition}

\begin{proposition}
	\label{prop:YB_Schur_cont_field}
	The half-continuous $t=s=0$ Yang-Baxter field is identified with the
	dynamics in \Cref{def:Schur_dyn_YB}.
\end{proposition}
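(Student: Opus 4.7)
The proof proceeds by computing the half-continuous limit $v\to\infty$ of the Yang-Baxter field at $s=t=0$ and identifying it with the continuous-time Markov dynamics of \Cref{def:Schur_dyn_YB}. By the inductive construction of the Yang-Baxter field (\Cref{def:YB_field}), it is enough to describe the evolution at a single pair of consecutive levels $(k-1,k)$: the joint conditional law of $\lambda=\lambda^{(k-1)}(\tau)$ and $\nu=\lambda^{(k)}(\tau)$ given $\kappa=\lambda^{(k-1)}(\tau-)$ and $\mu=\lambda^{(k)}(\tau-)$.

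The first step is to specialize the local forward transition probabilities $P^{\mathrm{fwd}}_{v,u}$ from \Cref{fig:fwd_YB} to $s=t=0$ (with the parameter swap of \Cref{rmk:swap_u_v}). At this specialization most table entries become $0$ or $1$ exactly, while the remaining ones are of the form $u/v$ or $1-u/v$. Under the half-continuous rescaling $x=\tau v$, one column corresponds to the infinitesimal step $d\tau=1/v$, so a local transition probability of order $u/v$ produces a Poisson event of rate $u$. By \Cref{prop:U_are_products_of_P}, $\mathsf{U}^{\mathrm{fwd}}_{v,u}$ factors as a product of such local probabilities over consecutive columns, and assembling these infinitesimal events yields a well-defined continuous-time Markov dynamics on interlacing arrays.

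The second step is to track the cross vertex state as it traverses the columns from left to right. The cross enters in the fully-arrowed state $\iiYBii{.25}{3}{10pt}$ and, by \Cref{lemma:cross_far_to_the_right}, eventually stabilizes in the diagonal state $\oiYBio{.25}{3}{10pt}$. At each column carrying $g_{k-1}, g_k$ arrows at levels $k-1,k$, the allowed local transitions are dictated by \Cref{fig:fwd_YB}: those producing a net arrow-rearrangement have probability of order $u/v$ (yielding rate $u$), those forbidden by interlacing vanish, the deterministic ones equal $1$, and the rest are $1-O(v^{-1})$. Reading off the Poisson rates gives independent exponential clocks of rate $u$ at each unblocked level-$k$ particle, along with deterministic propagation rules: a jump $\kappa_i\to\kappa_i+1=\lambda_i$ at level $k-1$ triggers a unique cascade of level-$k$ updates through the forced cross transitions. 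The blocking-from-below rule $\mu_i=\lambda_{i-1}$ is precisely the obstruction imposed by the interlacing $\lambda\prec\nu$, and the mandatory pushing arises from probability-one entries in \Cref{fig:fwd_YB}.

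The special conditions \eqref{YB_Schur_cont_field_exotic_jump} and \eqref{YB_Schur_cont_field_exotic_push} correspond to the cross traversing a cluster of $m+1$ consecutive matched arrows at both levels $k-1,k$. Within such a cluster the cross executes a chain of forced transitions: the special blocking manifests as the total exit-jump probability vanishing exactly at $t=0$, owing to the zero entries of \Cref{fig:fwd_YB} appearing in the needed transitions, while the special pushing emerges as a forced cascade of unit-probability transitions propagating through the cluster. The main expected obstacle is the bookkeeping for this cluster traversal: one must multiply the local probabilities across all $m+1$ columns, verify that accumulated $O(v^{-1})$ errors do not survive the $v\to\infty$ limit, and confirm that the combined effect matches the cluster rules in \Cref{def:Schur_dyn_YB}. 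Though each such verification is elementary, the number of boundary configurations to enumerate makes this the most calculation-intensive step of the argument.
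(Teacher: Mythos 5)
Your proposal follows essentially the same route as the paper's proof: specialize the local forward probabilities $P^{\mathrm{fwd}}_{v,u}$ at $s=t=0$, expand as $v\to+\infty$ so that the $u/v$-entries become rate-$u$ Poisson events (using the product structure of $\mathsf{U}^{\mathrm{fwd}}_{v,u}$), and track the dragged cross vertex to read off the jump rates, blocking, mandatory pushing, and the two special rules of \Cref{def:Schur_dyn_YB}. The case analysis you defer is exactly what the paper carries out — decomposing $\mathbb{Z}_{\ge0}$ into segments $[\lambda_i,\mu_i)$ and $[\mu_{i+1},\lambda_i)$ to obtain the rates and the special blocking (your ``cluster'' picture corresponds to its zero-length-segment analysis), and checking four local configurations plus variants for the move propagation — so your outline is correct and matches the paper's argument.
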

\begin{proof}
	The Yang-Baxter field is determined using the forward transition
	probabilities $\mathsf{U}^{\mathrm{fwd}}_{v,u}$, which in turn are products of
	the local probabilities $P^{\mathrm{fwd}}_{v,u}$, cf.
	\Cref{def:Ufwd,def:YB_field}. Setting $s=t=0$ and expanding the latter as
	$v\to+\infty$ we get the quantities given in the table in
	\Cref{fig:fwd_Schur_cont}. Note that these quantities do not depend on the
	multiplicity $g$ of arrows in the middle as was the case for $s,t\ne 0$.
	Because of this, we can assume without loss of generality that all the
	multiplicities in the middle are $0$ or $1$. It remains to match the
	corresponding expansions as $v\to+\infty$ of $\mathsf{U}^{\mathrm{fwd}}_{v,u}$
	to rates $w_j$ and update probabilities $U(\mu\to\nu\mid \kappa\to\lambda)$
	given in \Cref{def:Schur_dyn_YB}. We do this in two steps, for
	$\lambda=\kappa$ (considering jump rates) and $\lambda\ne\kappa$ (dealing with
	move propagation).

	\begin{figure}[htpb]
		\centering
		\scalebox{.9}{
			$
				\begin{array}{c||c||c|c||c|c||c}
					P^{\mathrm{fwd}}_{v,u}& \ooYBoo{.3}{3}{13.5pt}
						& \ioYBio{.3}{3}{13.5pt}
						& \oiYBio{.3}{3}{13.5pt}
						& \oiYBoi{.3}{3}{13.5pt}
						& \ioYBoi{.3}{3}{13.5pt}
						& \iiYBii{.3}{3}{13.5pt}
					\phantom{\bigg.}
					\\\hline
					\ooYBoo{.3}{3}{13.5pt}
					\phantom{\Big.}
						& 1
						& v^{-1}u
						&
					\coli
					1-O(v^{-1})
						&1
						&\colx 0
						& 1
					\\\hline
					\ioYBio{.3}{3}{13.5pt}
					\phantom{\Big.}
						& 1
						&
						v^{-1}u
						&
					\coli
					1-O(v^{-1})
						& 1
						& \colx0
						& 1
					\\\hline
					\oiYBio{.3}{3}{13.5pt}
					\phantom{\Big.}
						& \colx1
						& \colx0
						& 1
						&
					\colx
					1
						&
					\colxx
					0
						& \colx
						1
					\\\hline
					\oiYBoi{.3}{3}{13.5pt}
					\phantom{\Big.}
						& 1
						& 1
						& \coli0
						& 1
						& \colx0
						& 1
					\\\hline
					\ioYBoi{.3}{3}{13.5pt}
					\phantom{\Big.}
						& \coli1
						& \coli0
						& \colii1
						& \coli0
						& 1
						& \coli1
					\\\hline
					\iiYBii{.3}{3}{13.5pt}
					\phantom{\Big.}
						& 1
						& 
						v^{-1}u
						&
						\coli 1-O(v^{-1})
						&
						1
						&
						\colx 0
						& 1
				\end{array}
			$
		}
		\caption{Behavior of the forward local 
			Yang-Baxter transition probabilities for $s=t=0$
			as $v\to+\infty$. The coloring of the table cells is explained
			in \Cref{fig:fwd_YB}.
			Note that the parameters $u,v$
			are swapped compared to \Cref{fig:fwd_YB}, cf. \Cref{rmk:swap_u_v}.}
		\label{fig:fwd_Schur_cont}
	\end{figure}

	\smallskip\noindent
	\textit{Jump rates.} First consider the case $\lambda=\kappa$. Then
	the arrow configuration $\lambda\mathop{\dot\succ}\kappa\prec \mu$
	(cf. \Cref{fig:skew_Cauchy,fig:U_transition_probabilities}) looks as in
	\Cref{fig:Schur_YB_proof}(a), and we need to drag the cross vertex through
	this configuration from left to right. The nonnegative integer line
	$\mathbb{Z}_{\ge0}$ is divided into segments of two types: type~I segments
	$[\lambda_i,\mu_i)$ and type~II segments $[\mu_{i+1},\lambda_i)$. When
	$\lambda_i,\mu_i$ are sufficiently apart, these types of segments interlace,
	but it can also happen that segments of the same type can be neighbors.

	The cross vertex starts in state $\iiYBii{.25}{3.5}{10.5pt}$ in type~I
	segment, and this state cannot change thoughout type~I segment. Observe that
	on the boundary from type~I to type~II segment (say, corresponding to the
	arrow at $\mu_i$), if the length of type~II segment is positive, the cross
	vertex transforms (while moving to the right) as:
	\begin{itemize}
		\item
		      $\iiYBii{.25}{3.5}{10.5pt}
			      \rightsquigarrow\ioYBio{.25}{3.5}{10.5pt}
			      \rightsquigarrow\oiYBio{.25}{3.5}{10.5pt}$ with
		      probability $v^{-1}u+O(v^{-2})$ (i.e., at rate $u$) if
		      the length of the type~II segment is greater than $1$, or if the length of the
		      type~II segment is $1$ and the following type~I segment has zero length;
		\item
		      $\iiYBii{.25}{3.5}{10.5pt}
			      \rightsquigarrow\ioYBio{.25}{3.5}{10.5pt}
			      \rightsquigarrow\iiYBii{.25}{3.5}{10.5pt}$ with
		      probability $v^{-1}u+O(v^{-2})$ (i.e., at rate $u$) if
		      the length of the type~II segment is equal to $1$ and the next segment is
		      type~I of positive length;
		\item
		      $\iiYBii{.25}{3.5}{10.5pt}
			      \rightsquigarrow\oiYBio{.25}{3.5}{10.5pt}$ with
		      probability $1-O(v^{-1})$.
	\end{itemize}
	In the first two cases this move places an arrow in the middle at
	$\nu_i=\mu_i+1$, and in the second case an arrow is placed at $\nu_i=\mu_i$.
	As $v\to+\infty$, the move leading to $\nu_i=\mu_i+1$ can occur only once in
	the process of dragging the cross vertex, which proves the claim that in
	general the rates $w_i$ are equal to $u$.

	The cross vertex does not change throughout type~II segments and
	leaves such a segment as $\oiYBio{.25}{3.5}{10.5pt}$ (unless event of
	probability $v^{-1}u$ occurs and the length of type~II segment is $1$, but
	this is already considered above). When entering type~I segment of positive
	length (at, say, the boundary corresponding to $\lambda_j=\kappa_j$), the
	cross vertex transforms as $\oiYBio{.25}{3.5}{10.5pt}
		\rightsquigarrow\iiYBii{.25}{3.5}{10.5pt}$, and this removes an arrow in the
	middle at $\kappa_j$.

	A type~II segment of zero length corresponds to
	$\mu_{i+1}=\kappa_i=\lambda_i$ for some $i$, which blocks the independent jump
	of $\mu_{i+1}$. A type~I segment of zero length does not change the state of
	the cross from $\oiYBio{.25}{3.5}{10.5pt}$ which it has while traveling
	through type~II segment; this behavior corresponds to the special case
	\eqref{YB_Schur_cont_field_exotic_jump} in which the jump rate is zero. This
	establishes the claim about the jump rates.

	\begin{figure}[htpb]
		\centering
			\begin{tikzpicture}
				[scale=.9]
				\def\h{.7}
				\draw[dashed] (0,0)--++(8.5,0);
				\draw[dashed] (0,\h)--++(8.5,0);
				\draw[->, ultra thick] (0,0)--++(2,0)--++(0,\h)--++(1,0)--++(0,.5);
				\draw[->, ultra thick] (0,\h)--++(1,0)--++(0,.5);
				\draw[->, ultra thick] (2,-.5)--++(0,.5)--++(1.5,0)--++(0,\h+.5);
				\draw[->, ultra thick] (3.5,-.5)--++(0,.5)--++(1,0)--++(0,\h)--++(1,0)--++(0,.5);
				\draw[->, ultra thick] (4.5,-.5)--++(0,.5)--++(1,0)--++(0,\h)--++(2,0)--++(0,.5);
				\draw[->, ultra thick] (5.5,-.5)--++(0,.5)--++(3,0);
				\node at (-.2,-.3) {$\lambda$};
				\node at (-.2,\h/2) {$\kappa$};
				\node at (-.2,\h+.3) {$\mu$};
				\node at (8,-.4) {(a)};
				\node at (.5,\h+.7) {I};
				\node at (1.5,\h+.7) {II};
				\node at (2.5,\h+.7) {I};
				\node at (3.25,\h+.7) {II};
				\node at (4,\h+.7) {II};
				\node at (5,\h+.7) {I};
				\node at (6.5,\h+.7) {I};
				\node at (8,\h+.7) {II};
			\end{tikzpicture}
			\\\vspace{10pt}
			\begin{tikzpicture}
				[scale=1]
				\def\h{.7}
				\draw[dashed] (0,0)--++(3,0);
				\draw[dashed] (0,\h)--++(3,0);
				\draw[->, ultra thick] (0,0)--++(1,0)--++(0,\h)--++(1.5,0)--++(0,.5);
				\draw[->, ultra thick] (1.5,-.5)--++(0,.5)--++(1.5,0);
				\node at (1.8,-.3) {$\lambda_i$};
				\node at (1.3,\h/2) {$\kappa_i$};
				\node at (2.85,\h+.3) {$\mu_i$};
				\node at (2.7,-.4) {(b)};
			\end{tikzpicture}
			\qquad
			\begin{tikzpicture}
				[scale=1]
				\def\h{.7}
				\draw[dashed] (0,0)--++(3,0);
				\draw[dashed] (0,\h)--++(3,0);
				\draw[->, ultra thick] (0,0)--++(1,0)--++(0,\h)--++(.5,0)--++(0,.5);
				\draw[->, ultra thick] (1.5,-.5)--++(0,.5)--++(1.5,0);
				\node at (1.8,-.3) {$\lambda_i$};
				\node at (1.3,\h/2) {$\kappa_i$};
				\node at (1.8,\h+.3) {$\mu_i$};
				\node at (2.7,-.4) {(c)};
			\end{tikzpicture}
			\qquad 
			\begin{tikzpicture}
				[scale=1]
				\def\h{.7}
				\draw[dashed] (0,0)--++(3,0);
				\draw[dashed] (0,\h)--++(3,0);
				\draw[->, ultra thick] (0,0)--++(1,0)--++(0,\h)--++(.5,0)--++(0,.5);
				\draw[->, ultra thick] (1.55,-.5)--++(0,.5)--++(1.45,0);
				\draw[->, ultra thick] (1.45,-.5)--++(0,.5)--++(0.05,0)--++(0,\h)--++(1.5,0);
				\node at (1.8,-.3) {$\lambda_i$};
				\node at (1.99,\h/2) {$\kappa_{i-1}$};
				\node at (.7,\h/2) {$\kappa_{i}$};
				\node at (1.8,\h+.3) {$\mu_i$};
				\node at (2.7,-.4) {(d)};
			\end{tikzpicture}
			\qquad
			\begin{tikzpicture}
				[scale=1]
				\def\h{.7}
				\draw[dashed] (0,0)--++(3,0);
				\draw[dashed] (0,\h)--++(3,0);
				\draw[->, ultra thick] (0,0)--++(1,0)--++(0,\h)--++(0,0)--++(0,.5);
				\draw[->, ultra thick] (1.5,-.5)--++(0,.5)--++(1.5,0);
				\node at (1.8,-.3) {$\lambda_i$};
				\node at (1.3,\h/2) {$\kappa_i$};
				\node at (1.35,\h+.3) {$\mu_i$};
				\node at (2.7,-.4) {(e)};
			\end{tikzpicture}
		\caption{Arrow configurations 
		$\lambda\mathop{\dot\succ}\kappa\prec \mu$ in the proof of
		\Cref{prop:YB_Schur_cont_field}.}
		\label{fig:Schur_YB_proof}
	\end{figure}

	\smallskip
	\noindent
	\textit{Move propagation.}
	Assume now that $\lambda\ne \kappa$, and the difference between these
	two signatures at level $k-1$ can be only at one location,
	$\lambda_i=\kappa_i+1$. This fact would follow by induction on levels of the
	array after we show that the move propagation mechanism is as in
	\Cref{def:Schur_dyn_YB}. Indeed, this would imply that a single move of a
	particle by one cannot result in a move of a particle by more than one, or
	moves by more than one particle, at the level one higher.

	Then in the process of dragging the cross vertex through the arrow
	configuration $\lambda\mathop{\dot\succ}\kappa\prec \mu$ to obtain the
	signature $\nu$ all updates are deterministic: an event with probability
	$O(v^{-1})$ has already occured at level $k-1$ or below, and at a single time
	moment two or more such events cannot occur. Updates through the parts of the
	configuration where $\lambda_j=\kappa_j$ have been considered above: they all
	lead to setting $\nu_j=\mu_j$. Thus, it remains to consider the update coming
	from the passing of the cross vertex through the part of the configuration
	where $\lambda_i=\kappa_i+1$. There are four basic cases, see
	\Cref{fig:Schur_YB_proof}(b)-(e):
	\begin{itemize}
		\item
		      (b)
		      When $\mu_i>\lambda_i$ and $\mu_{i+1}<\kappa_{i}$, the
		      cross vertex is updated as $\oiYBio{.25}{3.5}{10.5pt}
			      \rightsquigarrow\oiYBoi{.25}{3.5}{10.5pt}
			      \rightsquigarrow\iiYBii{.25}{3.5}{10.5pt}$. This removes the arrow at
		      $\kappa_i$ and corresponds to $U(\mu\to\nu\mid
			      \kappa\to\lambda)=\mathbf{1}_{\nu=\mu}$.
		\item
		      (c)
		      When $\mu_i=\lambda_i=\kappa_i+1<\lambda_{i-1}$, the
		      update is $\oiYBio{.25}{3.5}{10.5pt}
			      \rightsquigarrow\oiYBoi{.25}{3.5}{10.5pt}
			      \rightsquigarrow\ioYBio{.25}{3.5}{10.5pt}
			      \rightsquigarrow\oiYBio{.25}{3.5}{10.5pt}$, which removes the arrow at
		      $\kappa_i$ and places a new arrow (corresponding to $\nu_i$ after the update)
		      at $\lambda_i+1$, which corresponds to the push under conditions
		      \eqref{YB_Schur_cont_field_exotic_push}.
		\item
		      (d) When $\mu_i=\lambda_i=\kappa_i+1=\lambda_{i-1}$, the
		      update is $\oiYBio{.25}{3.5}{10.5pt}
			      \rightsquigarrow\oiYBoi{.25}{3.5}{10.5pt}
			      \rightsquigarrow\iiYBii{.25}{3.5}{10.5pt}$, which removes the arrow at
		      $\kappa_i$, and does not affect the arrow at $\kappa_{i-1}=\lambda_{i-1}$
		      which becomes $\nu_i=\mu_i$. This case violates of
		      \eqref{YB_Schur_cont_field_exotic_push}, and thus the update rule is
		      $U(\mu\to\nu\mid \kappa\to\lambda)=\mathbf{1}_{\nu=\mu}$.
		\item
		      (e)
		      When $\mu_i<\lambda_i$ (and necessarily
		      $\mu_i=\lambda_i+1$), the update is $\oiYBio{.25}{3.5}{10.5pt}
			      \rightsquigarrow\ooYBoo{.25}{3.5}{10.5pt}
			      \rightsquigarrow\oiYBio{.25}{3.5}{10.5pt}$, which removes an arrow at
		      $\kappa_i$ and adds a new arrow for $\nu_i$ at $\lambda_i$. This corresponds
		      to the mandatory pushing to restore interlacing.
	\end{itemize}
	Each of the cases (c)-(e) admits a slight variation when $\mu_{i+1}=\kappa_i>\kappa_{i+1}$. Then in
	the update of the cross vertex state the initial state is
	$\iiYBii{.25}{3.5}{10.5pt}$ instead of $\oiYBio{.25}{3.5}{10.5pt}$, but the
	rows of the table in \Cref{fig:fwd_Schur_cont} corresponding to these two
	states are the same up to $O(v^{-1})$. There is also another variation of (e)
	when $\lambda_{i-1}=\kappa_{i-1}=\lambda_i<\mu_{i-1}$, in which case the update is
	$\oiYBio{.25}{3.5}{10.5pt}\rightsquigarrow\ooYBoo{.25}{3.5}{10.5pt}
	\rightsquigarrow\iiYBii{.25}{3.5}{10.5pt}$. This does not remove an arrow
	at $\kappa_{i-1}$ which becomes $\nu_i$ after the passing of the cross, and this
	agrees with the mandatory pushing. This completes the proof.
\end{proof}
\begin{remark}
	\label{rmk:YB_Schur_directly_check}
	One can directly check that the Yang-Baxter dynamics on interlacing
	arrays described in \Cref{prop:YB_Schur_cont_field} in the language of
	interlacing arrays satisfies equation (2.20) of \cite{BorodinPetrov2013NN}.
	This equation implies that the dynamics acts nicely on Schur processes (i.e.,
	in agreement with \Cref{thm:YB_field_spin_HL_process}). However, after
	establishing \Cref{prop:YB_Schur_cont_field}, this fact also follows as a
	degeneration of \Cref{thm:YB_field_spin_HL_process}.
\end{remark}

The dynamics of \Cref{prop:YB_Schur_cont_field} is very similar to the one
constructed in \cite{BorFerr2008DF} using an idea of coupling Markov chains
from \cite{DiaconisFill1990}. Namely, in the latter dynamics the absence of
independent jumps and additional pushing in the special cases
\eqref{YB_Schur_cont_field_exotic_jump},
\eqref{YB_Schur_cont_field_exotic_push} are eliminated. In other words, in the
dynamics of \cite{BorFerr2008DF} every particle simply jumps to the right by
one at rate $u$ while obeying the blocking and the mandatory pushing rules.

On the other hand, the Hall-Littlewood RSK field introduced in
\cite{BufetovMatveev2017} in the half-continuous $t=0$ limit turns into a
continuous time dynamics on interlacing arrays related to the column insertion
Robinson-Schensted-Knuth (RSK) correspondence. In this dynamics, only the
leftmost particles $\lambda^{(j)}_j$ can independently jump. At the same time,
each move (to the right by one) of a particle $\lambda^{(j)}_i$ triggers a
move of a particle to the right of it on the upper level. Typically, this
triggered particle is $\lambda^{(j+1)}_i$, but the move is donated to the
right if it is blocked. We refer to \cite[Section 7]{BorodinPetrov2013NN} for
a detailed description of this dynamics related to the (column) RSK.

Since this RSK dynamics differs from the one coming from the Yang-Baxter field
via \Cref{prop:YB_Schur_cont_field}, we see that the Hall-Littlewood RSK field
of \cite{BufetovMatveev2017} also differs from the $s=0$ Yang-Baxter field of
\Cref{sub:degen_HL}.

\subsection{Half-continuous Hall-Littlewood degeneration with rescaling}
\label{sub:hc_degen_IHL}

Renaming $u=-su$ and slowing the continuous time (equivalently, rescaling the
continuous horizontal direction in the vertex model language) by the factor
$(-s)$ makes the rates and probabilities in the half-continuous DS6V
independent of $s$. Then we can send $s\to0$ and obtain a well-defined dynamic
half-continuous vertex model. This model can be also obtained as a
half-continuous limit $v\to+\infty$ of the one described in
\Cref{sub:degen_IHL}. The resulting rates and probabilities for this model are
listed in \Cref{fig:dynamic_vertex_weights_degen_half_cont}(e).

\subsection{Half-continuous Schur degeneration with rescaling}
\label{sub:hc_degen_t0s0}

Further setting $t=0$ in the model of \Cref{sub:hc_degen_IHL} turns the rates
and probabilities into the ones in
\Cref{fig:dynamic_vertex_weights_degen_half_cont}(f). Via a simple time
rescaling, this model becomes the same as the modified continuous time
PushTASEP considered in \Cref{sub:hc_degen_t0}.

\subsection{Rational limit $t\to1$}
\label{sub:degen_rational}

In this and the following subsections we return to the original DS6V weights
as in \Cref{fig:L_dynamicS6V_probabilities}. Let us take limit $t\to1$ in
these weights, simultaneously rescaling all other parameters:
\begin{equation*}
	t=e^{\varepsilon},\qquad s=e^{\varepsilon\zeta}, \qquad
	u=e^{x\varepsilon},\qquad v=e^{-y\varepsilon}, \qquad \varepsilon\to0.
\end{equation*}
In this limit the vertex weights turn into the following:
\begin{align}
	\nonumber
	\biggl[\scalebox{.7}{
			\begin{tikzpicture}[scale=1.2, very thick,
					baseline=-4pt]
				\draw[dashed] (0,-.6)--++(0,1.2);
				\draw[dashed] (-.6,0)--++(1.2,0);
				\node[anchor=north east] at (-.1,-.1)
				{$\ell$};
				\node[anchor=south east] at (-.1,.1)
				{$\ell+1$};
				\node[anchor=north west] at (.1,-.1)
				{$\ell-1$};
				\node[anchor=south west] at (.1,.1) {$\ell$};
				\draw[ultra thick,->] (-.6,0)--++(.55,0);
				\draw[ultra thick,->] (0,-.6)--++(0,.55);
				\draw[ultra thick,->] (0,0)--++(.55,0);
				\draw[ultra thick,->] (0,0)--++(0,.55);
			\end{tikzpicture}
	}\biggr] & =1,
	\qquad
	\biggl[\scalebox{.7}{
			\begin{tikzpicture}[scale=1.2, very thick,
					baseline=-4pt]
				\draw[dashed] (0,-.6)--++(0,1.2);
				\draw[dashed] (-.6,0)--++(1.2,0);
				\node[anchor=north east] at (-.1,-.1)
				{$\ell$};
				\node[anchor=south east] at (-.1,.1) {$\ell$};
				\node[anchor=north west] at (.1,-.1) {$\ell$};
				\node[anchor=south west] at (.1,.1) {$\ell$};
			\end{tikzpicture}
		}\biggr]=1,
	\\
	\biggl[\scalebox{.7}{
			\begin{tikzpicture}[scale=1.2, very thick,
					baseline=-4pt]
				\draw[dashed] (0,-.6)--++(0,1.2);
				\draw[dashed] (-.6,0)--++(1.2,0);
				\node[anchor=north east] at (-.1,-.1)
				{$\ell$};
				\node[anchor=south east] at (-.1,.1)
				{$\ell+1$};
				\node[anchor=north west] at (.1,-.1) {$\ell$};
				\node[anchor=south west] at (.1,.1) {$\ell$};
				\draw[ultra thick,->] (-.6,0)--++(.6,0);
				\draw[ultra thick,->] (0,0)--++(0,.6);
			\end{tikzpicture}
	}\biggr] & =\frac{\ell-x+\zeta}{(x+y+1)(\ell+y+\zeta)},
	\qquad \qquad
	\biggl[\scalebox{.7}{
			\begin{tikzpicture}[scale=1.2, very thick,
					baseline=-4pt]
				\draw[dashed] (0,-.6)--++(0,1.2);
				\draw[dashed] (-.6,0)--++(1.2,0);
				\node[anchor=north east] at (-.1,-.1)
				{$\ell$};
				\node[anchor=south east] at (-.1,.1)
				{$\ell+1$};
				\node[anchor=north west] at (.1,-.1) {$\ell$};
				\node[anchor=south west] at (.1,.1)
				{$\ell+1$};
				\draw[ultra thick,->] (-.6,0)--++(.6,0);
				\draw[ultra thick,->] (0,0)--++(.6,0);
			\end{tikzpicture}
		}\biggr]=
	\frac{(x+y)(\ell+y+\zeta+1)}{(x+y+1)(\ell+y+\zeta)},
	\label{rational_limit}
	\\
	\biggl[\scalebox{.7}{
			\begin{tikzpicture}[scale=1.2, very thick,
					baseline=-4pt]
				\draw[dashed] (0,-.6)--++(0,1.2);
				\draw[dashed] (-.6,0)--++(1.2,0);
				\node[anchor=north east] at (-.1,-.1)
				{$\ell$};
				\node[anchor=south east] at (-.1,.1) {$\ell$};
				\node[anchor=north west] at (.1,-.1)
				{$\ell-1$};
				\node[anchor=south west] at (.1,.1) {$\ell$};
				\draw[ultra thick,->] (0,-.6)--++(0,.6);
				\draw[ultra thick,->] (0,0)--++(.6,0);
			\end{tikzpicture}
	}\biggr] & =
	\frac{\ell+y+\zeta}{(x+y+1)(\ell-x+\zeta)}\,\mathbf{1}_{\ell\ge1},\qquad
	\biggl[\scalebox{.7}{
			\begin{tikzpicture}[scale=1.2, very thick,
					baseline=-4pt]
				\draw[dashed] (0,-.6)--++(0,1.2);
				\draw[dashed] (-.6,0)--++(1.2,0);
				\node[anchor=north east] at (-.1,-.1)
				{$\ell$};
				\node[anchor=south east] at (-.1,.1) {$\ell$};
				\node[anchor=north west] at (.1,-.1)
				{$\ell-1$};
				\node[anchor=south west] at (.1,.1)
				{$\ell-1$};
				\draw[ultra thick,->] (0,-.6)--++(0,.6);
				\draw[ultra thick,->] (0,0)--++(0,.6);
			\end{tikzpicture}
		}\biggr]=
	\frac{(x+y)(\ell-x+\zeta-1)}{(x+y+1)(\ell-x+\zeta)}\,\mathbf{1}_{\ell\ge1}
	.
	\nonumber
\end{align}
These weights are dynamic in the sense that they depend on the height function
$\ell$. Moreover, under certain restrictions on the parameters (for example,
if $x,y>0$ and $\zeta>x$), these weights are between $0$ and $1$ for all
$\ell\in \mathbb{Z}_{\ge0}$. Thus, the weights \eqref{rational_limit} define a
dynamic stochastic vertex model. Its height function is identified via
\Cref{cor:dyn6V_spin_HL_process} with an observable of a measure constructed
out of rational symmetric functions of \cite[Section 8.5]{Borodin2014vertex}.

The Hall-Littlewood case $(s=0)$ corresponds to setting $\zeta\to+\infty$ in
the weights \eqref{rational_limit}. This vertex model is no longer dynamic, it
has symmetric vertex weights (i.e., the probabilities for a path to turn right
or left are both equal to $1/(1+x+y)$) and can be regarded as a discrete time
version of the symmetric simple exclusion process (SSEP). One can thus say
that the limit $t\to1$ for $s=0$ corresponds to the transition from the XXZ to
the XXX model, and the model \eqref{rational_limit} can be regarded as a
dynamic version of SSEP/XXX.

\subsection{Limit to a dynamic version of ASEP}
\label{sub:degen_ASEP}

Here we consider a limit of DS6V to a continuous time particle system
generalizing the ASEP (Asymmetric Simple Exclusion Process). For the
stochastic six vertex model a limit to the usual ASEP was observed in
\cite{GwaSpohn1992}
(see also \cite{BCG6V} for details).

Recall that the spectral parameters of the DS6V weights satisfy $0\le u<v<1$.
Taylor expand the vertex weights as $v-u\to0$ (we omit the vertices
$(0,0;0,0)$ and $(1,1;1,1)$ always having weight $1$):
\begin{align*}
	\biggl[\scalebox{.7}{
			\begin{tikzpicture}[scale=1.2, very thick,
					baseline=-4pt]
				\draw[dashed] (0,-.6)--++(0,1.2);
				\draw[dashed] (-.6,0)--++(1.2,0);
				\node[anchor=north east] at (-.1,-.1)
				{$\ell$};
				\node[anchor=south east] at (-.1,.1)
				{$\ell+1$};
				\node[anchor=north west] at (.1,-.1) {$\ell$};
				\node[anchor=south west] at (.1,.1) {$\ell$};
				\draw[ultra thick,->] (-.6,0)--++(.6,0);
				\draw[ultra thick,->] (0,0)--++(0,.6);
			\end{tikzpicture}
	}\biggr] & =
	1+O(v-u),
	\qquad
	\biggl[\scalebox{.7}{
			\begin{tikzpicture}[scale=1.2, very thick,
					baseline=-4pt]
				\draw[dashed] (0,-.6)--++(0,1.2);
				\draw[dashed] (-.6,0)--++(1.2,0);
				\node[anchor=north east] at (-.1,-.1)
				{$\ell$};
				\node[anchor=south east] at (-.1,.1)
				{$\ell+1$};
				\node[anchor=north west] at (.1,-.1) {$\ell$};
				\node[anchor=south west] at (.1,.1)
				{$\ell+1$};
				\draw[ultra thick,->] (-.6,0)--++(.6,0);
				\draw[ultra thick,->] (0,0)--++(.6,0);
			\end{tikzpicture}
		}\biggr]=
		\frac{(v-u) (u-s t^{\ell+1})}{(1-t) u (u-s t^\ell)}+O(v-u)^2,
	\\
	\biggl[\scalebox{.7}{
			\begin{tikzpicture}[scale=1.2, very thick,
					baseline=-4pt]
				\draw[dashed] (0,-.6)--++(0,1.2);
				\draw[dashed] (-.6,0)--++(1.2,0);
				\node[anchor=north east] at (-.1,-.1)
				{$\ell$};
				\node[anchor=south east] at (-.1,.1) {$\ell$};
				\node[anchor=north west] at (.1,-.1)
				{$\ell-1$};
				\node[anchor=south west] at (.1,.1) {$\ell$};
				\draw[ultra thick,->] (0,-.6)--++(0,.6);
				\draw[ultra thick,->] (0,0)--++(.6,0);
			\end{tikzpicture}
	}\biggr] & =
	1+O(v-u)
	,
	\qquad 
	\biggl[\scalebox{.7}{
			\begin{tikzpicture}[scale=1.2, very thick,
					baseline=-4pt]
				\draw[dashed] (0,-.6)--++(0,1.2);
				\draw[dashed] (-.6,0)--++(1.2,0);
				\node[anchor=north east] at (-.1,-.1)
				{$\ell$};
				\node[anchor=south east] at (-.1,.1) {$\ell$};
				\node[anchor=north west] at (.1,-.1)
				{$\ell-1$};
				\node[anchor=south west] at (.1,.1)
				{$\ell-1$};
				\draw[ultra thick,->] (0,-.6)--++(0,.6);
				\draw[ultra thick,->] (0,0)--++(0,.6);
			\end{tikzpicture}
		}\biggr]=
		\frac{(v-u)t (u-s t^{\ell-1})}{(1-t) u (u-s t^\ell)}
		+O(v-u)^2
	.
\end{align*}
We thus see that the up-right lattice paths perform staircase like movements
most of the time. Occasionally, however, these staircases move up or down
according to the weights of the vertices $(1,0;1,0)$ and $(0,1;0,1)$,
respectively. Subtracting the staircase movement, rescaling the vertical
direction by the factor of $\frac{v-u}{u(1-t)}$, and interpreting it as time
leads to the following continuous time particle system on $\mathbb{Z}$.

The particles are ordered as $y_1(\tau)>y_2(\tau)>\ldots $, and at most one
particle per site is allowed. The six vertex boundary condition translates
into the step initial condition $y_i(0)=-i$, $i\ge1$. In continuous time, each
particle $y_{\ell}$ tries to jump to the right by one at rate
$\dfrac{u-st^{\ell}}{u-st^{\ell-1}}$, and to the left by one at rate
$t\,\dfrac{u-st^{\ell-1}}{u-st^{\ell}}$. If the destination is occupied, the
corresponding jump is blocked and $y_{\ell}$ does not move. See
\Cref{fig:dyn_ASEP} in the Introduction. Thus, one can say that our dynamic
ASEP is a generalization of the ASEP with certain particle-dependent jump
rates. The connection to spin Hall-Littlewood measures might provide tools for
asymptotic analysis of this model.

The dynamic version of the ASEP obtained above is somewhat similar to the one
of \cite{borodin2017elliptic}, \cite{BorodinCorwin2017dynamic} coming from
vertex models at elliptic level. However, these two models are different. In
particular, in our model the dynamic dependence on the height function is via
the quantities $h_x=\#\{\textnormal{number of particles to the right of
		$x$}\}$, while in \cite{borodin2017elliptic}, \cite{BorodinCorwin2017dynamic}
the dynamic parameter is $s_x=2h_x+x$ which incorporates both the particle's
number and location.

\subsection{Finite vertical spin}
\label{sub:degen_finite_spin}

Setting $s=t^{-I/2}$, where $I\in \mathbb{Z}_{\ge1}$, turns the vertical
representation giving rise to the vertex weights \eqref{vertex_weights} into a
spin $\frac{I}{2}$ one. This gives rise to a vertex model with at most $I$
vertical arrows per edge allowed. Let us briefly discuss what this means for
the main constructions of the present paper. For simplicity, we only consider
the case $I=1$ when the higher spin six vertex model turns into the six vertex
model.

Call a signature $\lambda\in \mathsf{Sign}_N$ \emph{strict} if
$\lambda_1>\lambda_2>\ldots>\lambda_N $. Observe that for $s=t^{-\frac{1}{2}}$
the weight
\begin{equation*}
	\Big[
	\Voi{.6}g{g-1}{-4.5pt} \Big]_{u}=
	\dfrac{(1-t^{g-2})u}{1-ut^{-\frac{1}{2}}}
\end{equation*}
vanishes for $g=2$. Thus, $G_{\mu/\nu}^c(v)$ also vanishes if $\nu$ is strict
and $\mu$ is not, see \Cref{ssub:G_definition}. At the same time the function
$G^{c}_{\lambda/(0^N)}$ entering the spin Hall-Littlewood measure
\eqref{spin_HL_measure} is not well-defined since $(0^{N})$ is not strict.
This presents an obstacle in degenerating spin Hall-Littlewood measures and
processes to $s=t^{-\frac{1}{2}}$ in a straightforward way.

On the other hand, the vertex weights for $s=t^{-\frac{1}{2}}$ satisfy a
Yang-Baxter equation, and bijectivisation can be applied to it, too. Following
the lines of \Cref{sec:local_transition_probabilities}, one can define forward
transition probabilities
$\mathsf{U}^{\mathrm{fwd}}(\kappa\to\nu\mid\lambda,\mu)$, where
$\kappa,\lambda\in\mathsf{Sign}_{N-1}$ and $\mu,\nu\in\mathsf{Sign}_{N}$ are
strict. Using these probabilities, it is possible to define an analogue of the
Yang-Baxter field $\boldsymbol\lambda^{(x,y)}$, $x,y\in\mathbb{Z}_{\ge0}$,
with boundary conditions $\boldsymbol\lambda^{(x,0)}=\varnothing$,
$\boldsymbol\lambda^{(0,y)}=(-1,-2,\ldots,-y )$. It is not clear whether this
version of the Yang-Baxter field leads via Markov projections to an analogue
of the dynamic stochastic six vertex model of \Cref{sec:dynamicS6V}, and we do
not discuss this issue here.

\section{Yang-Baxter equation}
\label{app:YB_equation}

Here we write out all the explicit identities between rational functions which
comprise the Yang-Baxter equation. This equation states that certain
combinations of vertex weights \eqref{vertex_weights},
\eqref{cross_vertex_weights} are equal to each other. Writing all possible
cases out we arrive at the following 16 identities. For better notation, in
the vertex weights we put cross vertices together with pairs of vertices, and
use the shorthand
\begin{equation}
	\label{YB_spectral_parameters_convention}
	\left[ \cdots \right]:=\left[ \cdots \right]_{u,v},
	\qquad
	\left[ \cdots \right]':=\left[ \cdots \right]_{v,u}
\end{equation}
for the vertex weights. Moreover, by agreement, the weights of the cross
vertices are not affected by the swapping of spectral parameters, and are
given by \eqref{cross_vertex_weights} in both sides of each of the identities.

Below are all the 16 identities comprising the Yang-Baxter equation. They
depend on an arbitrary nonnegative integer $g$ subject to the agreement that
once an arrow configuration in either side of a formula contains $g-1$ or
$g-2$, we assume that $g\ge1$ or $g\ge2$, respectively. Each of the identities
below is readily verified by hand:
\begin{align}
	\label{YB1.1}\tag{YB1.1} &
	\biggl[
	\ooYBoo{.3}{3}{13.5pt}\ooWoo{.6}ggg{6.75pt}
	\biggr]
	=
	\tfrac{(1-st^gu)(1-st^gv)}{(1-su)(1-sv)}
	=
	\biggl[
	\ooWoo{.6}ggg{6.75pt}
	\ooYBoo{.3}{3}{13.5pt}
	\biggr]'
	;                          \\
	\label{YB1.2}\tag{YB1.2}
	\begin{split}
		&\biggl[
		\ooYBoo{.3}{3}{13.5pt}\ooWio{.6}g{g-1}{g-1}{6.75pt}
		\biggr]
		=
		\tfrac{(1-s^2t^{g-1})u(1-st^{g-1}v)}{(1-su)(1-sv)}
		\\[-8pt]&\hspace{5pt}=
		\tfrac{(1-s^2t^{g-1})v(1-st^{g-1}u)}{(1-sv)(1-su)}
		\tfrac{(1-t)u}{u-tv}+
		\tfrac{(1-st^gv)(1-s^2t^{g-1})u}{(1-sv)(1-su)}
		\tfrac{u-v}{u-tv}
		=
		\biggl[
		\ooWio{.6}g{g-1}{g-1}{6.75pt}\ioYBio{.3}{3}{13.5pt}
		\biggr]'+
		\biggl[
		\ooWoi{.6}g{g}{g-1}{6.75pt}\oiYBio{.3}{3}{13.5pt}
		\biggr]';
	\end{split}
	\\
	\label{YB1.3}\tag{YB1.3}
	\begin{split}
		&\biggl[
		\ooYBoo{.3}{3}{13.5pt}\ooWoi{.6}g{g}{g-1}{6.75pt}
		\biggr]
		=
		\tfrac{(1-st^gu)(1-s^2t^{g-1})v}{(1-su)(1-sv)}
		\\[-8pt]&\hspace{5pt}=
		\tfrac{(1-st^gv)(1-s^2t^{g-1})u}{(1-sv)(1-su)}
		\tfrac{(1-t)v}{u-tv}+
		\tfrac{(1-s^2t^{g-1})v(1-st^{g-1}u)}{(1-sv)(1-su)}
		\tfrac{t(u-v)}{u-tv}
		=
		\biggl[
		\ooWoi{.6}g{g}{g-1}{6.75pt}\oiYBoi{.3}{3}{13.5pt}
		\biggr]'+
		\biggl[
		\ooWio{.6}g{g-1}{g-1}{6.75pt}\ioYBoi{.3}{3}{13.5pt}
		\biggr]'
		;
	\end{split}
	\\
	\label{YB1.4}\tag{YB1.4} &
	\biggl[
	\ooYBoo{.3}{3}{13.5pt}\ooWii{.6}g{g-1}{g-2}{6.75pt}
	\biggr]
	=
	\tfrac{(1-s^2t^{g-1})u(1-s^2t^{g-2})v}{(1-su)(1-sv)}
	=
	\biggl[
	\ooWii{.6}g{g-1}{g-2}{6.75pt}\iiYBii{.3}{3}{13.5pt}
	\biggr]'
	;                          \\
	\label{YB2.1}\tag{YB2.1}
	\begin{split}
		&
		\biggl[
		\ioYBio{.3}{3}{13.5pt}\oiWoo{.6}g{g+1}{g+1}{6.75pt}
		\biggr]
		+
		\biggl[
		\ioYBoi{.3}{3}{13.5pt}\ioWoo{.6}gg{g+1}{6.75pt}
		\biggr]
		=
		\tfrac{(1-t)u}{u-tv}
		\tfrac{(1-t^{g+1})(1-st^{g+1}v)}{(1-su)(1-sv)}
		+
		\tfrac{t(u-v)}{u-tv}
		\tfrac{(1-st^gu)(1-t^{g+1})}{(1-su)(1-sv)}
		\\[-8pt]&
		\hspace{5pt}
		=
		\tfrac{(1-t^{g+1})(1-st^{g+1}u)}{(1-sv)(1-su)}
		=
		\biggl[
		\oiWoo{.6}g{g+1}{g+1}{6.75pt}\ooYBoo{.3}{3}{13.5pt}
		\biggr]'
		;
	\end{split}
	\\
	\label{YB2.2}\tag{YB2.2}
	\begin{split}
		&
		\biggl[
		\ioYBio{.3}{3}{13.5pt}\oiWio{.6}ggg{6.75pt}
		\biggr]
		+
		\biggl[
		\ioYBoi{.3}{3}{13.5pt}\ioWio{.6}g{g-1}g{6.75pt}
		\biggr]
		=
		\tfrac{(1-t)u}{u-tv}
		\tfrac{(u-st^g)(1-st^g v)}{(1-su)(1-sv)}
		+
		\tfrac{t(u-v)}{u-tv}
		\tfrac{(1-s^2t^{g-1})u(1-t^g)}{(1-su)(1-sv)}
		\\[-8pt]&
		\hspace{5pt}
		=
		\tfrac{(v-st^g)(1-st^gu)}{(1-sv)(1-su)}
		\tfrac{(1-t)u}{u-tv}
		+
		\tfrac{(1-t^{g+1})(1-s^2t^g)u}{(1-sv)(1-su)}
		\tfrac{u-v}{u-tv}
		=
		\biggl[
		\oiWio{.6}g{g}{g}{6.75pt}\ioYBio{.3}{3}{13.5pt}
		\biggr]'
		+
		\biggl[
		\oiWoi{.6}g{g+1}{g}{6.75pt}\oiYBio{.3}{3}{13.5pt}
		\biggr]'
		;
	\end{split}
	\\
	\label{YB2.3}\tag{YB2.3}
	\begin{split}
		&
		\biggl[
		\ioYBio{.3}{3}{13.5pt}\oiWoi{.6}g{g+1}g{6.75pt}
		\biggr]
		+
		\biggl[
		\ioYBoi{.3}{3}{13.5pt}\ioWoi{.6}ggg{6.75pt}
		\biggr]
		=
		\tfrac{(1-t)u}{u-tv}
		\tfrac{(1-t^{g+1})(1-s^2t^g)v}{(1-su)(1-sv)}
		+
		\tfrac{t(u-v)}{u-tv}
		\tfrac{(1-st^gu)(v-st^g)}{(1-su)(1-sv)}
		\\[-8pt]&
		\hspace{5pt}
		=
		\tfrac{(1-t^{g+1})(1-s^2t^g)u}{(1-sv)(1-su)}
		\tfrac{(1-t)v}{u-tv}
		+
		\tfrac{(v-st^g)(1-st^gu)}{(1-sv)(1-su)}
		\tfrac{t(u-v)}{u-tv}
		=
		\biggl[
		\oiWoi{.6}g{g+1}{g}{6.75pt}\oiYBoi{.3}{3}{13.5pt}
		\biggr]'
		+
		\biggl[
		\oiWio{.6}g{g}{g}{6.75pt}\ioYBoi{.3}{3}{13.5pt}
		\biggr]'
		;
	\end{split}
	\\
	\label{YB2.4}\tag{YB2.4}
	\begin{split}
		&
		\biggl[
		\ioYBio{.3}{3}{13.5pt}\oiWii{.6}gg{g-1}{6.75pt}
		\biggr]
		+
		\biggl[
		\ioYBoi{.3}{3}{13.5pt}\ioWii{.6}g{g-1}{g-1}{6.75pt}
		\biggr]
		=
		\tfrac{(1-t)u}{u-tv}
		\tfrac{(u-st^g)(1-s^2t^{g-1})v}{(1-su)(1-sv)}
		+
		\tfrac{t(u-v)}{u-tv}
		\tfrac{(1-s^2t^{g-1})u(v-st^{g-1})}{(1-su)(1-sv)}
		\\[-8pt]&
		\hspace{5pt}
		=
		\tfrac{(v-st^g)(1-s^2t^{g-1})u}{(1-sv)(1-su)}
		=
		\biggl[
		\oiWii{.6}g{g}{g-1}{6.75pt}\iiYBii{.3}{3}{13.5pt}
		\biggr]'
		;
	\end{split}
	\\
	\label{YB3.1}\tag{YB3.1}
	\begin{split}
		&
		\biggl[
		\oiYBoi{.3}{3}{13.5pt}\ioWoo{.6}gg{g+1}{6.75pt}
		\biggr]
		+
		\biggl[
		\oiYBio{.3}{3}{13.5pt}\oiWoo{.6}g{g+1}{g+1}{6.75pt}
		\biggr]
		=
		\tfrac{(1-t)v}{u-tv}
		\tfrac{(1-st^gu)(1-t^{g+1})}{(1-su)(1-sv)}
		+
		\tfrac{u-v}{u-tv}
		\tfrac{(1-t^{g+1})(1-st^{g+1}v)}{(1-su)(1-sv)}
		\\[-8pt]&\hspace{5pt}=
		\tfrac{(1-st^gv)(1-t^{g+1})}{(1-sv)(1-su)}
		=
		\biggl[
		\ioWoo{.6}gg{g+1}{6.75pt}\ooYBoo{.3}{3}{13.5pt}
		\biggr]'
		;
	\end{split}
	\\
	\label{YB3.2}\tag{YB3.2}
	\begin{split}
		&
		\biggl[
		\oiYBoi{.3}{3}{13.5pt}\ioWio{.6}g{g-1}g{6.75pt}
		\biggr]
		+
		\biggl[
		\oiYBio{.3}{3}{13.5pt}\oiWio{.6}ggg{6.75pt}
		\biggr]
		=
		\tfrac{(1-t)v}{u-tv}
		\tfrac{(1-s^2t^{g-1})u(1-t^g)}{(1-su)(1-sv)}
		+
		\tfrac{u-v}{u-tv}
		\tfrac{(u-st^g)(1-st^gv)}{(1-su)(1-sv)}
		\\[-8pt]&\hspace{5pt}=
		\tfrac{(1-s^2t^{g-1})v(1-t^g)}{(1-sv)(1-su)}
		\tfrac{(1-t)u}{u-tv}
		+
		\tfrac{(1-st^gv)(u-st^g)}{(1-sv)(1-su)}
		\tfrac{u-v}{u-tv}
		=
		\biggl[
		\ioWio{.6}g{g-1}g{6.75pt}\ioYBio{.3}{3}{13.5pt}
		\biggr]'
		+
		\biggl[
		\ioWoi{.6}ggg{6.75pt}\oiYBio{.3}{3}{13.5pt}
		\biggr]'
		;
	\end{split}
	\\
	\label{YB3.3}\tag{YB3.3}
	\begin{split}
		&
		\biggl[
		\oiYBoi{.3}{3}{13.5pt}\ioWoi{.6}ggg{6.75pt}
		\biggr]
		+
		\biggl[
		\oiYBio{.3}{3}{13.5pt}\oiWoi{.6}g{g+1}g{6.75pt}
		\biggr]
		=
		\tfrac{(1-t)v}{u-tv}
		\tfrac{(1-st^gu)(v-st^g)}{(1-su)(1-sv)}
		+
		\tfrac{u-v}{u-tv}
		\tfrac{(1-t^{g+1})(1-s^2t^g)v}{(1-su)(1-sv)}
		\\[-8pt]&\hspace{5pt}=
		\tfrac{(1-st^gv)(u-st^g)}{(1-sv)(1-su)}
		\tfrac{(1-t)v}{u-tv}
		+
		\tfrac{(1-s^2t^{g-1})v(1-t^g)}{(1-sv)(1-su)}
		\tfrac{t(u-v)}{u-tv}
		=
		\biggl[
		\ioWoi{.6}g{g}g{6.75pt}\oiYBoi{.3}{3}{13.5pt}
		\biggr]'
		+
		\biggl[
		\ioWio{.6}g{g-1}g{6.75pt}\ioYBoi{.3}{3}{13.5pt}
		\biggr]'
		;
	\end{split}
	\\
	\label{YB3.4}\tag{YB3.4}
	\begin{split}
		&
		\biggl[
		\oiYBoi{.3}{3}{13.5pt}\ioWii{.6}g{g-1}{g-1}{6.75pt}
		\biggr]
		+
		\biggl[
		\oiYBio{.3}{3}{13.5pt}\oiWii{.6}gg{g-1}{6.75pt}
		\biggr]
		=
		\tfrac{(1-t)v}{u-tv}
		\tfrac{(1-s^2t^{g-1})u(v-st^{g-1})}{(1-su)(1-sv)}
		+
		\tfrac{u-v}{u-tv}
		\tfrac{(u-st^g)(1-s^2t^{g-1})v}{(1-su)(1-sv)}
		\\[-8pt]&\hspace{5pt}=
		\tfrac{(1-s^2t^{g-1})v(u-st^{g-1})}{(1-sv)(1-su)}
		=
		\biggl[
		\ioWii{.6}g{g-1}{g-1}{6.75pt}\iiYBii{.3}{3}{13.5pt}
		\biggr]'
		;
	\end{split}
	\\
	\label{YB4.1}\tag{YB4.1} &
	\biggl[
	\iiYBii{.3}{3}{13.5pt}\iiWoo{.6}g{g+1}{g+2}{6.75pt}
	\biggr]
	=
	\tfrac{(1-t^{g+1})(1-t^{g+2})}{(1-su)(1-sv)}
	=
	\biggl[
	\iiWoo{.6}g{g+1}{g+2}{6.75pt}\ooYBoo{.3}{3}{13.5pt}
	\biggr]'
	;                          \\
	\label{YB4.2}\tag{YB4.2}
	\begin{split}
		&
		\biggl[
		\iiYBii{.3}{3}{13.5pt}\iiWio{.6}gg{g+1}{6.75pt}
		\biggr]
		=
		\tfrac{(u-st^g)(1-t^{g+1})}{(1-su)(1-sv)}
		\\[-8pt]&\hspace{5pt}
		=
		\tfrac{(v-st^g)(1-t^{g+1})}{(1-sv)(1-su)}
		\tfrac{(1-t)u}{u-tv}
		+
		\tfrac{(1-t^{g+1})(u-st^{g+1})}{(1-sv)(1-su)}
		\tfrac{u-v}{u-tv}
		=
		\biggl[
		\iiWio{.6}g{g}{g+1}{6.75pt}\ioYBio{.3}{3}{13.5pt}
		\biggr]'
		+
		\biggl[
		\iiWoi{.6}g{g+1}{g+1}{6.75pt}\oiYBio{.3}{3}{13.5pt}
		\biggr]'
		;
	\end{split}
	\\
	\label{YB4.3}\tag{YB4.3}
	\begin{split}
		&
		\biggl[
		\iiYBii{.3}{3}{13.5pt}\iiWoi{.6}g{g+1}{g+1}{6.75pt}
		\biggr]
		=
		\tfrac{(1-t^{g+1})(v-st^{g+1})}{(1-su)(1-sv)}
		\\[-8pt]&\hspace{5pt}
		=
		\tfrac{(1-t^{g+1})(u-st^{g+1})}{(1-sv)(1-su)}
		\tfrac{(1-t)v}{u-tv}
		+
		\tfrac{(v-st^g)(1-t^{g+1})}{(1-sv)(1-su)}
		\tfrac{t(u-v)}{u-tv}
		=
		\biggl[
		\iiWoi{.6}g{g+1}{g+1}{6.75pt}\oiYBoi{.3}{3}{13.5pt}
		\biggr]'
		+
		\biggl[
		\iiWio{.6}g{g}{g+1}{6.75pt}\ioYBoi{.3}{3}{13.5pt}
		\biggr]'
		;
	\end{split}
	\\
	\label{YB4.4}\tag{YB4.4} &
	\biggl[
	\iiYBii{.3}{3}{13.5pt}\iiWii{.6}g{g}{g}{6.75pt}
	\biggr]
	=
	\tfrac{(u-st^g)(v-st^g)}{(1-su)(1-sv)}
	=
	\biggl[
	\iiWii{.6}g{g}{g}{6.75pt}\iiYBii{.3}{3}{13.5pt}
	\biggr]
	.
\end{align}

\section{Probabilities of forward and backward Yang-Baxter moves}
\label{app:YB_probabilities}

Here we list in full detail the probabilities of forward and backward
Yang-Baxter moves discussed in \Cref{sub:YB_bijectivisation_new_label}. These
probabilities (coming from identities \eqref{YB1.1}--\eqref{YB4.4} listed in
\Cref{app:YB_equation}) depend on the spectral parameters $u,v$ and on an
arbitrary nonnegative integer $g$ (which is required to be $\ge1$ if the
corresponding arrow configurations contain $g-1$ vertical arrows).

Equation numbers \eqref{F1.1}--\eqref{F4.4} and \eqref{B1.1}--\eqref{B4.4}
below correspond to numbers of the Yang-Baxter identities in
\Cref{app:YB_equation} whose bijectivisation gives these forward and backward
transition probabilities. The forward transition probabilities look as follows
(we do not write down transitions whose probabilities are identically zero):
\begin{align}
	\label{F1.1}\tag{F1.1} &
		P^{\mathrm{fwd}}_{u,v}\biggl( \ooYBoo{.3}{3}{13.5pt}\ooWoo{.6}ggg{6.75pt}\ ,\
		\ooWoo{.6}ggg{6.75pt}\ooYBoo{.3}{3}{13.5pt} \biggr)
		=
		1;
	\\
	\label{F1.2}\tag{F1.2}
		&
		P^{\mathrm{fwd}}_{u,v}\biggl( \ooYBoo{.3}{3}{13.5pt}\ooWio{.6}{g+1}gg{6.75pt}\ ,\
		\ooWio{.6}{g+1}gg{6.75pt}\ioYBio{.3}{3}{13.5pt} \biggr)
		=
		1-P^{\mathrm{fwd}}_{u,v}\biggl( \ooYBoo{.3}{3}{13.5pt}\ooWio{.6}{g+1}gg{6.75pt}\ ,\
		\ooWoi{.6}{g+1}{g+1}{g}{6.75pt}\oiYBio{.3}{3}{13.5pt} \biggr)
		=
		\dfrac{(1-t)v}{u-tv}\dfrac{1-st^{g}u}{1-st^{g}v}
		;
	\\
	\label{F1.3}\tag{F1.3}
		&
		P^{\mathrm{fwd}}_{u,v}
		\biggl( \ooYBoo{.3}{3}{13.5pt}\ooWoi{.6}{g}g{g-1}{6.75pt}\ ,\
		\ooWoi{.6}{g}g{g-1}{6.75pt}\oiYBoi{.3}{3}{13.5pt} \biggr)
		=
		1-
		P^{\mathrm{fwd}}_{u,v}
		\biggl( \ooYBoo{.3}{3}{13.5pt}\ooWoi{.6}{g}g{g-1}{6.75pt}\ ,\
		\ooWio{.6}{g}{g-1}{g-1}{6.75pt}\ioYBoi{.3}{3}{13.5pt} \biggr)
		=
		\dfrac{(1-t)u}{u-tv}\dfrac{1-st^gv}{1-st^gu}
		;
	\\
	\label{F1.4}\tag{F1.4}
		&
		P^{\mathrm{fwd}}_{u,v}
		\biggl( \ooYBoo{.3}{3}{13.5pt}\ooWii{.6}{g+1}g{g-1}{6.75pt}\ ,\
		\ooWii{.6}{g+1}{g}{g-1}{6.75pt}\iiYBii{.3}{3}{13.5pt} \biggr)
		=1
		;
	\\
	\label{F2.1}\tag{F2.1}
		&
		P^{\mathrm{fwd}}_{u,v}
		\biggl( \ioYBio{.3}{3}{13.5pt}\oiWoo{.6}{g-1}g{g}{6.75pt}\ ,\
		\oiWoo{.6}{g-1}{g}{g}{6.75pt}\ooYBoo{.3}{3}{13.5pt} \biggr)
		=P^{\mathrm{fwd}}_{u,v}
		\biggl( \ioYBoi{.3}{3}{13.5pt}\ioWoo{.6}{g}g{g+1}{6.75pt}\ ,\
		\oiWoo{.6}{g}{g+1}{g+1}{6.75pt}\ooYBoo{.3}{3}{13.5pt} \biggr)
		=1
		;
	\\
	\label{F2.2}\tag{F2.2}
	\begin{split}
		&
		P^{\mathrm{fwd}}_{u,v}
		\biggl( \ioYBio{.3}{3}{13.5pt}\oiWio{.6}{g}g{g}{6.75pt}\ ,\
		\oiWio{.6}{g}{g}{g}{6.75pt}\ioYBio{.3}{3}{13.5pt} \biggr)
		=
		1-
		P^{\mathrm{fwd}}_{u,v}
		\biggl( \ioYBio{.3}{3}{13.5pt}\oiWio{.6}{g}g{g}{6.75pt}\ ,\
		\oiWoi{.6}{g}{g+1}{g}{6.75pt}\oiYBio{.3}{3}{13.5pt} \biggr)
		=
		\dfrac{v-st^g}{u-st^g}
		\dfrac{1-st^gu}{1-st^gv}
		;
		\\
		&
		P^{\mathrm{fwd}}_{u,v}
		\biggl( \ioYBoi{.3}{3}{13.5pt}\ioWio{.6}{g+1}g{g+1}{6.75pt}\ ,\
		\oiWoi{.6}{g+1}{g+2}{g+1}{6.75pt}\oiYBio{.3}{3}{13.5pt} \biggr)
		=
		1
		;
	\end{split}
	\\
	\label{F2.3}\tag{F2.3}
		&
		P^{\mathrm{fwd}}_{u,v}
		\biggl( \ioYBio{.3}{3}{13.5pt}\oiWoi{.6}{g-1}g{g-1}{6.75pt}\ ,\
		\oiWoi{.6}{g-1}{g}{g-1}{6.75pt}\oiYBoi{.3}{3}{13.5pt} \biggr)
		=P^{\mathrm{fwd}}_{u,v}
		\biggl( \ioYBoi{.3}{3}{13.5pt}\ioWoi{.6}{g}g{g}{6.75pt}\ ,\
		\oiWio{.6}{g}{g}{g}{6.75pt}\ioYBoi{.3}{3}{13.5pt} \biggr)
		=
		1
		;
	\\
	\label{F2.4}\tag{F2.4}
		&
		P^{\mathrm{fwd}}_{u,v}
		\biggl( \ioYBio{.3}{3}{13.5pt}\oiWii{.6}{g}g{g-1}{6.75pt}\ ,\
		\oiWii{.6}{g}{g}{g-1}{6.75pt}\iiYBii{.3}{3}{13.5pt} \biggr)
		=P^{\mathrm{fwd}}_{u,v}
		\biggl( \ioYBoi{.3}{3}{13.5pt}\ioWii{.6}{g+1}g{g}{6.75pt}\ ,\
		\oiWii{.6}{g+1}{g+1}{g}{6.75pt}\iiYBii{.3}{3}{13.5pt} \biggr)
		=1
		;
	\\
	\label{F3.1}\tag{F3.1}
		&
		P^{\mathrm{fwd}}_{u,v}
		\biggl( \oiYBio{.3}{3}{13.5pt}\oiWoo{.6}{g-1}g{g}{6.75pt}\ ,\
		\ioWoo{.6}{g-1}{g-1}{g}{6.75pt}\ooYBoo{.3}{3}{13.5pt} \biggr)
		=P^{\mathrm{fwd}}_{u,v}
		\biggl( \oiYBoi{.3}{3}{13.5pt}\ioWoo{.6}{g}g{g+1}{6.75pt}\ ,\
		\ioWoo{.6}{g}{g}{g+1}{6.75pt}\ooYBoo{.3}{3}{13.5pt} \biggr)
		=1
		;
	\\
	\label{F3.2}\tag{F3.2}
		&
		P^{\mathrm{fwd}}_{u,v}
		\biggl( \oiYBoi{.3}{3}{13.5pt}\ioWio{.6}{g+1}g{g+1}{6.75pt}\ ,\
		\ioWio{.6}{g+1}{g}{g+1}{6.75pt}\ioYBio{.3}{3}{13.5pt} \biggr)
		=
		P^{\mathrm{fwd}}_{u,v}
		\biggl( \oiYBio{.3}{3}{13.5pt}\oiWio{.6}{g}g{g}{6.75pt}\ ,\
		\ioWoi{.6}{g}{g}{g}{6.75pt}\oiYBio{.3}{3}{13.5pt} \biggr)
		=
		1
		;
	\\
	\label{F3.3}\tag{F3.3}
	\begin{split}
		&
		P^{\mathrm{fwd}}_{u,v}
		\biggl( \oiYBio{.3}{3}{13.5pt}\oiWoi{.6}{g-1}g{g-1}{6.75pt}\ ,\
		\ioWoi{.6}{g-1}{g-1}{g-1}{6.75pt}\oiYBoi{.3}{3}{13.5pt} \biggr)
		=
		1-
		P^{\mathrm{fwd}}_{u,v}
		\biggl( \oiYBio{.3}{3}{13.5pt}\oiWoi{.6}{g-1}g{g-1}{6.75pt}\ ,\
		\ioWio{.6}{g-1}{g-2}{g-1}{6.75pt}\ioYBoi{.3}{3}{13.5pt} \biggr)
		=
		\dfrac{1-t}{1-t^{g}}
		\dfrac{1-s^2t^{2g-2}}{1-s^2t^{g-1}}
		;
		\\
		&
		P^{\mathrm{fwd}}_{u,v}
		\biggl( \oiYBoi{.3}{3}{13.5pt}\ioWoi{.6}{g}g{g}{6.75pt}\ ,\
		\ioWoi{.6}{g}{g}{g}{6.75pt}\oiYBoi{.3}{3}{13.5pt} \biggr)
		=
		1
		;
	\end{split}
	\\
	\label{F3.4}\tag{F3.4}
		&
		P^{\mathrm{fwd}}_{u,v}
		\biggl( \oiYBio{.3}{3}{13.5pt}\oiWii{.6}{g}g{g-1}{6.75pt}\ ,\
		\ioWii{.6}{g}{g-1}{g-1}{6.75pt}\iiYBii{.3}{3}{13.5pt} \biggr)
		=
		P^{\mathrm{fwd}}_{u,v}
		\biggl( \oiYBoi{.3}{3}{13.5pt}\ioWii{.6}{g+1}g{g}{6.75pt}\ ,\
		\ioWii{.6}{g+1}{g}{g}{6.75pt}\iiYBii{.3}{3}{13.5pt} \biggr)
		=1
		;
	\\
	\label{F4.1}\tag{F4.1}
		&
		P^{\mathrm{fwd}}_{u,v}
		\biggl( \iiYBii{.3}{3}{13.5pt}\iiWoo{.6}{g-1}g{g+1}{6.75pt}\ ,\
		\iiWoo{.6}{g-1}{g}{g+1}{6.75pt}\ooYBoo{.3}{3}{13.5pt} \biggr)
		=1
		;
	\\
	\label{F4.2}\tag{F4.2}
		&
		P^{\mathrm{fwd}}_{u,v}
		\biggl( \iiYBii{.3}{3}{13.5pt}\iiWio{.6}{g}g{g+1}{6.75pt}\ ,\
		\iiWio{.6}{g}{g}{g+1}{6.75pt}\ioYBio{.3}{3}{13.5pt} \biggr)
		=
		1-
		P^{\mathrm{fwd}}_{u,v}
		\biggl( \iiYBii{.3}{3}{13.5pt}\iiWio{.6}{g}g{g+1}{6.75pt}\ ,\
		\iiWoi{.6}{g}{g+1}{g+1}{6.75pt}\oiYBio{.3}{3}{13.5pt} \biggr)
		=
		\dfrac{(1-t)u}{u-tv}\dfrac{v-st^g}{u-st^g}
		;
	\\
	\label{F4.3}\tag{F4.3}
		&
		P^{\mathrm{fwd}}_{u,v}
		\biggl( \iiYBii{.3}{3}{13.5pt}\iiWoi{.6}{g-1}g{g}{6.75pt}\ ,\
		\iiWoi{.6}{g-1}{g}{g}{6.75pt}\oiYBoi{.3}{3}{13.5pt} \biggr)
		=
		1-
		P^{\mathrm{fwd}}_{u,v}
		\biggl( \iiYBii{.3}{3}{13.5pt}\iiWoi{.6}{g-1}g{g}{6.75pt}\ ,\
		\iiWio{.6}{g-1}{g-1}{g}{6.75pt}\ioYBoi{.3}{3}{13.5pt} \biggr)
		=
		\dfrac{(1-t)v}{u-tv}
		\dfrac{u-st^{g}}{v-st^{g}};
	\\
	\label{F4.4}\tag{F4.4}
	&
		P^{\mathrm{fwd}}_{u,v}
		\biggl( \iiYBii{.3}{3}{13.5pt}\iiWii{.6}{g}g{g}{6.75pt}\ ,\
		\iiWii{.6}{g}{g}{g}{6.75pt}\iiYBii{.3}{3}{13.5pt} \biggr)
		=1.
\end{align}

The backward transition probabilities have the following form (again, we omit
transitions having zero probability):
\begin{align}
	\label{B1.1}\tag{B1.1} &
		P^{\mathrm{bwd}}_{u,v}\biggl(
			\ooWoo{.6}ggg{6.75pt}\ooYBoo{.3}{3}{13.5pt}
			\ ,\
			\ooYBoo{.3}{3}{13.5pt}\ooWoo{.6}ggg{6.75pt}
		\biggr)
		=
		1
		;
	\\
	\label{B1.2}\tag{B1.2}
		&
		P^{\mathrm{bwd}}_{u,v}\biggl(
			\ooWio{.6}{g+1}gg{6.75pt}\ioYBio{.3}{3}{13.5pt}
			\ ,\
			\ooYBoo{.3}{3}{13.5pt}\ooWio{.6}{g+1}gg{6.75pt}
		\biggr)
		=
		P^{\mathrm{bwd}}_{u,v}\biggl(
			\ooWoi{.6}{g}{g}{g-1}{6.75pt}\oiYBio{.3}{3}{13.5pt}
			\ ,\
			\ooYBoo{.3}{3}{13.5pt}\ooWio{.6}{g}{g-1}{g-1}{6.75pt}
		\biggr)
		=
		1
		;
	\\
	\label{B1.3}\tag{B1.3}
		&
		P^{\mathrm{bwd}}_{u,v}
		\biggl(
			\ooWoi{.6}{g}g{g-1}{6.75pt}\oiYBoi{.3}{3}{13.5pt}
			\ ,\
			\ooYBoo{.3}{3}{13.5pt}\ooWoi{.6}{g}g{g-1}{6.75pt}
		\biggr)
		=
		P^{\mathrm{bwd}}_{u,v}
		\biggl(
			\ooWio{.6}{g+1}{g}{g}{6.75pt}\ioYBoi{.3}{3}{13.5pt}
			\ ,\
			\ooYBoo{.3}{3}{13.5pt}\ooWoi{.6}{g+1}{g+1}{g}{6.75pt}
		\biggr)
		=1
		;
	\\
	\label{B1.4}\tag{B1.4}
		&
		P^{\mathrm{bwd}}_{u,v}
		\biggl(
			\ooWii{.6}{g+1}{g}{g-1}{6.75pt}\iiYBii{.3}{3}{13.5pt}
			\ ,\
			\ooYBoo{.3}{3}{13.5pt}\ooWii{.6}{g+1}g{g-1}{6.75pt}
		\biggr)
		=1
		;
	\\
	\label{B2.1}\tag{B2.1}
		&
		P^{\mathrm{bwd}}_{u,v}
		\biggl(
			\oiWoo{.6}{g-1}{g}{g}{6.75pt}\ooYBoo{.3}{3}{13.5pt}
			\ ,\
			\ioYBio{.3}{3}{13.5pt}\oiWoo{.6}{g-1}g{g}{6.75pt}
		\biggr)
		=
		1-
		P^{\mathrm{bwd}}_{u,v}
		\biggl(
			\oiWoo{.6}{g-1}{g}{g}{6.75pt}\ooYBoo{.3}{3}{13.5pt}
			\ ,\
			\ioYBoi{.3}{3}{13.5pt}\ioWoo{.6}{g-1}{g-1}{g}{6.75pt}
		\biggr)
		=
		\dfrac{(1-t)u}{u-tv}
		\dfrac{1-st^gv}{1-st^gu}
		;
	\\
	\label{B2.2}\tag{B2.2}
	\begin{split}
		&
		P^{\mathrm{bwd}}_{u,v}
		\biggl(
			\oiWio{.6}{g}{g}{g}{6.75pt}\ioYBio{.3}{3}{13.5pt}
			\ ,\
			\ioYBio{.3}{3}{13.5pt}\oiWio{.6}{g}g{g}{6.75pt}
		\biggr)
		=1
		;
		\\
		&
		P^{\mathrm{bwd}}_{u,v}
		\biggl(
			\oiWoi{.6}{g-1}{g}{g-1}{6.75pt}\oiYBio{.3}{3}{13.5pt}
			\ ,\
			\ioYBio{.3}{3}{13.5pt}\oiWio{.6}{g-1}{g-1}{g-1}{6.75pt}
		\biggr)
		=
		1
		-
		P^{\mathrm{bwd}}_{u,v}
		\biggl(
			\oiWoi{.6}{g-1}{g}{g-1}{6.75pt}\oiYBio{.3}{3}{13.5pt}
			\ ,\
			\ioYBoi{.3}{3}{13.5pt}\ioWio{.6}{g-1}{g-2}{g-1}{6.75pt}
		\biggr)
		=
		\frac{1-t}{1-t^g}\frac{1-s^2t^{2g-2}}{1-s^2t^{g-1}}
		;
	\end{split}
	\\
	\label{B2.3}\tag{B2.3}
		&
		P^{\mathrm{bwd}}_{u,v}
		\biggl(
			\oiWoi{.6}{g-1}{g}{g-1}{6.75pt}\oiYBoi{.3}{3}{13.5pt}
			\ ,\
			\ioYBio{.3}{3}{13.5pt}\oiWoi{.6}{g-1}g{g-1}{6.75pt}
		\biggr)
		=P^{\mathrm{bwd}}_{u,v}
		\biggl(
			\oiWio{.6}{g}{g}{g}{6.75pt}\ioYBoi{.3}{3}{13.5pt}
			\ ,\
			\ioYBoi{.3}{3}{13.5pt}\ioWoi{.6}{g}g{g}{6.75pt}
		\biggr)
		=1
		;
	\\
	\label{B2.4}\tag{B2.4}
		&
		P^{\mathrm{bwd}}_{u,v}
		\biggl(
			\oiWii{.6}{g}{g}{g-1}{6.75pt}\iiYBii{.3}{3}{13.5pt}
			\ ,\
			\ioYBio{.3}{3}{13.5pt}\oiWii{.6}{g}g{g-1}{6.75pt}
		\biggr)
		=
		1-
		P^{\mathrm{bwd}}_{u,v}
		\biggl(
			\oiWii{.6}{g}{g}{g-1}{6.75pt}\iiYBii{.3}{3}{13.5pt}
			\ ,\
			\ioYBoi{.3}{3}{13.5pt}\ioWii{.6}{g}{g-1}{g-1}{6.75pt}
		\biggr)
		=\frac{(1-t)v}{u-tv}
		\frac{u-st^g}{v-st^g}
		;
	\\
	\label{B3.1}\tag{B3.1}
		&
		P^{\mathrm{bwd}}_{u,v}
		\biggl(
			\ioWoo{.6}{g}{g}{g+1}{6.75pt}\ooYBoo{.3}{3}{13.5pt}
			\ ,\
			\oiYBoi{.3}{3}{13.5pt}\ioWoo{.6}{g}g{g+1}{6.75pt}
		\biggr)
		=1-
		P^{\mathrm{bwd}}_{u,v}
		\biggl(
			\ioWoo{.6}{g}{g}{g+1}{6.75pt}\ooYBoo{.3}{3}{13.5pt}
			\ ,\
			\oiYBio{.3}{3}{13.5pt}\oiWoo{.6}{g}{g+1}{g+1}{6.75pt}
		\biggr)
		=\frac{(1-t)v}{u-tv}
		\frac{1-st^gu}{1-st^gv}
		;
	\\
	\label{B3.2}\tag{B3.2}
		&
		P^{\mathrm{bwd}}_{u,v}
		\biggl(
			\ioWio{.6}{g+1}{g}{g+1}{6.75pt}\ioYBio{.3}{3}{13.5pt}
			\ ,\
			\oiYBoi{.3}{3}{13.5pt}\ioWio{.6}{g+1}g{g+1}{6.75pt}
		\biggr)
		=
		P^{\mathrm{bwd}}_{u,v}
		\biggl(
			\ioWoi{.6}{g}{g}{g}{6.75pt}\oiYBio{.3}{3}{13.5pt}
			\ ,\
			\oiYBio{.3}{3}{13.5pt}\oiWio{.6}{g}g{g}{6.75pt}
		\biggr)
		=
		1
		;
	\\
	\label{B3.3}\tag{B3.3}
	\begin{split}
		&
		P^{\mathrm{bwd}}_{u,v}
		\biggl(
			\ioWio{.6}{g+1}{g}{g+1}{6.75pt}\ioYBoi{.3}{3}{13.5pt}
			\ ,\
			\oiYBio{.3}{3}{13.5pt}\oiWoi{.6}{g+1}{g+2}{g+1}{6.75pt}
		\biggr)
		=1
		;
	\\
		&
		P^{\mathrm{bwd}}_{u,v}
		\biggl(
			\ioWoi{.6}{g}{g}{g}{6.75pt}\oiYBoi{.3}{3}{13.5pt}
			\ ,\
			\oiYBoi{.3}{3}{13.5pt}\ioWoi{.6}{g}g{g}{6.75pt}
		\biggr)
		=
		1-
		P^{\mathrm{bwd}}_{u,v}
		\biggl(
			\ioWoi{.6}{g}{g}{g}{6.75pt}\oiYBoi{.3}{3}{13.5pt}
			\ ,\
			\oiYBio{.3}{3}{13.5pt}\oiWoi{.6}{g}{g+1}{g}{6.75pt}
		\biggr)
		=
		\frac{v-st^g}{u-st^g}
		\frac{1-st^gu}{1-st^gv}
		;
	\end{split}
	\\
	\label{B3.4}\tag{B3.4}
		&
		P^{\mathrm{bwd}}_{u,v}
		\biggl(
			\ioWii{.6}{g+1}{g}{g}{6.75pt}\iiYBii{.3}{3}{13.5pt}
			\ ,\
			\oiYBoi{.3}{3}{13.5pt}\ioWii{.6}{g+1}g{g}{6.75pt}
		\biggr)
		=
		1-
		P^{\mathrm{bwd}}_{u,v}
		\biggl(
			\ioWii{.6}{g+1}{g}{g}{6.75pt}\iiYBii{.3}{3}{13.5pt}
			\ ,\
			\oiYBio{.3}{3}{13.5pt}\oiWii{.6}{g+1}{g+1}{g}{6.75pt}
		\biggr)
		=
		\frac{(1-t)u}{u-tv}\frac{v-st^g}{u-st^g}
		;
	\\
	\label{B4.1}\tag{B4.1}
		&
		P^{\mathrm{bwd}}_{u,v}
		\biggl(
			\iiWoo{.6}{g-1}{g}{g+1}{6.75pt}\ooYBoo{.3}{3}{13.5pt}
			\ ,\
			\iiYBii{.3}{3}{13.5pt}\iiWoo{.6}{g-1}g{g+1}{6.75pt}
		\biggr)
		=1
		;
	\\
	\label{B4.2}\tag{B4.2}
		&
		P^{\mathrm{bwd}}_{u,v}
		\biggl(
			\iiWio{.6}{g}{g}{g+1}{6.75pt}\ioYBio{.3}{3}{13.5pt}
			\ ,\
			\iiYBii{.3}{3}{13.5pt}\iiWio{.6}{g}g{g+1}{6.75pt}
		\biggr)
		=
		P^{\mathrm{bwd}}_{u,v}
		\biggl(
			\iiWoi{.6}{g-1}{g}{g}{6.75pt}\oiYBio{.3}{3}{13.5pt}
			\ ,\
			\iiYBii{.3}{3}{13.5pt}\iiWio{.6}{g-1}{g-1}{g}{6.75pt}
		\biggr)
		=1
		;
	\\
	\label{B4.3}\tag{B4.3}
		&
		P^{\mathrm{bwd}}_{u,v}
		\biggl(
			\iiWoi{.6}{g-1}{g}{g}{6.75pt}\oiYBoi{.3}{3}{13.5pt}
			\ ,\
			\iiYBii{.3}{3}{13.5pt}\iiWoi{.6}{g-1}g{g}{6.75pt}
		\biggr)
		=
		P^{\mathrm{bwd}}_{u,v}
		\biggl(
			\iiWio{.6}{g}{g}{g+1}{6.75pt}\ioYBoi{.3}{3}{13.5pt}
			\ ,\
			\iiYBii{.3}{3}{13.5pt}\iiWoi{.6}{g}{g+1}{g+1}{6.75pt}
		\biggr)
		=1
		;
	\\
	\label{B4.4}\tag{B4.4}
	&
		P^{\mathrm{bwd}}_{u,v}
		\biggl(
			\iiWii{.6}{g}{g}{g}{6.75pt}\iiYBii{.3}{3}{13.5pt}
			\ ,\
			\iiYBii{.3}{3}{13.5pt}\iiWii{.6}{g}g{g}{6.75pt}
		\biggr)
		=1.
\end{align}

\section{Another form of the skew Cauchy identity}
\label{sub:another_Cauchy}

The spin Hall-Littlewood symmetric functions satisfy another form of Cauchy
identities which is worth mentioning. These identities involve the functions
$G^c$ (\Cref{ssub:G_definition}) along with the functions $G$. The latter are
variations of the $F$ functions (\Cref{ssub:F_definition}), the only
difference is that the boundary condition on the left as in in
\Cref{fig:connecting_interlacing} (left) is also empty. We refer to
\cite[Section 3]{Borodin2014vertex} for a detailed definition of the functions
$G$. Let us focus on the variant of the skew Cauchy identity with single
variables (analogue of \Cref{thm:skew_Cauchy_one}):
\begin{proposition}
	\label{prop:anotForm}
	Under assumption \eqref{condition_on_convergence}, let $\lambda, \mu
		\in \mathsf{Sign}_N$. We have
	\begin{equation}
		\label{eq:anotForm}
		\sum_{\kappa \in \mathsf{Sign}_N}
		G^c_{\lambda/\kappa}(v^{-1})G_{\mu/\kappa} (u) =\sum_{\nu \in
			\mathsf{Sign}_N} G^c_{\nu/\mu} (v^{-1}) G_{\nu/\lambda} (u).
	\end{equation}
\end{proposition}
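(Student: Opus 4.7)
The plan is to prove \eqref{eq:anotForm} by adapting the bijective argument for \Cref{thm:skew_Cauchy_one} in \Cref{sub:bijective_proof_skew_Cauchy}, now based on dragging a different cross vertex across a different two-layer configuration. Interpret both sides of \eqref{eq:anotForm} as partition functions in a horizontal strip, with $\lambda$ fixed at the very bottom and $\mu$ at the very top, and with the summation variable ($\kappa$ or $\nu$) being the middle signature. On the left-hand side, the bottom layer is of $G^c$-type (spectral parameter $v$, packed horizontal arrows at $\pm\infty$) and the top layer of $G$-type (spectral parameter $u$, empty horizontal boundary at $\pm\infty$); on the right-hand side the roles of the two layers are swapped. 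Accordingly, the far-left and the far-right boundaries of the LHS configuration both look like $\scalebox{.6}{\begin{tikzpicture}[scale=1.5, ultra thick, baseline=6pt]\draw[->] (-.5,0.1)--++(.4,0);\draw[dotted] (-.5,.4)--++(.4,0);\end{tikzpicture}}$, while both boundaries of the RHS configuration look like $\scalebox{.6}{\begin{tikzpicture}[scale=1.5, ultra thick, baseline=6pt]\draw[dotted] (-.5,0.1)--++(.4,0);\draw[->] (-.5,.4)--++(.4,0);\end{tikzpicture}}$.

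Next I would insert a cross vertex in state $\oiYBio{.3}{3}{13.5pt}$, of weight $[\oiYBio{.3}{3}{13.5pt}]_{u,v}=\uprho=(u-v)/(u-tv)$, on the far left of the LHS configuration. Its right-facing edges (lower arrow, upper empty) match the existing LHS left boundary, while its left-facing edges (lower empty, upper arrow) install an RHS-type boundary on the outside. Then, using the forward Yang-Baxter moves with transition weights $P^{\mathrm{fwd}}_{u,v}$ from \Cref{sub:YB_bijectivisation_new_label}, the cross is dragged column by column to the far right. Each such local step preserves the full two-layer partition function (by the Yang-Baxter equation \eqref{YB1.1}--\eqref{YB4.4}) while swapping the spectral parameters of the two vertices in the column through which the cross passes. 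This is exactly the mechanism that, once the cross has traversed the entire strip, converts the bottom-layer parameter from $v$ to $u$ and the top-layer parameter from $u$ to $v$.

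An argument analogous to \Cref{lemma:cross_far_to_the_right} then shows that, under the hypothesis \eqref{condition_on_convergence}, the cross vertex stabilizes with probability one in state $\oiYBio{.3}{3}{13.5pt}$ as it is dragged past all the vertical arrows supporting $\lambda$ and $\mu$: in an empty column, the entries of \Cref{fig:fwd_YB} force $\oiYBio{.3}{3}{13.5pt}\to\oiYBio{.3}{3}{13.5pt}$ with probability $1$ and $\ioYBio{.3}{3}{13.5pt}\to\oiYBio{.3}{3}{13.5pt}$ with strictly positive probability. After stabilization, the configuration consists of a cross vertex $\oiYBio{.3}{3}{13.5pt}$ on the far right together with a two-layer configuration whose internal structure, spectral parameters, and both side boundaries are precisely those of the RHS of \eqref{eq:anotForm}. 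Since $\oiYBio{.3}{3}{13.5pt}$ contributes the same weight $\uprho$ on the left and on the right of the dragging process, the two resulting partition functions coincide, the common factor cancels, and one obtains LHS $=$ RHS, exactly as in the derivation of \eqref{skew_Cauchy_identity_bijective_proof} but now without the multiplicative prefactor $(v-u)/(v-tu)$.

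The main obstacle is the verification that both the far-left and the far-right boundaries of the transformed configuration are of RHS-type, given that now both ends of the LHS configuration sit in a "partially empty" regime (only one layer carries a horizontal arrow at infinity). This is what selects $\oiYBio{.3}{3}{13.5pt}$ as the unique cross vertex state which is simultaneously consistent with the LHS boundary on the right of the cross at the initial position and with the RHS boundary on the left of the cross at the final position, and it is precisely what makes the factor $\uprho$ cancel rather than appear as in \Cref{thm:skew_Cauchy_one}. Once this boundary bookkeeping and the cross-vertex stabilization are in place, the rest of the argument reduces to a column-by-column application of the local reversibility condition \eqref{reversibility_condition}, exactly as in \Cref{prop:reversibility_on_signatures}.
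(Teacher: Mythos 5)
Your proposal is correct and follows essentially the same route as the paper's proof: the paper likewise establishes \eqref{eq:anotForm} by dragging the cross vertex $\oiYBio{.25}{3.5}{10.5pt}$ from $-\infty$ to $+\infty$ (and from $+\infty$ to $-\infty$ for the backward probabilities) through the two-layer configurations with the boundary conditions of \Cref{fig:another_skew_Cauchy}, proving an analog of \Cref{prop:reversibility_on_signatures} in which the same cross-vertex weight appears on both sides and therefore cancels, exactly as you argue. The only (immaterial) discrepancy is notational: with your layer assignment (bottom layer $v$, top layer $u$) the relevant convention is the swapped one of \Cref{rmk:swap_u_v}, i.e.\ $P^{\mathrm{fwd}}_{v,u}$ and cross weight $\frac{v-u}{v-tu}$ rather than $\frac{u-v}{u-tv}$, but since this common factor cancels the argument is unaffected.
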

\begin{proof}
	The proof is analogous to out proof of \Cref{thm:skew_Cauchy_one}
	presented in \Cref{sub:bijective_proof_skew_Cauchy}. The only difference is
	that we consider boundary conditions as in \Cref{fig:another_skew_Cauchy}
	instead of \Cref{fig:skew_Cauchy}. Namely, one defines the modified transition
	probabilities on signatures $\bar{\mathsf{U}}^{\mathrm{fwd}}_{v,u} (\kappa
		\to\nu \mid \lambda, \mu)$ ($\bar{\mathsf{U}}^{\mathrm{bwd}}_{v,u}
		(\nu\to\kappa	     \mid \lambda, \mu)$) obtained by dragging the cross
	vertex $\oiYBio{.25}{3.5}{10.5pt}$ from $-\infty$ to $+\infty$ (from $+\infty$
	to $-\infty$, respectively), proves an analog of
	\Cref{prop:reversibility_on_signatures} and obtains a bijective proof of
	\eqref{eq:anotForm}.
\end{proof}

\begin{figure}[htpb]
\centering
\begin{tikzpicture}[scale=.6, thick]
		\draw (-4.5,0)--++(10.5,0) node[below right] {$v$};
		\draw (-4.5,1)--++(10.5,0) node[above right] {$u$};
		\draw[densely dotted, line width=.4]
		(-4.5,2)--++(10.2,0);
		\draw[densely dotted, line width=.4]
		(-4.5,-1)--++(10.2,0);
		\foreach \ii in {-4,...,5}
			{
				\draw (\ii,1.1)--++(0,-.2);
				\draw (\ii,.1)--++(0,-.2)
				node[below, yshift=-25] {$\ii$};
				\draw[densely dotted, line width=.4]
				(\ii,-1.3)--++(0,3.6);
			}
		\node at (0.5,-1.1) {$\lambda$};
		\draw [line width=2,->] (-2,-1)--++(0,1);
		\draw [line width=2,->] (.9,-1)--++(0,1);
		\draw [line width=2,->] (1.1,-1)--++(0,1);
		\draw [line width=2,->] (3,-1)--++(0,1);
		\node at (0.5,2.1) {$\mu$};
		\draw [line width=2,->] (-1,1)--++(0,1);
		\draw [line width=2,->] (0,1)--++(0,1);
		\draw [line width=2,->] (1,1)--++(0,1);
		\draw [line width=2,->] (4,1)--++(0,1);
		\node at (.5,0.5) {$\kappa$};
		\draw [line width=2,->] (-3,0)--++(0,.9);
		\draw [line width=2,->] (0,0)--++(0,1);
		\draw [line width=2,->] (2,0)--++(0,1);
		\draw [line width=2,->] (-5,0)--++(1,0);
		\draw [line width=2,->] (-4,0)--++(1,0);
		\draw [line width=2,->] (-3,.9)--++(.1,.1)--++(.9,0);
		\draw [line width=2,->] (-2,1)--++(1,0);
		\draw [line width=2,->] (-2,0)--++(1,0);
		\draw [line width=2,->] (-1,0)--++(1,0);
		\draw [line width=2,->] (.9,0)--++(.1,.1)--++(0,.9);
		\draw [line width=2,->] (1.1,0)--++(.9,0);
		\draw [line width=2,->] (2,1)--++(1,0);
		\draw [line width=2,->] (3,1)--++(1,0);
		\draw [line width=2,->] (3,0)--++(1,0);
		\draw [line width=2,->] (4,0)--++(1,0);
		\draw [line width=2,->] (5,0)--++(1,0);
		\draw [line width=2,->] (6,0)--++(1,0);
	\end{tikzpicture}\qquad \quad
	\begin{tikzpicture}[scale=.6, thick]
		\draw (-4.5,0)--++(10.5,0) node[below right] {$u$};
		\draw (-4.5,1)--++(10.5,0) node[above right] {$v$};
		\draw[densely dotted, line width=.4]
		(-4.5,2)--++(10.2,0);
		\draw[densely dotted, line width=.4]
		(-4.5,-1)--++(10.2,0);
		\foreach \ii in {-4,...,5}
			{
				\draw (\ii,1.1)--++(0,-.2);
				\draw (\ii,.1)--++(0,-.2)
				node[below, yshift=-25] {$\ii$};
				\draw[densely dotted, line width=.4]
				(\ii,-1.3)--++(0,3.6);
			}
		\node at (0.5,-1.1) {$\lambda$};
		\draw [line width=2,->] (-2,-1)--++(0,.9);
		\draw [line width=2,->] (.9,-1)--++(0,1);
		\draw [line width=2,->] (1.1,-1)--++(0,1);
		\draw [line width=2,->] (3,-1)--++(0,1);
		\node at (0.5,2.1) {$\mu$};
		\draw [line width=2,->] (-1.1,1)--++(.1,.1)--++(0,.9);
		\draw [line width=2,->] (0,1)--++(0,1);
		\draw [line width=2,->] (1,1)--++(0,1);
		\draw [line width=2,->] (4,1)--++(0,1);
		\node at (.5,0.5) {$\nu$};
		\draw [line width=2,->] (-5,1)--++(1,0);
		\draw [line width=2,->] (-4,1)--++(1,0);
		\draw [line width=2,->] (-3,1)--++(1,0);
		\draw [line width=2,->] (-2,0)--++(1,0);
		\draw [line width=2,->] (-2,1)--++(1,0);
		\draw [line width=2,->] (-1,0)--++(0,.9);
		\draw [line width=2,->] (-1,.9)--++(.1,.1)--++(.9,0);
		\draw [line width=2,->] (.9,0)--++(.1,.1)--++(0,.9);
		\draw [line width=2,->] (1.1,0)--++(.9,0);
		\draw [line width=2,->] (2,0)--++(0,1);
		\draw [line width=2,->] (2,1)--++(1,0);
		\draw [line width=2,->] (3,1)--++(1,0);
		\draw [line width=2,->] (3,0)--++(1,0);
		\draw [line width=2,->] (4,0)--++(1,0);
		\draw [line width=2,->] (5,0)--++(0,1);
		\draw [line width=2,->] (5,1)--++(1,0);
		\draw [line width=2,->] (6,1)--++(1,0);
	\end{tikzpicture}
	\caption{Illustration of the sums in both sides of identity
		\eqref{eq:anotForm}.}
	\label{fig:another_skew_Cauchy}
\end{figure}

\Cref{prop:anotForm} is new. Its $s=0$ degeneration was mentioned
in \cite[Sections 3.1 and 3.7]{BufetovMatveev2017}. The significance of this
variation of the skew Cauchy identity is in the fact that it does not have any
prefactors, which is neat from the combinatorial point of view. Another
property which is better visible in this variation is a symmetry between
$\lambda$ and $\mu$:
\begin{proposition}
	\label{prop:symmetrF}
	Let $\bar{\mathsf{U}}^{\mathrm{fwd}}_{v,u}(\kappa \to \nu \mid
		\lambda, \mu)$ and
	$\bar{\mathsf{U}}^{\mathrm{bwd}}_{v,u}(\nu\to\kappa\mid\lambda,\mu)$ be
	transition probabilities defined in the proof of Proposition
	\ref{prop:anotForm}. We have
	\begin{equation*}
		\bar{\mathsf{U}}^{\mathrm{fwd}}_{v,u} (\kappa \to \nu \mid
		\lambda, \mu) =
		\bar{\mathsf{U}}^{\mathrm{fwd}}_{u^{-1},v^{-1}}(\kappa \to \nu
		\mid \mu, \lambda),
		\qquad
		\bar{\mathsf{U}}^{\mathrm{bwd}}_{v,u}
		(\nu \to \kappa \mid \lambda, \mu) =
		\bar{\mathsf{U}}^{\mathrm{bwd}}_{u^{-1},v^{-1}}
		(\nu \to \kappa \mid \mu, \lambda) .
	\end{equation*}
\end{proposition}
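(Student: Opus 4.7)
The plan is to reduce the global symmetry on signatures to the local symmetry of forward transition weights established in Proposition \ref{prop:symm2}. First I would observe that the modified transition probabilities $\bar{\mathsf{U}}^{\mathrm{fwd}}_{v,u}(\kappa \to \nu \mid \lambda, \mu)$ and $\bar{\mathsf{U}}^{\mathrm{bwd}}_{v,u}(\nu \to \kappa \mid \lambda, \mu)$, being defined via sequential dragging of the cross vertex from $-\infty$ to $+\infty$ (resp.\ from $+\infty$ to $-\infty$) through the two-layer configuration of Figure \ref{fig:another_skew_Cauchy}, admit an exact analogue of Proposition \ref{prop:U_are_products_of_P}: each is a finite product of local forward (resp.\ backward) transition weights $P^{\mathrm{fwd}}_{v,u}$ (resp.\ $P^{\mathrm{bwd}}_{v,u}$) over the columns traversed, where the intermediate cross-vertex states are deterministically fixed by the four signatures $\lambda, \kappa, \nu, \mu$.

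Next I would invoke Proposition \ref{prop:symm2}, which (after the swap $(u,v) \leftrightarrow (v,u)$ of Remark \ref{rmk:swap_u_v}) asserts the invariance of each local $P^{\mathrm{fwd}}_{v,u}$ under the combined transformation $(v,u) \mapsto (u^{-1}, v^{-1})$, swap of rows $i_1 \leftrightarrow i_2$, and flip $k_j \mapsto 1-k_j$ of every horizontal boundary index. Geometrically this is a vertical reflection of the column accompanied by interchange of empty and full horizontal edges. One checks directly that under this transformation the six cross-vertex states permute as $\ooYBoo \leftrightarrow \iiYBii$, $\oiYBoi \leftrightarrow \ioYBio$, while $\oiYBio$ and $\ioYBoi$ are individually fixed; in particular the state $\oiYBio$ at which the dragging begins and ends in the $\bar{\mathsf{U}}$-construction is preserved, so the endpoint conditions are compatible with the transformation.

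I would then apply the transformation column by column to the product expression for $\bar{\mathsf{U}}^{\mathrm{fwd}}_{v,u}(\kappa \to \nu \mid \lambda, \mu)$. Each factor matches, term for term, the corresponding factor of $\bar{\mathsf{U}}^{\mathrm{fwd}}_{u^{-1}, v^{-1}}(\kappa \to \nu \mid \mu, \lambda)$, because the vertical flip swaps the roles of the $G^c$-layer (with parameter $v$, packed horizontal boundaries) and the $G$-layer (with parameter $u$, empty horizontal boundaries), and the arrow flip converts packed boundaries into empty ones and vice versa, consistent with the relation between $G^c(v^{-1})$ and $G(v)$ encoded in Remark \ref{rmk:G_coincides_with_Bor17}. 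Multiplying the matched local identities gives the global identity for $\bar{\mathsf{U}}^{\mathrm{fwd}}$; the identical argument with $P^{\mathrm{bwd}}$ in place of $P^{\mathrm{fwd}}$ (and the cross dragged right-to-left) handles $\bar{\mathsf{U}}^{\mathrm{bwd}}$.

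The main obstacle is purely bookkeeping: to verify that the vertical reflection together with the arrow flip really does send the configuration appearing in $\bar{\mathsf{U}}^{\mathrm{fwd}}_{v,u}(\kappa \to \nu \mid \lambda, \mu)$ onto the one appearing in $\bar{\mathsf{U}}^{\mathrm{fwd}}_{u^{-1}, v^{-1}}(\kappa \to \nu \mid \mu, \lambda)$, including the $G^c$/$G$ asymmetry of left and right boundaries at $\pm\infty$ and the state of the cross at those endpoints. Once a short table listing the action of the combined flip on the six cross-vertex states is set up, every ingredient of the argument becomes a direct verification, and no additional computation beyond Proposition \ref{prop:symm2} is required.
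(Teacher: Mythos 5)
Your argument is correct and coincides with the paper's (extremely terse) proof, which simply asserts that the statement follows from the local symmetry of the transition weights — i.e.\ \Cref{prop:symm2} read with the parameter swap of \Cref{rmk:swap_u_v} — applied column by column via the product representation of $\bar{\mathsf{U}}$ (the analogue of \Cref{prop:U_are_products_of_P}), exactly as you spell out, including the check that the $G^c$/$G$ boundary conditions at $\pm\infty$ are exchanged correctly. One minor slip in your bookkeeping: under the combined vertical flip and complementation of horizontal occupancies the cross states $\oiYBoi{.25}{3.5}{10.5pt}$ and $\ioYBio{.25}{3.5}{10.5pt}$ are each \emph{fixed} (either operation alone interchanges them), and only $\ooYBoo{.25}{3.5}{10.5pt}\leftrightarrow\iiYBii{.25}{3.5}{10.5pt}$ are swapped; this does not affect your proof, since the only fact you actually use is that the initial and terminal state $\oiYBio{.25}{3.5}{10.5pt}$ is preserved, which is true.
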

\begin{proof}
	Readily follows from Proposition 3.4.
\end{proof}

Note also that in the Hall-Littlewood case ($s=0$) \Cref{prop:symmetrF}
becomes fully symmetric:
\begin{equation*}
	\bar{\mathsf{U}}^{\mathrm{fwd}}_{v,u} (\kappa \to \nu \mid \lambda,
	\mu) =
	\bar{\mathsf{U}}^{\mathrm{fwd}}_{v,u}(\kappa \to \nu \mid \mu,
	\lambda),
	\qquad
	\bar{\mathsf{U}}^{\mathrm{bwd}}_{v,u}
	(\nu \to \kappa \mid \lambda, \mu) =
	\bar{\mathsf{U}}^{\mathrm{bwd}}_{v,u}
	(\nu \to \kappa \mid \mu, \lambda) .
\end{equation*}
Indeed, this is because the local transition probabilities
(\Cref{fig:fwd_YB,fig:bwd_YB}) are invariant under the swap $(u,v) \to
	(v^{-1},u^{-1})$ if $s=0$.

\section{Inhomogeneous modifications}
\label{app:inhomogeneous_construction}

Most constructions and results of the present paper can be generalized to
allow the spectral parameter $u$ and the spin parameter $s$ in the higher spin
weights \eqref{vertex_weights} vary along columns. Versions of the spin
Hall-Littlewood functions $F$ and $G^c$ with this type of inhomogeneity, as
well as Cauchy summation identities for these functions, are discussed in
detail in \cite{BorodinPetrov2016inhom}. Such Cauchy identities were employed
in that work to compute observables of the inhomogeneous stochastic higher
spin six vertex model which are amenable to asymptotic analysis (performed in,
e.g., \cite{BorodinPetrov2016Exp}).

Let us briefly discuss the modifications needed to introduce inhomogeneity
parameters into our constructions. These parameters form two families,
$\{\xi_i\}_{i\in\mathbb{Z}}$ and $\{s_i\}_{i\in\mathbb{Z}}$. The vertex
weights \eqref{vertex_weights} in the column number $i$ now depend on the
parameters $\xi_i u$ (replacing $u$) and $s_i$. These parameters $\xi_i,s_i$
do not enter the cross vertex weights \eqref{cross_vertex_weights} involved in
the Yang-Baxter equation. However, they do enter the local transition
probabilities $P^{\mathrm{fwd}},P^{\mathrm{bwd}}$: in the tables in
\Cref{fig:fwd_YB,fig:bwd_YB} one should replace the parameters $u,v,s$ with
$\xi_iu,\xi_iv,s_i$, respectively, where $i\in \mathbb{Z}$ is the location
through which the cross vertex is dragged.

Next, the definitions of the functions $F$ and $G^c$ should be modified as in
\cite{BorodinPetrov2016inhom}, by first replacing $(u,s)\to(\xi_m u,s_m)$ in
\eqref{F_skew_one_variable_definition} and
$(u^{-1},s)\to(\xi_m^{-1}u^{-1},s_m)$ in
\eqref{G_skew_one_variable_definition}, and then defining the multivariable
functions as in \Cref{ssub:F_G_multivar_definition}. Note that in Cauchy
identities (e.g., in \eqref{nonskew_multi_Cauchy}) the parameters in the
functions $F$ and $G^c$ should be $u_i\xi_m$ and $v_j^{-1}\xi_m^{-1}$,
respectively. Remarkably, the double product
$\prod\prod\frac{v_j-u_i}{v_j-tu_i}$ entering \eqref{nonskew_multi_Cauchy}
remains the same in the inhomogeneous setting.

Having inhomogeneous versions of the spin Hall-Littlewood functions $F$ and
$G^c$, one can define the corresponding measures and processes as in
\Cref{sub:spin_HL_measures_processes}. The local transition probabilities
assembled into $\mathsf{U}^{\mathrm{fwd}}_{v,u}$ and
$\mathsf{U}^{\mathrm{bwd}}_{v,u}$ thus give rise to an inhomogeneous version
of the Yang-Baxter field depending on $t$, the parameters $\{u_i\}$, $\{v_j\}$
as in \Cref{fig:YB_field}, and two series of inhomogeneous parameters
$\{\xi_m\}$ and $\{s_m\}$. The latter parameters may be thought of as
belonging to the third dimension in \Cref{fig:YB_field}, the one where the
signatures $\boldsymbol\lambda^{(x,y)}$ live.

The dynamic stochastic six vertex model (DS6V) arising in
\Cref{sec:dynamicS6V} as a Markov projection of the Yang-Baxter field onto the
column number zero does not feel the inhomogeneous parameters $\{\xi_m\}$ and
$\{s_m\}$ for $m\ge1$. This follows by the very construction of the
Yang-Baxter field using the probabilities $\mathsf{U}^{\mathrm{fwd}}_{v,u}$.
In other words:
\begin{corollary}
	\label{cor:independence_on_inhom_parameters}
	The distribution of the number of zero parts $\lambda^{[0]}$ under the
	inhomogeneous version of the spin Hall-Littlewood measure described above does
	not depend on the inhomogeneity parameters $\xi_m,s_m$ for $m\ge1$. A similar
	statement holds for spin Hall-Littlewood processes.
\end{corollary}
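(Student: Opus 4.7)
The plan is to reduce the claim to the Markov projection property of the Yang-Baxter field (Proposition \ref{prop:YB_Markov_projections}) applied in the inhomogeneous setting. The key observation is that the inhomogeneity parameters $\xi_m, s_m$ for a fixed $m \ge 1$ enter the local transition probabilities $P^{\mathrm{fwd}}_{v,u}$ only at the step when the cross vertex is being dragged through column~$m$; they do not enter the transition at any other column. Consequently, $\mathsf{U}^{\mathrm{fwd}}_{v,u}(\kappa \to \nu \mid \lambda, \mu)$ depends on $\xi_m, s_m$ ($m\ge 1$) only through how it acts on columns $\ge 1$.

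First, I would verify that the Markov projection property (Proposition \ref{prop:YB_Markov_projections}) for $h=1$ carries over verbatim to the inhomogeneous setting: the argument in Section~\ref{sub:properties_of_global_transitions} uses only the sequential left-to-right construction of $\mathsf{U}^{\mathrm{fwd}}_{v,u}$, and this structure is preserved under column-dependent parameters. Thus the scalar field $\mathbf{L} = \{\boldsymbol\ell^{(x,y)}\}_{x,y\ge0}$, where $\boldsymbol\ell^{(x,y)} = (\boldsymbol\lambda^{(x,y)})^{[0]}$, has dynamics governed solely by the restriction of the $\mathsf{U}^{\mathrm{fwd}}$ probabilities to column~$0$, which depends on $\xi_0, s_0$ but not on $\xi_m, s_m$ for $m\ge 1$.

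Second, by the (inhomogeneous version of the) identification theorem \Cref{thm:YB_field_spin_HL_process}, the marginal law of $\boldsymbol\lambda^{(x,y)}$ under the Yang-Baxter field is precisely the spin Hall-Littlewood measure $\mathscr{H}_{x,y}$. Hence the distribution of the number of zero parts $\lambda^{[0]}$ under $\mathscr{H}_{x,y}$ equals the distribution of $\boldsymbol\ell^{(x,y)}$ under $\mathbf{L}$, and the previous paragraph shows that this distribution is independent of $\xi_m, s_m$ for $m \ge 1$. The analogous statement for spin Hall-Littlewood processes along a down-right path follows by the same argument applied to the joint law of $\{\boldsymbol\ell^p : p \in \mathcal{P}_{\vec{x},\vec{y}}\}$.

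The only real point to check carefully is the first step: that Proposition~\ref{prop:YB_Markov_projections} remains valid under column-dependent inhomogeneities. This is not a significant obstacle, since the proof of that proposition relies on the locality of the dragging construction (the state of the cross vertex after passing column $r$ is determined by the arrow configuration at column $r$ alone, together with the current state of the cross), and this locality is manifestly unchanged when the weights at column $r$ depend on $(\xi_r, s_r)$.
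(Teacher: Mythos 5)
Your proposal is correct and follows essentially the same route as the paper: the corollary is obtained there exactly by noting that the column-zero Markov projection of the (inhomogeneous) Yang-Baxter field is built from forward moves whose column-$m$ probabilities alone involve $\xi_m,s_m$, so the projection onto column zero does not feel these parameters for $m\ge1$, and then invoking the inhomogeneous analogue of \Cref{thm:YB_field_spin_HL_process} (cf.\ \Cref{cor:dyn6V_spin_HL_process}) to identify its law with $\lambda^{[0]}$ under the spin Hall-Littlewood measures and processes. Your added care about checking that \Cref{prop:YB_Markov_projections} persists under column-dependent parameters is exactly the locality point the paper summarizes by ``this follows by the very construction of the Yang-Baxter field using the probabilities $\mathsf{U}^{\mathrm{fwd}}_{v,u}$.''
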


On the other hand, the parameters $\{u_i\}$, $\{v_j\}$ entering the
Yang-Baxter field, carry over to the DS6V model. The height function in this
inhomogeneous DS6V model is identified with $\lambda^{[0]}$ under a spin
Hall-Littlewood measure, in which the inhomogeneous parameters $u_i,v_j$ serve
as variables in the functions $F$ and $G^c$. See
\Cref{cor:dyn6V_spin_HL_process}. The presence of the inhomogeneous parameters
$\{u_i\}$ and $\{v_j\}$ carries over to most of the degenerations of the DS6V
model considered in \Cref{sec:degenerations}. An exception is the ASEP type
limit of \Cref{sub:degen_ASEP} since this limit is performed along the
diagonal of the quadrant.

\printbibliography

\end{document}